\newtheorem*{rep@theorem}{\rep@title}
\newcommand{\newreptheorem}[2]{%
\newenvironment{rep#1}[1]{%
 \def\rep@title{#2 \ref{##1}}%
 \begin{rep@theorem}}%
 {\end{rep@theorem}}}
\newcommand{\cS}{\mathcal{S}}
\newcommand{\cC}{\mathcal{C}}
\newcommand{\R}{\mathbb{R}}
\newcommand{\C}{\mathbb{C}}
\newcommand{\bI}{\overline{I}}
\newcommand{\bJ}{\overline{J}}
\newcommand{\one}{\mathbbm{1}}
\newcommand{\fkS}{\mathfrak{S}}
\newcommand{\cm}{\operatorname{CM}}
\DeclareMathOperator{\ncm}{NCM}
\newcommand{\tw}{\tilde{w}}
\theoremstyle{plain}
\newtheorem*{Thm}{Theorem}
\newtheorem{thm}{Theorem}[section]
\newtheorem{prop}[thm]{Proposition}
\newtheorem{cor}[thm]{Corollary}
\newtheorem{lemma}[thm]{Lemma}
\theoremstyle{definition}
\newtheorem{definition}{Definition}
\theoremstyle{remark}
\newtheorem*{remark}{Remark}
\newtheorem*{example}{Example}
\newtheorem{question}[thm]{Question}
\DeclareMathOperator{\imm}{Imm}
\DeclareMathOperator{\sgn}{sgn}
\title{$\%$-Immanants and Temperley-Lieb Immanants}
\author{Frank Lu, Kevin Ren, Dawei Shen, Siki Wang}
\date{March 2023}
\begin{document}

\maketitle
\begin{abstract}

In this paper, we investigate the relationship between Temperley-Lieb immanants, which were introduced by Rhoades and Skandera, and \%-immanants, an immanant based on a concept introduced by Chepuri and Sherman-Bennett. Our main result is a classification of when a Temperley-Lieb immanant can be written as a linear combination of \%-immanants. This result uses a formula by Rhoades and Skandera to compute Temperley-Lieb immanants in terms of complementary minors. Using this formula, we also derive an explicit expression for the coefficients of a Temperley-Lieb immanant coming from a $321$-, $1324$-avoiding permutation $w$ containing the pattern $2143,$ which we use to derive our main result.
\end{abstract}


\tableofcontents
\section{Introduction}
Immanants are functions defined on square matrices which are generalizations of the determinant. For a function $f:\mathfrak S_n\to\mathbb C$, we define the immanant associated to $f$, $\imm_f:M_{n\times n}\to \mathbb C$, as
\[\imm_f(X)= \sum\limits_{\sigma \in \mathfrak{S}_n} f(\sigma)x_{1, \sigma(1)}x_{2, \sigma(2)} \cdots x_{n, \sigma(n)},\] where the matrix $X$ has entries $x_{i, j}.$ We may also view this immanant as a polynomial in $\mathbb{C}[x_{i, j}],$ where $1 \leq i, j \leq n.$

We are interested in the following two families of immanants.


First, we have Temperley-Lieb immanants, introduced in \cite{RS} by Rhoades and Skandera. These immanants are indexed by $321$-avoiding permutations, and their coefficients are derived from coefficients of expansions of a product within the Temperley-Lieb algebra. They are also a special case of another type of immanant, called Kazhdan-Lusztig immanants, as proven in \cite{HarderRS}. Unlike Temperley-Lieb immanants, Kazhdan-Lusztig immanants are a lot more difficult to understand, as their coefficients are related to Kazhdan-Lusztig polynomials, which in turn are related to each other by a recursive relation. 

We then introduce a new class of immanants which we call \%-immanants. These immanants are based on the notion of the determinant formulas from \cite{CSB}, and are significantly easier to compute than Temperley-Lieb immanants.
\begin{definition}[\%-immanants]
Suppose $\lambda / \mu$ is a skew tableau. The \%-immanant associated to this permutation is defined by $$\imm^{\%}_{\lambda/\mu}(X) = \sum_{\sigma \in A} \sgn(\sigma) x_{1, \sigma(1)}x_{2, \sigma(2)} \cdots x_{n, \sigma(n)},$$ where $\sigma \in A$ iff for all $i$, $(i,\sigma(i)) \in \lambda /\mu$. This is related to a `Skew Ferrers Matrix' of \cite{MR2353118}.
\end{definition}


Recently, using the connections between Kazhdan-Lusztig immanants and Schubert varieties, Chepuri and Sherman-Bennett \cite{CSB} proved that for a $2143$ and $1324$-avoiding permutation $w$, the corresponding Kazhdan-Lusztig immanant $\imm_w(M)$ is the same as some $\%$-immanant (up to signs). In particular, the same result holds for Temperley-Lieb immanants. Motivated by this result, our paper seeks to answer the following question.

\begin{question}
Which Temperley-Lieb immanants are linear combinations of \%-immanants?
\end{question}

The advantage of specializing to the case of Temperley-Lieb immanants is that determining the coefficients of these immanants, while still difficult, can be done non-recursively, suggesting that such a question is more tenable for Temperley-Lieb immanants than Kazhdan-Lusztig immanants in general. In this paper, we provide a complete answer to the above question. Furthermore, our proofs will be purely combinatorial, based on the relationship between Temperley-Lieb immanants, non-crossing matchings, and colorings. Our main result is the following.

\begin{Thm}
Let $w$ be a $321$-avoiding permutation. The following statements are equivalent:
\begin{enumerate}
    \item The Temperley-Lieb immanant $\imm_w$ is a linear combination of \%-immanants,
    
    \item The signed Temperley-Lieb immanant $\sgn(w) \imm_w$ is a sum of at most two \%-immanants,
    
    \item The permutation $w$ avoids the patterns $1324, 24153, 31524, 231564$, and $312645,$ in addition to avoiding $321.$
\end{enumerate}
\end{Thm}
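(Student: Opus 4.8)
I plan to establish the cycle $(2)\Rightarrow(1)\Rightarrow(3)\Rightarrow(2)$. The implication $(2)\Rightarrow(1)$ is immediate, since a sum of at most two \%-immanants is in particular a linear combination of \%-immanants; so all the work is in $(1)\Rightarrow(3)$ and $(3)\Rightarrow(2)$. Throughout I will use the elementary observation that $\imm^{\%}_{\lambda/\mu}$ is supported, in the monomial basis, on the set of permutations with $\mu_i < \sigma(i) \le \lambda_i$ for all $i$, and that on this support the coefficient of $x_{1,\sigma(1)}\cdots x_{n,\sigma(n)}$ is exactly $\sgn(\sigma)$; since $\mu\subseteq\lambda$ are partitions, the defining intervals $(\mu_i,\lambda_i]$ have weakly monotone endpoints in $i$. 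I will call such a support a \emph{skew Ferrers support}. Hence $\sgn(w)\imm_w = \sum_k c_k\,\imm^{\%}_{\lambda^{(k)}/\mu^{(k)}}$ is equivalent to the statement that the function $h_w(\sigma) := \sgn(w)\sgn(\sigma)f_w(\sigma)$, where $f_w$ is the monomial coefficient function of $\imm_w$, lies in the $\mathbb{C}$-span of the indicator functions of skew Ferrers supports; this reformulation is what I would work with.

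\textbf{$(1)\Rightarrow(3)$.} I will argue the contrapositive: if $w$ contains one of $1324, 24153, 31524, 231564, 312645$, then $h_w$ is not in the span of skew Ferrers indicators. First, a \emph{locality reduction}: using the Rhoades--Skandera formula expressing $\imm_w$ through complementary minors, together with the non-crossing-matching/coloring description of the coefficients, I would show that if $w$ contains a pattern $p$ at a fixed set of positions, the restriction of $h_w$ to the block of permutations agreeing with $w$ off those positions is governed by $h_p$, while skew Ferrers supports restrict to skew Ferrers supports (or the empty set) on the corresponding subgrid. This reduces the claim to the five minimal cases $w\in\{1324, 24153, 31524, 231564, 312645\}$. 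For each, I compute $f_w$ explicitly from the complementary-minors formula and exhibit a finite set of permutations on which $h_w$ realizes a pattern of values that no linear combination of skew Ferrers indicators can realize --- the obstruction being precisely the weak monotonicity of the interval endpoints of a skew Ferrers board. Since the forbidden list is closed under inverse ($1324$ is an involution, and $24153\leftrightarrow 31524$, $231564\leftrightarrow 312645$ are inverse pairs), and everything in sight is compatible with $w\mapsto w^{-1}$ (matrix transpose), only three of the five patterns need a separate treatment.

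\textbf{$(3)\Rightarrow(2)$.} I will argue constructively, splitting on whether $w$ contains $2143$. If $w$ avoids $2143$ as well, then $w$ is $321$-, $2143$- and $1324$-avoiding, so by the theorem of Chepuri and Sherman-Bennett quoted above (specialized from Kazhdan--Lusztig to Temperley--Lieb immanants), $\sgn(w)\imm_w$ is a single \%-immanant and (2) holds. If $w$ contains $2143$ but avoids $321,1324,24153,31524,231564,312645$, I would invoke the explicit formula for the coefficients of $\imm_w$ for $321$-, $1324$-avoiding $w$ containing $2143$ (derived from the complementary-minors formula, as advertised in the abstract), and show that $h_w$ decomposes as $\one_{A_1}+\one_{A_2}$ for two skew Ferrers supports $A_1,A_2$ --- read off from the positions of the $2143$-occurrences in $w$ --- which need not be disjoint, so that $h_w$ may take the value $2$ on $A_1\cap A_2$. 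The avoidance of $24153, 31524, 231564, 312645$ is exactly the hypothesis guaranteeing that two supports suffice and no third is forced. Then $\sgn(w)\imm_w=\imm^{\%}_{\lambda^{(1)}/\mu^{(1)}}+\imm^{\%}_{\lambda^{(2)}/\mu^{(2)}}$.

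\textbf{Main obstacle.} The crux is $(1)\Rightarrow(3)$: for each of the five minimal patterns I must pin down exactly which patterns of values on a chosen finite set of permutations are attainable by linear combinations of skew Ferrers indicators and verify that the explicitly computed $h_w$ is not among them, together with making the locality reduction fully rigorous. A secondary difficulty is the bookkeeping in $(3)\Rightarrow(2)$: correctly describing the two skew shapes from the combinatorial data of $w$, handling their overlap, and using the avoidance of $24153, 31524, 231564, 312645$ in an essential rather than incidental way.
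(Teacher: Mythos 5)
Your overall skeleton (the cycle of implications, splitting $(3)\Rightarrow(2)$ on whether $w$ contains $2143$, and invoking the Chepuri--Sherman-Bennett result in the $2143$-avoiding case) matches the paper, but the heart of your $(1)\Rightarrow(3)$ rests on an unproven step that the paper does not use and that does not follow from anything you cite: the ``locality reduction.'' You need that, for $u$ agreeing with $w$ off the pattern positions $I$, the coefficient $f_w(u)$ is determined (up to sign) by $f_{w|_I}(u|_I)$. That is a pattern-restriction statement for Temperley--Lieb (equivalently Kazhdan--Lusztig) immanant coefficients; it is not an evident consequence of the complementary-minor formula, and for Kazhdan--Lusztig polynomials pattern containment is known in general to yield inequalities rather than equalities, so this step needs a genuine proof before any reduction to the five minimal patterns is legitimate. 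Even granting it, you would still need a workable criterion for when a coefficient function lies in the span of skew-Ferrers indicators; ``the obstruction is weak monotonicity of the interval endpoints'' is not yet such a criterion. The paper's route avoids both issues: it first proves that $\imm_f$ lies in the span of \%-immanants iff $f(v)=-f(v')$ for every $1324$-adjacent pair $v,v'$ (Theorem \ref{thm:classifying space of percent}), then classifies all $321$-, $1324$-avoiding, $2143$-containing $w$ into two explicit block forms (Proposition \ref{2143Patterns}) and computes $f_w(u)$ in closed form for such $w$ (Theorems \ref{AntiDiagCoeff} and \ref{coefficient of u in w case 2}); the contrapositive of $(1)\Rightarrow(3)$ is then settled for arbitrary $n$, with no reduction to minimal patterns, by exhibiting a single explicit $1324$-adjacent pair $v, vs_a$ (resp.\ $v, vs_{a+e}$) with $f_w(v)\neq -f_w(vs_a)$, after first disposing of $1324$-containment via Theorem \ref{1324Thm}.

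For $(3)\Rightarrow(2)$ your outline is directionally right but the role of the four length-$5$ and length-$6$ patterns is asserted rather than derived. The actual mechanism is: avoiding $24153$ and $31524$ forces the $[2][1][3][5][4]$ block form (Lemma \ref{lem:24153, 31524 pattern}), and avoiding $231564$ and $312645$ then forces $\min(a,c)=1$ and $\min(b,d)=1$; by Theorem \ref{AntiDiagCoeff} this caps every coefficient at $\binom{1+1}{1}=2$ and is exactly what allows $\sgn(w)\imm_w$ to be written as $\imm^\%_w$ plus one further explicitly described skew shape (the case $c=1$ being recovered from $a=1$ by the symmetries of Lemma \ref{lem: simplify cases}). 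Reading the two shapes off ``the positions of the $2143$-occurrences'' alone will not suffice; you need this block data, so as written both directions still have substantive gaps.
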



\par The plan of the paper is as follows. In section 2, we go through the important concepts and introduce some of the important objects that we'll be studying, and review some preliminary results about them. Following that, we cover \%-immanants in section 3, where we also present a result describing which immanants can be written as linear combinations of \%-immanants. 
\par In section 4, we prove that the Temperley-Lieb immanant of a $321$-avoiding permutation $w$ is a \%-immanant (in fact, the \%-immanant associated with $w$) if and only if $w$ avoids the patterns $2143$ and $1324$ using combinatorial methods. Although both directions can be proved relatively easily with known results (e.g. \cite{billey2003maximal}, \cite{CSB}), the setup here will evoke the style of proof we will use in the rest of the paper.
\par We conclude in section 5 by proving that the Temperley-Lieb immanant of a $321$-avoiding permutation is a sum of two \%-immanants, and more generally a linear combination of \%-immanants, if and only if it avoids the patterns $1324, 24153, 31524, 231564, 312645.$ In the course of proving this result, we will also arrive at a relatively simple combinatorial formula for the coefficients of a Temperley-Lieb immanant associated with a $321$- and $1324$-avoiding permutation $w$ that contains the pattern $2143.$ We note that this formula, along with the work in section 4, gives us the coefficients of the Temperley-Lieb immanant of any permutation $w$ that avoids $321$ and $1324.$
\par Throughout the paper, we will leave the proofs of technical lemmas at the end of each section, so that the reader can first focus on the bigger picture. An interested reader is welcomed to go through the proofs of the lemmas. 
\par We have an upcoming paper that extends our results in this paper to general Kazhdan-Lusztig immanants, i.e., we do not require the associated permutation to be $321$-avoiding.

\section{Preliminaries}\label{sec:prelims}

\par In this section, we go through the main preliminary concepts that we extensively use throughout this paper. We start with some discussion of permutations and Bruhat order, and then introduce Temperley-Lieb immanants. We finally discuss complementary minors and their relation to Temperley-Lieb immanants.
\subsection{Permutations}
We begin this section by introducing some notation which will help us talk about permutations.

\par For a finite set $I$, let $|A|$ denote the cardinality of $A$. If furthermore we have a set of indices $I \subset [n] := \{1, 2, \ldots, n\},$ let $\overline{I} = [n] - I.$ 
\par Given integers $a, b,$ let $[a, b] = \{a, a+1, \ldots, b\}$ if $a \leq b$ and the empty set otherwise. Furthermore, we let $[a, b: c, d]$ be $[a, b] \cup [c, d],$ and similarly we let $[a_1, a_2:a_3, a_4: a_5, a_6: \ldots: a_{2n-1}, a_{2n}]$ be the union of the $n$ intervals $[a_{2i - 1}, a_{2i}]$ for $i \in [n].$ Let $s(A)$ be the sum of the elements of a set $A \subset \mathbb{R}.$

\par Let $\fkS_n$ denote the group of permutations of length $n$. A permutation $u \in \fkS_n$ can be viewed as a map $[n] \to [n]$. Thus, given a subset $S \subset [n]$, we can define $u(S)$ to be the image of $S$, or $\{ u(x) : x \in S \}$. 

\par Given $v, w \in \fkS_n,$ let $v \cdot w$ to denote the product of $v, w$ in $\fkS_n,$ given by $v\cdot w(i) = v(w(i)).$ We will sometimes drop the $\cdot$ when writing the product for readability. We furthermore let $(i, j)$ denote the transposition swapping $i$ and $j$. It will be clear by context whether $(i, j)$ denotes a transposition or an ordered pair. 
\par We also have the \textbf{longest word} $w_0 \in \fkS_n,$ which is the permutation with one-line notation $n (n-1) (n-2) \cdots 1.$ We will specify which permutation group we are considering the longest word in when it is unclear from context.
\par A large part of this paper is devoted to extracting substructures from a permutation. We define two mechanisms for doing so. First, we have a notion of a block structure for a permutation, which we define as follows.
\begin{definition}A \textbf{block} is a string of consecutive ascending integers. Given two disjoint blocks $[a]$ and $[b],$ we say that $[a] < [b]$ if the largest element in $[a]$ is smaller than the smallest element of $[b].$
\par From here, given a permutation $w \in \mathfrak{S}_n$ and $v \in \mathfrak{S}_m,$ where $m \leq n,$ we say that $w$ has \textbf{block structure} $[v(1)][v(2)]\ldots [v(m)]$ if the one-line notation for $w$ consists of $m$ blocks, where $[1] < [2] < \ldots < [m].$
\end{definition}
For instance, the permutation with one-line notation $56123784$ has block structure $[3][1][4][2].$ Note that a permutation can have multiple block structures (e.g. the same permutation has block structure $[4][1][2][5][3]$) but a unique block structure with the fewest number of blocks.

\par We will sometimes want to be explicit about the values that are present in a given block. To do this, in analogy with the notation $[a, b]$ for the set $\{a, a+1, \ldots, b\},$ we let $(a..b)$ denote the sequence $(a, a+1, \ldots, b),$ and we let $(a_1..a_2: a_3..a_4: \ldots: a_{2n-1}..a_{2n})$ denote the sequence $$(a_1, a_1 + 1, a_1 + 2, \ldots, a_2, a_3, a_3 + 1, \ldots, a_4, \ldots, a_{2n-1}, a_{2n-1} + 1, \ldots, a_{2n}).$$ If $a > b,$ then we let $(a..b)$ denote the empty sequence. The most common place we will use this notation is for writing the one-line notation of a permutation. We can think of the one line notation as a sequence of elements, where the $i$-th element in the sequence is $w(i).$ For instance, we can write the one line notation $56123784$ using our notation as $(5..6:1..3:7..8:4..4).$ 
\par For brevity, given a sequence $(a_1..a_2, a_3..a_4, \ldots, a_{2n-1}..a_{2n})$ and a function $f$ whose values are well-defined on all of the elements in the sequence, we will denote the sequence $$(f(a_1), f(a_1 + 1), \ldots, f(a_2), f(a_3), f(a_3 + 1), \ldots, f(a_4), \ldots, f(a_{2n-1}), f(a_{2n-1} + 1), \ldots, f(a_{2n}))$$ as $f(a_1..a_2:a_3..a_4:\ldots:a_{2n-1}..a_{2n}).$ The most common situation that we will run into during this paper is the situation where the $a_i$ all lie in $[n],$ and $f \in \fkS_n.$

\par Our other mechanism for obtaining substructures comes from the idea of restriction, in analogy with the notion of restricting the domain of functions. Note that the following definition is the same operation as the flattening operation from \cite[\S 3]{billey2003maximal}. 
\begin{definition}
Given a permutation $w \in \fkS_n$ and a set of indices $I \subset [n]$, \textbf{the restricted permutation} $w|_I$ is the permutation on $\fkS_{|I|}$ defined by the following condition: for $1 \le i \le |I|$, $w$ maps the $i$-th smallest element of $I$ to the $w|_I (i)$-th smallest element of $w(I)$.
\end{definition}

\par The restriction of a permutation allows us to naturally define pattern avoidance.

\begin{definition}
    Given a permutation $w \in \fkS_n$ and $v \in \fkS_m$, with $m \le n$, we say that $w$ \textbf{avoids} the pattern $v$ if there does not exist a subset $I \subset [n]$ with $|I| = m$ such that $w|_I = v$.
\end{definition}

For example, the permutation $31524$ avoids $321$ but does not avoid $123$.

Pattern avoidance (and by contrapositive, containing a pattern) is closed under taking inverses and by application of the longest word. This is the subject of the following two lemmas.
\begin{lemma}\label{lem: restriction inverses}
Let $w \in \fkS_n.$ Then, if $v = w|_I \in \fkS_m$ for some subset $I \subset [1, n]$ (which in particular requires $|I| = m$), then $v^{-1} = (w^{-1})|_{w(I)}.$ In particular, $w$ avoids the pattern $v$ if and only if $w^{-1}$ avoids the pattern $v^{-1}$.
\end{lemma}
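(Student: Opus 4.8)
The plan is to prove both claims about the restricted permutation $w|_I$ directly from the defining property, namely the order-preserving bijections between a set and its image. First I would set up notation: let $I = \{i_1 < i_2 < \cdots < i_m\}$ and let $w(I) = \{j_1 < j_2 < \cdots < j_m\}$, where $j_k = w(i_{\pi(k)})$ for the unique permutation $\pi \in \fkS_m$ recording how $w$ permutes the sorted list of $I$ into the sorted list of $w(I)$. By definition, $v = w|_I$ is exactly this $\pi$: it sends $k$ to the index $\ell$ such that $w(i_k)$ is the $\ell$-th smallest element of $w(I)$. The core of the argument is the observation that $w^{-1}$ maps the $\ell$-th smallest element of $w(I)$, i.e. $j_\ell$, back to an element of $I$, and the set $w^{-1}(w(I)) = I$, so $(w^{-1})|_{w(I)}$ is a well-defined permutation in $\fkS_m$ describing how $w^{-1}$ sorts $w(I)$ into $I$.

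The key step is then to check that $(w^{-1})|_{w(I)} = v^{-1}$. By definition $(w^{-1})|_{w(I)}$ sends $\ell$ to the index $k$ such that $w^{-1}(j_\ell)$ is the $k$-th smallest element of $w^{-1}(w(I)) = I$, i.e. such that $w^{-1}(j_\ell) = i_k$, i.e. such that $w(i_k) = j_\ell$. But by the definition of $v$, $v(k) = \ell$ exactly when $w(i_k)$ is the $\ell$-th smallest element of $w(I)$, i.e. $w(i_k) = j_\ell$. Hence $(w^{-1})|_{w(I)}(\ell) = k \iff v(k) = \ell$, which says precisely $(w^{-1})|_{w(I)} = v^{-1}$. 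This is a short unwinding of definitions; the only thing to be careful about is confirming $w^{-1}(w(I)) = I$ and that $|w(I)| = |I| = m$, both immediate since $w$ is a bijection.

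For the final sentence, I would argue by contrapositive in both directions, which by symmetry reduces to one implication. Suppose $w$ contains the pattern $v$; then there is $I \subset [n]$ with $|I| = m$ and $w|_I = v$. Applying the first part with this $I$, we get $(w^{-1})|_{w(I)} = v^{-1}$, and since $|w(I)| = m$, this exhibits $v^{-1}$ as a pattern in $w^{-1}$, so $w^{-1}$ contains $v^{-1}$. The reverse implication follows by applying this to $w^{-1}$ in place of $w$ (using $(w^{-1})^{-1} = w$). Therefore $w$ avoids $v$ iff $w^{-1}$ avoids $v^{-1}$.

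I do not expect any real obstacle here — the statement is essentially a bookkeeping lemma. The one place where care is genuinely needed is keeping straight the two layers of indexing (positions within $I$ versus positions within $w(I)$, and the actual values), since it is easy to conflate "the $k$-th smallest element of $I$" with "$k$" itself; writing out the explicit bijection $k \mapsto i_k$ and $\ell \mapsto j_\ell$ at the start keeps this clean.
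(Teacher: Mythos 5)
Your proof is correct and follows essentially the same route as the paper: a direct unwinding of the definition of $w|_I$, checking that $(w^{-1})|_{w(I)}(\ell) = k$ exactly when $v(k) = \ell$, using $w^{-1}(w(I)) = I$. The explicit indexing $i_k, j_\ell$ and the contrapositive argument for the pattern-avoidance consequence are just slightly more detailed versions of what the paper does.
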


\begin{lemma}\label{lem: restriction flips}
Let $w \in \fkS_n.$ Then, if $v = w|_I \in \fkS_m$ for some subset $I \subset [1, n],$ and $w_0 \in \fkS_n, w_0' \in \fkS_m$ are the longest words in their respective permutation groups, then $w_0'v = (w_0w)|_I$ and $vw_0' = (ww_0)|_{w_0(I)}$.
\end{lemma}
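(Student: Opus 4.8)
The plan is to rewrite restriction as conjugation by order-preserving relabeling maps, after which both identities reduce to a single commutation relation for $w_0$. For a subset $J \subseteq [n]$ with $|J| = k$, let $\iota_J \colon [k] \to J$ be the unique increasing bijection, so that $\iota_J(i)$ is the $i$-th smallest element of $J$. Unwinding the definition of restriction, $w|_I$ is characterized by $w \circ \iota_I = \iota_{w(I)} \circ w|_I$, i.e.
\[
w|_I = \iota_{w(I)}^{-1} \circ w \circ \iota_I .
\]
I would establish this reformulation first; after that the two claims become pure symbol manipulation.

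The only input about longest words that I need is the commutation relation
\[
w_0 \circ \iota_J = \iota_{w_0(J)} \circ \widetilde w_0 ,
\]
valid for every $J \subseteq [n]$, where $w_0 \in \fkS_n$ and $\widetilde w_0 \in \fkS_{|J|}$ are the longest words (so $\widetilde w_0 = w_0'$ whenever $|J| = m$). This is proved by evaluating both sides at $i \in [|J|]$: the left side gives $n + 1 - \iota_J(i)$, and since $w_0$ reverses the order on $[n]$ it reverses the order of the elements of $J$, so this equals the $(|J| + 1 - i)$-th smallest element of $w_0(J)$, which is exactly $\iota_{w_0(J)}(\widetilde w_0(i))$. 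From this relation, together with $(w_0')^2 = \Id$ and $w_0^2 = \Id$, I would extract the two rearranged forms actually used below: $w_0' \, \iota_{w(I)}^{-1} = \iota_{w_0(w(I))}^{-1} \, w_0$ and $\iota_I \, w_0' = w_0 \, \iota_{w_0(I)}$.

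Now the first claim follows from
\[
w_0' v = w_0' \, \iota_{w(I)}^{-1} \, w \, \iota_I = \iota_{w_0(w(I))}^{-1} \, (w_0 w) \, \iota_I = \iota_{(w_0 w)(I)}^{-1} \, (w_0 w) \, \iota_I = (w_0 w)|_I ,
\]
where the third equality uses $(w_0 w)(I) = w_0(w(I))$. Likewise the second claim follows from
\[
v w_0' = \iota_{w(I)}^{-1} \, w \, \iota_I \, w_0' = \iota_{w(I)}^{-1} \, (w w_0) \, \iota_{w_0(I)} = \iota_{(w w_0)(w_0(I))}^{-1} \, (w w_0) \, \iota_{w_0(I)} = (w w_0)|_{w_0(I)} ,
\]
where the third equality uses $(w w_0)(w_0(I)) = w(I)$, which holds because $w_0^2 = \Id$.

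The only delicate point is bookkeeping: several longest words and several relabeling maps are in play, and one must keep track that the subscript of each $\iota$ matches the set that is its image, and that each $w_0$-symbol is interpreted in the correct symmetric group. Once the commutation relation is checked on elements, no genuine difficulty remains; an equivalent but more hands-on route, avoiding $\iota$ entirely, is to evaluate each of the claimed equalities at the $i$-th smallest element of its index set and simplify using $w_0(j) = n + 1 - j$ and $w_0'(j) = m + 1 - j$, but the conjugation bookkeeping seems cleaner.
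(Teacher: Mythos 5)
Your proof is correct, and at its core it is the same argument as the paper's: the essential fact in both is that $w_0$ reverses order, so the $i$-th smallest element of a subset $J$ is sent to the $(|J|+1-i)$-th smallest element of $w_0(J)$, which is exactly how the paper verifies each identity elementwise. Your packaging of restriction as conjugation by the increasing relabeling maps $\iota_J$ is a clean reformulation that lets one commutation relation $w_0\,\iota_J = \iota_{w_0(J)}\,\widetilde w_0$ do the work of both verifications, but it does not constitute a genuinely different route.
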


Applying the above lemma twice yields the following corollary.
\begin{cor}\label{cor: w_0 conjugation pattern avoidance}
Let $w \in \fkS_n.$ Then, if $v = w|_I \in \fkS_m$ for some subset $I \subset [1, n],$ and $w_0 \in \fkS_n, w_0' \in \fkS_m$ are the longest words in their respective permutation groups, then $w_0'vw_0' = (w_0ww_0)|_{w_0(I)}.$ In particular, $w$ avoids the pattern $v$ if and only if $w_0 w w_0$ avoids the pattern $w_0' v w_0'$.
\end{cor}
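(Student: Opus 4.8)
The statement to prove is Corollary~\ref{cor: w_0 conjugation pattern avoidance}, which asserts that $w_0' v w_0' = (w_0 w w_0)|_{w_0(I)}$ and the pattern-avoidance consequence. The text explicitly says this follows by ``applying the above lemma twice,'' so the plan is to chain the two identities from Lemma~\ref{lem: restriction flips}.

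\medskip

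The plan is as follows. First I would apply the first identity of Lemma~\ref{lem: restriction flips} to $v = w|_I$, obtaining $w_0' v = (w_0 w)|_I$. Now set $w' := w_0 w \in \fkS_n$, so that $w_0' v = w'|_I$. Next I would apply the second identity of Lemma~\ref{lem: restriction flips} to $w'$ with the same index set $I$: this yields $(w_0' v) w_0' = (w' w_0)|_{w_0(I)} = (w_0 w w_0)|_{w_0(I)}$, which is exactly the claimed identity. The only point requiring a moment's care is bookkeeping the index sets: the second identity of Lemma~\ref{lem: restriction flips} sends the index set $I$ to $w_0(I)$ (where $w_0 \in \fkS_n$ is the longest word acting on indices), and since we used the \emph{same} $I$ in both applications, the final index set is $w_0(I)$, matching the statement. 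One should also double-check that the restricted permutation $w'|_I = (w_0 w)|_I$ is indeed $w_0' v \in \fkS_m$, so that applying the second identity (which is stated for an arbitrary permutation restricted on $I$) is legitimate; this is immediate from the first identity.

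\medskip

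For the second assertion, I would argue as follows. Suppose $w$ contains the pattern $w_0' v w_0'$, i.e.\ there is a subset $J \subset [n]$ with $|J| = m$ and $w|_J = w_0' v w_0'$. Applying the identity just proved with $w_0' v w_0'$ in place of $v$ — more directly, applying it to $w_0 w w_0$ — and using that $w_0$ is an involution and $w_0'' := w_0' \in \fkS_m$ is also an involution, one gets $(w_0(w_0 w w_0)w_0)|_{w_0(J)} = w_0'(w_0' v w_0')w_0' = v$, so $(w_0 w w_0)|_{w_0(J)} = v$ wait — more cleanly: the identity $w_0' v w_0' = (w_0 w w_0)|_{w_0(I)}$ applied in the contrapositive direction shows directly that $w$ contains $v$ via the index set $I$ if and only if $w_0 w w_0$ contains $w_0' v w_0'$ via the index set $w_0(I)$, since the map $I \mapsto w_0(I)$ is a bijection on $m$-subsets of $[n]$. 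Thus $w$ avoids $v$ iff $w_0 w w_0$ avoids $w_0' v w_0'$.

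\medskip

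I do not anticipate a genuine obstacle here: the corollary is a formal consequence of Lemma~\ref{lem: restriction flips}, and the whole content is careful tracking of which longest word ($w_0 \in \fkS_n$ versus $w_0' \in \fkS_m$) acts where, and verifying that the intermediate object $(w_0 w)|_I$ has the right form to feed into the second identity. The mild subtlety — and the only place an error could creep in — is making sure the index set transforms as $I \to w_0(I)$ (and not $I \to w_0(I)$ composed with some further permutation coming from the second application), which is resolved by noting that both applications of Lemma~\ref{lem: restriction flips} use the multiplication-on-the-left form first (index set unchanged) and then the multiplication-on-the-right form (index set $I \mapsto w_0(I)$).
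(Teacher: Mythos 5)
Your proposal is correct and is exactly the paper's intended argument: the paper gives no separate proof beyond the remark that the corollary follows by ``applying the above lemma twice,'' and your chaining of the two identities in Lemma \ref{lem: restriction flips} (first $w_0'v=(w_0w)|_I$, then the second identity applied to $w_0w$ to get $(w_0'v)w_0'=(w_0ww_0)|_{w_0(I)}$) is precisely that, with the index-set bookkeeping handled correctly. The pattern-avoidance conclusion is also argued as the paper intends (the identity plus the involutive property of $w_0$, $w_0'$ gives both directions), though you could tidy the mid-sentence self-correction in that paragraph.
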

We note that for this paper, we will only need Lemma \ref{lem: restriction inverses} and Corollary \ref{cor: w_0 conjugation pattern avoidance}, and we will be mainly applying these to a handful of values of $v$ (namely, $321$ and the five patterns that appear in the statement of the main theorem). 
\subsection{Bruhat order}
\par We now define the Bruhat order on $\fkS_n$. See \cite{bjorner2006combinatorics} for a detailed reference.

\begin{definition}\label{defn: Bruhat Order}
A \textbf{reduced word} for an element $w \in \fkS_n$ is a decomposition of $w$ into the simple reflections $s_1, s_2, \ldots s_{n-1}$ of $\fkS_n,$ where $s_i$ is the transposition $(i, i+1),$ that is minimal in length among all such decompositions.
\par The length of a permutation $\ell(u)$ is the length of a reduced word for $u$.
\par The \textbf{Bruhat order} on $\fkS_n$ is defined as follows. We say that $u \le v$ if one of the following three equivalent definitions are satisfied:
\begin{enumerate}
    \item \label{bruhat1} Some reduced word for $v$ contains a subword equal to $u$.

    \item \label{bruhat2} There exists a sequence $u = u_1, u_2, \cdots, u_k = v$ such that for each $1 \le i \le k-1$, we can express $u_i^{-1} u_{i+1}$ as a transposition $(a_i, b_i)$ and we also have $\ell(u_{i+1}) = \ell(u_i) + 1$.

    \item \label{bruhat3} For all $1 \le i, j \le n$, we have
$|u([1,i]) \cap [1,j]| \ge |v([1,i]) \cap [1,j]|$.
\end{enumerate}
Using equivalent definition 2, it follows that the Bruhat order is graded by the length function: if $u \le v$, then $\ell(u) \le \ell(v)$. The minimal element of the Bruhat order is the identity permutation, and the maximal element is the longest word $w_0$ in $\fkS_n,$ which has length $\ell(w_0) = \frac{n(n-1)}{2}.$
\end{definition}

We remark that a fourth alternate definition for Bruhat order is given as Theorem 2.1.5 in \cite{bjorner2006combinatorics}. 
\par For example, note that $1423 < 2431,$ since we have the sequence $1423, 2413, 2431,$ with each adjacent pair satisfying the condition stated in definition \ref{bruhat2} above. 
\par In a few places, we wish to compare two permutations that differ on a prescribed set of indices. The following lemma makes this possible.

\begin{lemma}\label{lem:restriction}
Let $v, w \in \fkS_n$. If $I \supset \{ i \mid v(i) \neq w(i) \}$ and $w|_I \le v|_I$, then $w \le v$.
\end{lemma}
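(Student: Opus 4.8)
The plan is to use characterization \ref{bruhat3} of Bruhat order in Definition \ref{defn: Bruhat Order}: it suffices to show that for all $1 \le i, j \le n$, we have $|w([1,i]) \cap [1,j]| \ge |v([1,i]) \cap [1,j]|$. The key observation is that outside the index set $I$ the permutations $v$ and $w$ agree, so all the ``action'' happens on $I$, and we should be able to reduce the inequality for $v,w$ on $[n]$ to the corresponding inequality for $v|_I, w|_I$ on $[|I|]$, which holds by the hypothesis $w|_I \le v|_I$ (again via characterization \ref{bruhat3}).

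First I would fix $i,j$ and split the count $|w([1,i]) \cap [1,j]|$ into the contribution from indices in $[1,i] \cap I$ and the contribution from indices in $[1,i] \setminus I$. For the indices in $[1,i]\setminus I$, the values $w(k) = v(k)$ agree, so these contribute equally to $|w([1,i])\cap[1,j]|$ and $|v([1,i])\cap[1,j]|$ and cancel out of the desired inequality. Thus it remains to compare $|w([1,i]\cap I) \cap [1,j]|$ with $|v([1,i]\cap I)\cap[1,j]|$. Now I would translate this into the flattened picture: write $i' = |[1,i]\cap I|$, so that $[1,i]\cap I$ consists of the $i'$ smallest elements of $I$; similarly, since $v(I) = w(I)$ as sets (both permutations map $I$ to the same set, because they agree off $I$ and are bijections), let $j' = |v(I)\cap[1,j]| = |w(I)\cap[1,j]|$ be the number of elements of the common set $v(I) = w(I)$ that are $\le j$. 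By the definition of the restricted permutation, $w|_I$ maps the $i'$ smallest elements of $I$ to some elements among the $|I|$ elements of $w(I)$, and $|w([1,i]\cap I)\cap[1,j]|$ equals the number of these images lying among the $j'$ smallest elements of $w(I)$, i.e. $|w|_I([1,i']) \cap [1,j']|$. The same holds for $v$.

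Putting this together, the desired inequality becomes exactly $|w|_I([1,i']) \cap [1,j']| \ge |v|_I([1,i']) \cap [1,j']|$, which is an instance of characterization \ref{bruhat3} applied to $w|_I \le v|_I$ (valid since $1 \le i', j' \le |I|$). Hence $w \le v$.

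The main obstacle I anticipate is purely bookkeeping: carefully justifying that $v(I) = w(I)$ as sets (so that $j'$ is well-defined independent of which permutation we use) and that the restricted-permutation ``$k$-th smallest element maps to $k$-th smallest element of the image'' dictionary is applied correctly to convert the $[1,j]$ threshold on the codomain into the $[1,j']$ threshold on $w(I)$. There is nothing deep here, but one must be precise that the elements of $[1,i]\setminus I$ really do cancel (this uses only $v(k)=w(k)$ for $k\notin I$, which is guaranteed since $I \supseteq \{k : v(k)\ne w(k)\}$) and that the counting of ``how many of the first $i'$ elements of $I$ land in the first $j'$ elements of $w(I)$'' is genuinely the flattened count.
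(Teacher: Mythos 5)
Your proposal is correct and follows essentially the same route as the paper's proof: both use the counting characterization (definition \ref{bruhat3}) of Bruhat order, split the indices of $[1,i]$ into those inside and outside $I$, note the latter contribute equally since $v$ and $w$ agree there, and reduce the remaining comparison to the corresponding inequality for $w|_I \le v|_I$. The only difference is that you spell out the flattening dictionary (the passage from $[1,j]$ to the threshold $j'$ inside $w(I)=v(I)$) explicitly, which the paper leaves implicit.
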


We note that one can obtain this lemma by repeatedly using \cite[Lemma 17]{billey2003maximal}. For the sake of completeness, we provide a proof at the end of this section.

The following corollary is an immediate consequence of Lemma \ref{lem:restriction}.
\begin{cor}\label{cor:bruhat_inv}
If $(i, j)$ is an inversion of $w$, then $w \cdot (i,j) < w$.
\end{cor}

\begin{remark}
We are tempted to conclude that $w\tau$ has exactly one less inversion than $w$ (i.e. that $w$ covers $w\tau$), but this is not true: if $w$ has one-line notation $321,$ then $(1, 3)$ an inversion of $w,$ but $w \cdot (1, 3)$ is the identity, which is not covered by $w.$ However, if $w$ is $321$-avoiding, we can easily show that $w$ must in fact cover $w\tau$.
\end{remark}

\subsection{The Temperley-Lieb Algebra and Non-Crossing Matchings}
In this subsection, we define the Temperley-Lieb algebra and discuss a few basic properties of the algebra, which we will then use to define the coefficients of Temperley-Lieb immanants. 


\begin{definition}(See \cite[\S 2]{LPP} and \cite[\S 3]{RS})
The \textbf{Temperley-Lieb algebra}, which we denote as $TL_n(2),$ is the $\mathbb{C}-$algebra generated by elements $t_1, t_2, \ldots, t_{n-1}$ with the relations $t_i^2 = 2t_i$ for $i \in [n],$ $t_it_j = t_jt_i$ for $i, j \in [n]$ where $|i - j| \geq 2,$ and $t_it_jt_i = t_i$ for $i, j \in [n]$ where $|i - j| = 1.$
\end{definition}
Notice that the Temperley-Lieb algebra in general depends on a parameter, but throughout this paper we specialize to the case when the parameter equals $2.$ The following result is standard. See \cite[Section 3]{RS}, for instance.

\begin{prop}\label{prop:tl}
The $\C$-linear map $\theta: \C(\mathfrak{S}_n) \to TL_n(2)$ defined by $\theta(s_i) = t_i-1$ is a $\C$-algebra homomorphism. The $\C$-linear map $\beta:$ $\C$-span of 321-avoiding permutations in $\fkS_n$ $\to TL_n(2)$ defined by $\beta(s_{i_1} s_{i_2} \cdots s_{i_k}) = t_{i_1} t_{i_2} \cdots t_{i_k}$ for any reduced word $s_{i_1} s_{i_2} \cdots s_{i_k}$ of a $321$-avoiding permutation in $\fkS_n$ is a well-defined vector space isomorphism.
\end{prop}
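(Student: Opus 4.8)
The plan is to handle the two assertions separately. The first is a presentation argument: since $\C(\fkS_n)$ is the quotient of the free associative $\C$-algebra on $s_1,\dots,s_{n-1}$ by the Coxeter relations $s_i^2=1$, $s_is_j=s_js_i$ for $|i-j|\ge 2$, and $s_is_{i+1}s_i=s_{i+1}s_is_{i+1}$, it suffices to check that the elements $t_i-1$ of $TL_n(2)$ satisfy these same relations, whereupon the assignment $s_i\mapsto t_i-1$ extends uniquely to an algebra homomorphism $\theta$. I would verify these by direct expansion: $(t_i-1)^2=t_i^2-2t_i+1=1$ using $t_i^2=2t_i$; $(t_i-1)(t_j-1)=(t_j-1)(t_i-1)$ for $|i-j|\ge 2$ using $t_it_j=t_jt_i$; and $(t_i-1)(t_{i+1}-1)(t_i-1)=t_i-t_it_{i+1}-t_{i+1}t_i+t_{i+1}-1$ after applying $t_i^2=2t_i$ and $t_it_{i+1}t_i=t_i$, an expression visibly symmetric under $i\leftrightarrow i+1$, which gives the braid relation. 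This is a short computation I would carry out in full.

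For the second assertion I would first establish that $\beta$ is well defined. A $321$-avoiding permutation of $\fkS_n$ is precisely a fully commutative element: any two of its reduced words differ by a sequence of commutation moves $s_is_j\leftrightarrow s_js_i$ with $|i-j|\ge 2$ (equivalently, no reduced word contains a factor $s_is_{i+1}s_i$), a classical fact about $321$-avoiding permutations. Since $TL_n(2)$ also satisfies $t_it_j=t_jt_i$ for $|i-j|\ge 2$, the product $t_{i_1}\cdots t_{i_k}$ is unchanged under every such move, so it depends only on $w$ and not on the chosen reduced word $s_{i_1}\cdots s_{i_k}$; extending $\C$-linearly over the $321$-avoiding permutations gives a well-defined map $\beta$.

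Next I would show surjectivity. Any monomial $t_{i_1}\cdots t_{i_k}$ in the generators of $TL_n(2)$ can be rewritten, using $t_i^2=2t_i$ and $t_it_{i\pm 1}t_i=t_i$ together with the commutations, as a scalar times a monomial $t_{j_1}\cdots t_{j_m}$ to which no such reduction applies; since each application of the first two relations strictly shortens the word, this process terminates, and a word admitting no such reduction is exactly a reduced word of a fully commutative (hence $321$-avoiding) permutation. Thus $\{\beta(w): w\text{ is }321\text{-avoiding}\}$ spans $TL_n(2)$. Finally, I would invoke the standard fact that $\dim_{\C}TL_n(2)$ equals the Catalan number $C_n$, via the planar-diagram basis indexed by non-crossing perfect matchings of $2n$ points, which is also the number of $321$-avoiding permutations in $\fkS_n$. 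A spanning set of cardinality equal to the dimension is a basis, so $\beta$ is also injective, hence a vector space isomorphism.

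The main obstacle is this last step: the dimension count, equivalently the linear independence of the $\beta(w)$. I would be inclined to cite it from a standard reference such as \cite{RS}, but if a self-contained argument is wanted one attaches to each reduced word (equivalently, to each $321$-avoiding $w$) its non-crossing matching, checks that $\beta(w)$ is the corresponding diagram-basis element up to a power of $2$, and uses that distinct non-crossing matchings are linearly independent in the diagram algebra; everything else above is elementary.
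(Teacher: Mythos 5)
Your argument is correct in substance, but note that the paper does not actually prove this proposition: it is stated as a standard fact with a pointer to Section~3 of \cite{RS}, so there is no internal proof to compare against. What you have written is essentially the standard argument that such references use: check the Coxeter relations on the elements $t_i-1$ (your expansions, including the braid-relation computation yielding $t_i-t_it_{i+1}-t_{i+1}t_i+t_{i+1}-1$, are right), use the fact that $321$-avoiding permutations are exactly the fully commutative elements, whose reduced words form a single commutation class, to get well-definedness of $\beta$, and then combine a spanning argument with the Catalan-number dimension of $TL_n(2)$ to get bijectivity. The only place where your sketch leans on more than you state is the surjectivity step: to conclude that a monomial admitting no further reduction is a reduced word of a fully commutative element, you need Stembridge's characterization in the stronger form that a word is reducible (after commutations) whenever it is either non-reduced or reduced for an element containing a $321$ pattern; this is again classical, but it is the same fully-commutativity fact you already cite rather than a consequence of termination alone. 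Given that the proposition is quoted as standard, citing \cite{RS} for the linear independence (or the diagram-basis comparison you sketch) is entirely appropriate, and your proposal would serve as a self-contained proof if one were wanted.
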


We use the elements of our Temperley-Lieb algebra in order to construct the first of our objects of interest, the Temperley-Lieb immanant.

\begin{definition}\label{syldef}
Given a permutation $u \in \mathfrak{S}_n,$ let $x_u$ be the monomial $\prod\limits_{i=1}^n x_{i, v(i)},$ living in the set of polynomials on $n^2$ variables $x_{i, j}$ for $1 \leq i, j \leq n.$ If we are furthermore given a 321-avoiding permutation $w \in \fkS_n,$ let $f_w (u)$ be the coefficient of $\beta(w)$ in $\theta(u)$. 
\par From here, for each 321-avoiding permutation $w \in \fkS_n,$ define the \textbf{Temperley-Lieb immanant} of $w$ by Imm$_{w} = \sum_{u \in \mathfrak{S}_n} f_w(u)x_u.$ This is the 
\end{definition}

A lot of the coefficients $f_w (u)$ can be easily computed (and, in particular, a lot are zero), which can be characterized by the following lemma.

\begin{lemma}[Proposition 3.7 in \cite{RS}]\label{lem:basic fwu}
Let $w, u \in \fkS_n$, with $w$ being $321$-avoiding.
\begin{enumerate}
    \item If $u \not\ge w$, then $f_w (u) = 0$.
    
    \item $f_w (w) = 1$.
\end{enumerate}
\end{lemma}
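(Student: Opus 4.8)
The plan is to deduce both parts from the single stronger claim $(\star)$: \emph{for every $u \in \fkS_n$, the element $\theta(u)$ lies in the $\C$-span of $\{\beta(v) : v \text{ is } 321\text{-avoiding and } v \le u\}$, and if $u$ is itself $321$-avoiding then the coefficient of $\beta(u)$ in $\theta(u)$ is $1$.} Since $f_w(u)$ is by definition the coefficient of $\beta(w)$ in $\theta(u)$, the first half of $(\star)$ gives $f_w(u) = 0$ whenever $w \not\le u$ (part 1), and the second half with $u = w$ gives $f_w(w) = 1$ (part 2).

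I would prove $(\star)$ by induction on $\ell(u)$, the case $u = e$ being immediate since $\theta(e) = 1 = \beta(e)$. For the inductive step, write $u = u's_i$ with $\ell(u') = \ell(u) - 1$, taking $s_i$ to be the last letter of a reduced word of $u$; if $u$ is $321$-avoiding then so is $u'$ (otherwise $u'$ has a reduced word containing a consecutive factor $s_js_{j\pm1}s_j$, and appending $s_i$ produces such a reduced word for $u$). Because $\theta$ is an algebra homomorphism (Proposition \ref{prop:tl}), $\theta(u) = \theta(u')t_i - \theta(u')$, and by induction $\theta(u') = \sum_v c_v\beta(v)$ over $321$-avoiding $v \le u'$, with $c_{u'} = 1$ when $u'$ is $321$-avoiding. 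I then invoke the standard rules for right multiplication by $t_i$ on the $\beta$-basis, obtained by writing $\beta(v)$ as the product of $t$-generators along a reduced word of $v$ (legitimate by Proposition \ref{prop:tl}) and using $t_i^2 = 2t_i$ and $t_it_{i\pm1}t_i = t_i$: if $vs_i < v$ then $\beta(v)t_i = 2\beta(v)$; if $vs_i > v$ and $vs_i$ is $321$-avoiding then $\beta(v)t_i = \beta(vs_i)$; and if $vs_i > v$ and $vs_i$ is not $321$-avoiding then $\beta(v)t_i = 2^c\beta(v')$ for some $321$-avoiding $v' \le vs_i$.

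Granting these rules, clause (i) of $(\star)$ follows: $-\theta(u')$ is supported on $\{z \le u' \le u\}$, and each $\beta(v)t_i$ with $v \le u'$ is supported on $321$-avoiding permutations $\le u$ — on $v \le u' \le u$ in the first case, and on $vs_i$ (respectively $v' \le vs_i$) in the other two, where $vs_i \le u$ by the lifting property of the Bruhat order (\cite{bjorner2006combinatorics}, Prop.~2.2.7: $v \le u' = us_i$ together with $us_i < u$ force $vs_i \le u$). For clause (ii), assume $u$ is $321$-avoiding, so $u'$ is too and $c_{u'} = 1$; the coefficient of $\beta(u)$ in $\theta(u')$ is $0$ (its support misses $u$), and in $\theta(u')t_i = \sum_v c_v\beta(v)t_i$ a term $\beta(u)$ can arise only from the second rule with $v = u'$ (then $vs_i = u > v$ and $u$ is $321$-avoiding), contributing $c_{u'}\beta(u) = \beta(u)$; the first rule yields only $\beta(v)$ with $v \le u' < u$, and the third yields $\beta(v')$ with $v' \le vs_i$ of length at most $\ell(v)+1 \le \ell(u)$, so $v' = u$ would force $u = vs_i$, impossible as $u$ is $321$-avoiding while $vs_i$ is not. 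Hence the coefficient of $\beta(u)$ in $\theta(u)$ is $1$, closing the induction.

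The one nontrivial input — and the main obstacle — is the third multiplication rule, where $vs_i$ fails to be $321$-avoiding; this is where the structure of $TL_n(2)$ genuinely enters. One uses that a non-$321$-avoiding permutation admits a reduced word with a consecutive braid $s_js_{j\pm1}s_j$, so the reduced word $(\text{reduced word of }v)\cdot s_i$ of $vs_i$ can be shortened in $TL_n(2)$ via $t_jt_{j\pm1}t_j = t_j$; each such shortening replaces a reduced word by a subword of a commutation-equivalent word, hence descends in Bruhat order, so iterating until a $321$-avoiding element $v'$ is reached gives $v' \le vs_i$. This is routine Temperley–Lieb bookkeeping and could alternatively be cited from \cite{RS}; one may also bypass the induction entirely by expanding $\theta(u) = \sum_{S\subseteq[\ell(u)]}(-1)^{\ell(u)-|S|}\prod_{j\in S}t_{i_j}$ along a reduced word $s_{i_1}\cdots s_{i_{\ell(u)}}$ of $u$ and using the analogous fact that each $\prod_{j\in S}t_{i_j}$ reduces to $2^c\beta(v)$ with $v \le u$ — the $S = [\ell(u)]$ term giving $\beta(u)$ and every other term having $\ell(v) \le |S| < \ell(u)$, which forces $v \neq u$ and yields $f_u(u) = 1$.
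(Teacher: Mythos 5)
Your argument is correct, but it is organized quite differently from the paper's. The paper proves both parts in one stroke: expand $\theta(u) = (t_{j_1}-1)(t_{j_2}-1)\cdots(t_{j_m}-1)$ along a single reduced word of $u$; each monomial, after simplification by the Temperley--Lieb relations, is a power of $2$ times $\beta(v)$ for a $321$-avoiding $v$ whose reduced word is a subword of $s_{j_1}\cdots s_{j_m}$, so a nonzero coefficient of $\beta(w)$ forces $w \le u$ by the subword criterion (equivalent definition \ref{bruhat1} of Bruhat order), and when $u = w$ the only monomial long enough to produce $\beta(w)$ is the full product, giving $f_w(w)=1$; this is exactly the ``bypass'' you sketch in your final sentence. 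Your main route instead establishes the stronger triangularity statement $(\star)$ by induction on $\ell(u)$, using the three right-multiplication rules for $t_i$ on the $\beta$-basis and the lifting property of Bruhat order to keep the support below $u$, with a clean case check showing that only $v = u'$ can contribute $\beta(u)$. This is a legitimate and standard alternative: it is longer and needs the lifting property plus the fact that $u' = us_i$ inherits $321$-avoidance, but it buys the full unitriangularity of $\theta$ with respect to Bruhat order rather than just the two coefficients needed here. Both routes ultimately rest on the same nontrivial input, which you correctly isolate as the main obstacle: any $t$-word collapses in $TL_n(2)$ to $2^c\beta(v')$ with $v'$ obtained as a subword of (a commutation-equivalent form of) the original word, hence Bruhat-below it. The paper glosses this at the same level of detail (``the resulting monomial is a subword of the original monomial''), so your treatment is not less rigorous; if you do spell it out, keep the point you already hint at, namely that one must first move by commutation relations (which the $t_i$ respect) to a word containing a consecutive factor $s_js_{j\pm1}s_j$ before applying $t_jt_{j\pm1}t_j = t_j$, since the braid relation itself does not hold among the $t_i$; such a word exists in every commutation class of a non-fully-commutative element by connectivity of the reduced-word graph.
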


We can relate the coefficients $f_w(u)$ to colorings of a non-crossing matching, which we define below.

\begin{definition}\label{def:ncm}(See \cite[\S 2]{LPP})
A \textbf{non-crossing matching} of the integers in $[2n]$ is a perfect matching of these integers so that there doesn't exist $1 \le a < b < c < d \le 2n$ such that $a$ is paired with $c$ and $b$ is paired with $d$ in the matching.
\par For convenience, we sometimes denote the indices $n+1, n+2, \ldots, 2n$ in a non-crossing matching by $n', (n-1)', \ldots, 1',$ respectively. Also, let $[n]' = \{1', 2', \ldots, n'\}$ and $[a, b]' = \{a', (a+1)', \ldots, b'\}.$
\end{definition}

\begin{remark}
(1) The non-crossing aspect of the condition comes from the following interpretation: if we arrange the integers $1$ through $2n$ in that order around a circle, and draw a chord between paired vertices, then no two chords may intersect.

(2) If $v_1$ and $v_2$ are paired in a non-crossing matching, then there are an even number of vertices between $v_1$ and $v_2$. Thus, the difference of labels $v_2 - v_1$ must be odd. This parity consideration will be used extensively in this paper. \textbf{Warning:} in primed/unprimed notation, where $i' = 2n+1-i$, the difference of labels (when given labels in $[2n]$) between a primed vertex $j'$ and an unprimed vertex $i$ is $2n+1-j-i$, so $j-i$ must be even.
\end{remark}


There is a natural bijection between $321$-avoiding permutations of $[n]$ and non-crossing matchings of $2n$ vertices (see \cite[Section 3]{RS}). We summarize the bijection here:

\begin{prop}\label{prop:ncm}
There is a bijection, which we denote as $\ncm,$ between $321$-avoiding permutations of $[n]$ and non-crossing matchings of $2n$ vertices. Explicitly, for any $321$-avoiding permutation $w$, we construct a wiring diagram $D(w)$ corresponding to a reduced word for $w$, as in \cite{RS}. Fix a coordinate system such that each vertex $i$ is located at $(-1, n-i)$ and vertex $i'$ is located at $(1, n-i)$.

\begin{enumerate}[(a)]
\item For every $i$, the unique shortest path $p_i$ from $i$ to $w(i)'$ along the edges of $D(w)$ has either non-increasing, non-decreasing, or constant $y$-coordinate as a function of $x$, depending on whether $w(i) > i$, $w(i) < i$, or $w(i) = i$, respectively. Furthermore, two paths $p_i$ and $p_j$ intersect in at most one point. Let $S$ be the set of intersection points of some two $p_i, p_j$, $1 \le i < j \le n$.

    \item For each vertex $v$ (one of $i$ or $i'$ for some $i$) of the wiring diagram, there is a unique path $q_v$ along the edges of $D(w)$ starting at $v$ and terminating at another vertex (which we call $f(v)$), with the following properties:
\begin{itemize}
    \item the $y$-coordinate of $q_v$ is either non-increasing or non-decreasing as a function of path coordinate;
    
    \item for each $1 \le i \le n$, no positive length subsegment of the path $p_i$ intersects the path $q_v$ at exactly a single point. In other words, the path $q_v$ ``turns'' when it reaches a point of $S$.
\end{itemize}
Then the map $f$ induces a non-crossing matching of $2n$ vertices, which we call $\ncm(w)$.
\end{enumerate}
\end{prop}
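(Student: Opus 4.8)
The plan is to read Proposition~\ref{prop:ncm} as the verification that the classical wiring-diagram recipe for a non-crossing matching is well defined on $321$-avoiding permutations, and to prove its three ingredients in order: the at-most-one-intersection property and monotonicity of the wires (part (a)), and then that the turning rule of part (b) produces a fixed-point-free non-crossing involution on the $2n$ boundary vertices.

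For part (a), I would first note that two wires cross at most once because the word is reduced: the number of crossings in $D(w)$ equals the number of inversions of $w$, which forces each inverted pair of wires to cross exactly once and each non-inverted pair not at all. This gives $|p_i\cap p_j|\le 1$, and I take $S$ to be the finite set of crossing points. For monotonicity I would argue the contrapositive. Suppose a wire $p_i$ has a strict local maximum at height $h$; let $p_a$ be the wire crossed at the up-step immediately before the maximum and $p_b$ the wire crossed at the down-step immediately after. These are distinct, since otherwise $p_i$ would meet one wire twice, and since each of $p_a,p_b$ meets $p_i$ exactly once, $p_a$ lies above $p_i$ everywhere to the left of their crossing and below it everywhere to the right, while $p_b$ lies below $p_i$ to the left of their crossing and above it to the right. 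Comparing the three wires at $x=-1$, where $p_a$ is above $p_i$ above $p_b$, and at $x=1$, where that order is reversed, shows $p_a$ and $p_b$ also swap, hence cross; so $p_i,p_a,p_b$ cross pairwise. Since wires with starting positions $i<j<k$ cross pairwise exactly when $w(i)>w(j)>w(k)$, this exhibits a $321$-pattern in $w$, a contradiction; the local-minimum case is symmetric. Hence every wire is monotone, with the direction of $p_i$ dictated by the sign of $w(i)-i$, and when $w(i)=i$ the wire $p_i$ is horizontal and crossing-free, since any crossing on it would change its height.

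For part (b) I would construct $q_v$ greedily: leave $v$ along its unique incident edge, and at each crossing $s\in S$ reached, turn onto the other wire through $s$. The local claim is that this turn is forced: at $s$ the two crossing wires rise and fall oppositely, so among the three edges at $s$ other than the one just traversed, exactly one both lies on the other wire and keeps the path's running height monotone. Part (a) enters when one concludes that with this forced turn the running height is monotone over the whole path --- each wire-segment between crossings is followed in the monotone direction --- and in fact changes by exactly $1$ at each crossing passed, so $q_v$ is globally monotone in height, never revisits a crossing, meets at most $n-1$ of them, and therefore terminates; since the only degree-one vertices of $D(w)$ are the $2n$ boundary vertices, it terminates at a boundary vertex $f(v)\ne v$, and since it turns at every point of $S$, no positive-length subsegment of any $p_i$ meets $q_v$ in a single point.

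Finally, to see that $f$ is a non-crossing matching: traversing $q_v$ in the reverse direction and applying the same local rule recovers $q_{f(v)}$, so $f\circ f=\Id$, and being fixed-point-free $f$ is a perfect matching of the $2n$ boundary vertices. Each edge of $D(w)$ lies on exactly one path $q_v$, and at every crossing the turning rule resolves the crossing in the unique monotone way, so the two strands through $s$ ``bounce'' past one another rather than cross; hence the $q_v$ are pairwise non-crossing simple arcs in the rectangle $[-1,1]\times[0,n-1]$ with all endpoints on its boundary, and a family of pairwise non-crossing boundary-to-boundary arcs in a disk is exactly a non-crossing matching in the circular sense of Definition~\ref{def:ncm}; this matching is $\ncm(w)$. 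The step I expect to be the main obstacle is pinning down the forced turn in part (b) --- this is where the precise geometry of $D(w)$ as drawn in \cite{RS} must be handled with care and where part (a) genuinely enters --- after which termination, the involution property, and non-crossingness are routine planar bookkeeping.
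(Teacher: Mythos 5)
Your construction is essentially the Rhoades--Skandera recipe, and your treatment of the two technical points is sound in outline: the reduced-word/inversion count gives $|p_i\cap p_j|\le 1$, the local-extremum argument correctly converts a non-monotone wire into three pairwise-crossing wires and hence a $321$-pattern, and the greedy ``turn monotonically at each crossing'' rule does produce the correct elbow resolution (one can check it reproduces, e.g., $\ncm(2341)=\{1\,3'\},\{2\,4'\},\{3\,4\},\{1'\,2'\}$). Note for comparison that the paper does not prove this proposition at all: it is imported from \cite[\S 3]{RS} and only summarized, so your write-up is a reconstruction of the cited argument rather than an alternative to anything in the paper.

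However, as a proof of the statement as written there are genuine gaps. First, the headline claim is that $\ncm$ is a \emph{bijection} between $321$-avoiding permutations and non-crossing matchings of $[2n]$, and you never address injectivity or surjectivity (e.g.\ by constructing the inverse map from a matching back to a permutation, or by injectivity plus the fact that both sets are counted by the Catalan number $C_n$). Second, well-definedness is not established: $D(w)$ depends on a choice of reduced word, and you need the resulting matching to be independent of that choice; here the natural input is that $321$-avoiding permutations are fully commutative, so any two reduced words differ by commutation moves, which visibly do not change the resolved diagram --- this deserves a sentence, since otherwise ``$\ncm(w)$'' is not yet a function of $w$. Third, your uniqueness claim for $q_v$ (``exactly one choice keeps the running height monotone'') silently assumes the wire is strictly monotone before the first crossing; in the common drawing convention where wires are horizontal between crossings, a flat initial stretch makes both turns ``monotone so far,'' and then both continuations satisfy the turning condition, so uniqueness as characterized in part (b) would fail. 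You flag the drawing convention as the delicate point, but the fix (e.g.\ insisting on the strictly monotone drawing of part (a), or characterizing the turn by the elbow resolution rather than by running monotonicity) needs to be made explicit. The remaining steps (termination, $f\circ f=\Id$, non-crossingness via complementary elbows) are fine, though ``each edge lies on exactly one $q_v$'' is a mild overclaim --- a priori the resolution could also produce closed loops, which is harmless for your conclusion but should not be asserted without proof.
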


\begin{remark}
We abuse notation slightly and let the pairs of a $321$-avoiding permutation $w$ be the pairs that occur in the corresponding non-crossing matching $\ncm(w)$, per the bijection established by Proposition \ref{prop:ncm}.
\end{remark}

For instance, the left figure below is the wiring diagram for the permutation $2341 = s_1 s_2 s_3$ following the convention of \cite{RS}. By following the procedure in Proposition \ref{prop:ncm}, we obtain the non-crossing matching corresponding to $2341$ in the right figure below.

\begin{center}
\includegraphics[]{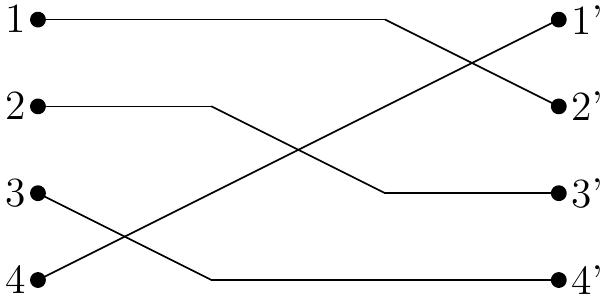}
\hspace{2cm}
\includegraphics[]{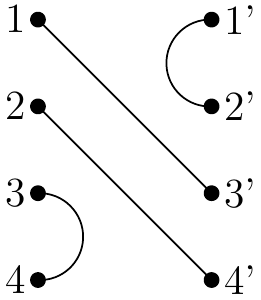}
\end{center}

\subsection{Complementary Minors and Colorings}
Temperley-Lieb immanants are closely related to complementary minors, which in turn can be represented by a coloring. We first describe colorings and how they are associated with complementary minors. 
\par For the purposes of this paper, a coloring of the vertices in $[n] \cup [n]' = \{1, 2, \ldots, n, 1', 2', \ldots, n'\}$ will always be a coloring of these vertices using the colors black and white.
\begin{definition}
A coloring of vertices $[n] \cup [n]'$ is \textbf{compatible} with a non-crossing matching if every black vertex is paired with a white vertex.
\par We also say that a coloring of vertices $[n] \cup [n]'$ is compatible with a permutation $w$ if the coloring is compatible with $\ncm(w).$
\end{definition}

\par In the below matching, for instance, the following is a compatible coloring.

\begin{center}
\includegraphics[]{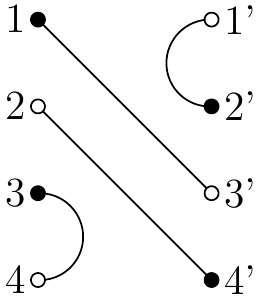}
\end{center}

\par We can represent any coloring by $(I, J)$, where $I \subset [n]$ is the set of unprimed black vertices, and $J' \subset [n]'$ is the set of primed white vertices, and thus say that a permutation $w$ is compatible with this pair $(I, J)$ of subsets if it is compatible with the corresponding coloring.
\par A coloring compatible with some non-crossing matching must have $n$ black and $n$ white vertices, which means $|I| = |J|$. As a result, we can associate the coloring $(I, J)$ to the complementary minor $\Delta_{I,J} \Delta_{\bar{I}, \bar{J}}$, defined below.

\begin{definition}
Consider the $n\times n$ matrix $(x_{i,j}),$ the matrix whose $(i, j)$ entry is the variable $x_{i, j}.$ For $I, J \subset [n]$ with $|I| = |J|$, we let $\Delta_{I,J}$ denote the determinant of the minor of this matrix with rows indexed by $I$ and columns indexed by $J$. Then the \textbf{complementary minor} of $I, J$ is $\Delta_{I,J} \Delta_{\bar{I}, \bar{J}}$.
\end{definition}
\par A key result we will use in this paper is a way to express a complementary minor as the sum of the Temperley-Lieb immanants compatible with the associated coloring.
\begin{prop}\cite[Proposition 4.3]{RS}\label{prop:cm equals imm sum}
Let $I,J \subseteq [n]$ with $|I| = |J|$, and consider the coloring $(I, J)$ (i.e. color $I, \bar{J}'$ black and $J', \bar{I}$ white). Then
\begin{equation}\label{cm equals imm sum}
    \Delta_{I,J}\Delta_{\bar{I}, \bar{J}} = \sum_{\substack{w \mathrm{\ compatible}\\ \mathrm{with\ } (I, J)}} \imm_w
\end{equation}
\end{prop}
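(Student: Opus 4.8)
The plan is to prove Proposition \ref{prop:cm equals imm sum} by expanding both sides in the monomial basis $\{x_u : u \in \fkS_n\}$ and matching coefficients. First I would expand the left-hand side: by the Leibniz formula, $\Delta_{I,J} = \sum_{\pi} \sgn(\pi) \prod_{i \in I} x_{i, \pi(i)}$ where $\pi$ ranges over bijections $I \to J$, and similarly $\Delta_{\bar I, \bar J} = \sum_{\bar\pi} \sgn(\bar\pi) \prod_{i \in \bar I} x_{i, \bar\pi(i)}$. A bijection $\pi : I \to J$ together with a bijection $\bar\pi : \bar I \to \bar J$ assembles into a single permutation $u \in \fkS_n$ with $u(I) = J$ and $u(\bar I) = \bar J$; conversely every such $u$ arises this way. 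The key sign computation is that $\sgn(\pi)\sgn(\bar\pi)$, interpreted appropriately as signs of the induced permutations on ordered sets, equals $\sgn(u) \cdot \epsilon(I,J)$ for some global sign $\epsilon(I,J)$ depending only on $(I,J)$ — this comes from the standard fact about the sign of a permutation that maps a set to a set, and the reordering needed to interleave the two sub-bijections. So the left side equals $\epsilon(I,J) \sum_{u : u(I) = J} \sgn(u)\, x_u$, up to getting this sign bookkeeping exactly right.

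Next I would expand the right-hand side. By Definition \ref{syldef}, $\sum_{w \text{ compatible with }(I,J)} \imm_w = \sum_u \left( \sum_{w \text{ compatible with }(I,J)} f_w(u) \right) x_u$, so I need to show that $\sum_{w \text{ compatible with }(I,J)} f_w(u)$ equals $\epsilon(I,J)\sgn(u)$ when $u(I) = J$ and equals $0$ otherwise. The natural tool here is Proposition \ref{prop:tl}: applying $\theta$ to $u = s_{j_1}\cdots s_{j_m}$ and expanding in the Temperley-Lieb basis $\{\beta(w) : w \text{ 321-avoiding}\}$, we have $\theta(u) = \sum_w f_w(u) \beta(w)$. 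I would then apply a $\C$-linear functional $\phi_{(I,J)} : TL_n(2) \to \C$ that reads off the total coefficient corresponding to the coloring $(I,J)$ — concretely, via the diagram/matching description of the Temperley-Lieb algebra, $\phi_{(I,J)}(\beta(w))$ should be $1$ if $w$ is compatible with $(I,J)$ and $0$ otherwise, and then one checks $\phi_{(I,J)}$ is multiplicative enough (or directly that $\phi_{(I,J)}(\theta(s_i)) = \phi_{(I,J)}(t_i - 1)$ behaves correctly) so that $\phi_{(I,J)}(\theta(u))$ can be evaluated inductively on a reduced word and matches $\pm\sgn(u)$ exactly when $u(I) = J$.

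Actually, the cleaner route — and the one I would ultimately take — is to avoid re-deriving the Temperley-Lieb combinatorics and instead cite that Proposition \ref{prop:cm equals imm sum} is exactly \cite[Proposition 4.3]{RS}, then give the short conceptual reason: both sides are elements of $\C[x_{i,j}]$ that are known to lie in the span of the $\imm_w$ (the right side manifestly, the left side by the defining property that complementary minors expand in Temperley-Lieb immanants), and the coefficient extraction via $f_w$ together with the sign analysis above pins down the coefficients. For a self-contained proof I would organize it as: (1) reduce to checking equality of the coefficient of each $x_u$; (2) handle $u$ with $u(I) \neq J$ (both sides vanish — the left trivially, the right because $f_w(u) \neq 0$ forces $w \le u$, and then a compatibility-of-coloring argument forces $u(I)=J$); (3) handle $u$ with $u(I) = J$ by the sign computation. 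The main obstacle I anticipate is step (2) in the non-trivial direction — showing $\sum_{w \text{ compatible with } (I,J)} f_w(u) = 0$ when $u(I) = J$ fails, i.e., understanding which non-crossing matchings $\ncm(w)$ are compatible with a given coloring and why their $f_w(u)$-weighted sum telescopes to zero — which is precisely the content that makes \cite{RS}'s proof nontrivial; getting the global sign $\epsilon(I,J)$ consistent between the two sides is the secondary bookkeeping hazard.
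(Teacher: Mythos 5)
The paper never proves this proposition: it is imported wholesale as \cite[Proposition 4.3]{RS}, so your ultimate fallback of simply citing Rhoades--Skandera coincides with what the paper does and is fine. The problem is the self-contained sketch you wrap around that citation, which contains a concrete error and should not stand as a proof. Your reduction to comparing coefficients of $x_u$ and your treatment of the left-hand side are correct (the global sign you call $\epsilon(I,J)$ is $(-1)^{s(I)+s(J)}$, exactly as in Lemma \ref{cmminors}), but your step (2) mechanism fails: for $u(I)\neq J$ you claim the right-hand side vanishes because $f_w(u)\neq 0$ forces $w\le u$ and then compatibility forces $u(I)=J$. That is false term by term. Take $n=2$, $I=J=\{1\}$, $u=21$: both $w=12$ and $w=21$ are compatible with the coloring $(I,J)$, and $f_{12}(21)=-1$, $f_{21}(21)=+1$ are both nonzero even though $u(I)=\{2\}\neq J$. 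The right-hand side vanishes only through cancellation across the compatible $w$, not because each summand does, and producing that cancellation (equivalently, evaluating your functional $\phi_{(I,J)}$ on $\theta(u)$, in both the $u(I)=J$ and $u(I)\neq J$ cases) is precisely the nontrivial content of the Rhoades--Skandera argument; ``multiplicative enough'' is doing all the work and is never checked. Similarly, asserting the left-hand side is ``known to lie in the span of the $\imm_w$'' is circular, since that is what the proposition asserts.

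So either cite \cite[Proposition 4.3]{RS} outright, as the paper does, or commit to proving the coefficient identity $\sum_{w\ \mathrm{compatible\ with\ }(I,J)} f_w(u) = (-1)^{s(I)+s(J)}\,\sgn(u)\,\one_{u(I)=J}$ in full, e.g.\ by the induction on a reduced word of $u$ against the diagram basis of $TL_n(2)$ that you gesture at; the half-finished middle ground in the sketch would not survive as written.
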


\begin{remark}
Using the proof of Proposition 4.7 in \cite{RS}, if we treat the equations in \eqref{cm equals imm sum} for all $I, J \subset [n]$ as an (over-determined) linear system in the variables $\imm_w$, we see that the system has a unique solution. In fact, the idea of ``solving'' for $\imm_w$ as a linear combination of certain complementary minors will play a key role in this paper. However, while we know that complementary minors uniquely determine our Temperley-Lieb immanants, it is not entirely clear how to solve this system in general.
\end{remark}

\begin{example}
For $I = \{ 1 \}$ and $J = \{ 1 \}$, we have:
\begin{equation*}
        x_{1,1} \begin{vmatrix}
                x_{2,2} & x_{2,3} \\
                x_{3,2} & x_{3,3}
        \end{vmatrix} = \imm_{123} + \imm_{213}
    \end{equation*}
\begin{center}
    \includegraphics[]{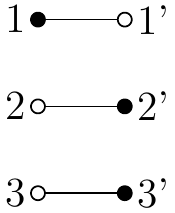}
    \hspace{2cm}
    \includegraphics[]{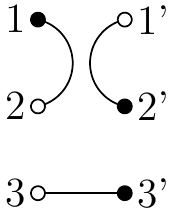}
\end{center}
Note that the left non-crossing matching corresponds to the permuation $123$ and the second corresponds to $213.$
\end{example}

\par In this paper, we also make the distinction between products of complementary minors as standalone determinants, and products of complementary minors that are ``embedded" in the matrix.

\begin{definition}\label{def:cm}
Define $\cm_{I,J}$ to be the determinant of the matrix $(y_{i,j})$ given by
\begin{equation*}
    y_{i,j} = \begin{cases}
        x_{i,j}, & i \in I, j \in J \text{ or } i \notin I, j \notin J, \\
        0, & \text{ otherwise.}
    \end{cases}
\end{equation*}
An explicit formula is
\begin{equation*}
    \cm_{I,J} = \sum_{\substack{u \in \fkS_n \\ u(I) = J}} \sgn(u) x_u.
\end{equation*}
\end{definition}

\begin{remark}
    Note that $x_u$ has nonzero coefficient in $\cm_{I,J}$ iff for all $1 \le i \le n$, the vertices $i$ and $u(i)'$ are assigned different colors in the coloring $(I, J)$.
\end{remark}

\begin{lemma}\label{cmminors}
We have $\cm_{I,J} = (-1)^{s(I) + s(J)}\Delta_{I,J} \Delta_{\overline{I},\overline{J}}$, where $s(I) := \sum_{i \in I} i$.
\end{lemma}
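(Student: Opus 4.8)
Proof proposal.

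The plan is to apply Laplace expansion of the determinant $\cm_{I,J} = \det(y_{i,j})$ along the set of rows indexed by $I$, and then use the zero pattern of the matrix $(y_{i,j})$ to collapse the expansion to a single term. Recall the standard Laplace expansion: for any $n\times n$ matrix $Y = (y_{i,j})$ and any $I \subseteq [n]$,
\[
\det Y = \sum_{\substack{K \subseteq [n]\\ |K| = |I|}} (-1)^{s(I) + s(K)} \,\det\big(Y_{I,K}\big)\,\det\big(Y_{\overline I,\overline K}\big),
\]
where $Y_{A,B}$ denotes the submatrix of $Y$ with rows $A$ and columns $B$, and $s(\cdot)$ is the sum of the elements, exactly as defined in Section~\ref{sec:prelims}. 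I would cite this as a standard fact (it is, in essence, the content behind the formula for $\cm_{I,J}$ already given).

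Next I apply this to the matrix $(y_{i,j})$ of Definition~\ref{def:cm}. Fix a term indexed by $K$. In the submatrix $Y_{I,K}$ every row $i \in I$ has $y_{i,j} = x_{i,j}$ for $j \in J$ and $y_{i,j} = 0$ for $j \notin J$; hence if $K \not\subseteq J$, the column of $Y_{I,K}$ indexed by any $j \in K \setminus J$ is identically zero, so $\det(Y_{I,K}) = 0$. Symmetrically, in $Y_{\overline I, \overline K}$ every row $i \in \overline I$ has $y_{i,j} = x_{i,j}$ for $j \notin J$ and $0$ for $j \in J$; so if $\overline K \not\subseteq \overline J$ — equivalently $J \not\subseteq K$ — then $\det(Y_{\overline I,\overline K}) = 0$. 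Since $|K| = |J|$, the conditions $K \subseteq J$ and $J \subseteq K$ force $K = J$, so the only surviving summand is the one with $K = J$. For that summand, $Y_{I,J} = (x_{i,j})_{i \in I, j \in J}$ gives $\det(Y_{I,J}) = \Delta_{I,J}$, and likewise $\det(Y_{\overline I, \overline J}) = \Delta_{\overline I,\overline J}$, so
\[
\cm_{I,J} = \det Y = (-1)^{s(I) + s(J)}\,\Delta_{I,J}\,\Delta_{\overline I,\overline J},
\]
which is the claim.

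If one prefers to avoid invoking the Laplace expansion as a black box, one can argue directly from $\cm_{I,J} = \sum_{u:\, u(I) = J} \sgn(u)\, x_u$. The monomials appearing in $\Delta_{I,J}\Delta_{\overline I,\overline J}$ are precisely the $x_u$ with $u(I) = J$, the monomial $x_u$ carrying coefficient $\sgn(\tau)\sgn(\rho)$, where $\tau \in \fkS_{|I|}$ and $\rho \in \fkS_{n-|I|}$ encode how $u$ sends $I$ to $J$ and $\overline I$ to $\overline J$ when both are listed in increasing order. Writing $u = w_J\,(\tau \oplus \rho)\,w_I^{-1}$, where $w_I$ is the permutation whose one-line notation lists the elements of $I$ followed by those of $\overline I$ (each block increasing) and $w_J$ is defined analogously, gives $\sgn(u) = \sgn(w_I)\sgn(w_J)\sgn(\tau)\sgn(\rho)$. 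A short inversion count shows $\ell(w_I) = s(I) - \binom{|I|+1}{2}$, hence $\sgn(w_I) = (-1)^{s(I)}(-1)^{\binom{|I|+1}{2}}$ and similarly for $w_J$; since $|I| = |J|$ the two $\binom{\,\cdot\,+1}{2}$ contributions cancel, so the coefficient of $x_u$ in $\Delta_{I,J}\Delta_{\overline I,\overline J}$ equals $(-1)^{s(I)+s(J)}\sgn(u)$, as needed.

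The entire content is sign bookkeeping. In the Laplace approach this is hidden inside the standard expansion identity, so the only thing I would write out in detail is the vanishing of all terms with $K \neq J$; in the direct approach the corresponding care is needed for the inversion count $\ell(w_I) = s(I) - \binom{|I|+1}{2}$ and the observation that $\binom{|I|+1}{2}$ and $\binom{|J|+1}{2}$ cancel. Neither is a genuine obstacle, so I expect the proof to be short.
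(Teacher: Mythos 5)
Your proof is correct, and it takes a different route from the paper. The paper's own argument normalizes: it observes the identity is immediate when $I = J = [k]$ and then reduces the general case to this one by moving the rows of $I$ and the columns of $J$ to the top/left, counting the swaps ($s(I) - |I|$ and $s(J) - |J|$, with $|I| = |J|$ so the corrections cancel) to track the sign of $\cm_{I,J}$. You instead invoke the generalized Laplace expansion along the rows $I$ and let the zero pattern of the matrix $(y_{i,j})$ annihilate every term except $K = J$, so the sign $(-1)^{s(I)+s(J)}$ comes packaged with the standard identity rather than from an explicit swap count; your vanishing argument ($K \not\subseteq J$ kills $\det(Y_{I,K})$, $J \not\subseteq K$ kills $\det(Y_{\overline I,\overline K})$, and $|K| = |J|$ forces $K = J$) is complete. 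Your alternative monomial-by-monomial argument, factoring $u = w_J(\tau \oplus \rho)w_I^{-1}$ and computing $\ell(w_I) = s(I) - \binom{|I|+1}{2}$, is also correct and is the most self-contained of the three: it needs no black box and makes explicit why $|I| = |J|$ is needed for the binomial corrections to cancel, a point the paper's swap count uses but passes over quickly. The trade-off is length: the paper's reduction is a three-line argument, your Laplace route outsources the sign to a citation, and your direct route spells out all the bookkeeping.
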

\par As an example, consider the product of complementary minors $$\Delta_{1,4}\Delta_{234,123} = \begin{vmatrix} x_{1,4} \end{vmatrix} \begin{vmatrix} x_{2,1} & x_{2,2} & x_{2,3} \\ x_{3,1} & x_{3,2} & x_{3,3} \\ x_{4,1} & x_{4,2} & x_{4,3} \end{vmatrix},$$ where $I = \{1\}, J = \{4\}.$ Observe that $x_{1,4}x_{2,3}x_{3,2}x_{4,1}$ has coefficient $-1$ in the expansion of $\Delta_{1,4}\Delta_{234,123}$. However, if we instead represented the product of complementary minors by zeroing out entries of the $4 \times 4$ determinant, we would have the following: $$\begin{vmatrix} 0 & 0 & 0 & x_{1,4} \\ x_{2,1} & x_{2,2} & x_{2,3} & 0 \\ x_{3,1} & x_{3,2} & x_{3,3} & 0 \\ x_{4,1} & x_{4,2} & x_{4,3} & 0\end{vmatrix}.$$ Notice here that the coefficient of $x_{1,4}x_{2,3}x_{3,2}x_{4,1}$ is $+1$, and agrees with what we'd expect if we instead took the normal determinant.
\par As such, when we represent the complementary minors by pictures, we refer to this latter picture in which the minors are viewed as being within the larger matrix, rather than standing alone as matrices. The signs in front of the pictures will then correspond to adding or subtracting the corresponding $\cm_{I, J}.$
\par The pictoral representation we use for this complementary minor is the following. 
\begin{center}
\includegraphics[]{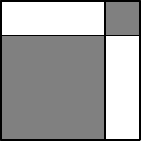}
\end{center}
In this case we denote by the shaded region the entries that we include in our complementary minor, and the white areas as the regions that are zeroed out. This convention is followed throughout the rest of the paper.

\par Finally, we record two miscellaneous lemmas that will be useful for us. We first note that the matching diagram admits certain symmetries. Specifically, reflecting a non-crossing matching $\ncm(w)$ about either axis of the diagram also gives a non-crossing matching corresponding to either $\ncm(w^{-1})$ or $\ncm(w_0 ww_0)$, where $w_0$ is the longest word in $\fkS_n$.
\begin{lemma}\label{lem:symmetry}
If $w, u \in \mathfrak{S}_n$ with $w$ $321$-avoiding, then
$f_w (u) = f_{w^{-1}} (u^{-1}) = f_{w_0 w w_0} (w_0 u w_0)$.
\end{lemma}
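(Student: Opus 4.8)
The plan is to realize the two symmetries $u\mapsto u^{-1}$ and $u\mapsto w_0uw_0$ of $\C(\fkS_n)$ by matching symmetries of $TL_n(2)$ that are compatible, through $\theta$ and $\beta$, with the defining expansion $\theta(u)=\sum_{w}f_w(u)\,\beta(w)$, the sum ranging over $321$-avoiding $w\in\fkS_n$ (legitimate since $\{\beta(w)\}$ is a basis of $TL_n(2)$ by Proposition \ref{prop:tl}). Once I have an (anti-)automorphism $\Phi$ of $TL_n(2)$ with $\Phi\circ\theta=\theta\circ(\text{the symmetry on }\C\fkS_n)$ and with $\beta$ transforming correctly under $\Phi$, applying $\Phi$ to this expansion and reindexing the basis will give the desired coefficient identity at once.

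For the inverse statement I would use the algebra anti-automorphism $\iota_{\fkS}\colon\C(\fkS_n)\to\C(\fkS_n)$, $u\mapsto u^{-1}$ (the linear extension of $s_i\mapsto s_i$ that reverses products), together with the unique $\C$-algebra anti-endomorphism $\iota_{TL}$ of $TL_n(2)$ sending $t_i\mapsto t_i$. The first thing to check is that $\iota_{TL}$ is well defined: under reversal of products each defining relation $t_i^2=2t_i$, $t_it_j=t_jt_i$ $(|i-j|\ge 2)$, $t_it_jt_i=t_i$ $(|i-j|=1)$ is carried to a defining relation, so $\iota_{TL}$ descends to $TL_n(2)$ and is an involutive anti-automorphism. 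Since $\theta$ is an algebra homomorphism, $\iota_{TL}\circ\theta$ and $\theta\circ\iota_{\fkS}$ are both algebra anti-homomorphisms $\C(\fkS_n)\to TL_n(2)$, and they agree on the generators $s_i$ (both send $s_i\mapsto t_i-1$), hence are equal. Moreover $\beta(w^{-1})=\iota_{TL}(\beta(w))$ for $321$-avoiding $w$: a reduced word $s_{i_1}\cdots s_{i_k}$ for $w$ reverses to a reduced word $s_{i_k}\cdots s_{i_1}$ for $w^{-1}$, which is again $321$-avoiding by Lemma \ref{lem: restriction inverses} (since $321^{-1}=321$), so $\beta(w^{-1})=t_{i_k}\cdots t_{i_1}=\iota_{TL}(\beta(w))$. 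Applying $\iota_{TL}$ to $\theta(u)=\sum_w f_w(u)\beta(w)$ gives $\theta(u^{-1})=\sum_w f_w(u)\,\beta(w^{-1})$; reindexing the basis via the bijection $w\mapsto w^{-1}$ of $321$-avoiding permutations and comparing with $\theta(u^{-1})=\sum_w f_w(u^{-1})\beta(w)$ yields $f_{w^{-1}}(u)=f_w(u^{-1})$, i.e.\ (replacing $w$ by $w^{-1}$) $f_w(u)=f_{w^{-1}}(u^{-1})$.

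The $w_0$-statement runs in exact parallel, with automorphisms in place of anti-automorphisms. Conjugation $c_{w_0}\colon u\mapsto w_0uw_0$ is an automorphism of $\C(\fkS_n)$ with $c_{w_0}(s_i)=s_{n-i}$; I would pair it with the unique $\C$-algebra endomorphism $\tau$ of $TL_n(2)$ with $t_i\mapsto t_{n-i}$, a well-defined involutive automorphism because the set of defining relations is invariant under $i\mapsto n-i$. Then $\tau\circ\theta=\theta\circ c_{w_0}$ (both algebra homomorphisms agreeing on the $s_i$, each $\mapsto t_{n-i}-1$), and $\beta(w_0ww_0)=\tau(\beta(w))$ because conjugation by $w_0$ preserves length (so $s_{n-i_1}\cdots s_{n-i_k}$ is a reduced word for $w_0ww_0$) and preserves $321$-avoidance by Corollary \ref{cor: w_0 conjugation pattern avoidance} (as $w_0'\cdot 321\cdot w_0'=321$ in $\fkS_3$). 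Applying $\tau$ to $\theta(u)=\sum_w f_w(u)\beta(w)$ and reindexing via $w\mapsto w_0ww_0$ gives $f_{w_0ww_0}(u)=f_w(w_0uw_0)$, hence $f_w(u)=f_{w_0ww_0}(w_0uw_0)$.

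The whole argument is essentially bookkeeping; the only steps needing genuine (if routine) care are the verifications that $\iota_{TL}$ and $\tau$ respect the presentation of $TL_n(2)$ and are therefore well defined, and — hand in hand with these — that $\beta$ is compatible with them, which is exactly where length-preservation and $321$-avoidance-preservation of $w\mapsto w^{-1}$ and $w\mapsto w_0ww_0$ enter (so that reduced words map to reduced words and $\beta$ of the image is defined at all). The concluding substitutions (replace $w$ by $w^{-1}$, resp.\ by $w_0ww_0$) are the only mildly delicate bits of indexing.
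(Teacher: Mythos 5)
Your proof is correct, but it follows a genuinely different route from the paper's. The paper deduces the lemma from Proposition \ref{prop:cm equals imm sum}: it writes each complementary minor identity $\sgn(u)\one_{u(I)=J}=\sum_{w \text{ compatible with }(I,J)}f_w(u)$, observes that $w$ is compatible with $(I,J)$ iff $w^{-1}$ is compatible with $(J,I)$ iff $w_0ww_0$ is compatible with $(I^*,J^*)$, and then invokes the uniqueness of the solution of this (over-determined) linear system in the $f_w(u)$ to conclude the two coefficient identities. You instead work directly from Definition \ref{syldef}: you realize the symmetries $u\mapsto u^{-1}$ and $u\mapsto w_0uw_0$ of $\C(\fkS_n)$ by the anti-automorphism $t_i\mapsto t_i$ and the automorphism $t_i\mapsto t_{n-i}$ of $TL_n(2)$, check compatibility with $\theta$ on generators and with $\beta$ via reduced words (using that $w\mapsto w^{-1}$ and $w\mapsto w_0ww_0$ preserve length and $321$-avoidance, the latter by Lemma \ref{lem: restriction inverses} and Corollary \ref{cor: w_0 conjugation pattern avoidance}), and then reindex the basis expansion $\theta(u)=\sum_w f_w(u)\beta(w)$, which is legitimate since Proposition \ref{prop:tl} makes $\{\beta(w)\}$ a basis. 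Your verifications (the reversed and index-flipped relations of $TL_n(2)$ are again defining relations; reversed or conjugated reduced words are reduced; the final substitutions $w\mapsto w^{-1}$, $u\mapsto w_0uw_0$) are all sound. In fact the paper explicitly invites exactly this argument --- ``an interested reader is encouraged to find an alternate proof of Lemma \ref{lem:symmetry} directly using Definition \ref{syldef}'' --- so you have supplied that alternate proof. What each approach buys: yours is self-contained at the level of the Temperley--Lieb presentation and needs neither \cite{RS}'s Proposition 4.3 nor the uniqueness-of-solution remark, while the paper's proof stays inside the coloring/complementary-minor formalism that drives the rest of the paper, so the same compatibility symmetries get reused later.
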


\par Given a non-crossing matching, we can also construct a compatible coloring with some nice properties.

\begin{lemma}\label{lem:matching_compatibility}
Let $w$ be a $321$-avoiding permutation. Define a coloring on $[n]\cup[n]'$ as follows: for each $1 \le i \le n$, color $i$ black and $w(i)'$ white if $w(i) > i$; color $i$ white and $w(i)'$ black if $w(i) < i$; and arbitrarily color $i$ and $w(i)'$ one white, one black if $w(i) = i$. Then, in $\ncm(w)$, every white vertex $i$ or $i'$ is paired with a black vertex $j$ or $j'$ with $j \leq i$. In particular, $w$ is compatible with the coloring.
\end{lemma}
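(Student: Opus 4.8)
The plan is to read everything off the wiring-diagram description of $\ncm(w)$ in Proposition \ref{prop:ncm}, using the single structural principle that the color of a vertex records which way the strand through it heads into the interior of the diagram. Recall the coordinates: vertex $i$ sits at $(-1,n-i)$ and vertex $i'$ at $(1,n-i)$, so on either side a \emph{smaller} index means a \emph{higher} vertex, i.e. height and index are related uniformly by $\mathrm{height}=n-\mathrm{index}$. Recall also that the strands $p_i$ (from $i$ to $w(i)'$) are $y$-monotone, that the edges of $D(w)$ are exactly the strands $p_1,\dots,p_n$, and that each matching curve $q_v$ is $y$-monotone in its arc length and turns at every crossing.

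First I would prove the key dictionary: a vertex $v$ is black if and only if the strand through $v$ leaves $v$ toward the interior with non-increasing $y$, and white if and only if it leaves with strictly increasing $y$. On the left this is immediate from the coloring rule: $i$ is black iff $w(i)\ge i$ iff $w(i)'$ sits at height $\le n-i$ iff $p_i$ departs $i$ going down or flat. On the right, $w(i)'$ is white iff $w(i)>i$ iff $p_i$ arrives at $w(i)'$ from strictly above, i.e. $p_i$ heads up as it leaves $w(i)'$ into the interior; and a fixed point $w(i)=i$ gives a flat strand, which is both non-increasing and non-decreasing, matching the fact that its two endpoints are colored arbitrarily one black and one white.

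Next I would transfer this to the matching curves. Since the only edge of $D(w)$ incident to a boundary vertex $v$ is the first edge of the strand through $v$, the curve $q_v$ starts off in the direction dictated by the color of $v$; being globally $y$-monotone, $q_v$ is then $y$-non-increasing when $v$ is black and $y$-non-decreasing when $v$ is white. Consequently, if $v$ is black then its partner $f(v)$ satisfies $\mathrm{height}(f(v))\le \mathrm{height}(v)$; moreover $q_{f(v)}$ is $q_v$ traversed backwards, hence $y$-non-decreasing, which by the dictionary forces $f(v)$ to be white, so $w$ is compatible with the coloring. Rewriting $\mathrm{height}=n-\mathrm{index}$, the inequality $\mathrm{height}(f(v))\le \mathrm{height}(v)$ becomes $\mathrm{index}(f(v))\ge \mathrm{index}(v)$, i.e. every white vertex is matched to a black vertex of index at most its own, which is exactly the assertion.

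The main obstacle is making the two ``dictionary'' steps airtight at crossings: one must check that when $q_v$ is forced to turn at a crossing it necessarily keeps the same vertical sense — this uses that the two strands meeting at a crossing exchange adjacent heights, together with the imposed $y$-monotonicity of $q_v$ — and one must dispose of the degenerate flat stretches (a strand running horizontally before its first crossing, and the fixed-point strands) so that ``the direction dictated by the color'' is unambiguous. These are precisely the sort of routine but delicate verifications that belong with the technical lemmas at the end of the section; once they are in place, the remainder is bookkeeping with the coordinate convention.
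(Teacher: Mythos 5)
Your proposal follows essentially the same route as the paper's proof: both work with the wiring-diagram construction of Proposition \ref{prop:ncm}, translate the coloring into the vertical direction in which the strand through a vertex leaves it, use the $y$-monotonicity of the matching curves $q_v$ to obtain the height (hence index) inequality, and traverse $q_v$ backwards to conclude that the partner has the opposite color. The only point to tighten is the fixed-point case, which the paper splits off at the start ($i$ is paired with $i'$ and the two are colored one black, one white) rather than running it through the color/direction dictionary, which is ambiguous for constant strands.
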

\begin{remark}
Proposition \ref{prop:ncm} gives a way to explicitly construct $\ncm(w)$ given the one-line notation of $w$. In practice, it is sufficient to know properties of $\ncm(w)$ given in Lemma \ref{lem:matching_compatibility}.
\end{remark}

\subsection{Proofs of Lemmas in Section 2}
In this section, we restate all lemmas in Section 2 and provide their proofs. 

\begin{replemma}{lem: restriction inverses}
Let $w \in \fkS_n.$ Then, if $v = w|_I \in \fkS_m$ for some subset $I \subset [1, n]$ (which in particular requires $|I| = m$), then $v^{-1} = (w^{-1})|_{w(I)}.$ In particular, $w$ avoids the pattern $v$ if and only if $w^{-1}$ avoids the pattern $v^{-1}$.
\end{replemma}

\begin{proof}
Fix $i \in [1, m].$ Let $x$ be the $i$-th smallest element in $w(I)$, and suppose $w^{-1}(x)$ is the $j$-th smallest element of $I$. Then since $w^{-1}(w(I)) = I$ (as $w$ is a permutation, and thus bijection), we have that $(w^{-1})|_{w(I)}(i) = j.$ However, we know that $w$ sends $w^{-1}(x)$ to $x,$ meaning that $w|_I(j) = i,$ or that $v(j) = i.$ Hence, for this $i,$ we have that $v^{-1}(i) = (w^{-1})|_{w(I)}(i).$ But this holds for all $i,$ giving us the desired result.
\end{proof}

\begin{replemma}{lem: restriction flips}
Let $w \in \fkS_n.$ Then, if $v = w|_I \in \fkS_m$ for some subset $I \subset [1, n],$ and $w_0 \in \fkS_n, w_0' \in \fkS_m$ are the longest words in their respective permutation groups, then $w_0'v = (w_0w)|_I$ and $vw_0' = (ww_0)|_{w_0(I)}$.
\end{replemma}

\begin{proof}
First, fix an arbitrary $i \in [1, m].$ Then, if $x$ is the $i$-th smallest element of $I$, suppose it gets sent to the $j$-th smallest element of $w(I).$ By definition of $w|_I$, we have $v(i) = j$. Since $w_0$ has one line notation $n(n-1)(n-2)\ldots 1$, we have $x < y \iff w_0(x) > w_0(y),$ for $x, y \in [1, n].$ Therefore, $w_0w$ sends the $i$-th smallest element of $I$ to the $j$-th largest element of $w_0w(I),$ so $(w_0w)|_I(i) = m + 1 - j.$ But notice that $w_0'(x) = m + 1 - x$ for each $x \in [1, m],$ and $j = v(i).$ Thus, we have that $w_0'v(i) = (w_0w)|_I(i).$ But $i$ is arbitrary, giving the first part of the lemma.
\par The second part proceeds similarly: for each $i \in [1, m],$ if $x$ is the $i$-th smallest element of $w_0(I),$ then $w_0x$ is the $i$-th largest element of $I,$ or the $(m + 1 - i)$-th smallest element of $I.$ Then, say that $j$ is so that $w(w_0(x))$ is the $j$-th smallest element of $w(I).$ We thus have that $(ww_0)|_{w_0(I)}(i) = j.$ Meanwhile, notice that $w$ sends the $(m + 1 - i)$-th smallest element of $I$ to the $j$-th smallest element of $w(I)$, meaning that $vw_0'(i) = v(m + 1 - i) = j.$ But this holds for each $i,$ so $vw_0' = (ww_0)|_{w_0(I)},$ finishing the proof of the lemma.
\end{proof}

\begin{replemma}{lem:restriction}
Let $v, w \in \fkS_n$. If $I \supset \{ i \mid v(i) \neq w(i) \}$ and $w|_I \le v|_I$, then $w \le v$.
\end{replemma}

\begin{proof}
Using equivalent definition \ref{bruhat3} of the Bruhat order, our goal is to show that $|w([1,i]) \cap [1,j]| \ge |v([1,i]) \cap [1,j]|$ for positive integers $i, j$ with $1 \le i, j \leq n.$ To show this, we add up the following two equations:
\begin{itemize}
    \item $w([1,i] \backslash I) \cap [1,j] = v([1,i] \backslash I) \cap [1,j]$ because $w(k) = v(k)$ for $k \notin I$;

    \item $|w([1,i] \cap I) \cap [1,j]| \ge |v([1,i] \cap I) \cap [1,j]|$ follows by applying the equivalent definition \ref{bruhat3} of Bruhat order to $w|_I \le v|_I$. \qedhere
\end{itemize}
\end{proof}

\begin{replemma}{lem:basic fwu}
Let $w, u \in \fkS_n$, with $w$ being $321$-avoiding.
\begin{enumerate}
    \item If $u \not\ge w$, then $f_w (u) = 0$.
    
    \item $f_w (w) = 1$.
\end{enumerate}
\end{replemma}

\begin{proof}
Let $w = s_{i_1} s_{i_2} \cdots s_{i_k}$ and $u = s_{j_1} s_{j_2} \cdots s_{j_m}$ be reduced words. Consider expanding $\theta(u) = (t_{j_1} - 1)(t_{j_2} - 1) \cdots (t_{j_m} - 1)$ and simplifying each monomial using the Temperley-Lieb algebra relations. The resulting monomial is a subword of the original monomial, which turn is a subword of $t_{j_1} t_{j_2} \cdots t_{j_m}$. Thus, we can only have a $t_{i_1} t_{i_2} \cdots t_{i_k}$ term in the expansion if $s_{i_1} s_{i_2} \cdots s_{i_k}$ is a subword of $s_{j_1} s_{j_2} \cdots s_{j_m}$. Thus, if $f_w (u) \neq 0$, then by equivalent definition \ref{bruhat1} of Bruhat order, we get $u \ge w$, proving part 1 of the Lemma. For part 2, we take $w = u$, and observe that when expanding $(t_{i_1} - 1)(t_{i_2} - 1) \cdots (t_{i_k} - 1)$, the only way to get a monomial of length $k$ is to take the first term of each binomial, and thus the coefficient of $t_{i_1} t_{i_2} \cdots t_{i_k}$ is one.
\end{proof}

\begin{replemma}{cmminors}
We have $\cm_{I,J} = (-1)^{s(I) + s(J)}\Delta_{I,J} \Delta_{\overline{I},\overline{J}}.$
\end{replemma}

\begin{proof}
This is clearly true if $I = J = [k]$ for some $k$. In the general case, notice that we can reduce to this case by performing $\sum_{i \in I} (i-1) = s(I) - |I|$ many row swaps and $\sum_{j \in J} (j-1) = s(J) - |J|$ many column swaps on the matrix $M$ of $\cm_{I,J}$. To conclude, we note that $|I| = |J|,$ and each row or column swap changes the sign of $M$.


    
    

\end{proof}

\begin{replemma}{lem:symmetry}
If $w, u \in \mathfrak{S}_n$ with $w$ $321$-avoiding, then
$f_w (u) = f_{w^{-1}} (u^{-1}) = f_{w_0 w w_0} (w_0 u w_0)$.
\end{replemma}
\begin{proof}
\par Throughout the proof of this lemma, given $I, J \subset [n],$ we say that $w$ is compatible with $(I, J)$ if $\ncm(w)$ compatible with the coloring where $I, \overline{J}'$ is colored black, and $\overline{I}, J'$ is colored white.
\par From Proposition \ref{prop:cm equals imm sum}, we have for all $I, J \subset [n]$ with $|I| = |J|$ and $u \in \fkS_n$,
\begin{equation}\label{cmeqimm}
    \sgn(u) \one_{u(I) = J} = \sum_{w \mathrm{\ compatible\ with\ } (I, J)} f_w (u),
\end{equation}
where $\one_{u(I) = J}$ is the indicator function that is $1$ on $u$ if $u(I) = J$ and $0$ otherwise.
\par We leverage the fact (see Remark after Proposition \ref{prop:cm equals imm sum}) that for a given $u$, this linear system of equations has a unique solution in the $\{ f_w (u) \}$.
Note that $w$ is compatible with $(I, J)$ iff $w^{-1}$ is compatible with $(J, I)$, and $\sgn(u^{-1}) = \sgn(u)$, so we can rewrite \eqref{cmeqimm} as
\begin{equation*}
    \sgn(u^{-1}) \one_{u^{-1} (J) = I} = \sum_{w \mathrm{\ compatible\ with\ } (J, I)} f_{w^{-1}} (u).
\end{equation*}
Replacing $u$ with $u^{-1}$ and swapping $I, J$, we get (for all $I, J \subset [n]$ with $|I| = |J|$ and $u \in \fkS_n$):
\begin{equation}\label{cmeqimm2}
    \sgn(u) \one_{u(I) = J} = \sum_{w \mathrm{\ compatible\ with\ } (I, J)} f_{w^{-1}} (u^{-1}).
\end{equation}
Notice that \eqref{cmeqimm} and \eqref{cmeqimm2} are the same system of equations with different variables $\{ f_w (u) \}$ and $\{ f_{w^{-1}} (u^{-1}) \}$. By uniqueness of solution to \eqref{cmeqimm}, we have $f_w (u) = f_{w^{-1}} (u^{-1})$.

A similar argument holds to show $f_w (u) = f_{w_0 w w_0} (w_0 u w_0)$. Define $I^* = \{ n+1 - i \mid i \in I \}$. Note that $w$ is compatible with $(I, J)$ iff $w_0 w w_0$ is compatible with $(I^*, J^*)$, so
\begin{equation*}
    \sgn(w_0 u w_0) \one_{w_0 u w_0 (I^*) = J^*} = \sum_{w \mathrm{\ compatible\ with\ } (I^*, J^*)} f_{w_0 w w_0} (u).
\end{equation*}
But this holds for all $I^*, J^*$, and $u$, so by uniqueness of solution to \eqref{cmeqimm}, we have $f_w (u) = f_{w_0 w w_0} (w_0 u w_0)$.

An interested reader is encouraged to find an alternate proof of Lemma \ref{lem:symmetry} directly using Definition \ref{syldef}.
\end{proof}

\begin{replemma}{lem:matching_compatibility}
Let $w$ be a $321$-avoiding permutation. Define a coloring on $[n] \cup [n]'$ as follows: for each $1 \le i \le n$, color $i$ black and $w(i)'$ white if $w(i) > i$; color $i$ white and $w(i)'$ black if $w(i) < i$; and arbitrarily color $i$ and $w(i)'$ one white, one black if $w(i) = i$. Then, in $\ncm(w)$, every white vertex $i$ or $i'$ is paired with a black vertex $j$ or $j'$ with $j \leq i$. In particular, $w$ is compatible with the coloring.
\end{replemma}

\begin{proof}
\par We use the notation of Proposition \ref{prop:ncm}. Additionally, let $y(v)$ be the $y$-coordinate of a vertex. Note that $y(i) = y(i') = n-i$. Thus, we want to show two claims: if $v$ is white, then (1) $f(v)$ is black and (2) $y(v) \le y(f(v))$.

Say a vertex $i$ or $i'$ is fixed if $w(i) = i$. If $v = i$ or $i'$ is fixed, then by Proposition \ref{prop:ncm}(a), $i$ is paired with $i'$, so $f(v) = i'$ or $i$ respectively. Thus, since $v$ is white, our coloring says that $f(v)$ must be black. Thus, we may assume $v$ is not fixed; then neither is $f(v)$.

Given a non-fixed vertex $v$, let $p_v$ be the path $p_i$ if $v = i$ is unprimed and the path $p_{w^{-1} (i)}$ if $v = i'$ is primed. We reformulate the coloring as follows: a non-fixed vertex $v$ is white if and only if when we walk along the path $p_v$ starting from $v$, the path is non-decreasing in $y$-coordinate.

Suppose we start from a non-fixed white vertex $v$, and walk along the piecewise linear path $q_v$ representing the pairing involving this white vertex. Let $p_v$ be the path $p_i$ if $v = i$ is unprimed and the path $p_{w^{-1} (i)}$ if $v = i'$ is primed. Since $v$ is a white vertex, the path $p_v$ is non-decreasing in $y$-coordinate if we walk along it starting at $v$. Since a walk along $q_v$ from $v$ starts by walking along $p_v$, we see by Proposition \ref{prop:ncm}(b) that the path $q_v$ is always non-decreasing in $y$-coordinate when walking from $v$ to $f(v)$. In particular, we have $y(v) \le y(f(v))$, proving our claim 2. Now once we get to $f(v)$, we turn around, and then our path will be non-increasing in $y$-value. But then we will be walking along $q_{f(v)}$, and thus by our coloring rule, we see that $f(v)$ must be black, proving our claim 1. This finishes the proof of the lemma.
\end{proof}

\section{\%-immanants}

\par We now define another subclass of immanant that we study in this paper, which we call \%-immanants. These immanants capture the idea of permutations fitting within skew-tableau lying in $n \times n$ matrices, which has been studied previously in connection to the Bruhat order and Kazhdan-Lusztig immanants. 
\par For instance, \cite{MR2353118} discusses the order ideals of a permutation and how these are related to a `Skew Ferrers Matrix.' In particular, \cite{MR2353118} describes when the set of permutations that fit within a certain skew-tableau in an $n \times n$ matrix is a principal ideal in the Bruhat order. Additionally \cite{CSB}, Chepuri and Sherman-Bennett discuss the ``determinantal formulas" arising from taking the determinant of a matrix where, outside of a skew-tableau, all the entries in the matrix are zero. We can view these formulas as immanants (in fact, our \%-immanants), as such a determinant is a $\mathbb{C}-$linear combination of the monomials $x_{\sigma},$ running over permutations $\sigma \subset \fkS_n.$ 
\par In this section, we define \%-immanants and provide a couple of simple examples. The main result of this section is a classification of the space of immanants generated by these \%-immanants. We re-define a \%-immanant here for convenience.
\begin{definition}
For a skew tableau $\lambda / \mu$, where $\lambda = (\lambda_1, \lambda_2, \ldots, \lambda_n)$ and $\mu = (\mu_1, \mu_2, \ldots, \mu_n)$ are non-increasing sequences of non-negative integers, we say a permutation $\sigma \in \fkS_n$ \textbf{lies in} $\lambda/\mu$ if for all $1 \le i \le n$, we have $\mu_i < \sigma(i) \leq \lambda_i.$ (Geometrically, we have $(i, \sigma(i)) \in \lambda/\mu$.)

Let $A$ is the set of permutations in $\fkS_n$ that lie in $\lambda/\mu$, then define $\imm^{\%}_{\lambda/\mu} = \sum_{\sigma \in A} \sgn(\sigma) x_{\sigma}$. We refer to this polynomial $\imm^{\%}_{\lambda/\mu}$ as a \textbf{\%-immanant} of degree $n$. We say the \textbf{diagram} of $\imm^{\%}_{\lambda/\mu}$ is  $\lambda/\mu$, embedded into a $n \times n$ square.
\end{definition}

Throughout this paper, we orient the $n \times n$ bounding box such that $(1, 1)$ is the upper-left unit square and $(n, 1)$ is the lower-left unit square. For instance, the following is the diagram of $\imm^\%_{(5, 5, 3, 2, 2)/(2, 1)}$ (for convenience, we drop trailing zeroes from $\lambda, \mu$). The areas within the skew shape are shaded. Note that this corresponds to diagrams of left-aligned skew Ferrers matrices in \cite{MR2353118}.
\begin{center}
\includegraphics[]{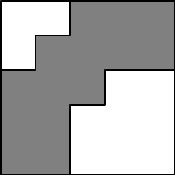}
\end{center}

A certain subset of \%-immanants are naturally associated to a permutation $w$ as follows.
\begin{definition}\label{defn:per imm w}
Suppose $w \in \mathfrak{S}_n.$ Let $m_w(i) = \min w([1, i])$ and let $M_w(i) = \max w([i, n]).$ Define $\mu, \lambda$ by $\mu_i = m_w(i)-1$ and $\lambda_i = M_w(i)$ for $1 \le i \le n$. Then, we define $\imm_w^{\%} = \imm_{\lambda/\mu}^{\%}.$
\end{definition}
For instance, the following is the diagram of $\imm^\%_{2143}$, where the positions of $(i, w(i))$ are marked with an ``X," and the areas within the skew shape are shaded. This is also referred to as the left hull of the permutation by \cite{MR2353118}, for instance.
\begin{center}
\includegraphics[]{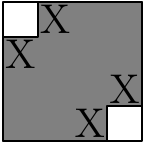}
\end{center}

Now, let $P_n^{\%}$ be the vector subspace of the space of immanants spanned by the \%-immanants of degree $n$. In order to classify this space, we need the following definition.

\begin{definition}\label{defn:1234-1324}
Two permutations $w, w' \in \fkS_n$ are \textbf{$1324$-adjacent} if $w(i) = w'(i)$ for all but two values $a < b,$ and there exist values $c, d$ with $c < a < b < d,$ such that either $w(c) < w(a) < w(b) < w(d)$ and $w'(c) < w'(b) < w'(a) < w'(d),$ or vice versa for $w, w'.$
\par Two permutations $w, w' \in \fkS_n$ are then said to be \textbf{$1324$-related} if there exists a sequence of permutations $w = w_0, w_1, \ldots, w_m = w' \in \fkS_n$ so that each adjacent pair of permutations are $1324$-adjacent. Being $1324$-related forms an equivalence relation on the permutations in $\fkS_n.$
\end{definition}
For instance, $14235$ is $1324$-adjacent to $13245,$ which in turn is $1324$-adjacent to $12345.$ In fact, one can show that all permutations $w \in \fkS_5$ where $w(1) = 1, w(5) = 5$ are $1324$-related.  
\par The idea behind the name of $1324$-related is that permutations that are $1324$-related can be obtained from each other by turning $1324$ patterns into $1234$ patterns (and vice versa).
\par This condition turns out to be heavily related with coefficients of immanants lying in $P_n^{\%}.$
\begin{thm}\label{thm:classifying space of percent}
The following conditions are equivalent:
\begin{enumerate}
    \item The immanant $\imm_f$ lies in $P_n^{\%},$
    \item The function $f$ satisfies $f(w) = -f(w'),$ for all $w, w' \in \fkS_n$ that are $1324$-adjacent.
\end{enumerate}
\end{thm}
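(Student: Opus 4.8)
The plan is to prove both directions by linear algebra on the (huge but finite-dimensional) space of immanants of degree $n$, using the fact that the monomials $x_\sigma$ for $\sigma \in \fkS_n$ form a basis. I will first set up the implication (1) $\Rightarrow$ (2). It suffices to check that each generator $\imm^\%_{\lambda/\mu}$ itself has the property that its coefficient function $f = \sgn \cdot \one_{\sigma \in \lambda/\mu}$ satisfies $f(w) = -f(w')$ whenever $w, w'$ are $1324$-adjacent, since the property ``$f(w) = -f(w')$ for all $1324$-adjacent pairs'' is closed under $\C$-linear combinations. So fix a $1324$-adjacent pair $w, w'$, differing only in positions $a < b$, with (say) $w(a) < w(b)$ and $w'(b) < w'(a)$ but $\{w(a), w(b)\} = \{w'(a), w'(b)\}$, and with $c < a < b < d$ witnessing $w(c) < w(a) < w(b) < w(d)$. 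The key geometric observation is that $w$ lies in a skew shape $\lambda/\mu$ if and only if $w'$ does: the conditions $\mu_i < \sigma(i) \le \lambda_i$ for $i \neq a, b$ are identical, and because $\lambda$ is weakly decreasing while $\mu$ is weakly decreasing, together with $a < b$, one checks $\mu_a \le \mu_b$ and $\lambda_a \ge \lambda_b$ hold automatically; hence if both $w(a), w(b)$ fit in their respective rows then so do the swapped values $w'(a) = w(b)$ in row $a$ (need $\mu_a < w(b) \le \lambda_a$, which follows from $\mu_a \le \mu_b < w(b)$ and $w(b) \le w(d) \le \lambda_a$... ) — I will verify this containment carefully, this is a short monotonicity argument. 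Finally, since $w' = w \cdot (a,b)$ as a product in $\fkS_n$ (the values in positions $a,b$ are swapped), $\sgn(w') = -\sgn(w)$, so $f(w') = -f(w)$ when both lie in $\lambda/\mu$, and $f(w) = f(w') = 0$ otherwise; either way $f(w) = -f(w')$.

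For (2) $\Rightarrow$ (1), I will argue by a dimension/spanning count. Let $V$ be the span of all $\imm_f$ with $f$ satisfying condition (2); we want $V \subseteq P^\%_n$ (the reverse inclusion is (1) $\Rightarrow$ (2)). Condition (2) is a system of homogeneous linear equations on $f$, one for each $1324$-adjacent pair, so the set of valid $f$ is a linear subspace $W$ of $\C^{\fkS_n}$; equivalently, $f$ is valid iff $f$ is constant-up-to-sign on each $1324$-relatedness equivalence class in the sense that within a class the value of $\sgn(\sigma)^{-1} f(\sigma)$... more precisely, $1324$-adjacent permutations have opposite sign (as just noted), so condition (2) says exactly that $g(\sigma) := \sgn(\sigma) f(\sigma)$ is constant on each $1324$-relatedness equivalence class. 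Hence $\dim W$ equals the number of $1324$-relatedness classes, and $W$ has a basis indexed by these classes, namely the functions $f_C$ with $f_C(\sigma) = \sgn(\sigma)\one_{\sigma \in C}$ for each class $C$. So it suffices to show each $\imm_{f_C} = \sum_{\sigma \in C} \sgn(\sigma) x_\sigma$ lies in $P^\%_n$.

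The heart of the matter, and the step I expect to be the main obstacle, is therefore: \emph{each $1324$-relatedness equivalence class $C$ is exactly the set of permutations lying in some skew shape $\lambda/\mu$}, up to the usual inclusion-exclusion — more realistically, $\imm_{f_C}$ is an alternating sum of $\%$-immanants. The natural candidate is to take, for $w \in C$, the ``hull'' shape $\lambda/\mu$ with $\mu_i = \min w([1,i]) - 1$ and $\lambda_i = \max w([i,n])$, as in Definition \ref{defn:per imm w}. One shows (a) this hull is an invariant of the class $C$ (a $1324$-adjacent move $w \to w'$ swaps an ascent $w(a) < w(b)$ to a descent inside a $1324$ pattern, which does not change any partial min-prefix or max-suffix — here the flanking elements $c, d$ guarantee the min/max are unaffected), and (b) the set of permutations lying in this hull is a union of $1324$-relatedness classes, and in fact, by the characterization of when the hull interval is a principal Bruhat ideal (cf.\ \cite{MR2353118} and the $2143$/$1324$-avoidance from Section 4), decomposes by inclusion-exclusion into a signed sum of $\%$-immanants of sub-shapes, each of which is exactly one class. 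This identification — that the combinatorics of $1324$-relatedness classes matches the combinatorics of skew shapes and their Möbius-type inclusion-exclusion — is where the real work lies; I would first settle the easy cases where $C$ itself equals $\{\sigma : \sigma \text{ lies in its hull}\}$ (conjecturally when a representative avoids $2143$, connecting to Section 4), then handle the general case by peeling off one ``extra'' permutation at a time and inducting on $\sum(\lambda_i - \mu_i)$, using that the difference $\imm^\%_{\lambda/\mu} - \imm_{f_C}$ is supported on strictly smaller shapes.
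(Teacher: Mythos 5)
Your direction (1) $\Rightarrow$ (2) is essentially sound and matches the paper's, done directly rather than via Lemma \ref{lem:engulfing} and Proposition \ref{prop:same percent} (the paper notes a direct proof is possible). One correction: since $\mu$ is non-increasing and $a<b$, you have $\mu_a \ge \mu_b$, not $\mu_a \le \mu_b$; and the passage from the $1324$-patterned member of the pair back to the $1234$-patterned one genuinely needs the flanking entries $c,d$ (e.g.\ $\mu_a \le \mu_c < w(c) < w(a)$ and $\lambda_b \ge \lambda_d \ge w(d) > w(b)$), so write that check out. Your reduction of (2) $\Rightarrow$ (1) to showing that each class indicator $\chi_C = \sum_{\sigma\in C}\sgn(\sigma)x_\sigma$ lies in $P_n^{\%}$ is correct, and your proposed hull-triangularity induction is a legitimate alternative to the paper's route, which instead does a dimension count: the $\chi_C$ span $V$, the hull immanants $\imm_w^{\%}$ of class representatives are linearly independent, so $\dim V \le \#\{\text{classes}\} \le \dim P_n^{\%}$, and with $P_n^{\%}\subset V$ this forces equality.

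However, the step you defer as ``where the real work lies'' is a genuine gap, and it is precisely the content of the paper's Proposition \ref{prop:same percent}: $w$ and $w'$ are $1324$-related if and only if $\imm_w^{\%}=\imm_{w'}^{\%}$, i.e.\ they have the same hull. Your item (a) is only the easy direction (related $\Rightarrow$ same hull). For your induction you need the converse, in the form that distinct classes have distinct hulls, so that every class $C'\neq C$ whose elements lie in $H(C)$ satisfies $H(C')\subsetneq H(C)$; this is exactly what makes ``$\imm^{\%}_{H(C)}-\chi_C$ is supported on strictly smaller shapes'' true and the recursion well-founded. Without it, two distinct classes could a priori share the hull $H(C)$ and no combination of \%-immanants could separate them (in that scenario the theorem itself would be false, so this cannot be dismissed as a formality). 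The paper proves the converse by a nontrivial induction on the largest position where two permutations with equal hulls disagree, using $m_w=m_{w'}$ and $M_w=M_{w'}$ to manufacture the flanking indices of an explicit $1324$-adjacent move; note that the paper's dimension-count argument needs this same proposition, for the linear independence step. Relatedly, your parenthetical ``each of which is exactly one class'' is false in general (the set of permutations lying in a shape is a union of classes, typically many), and ``peeling off one extra permutation at a time'' cannot work as stated: a \%-immanant can only remove whole classes, which again presupposes that the class-to-hull map is injective. The appeal to principal Bruhat ideals and \cite{MR2353118} is not needed. Once Proposition \ref{prop:same percent} is supplied, either your inclusion-exclusion induction or the paper's dimension count completes the proof.
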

In order to prove this, we first prove three intermediate results. Assume $n$ is fixed throughout.

\begin{lemma}\label{lem:bigtableau}
If $\imm^\%_{\lambda/\mu} \neq 0$, then $(i, n+1-i) \in \lambda/\mu$ for all $1 \le i \le n$.
\end{lemma}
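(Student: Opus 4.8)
The plan is to show the contrapositive: if $(i_0, n+1-i_0) \notin \lambda/\mu$ for some $i_0$, then $\imm^\%_{\lambda/\mu} = 0$, i.e. no permutation $\sigma \in \fkS_n$ lies in $\lambda/\mu$. The key observation is that the ``anti-diagonal'' cells $(i, n+1-i)$ for $1 \le i \le n$ are exactly the cells occupied by the longest word $w_0$, and more importantly they are spread out so that any skew shape missing one of them forces a counting obstruction. Concretely, suppose $(i_0, n+1-i_0) \notin \lambda/\mu$. Since $\lambda/\mu$ is a skew shape, each row $i$ consists of the contiguous block of columns $(\mu_i, \lambda_i]$. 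The cell $(i_0, n+1-i_0)$ being absent means either $\lambda_{i_0} \le n - i_0$ (the row is too short on the right) or $\mu_{i_0} \ge n+1-i_0$ (the row starts too far right).

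First I would handle the case $\lambda_{i_0} \le n - i_0$. Since $\lambda$ is non-increasing, for every $i \ge i_0$ we have $\lambda_i \le \lambda_{i_0} \le n - i_0$. So if $\sigma$ lies in $\lambda/\mu$, then for each of the $n - i_0 + 1$ rows $i \in \{i_0, i_0+1, \dots, n\}$, the value $\sigma(i)$ lies in $[1, n-i_0]$. But these $\sigma(i)$ are distinct, giving $n - i_0 + 1$ distinct values inside a set of size $n - i_0$, a contradiction. Hence no such $\sigma$ exists and $\imm^\%_{\lambda/\mu} = 0$. The case $\mu_{i_0} \ge n+1-i_0$ is dual: since $\mu$ is non-increasing, for every $i \le i_0$ we have $\mu_i \ge \mu_{i_0} \ge n+1-i_0$, so $\sigma(i) \in [n+2-i_0, n]$ for all $i \in \{1, \dots, i_0\}$; that is $i_0$ distinct values inside a set of size $i_0 - 1$, again a contradiction. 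Either way $A = \emptyset$ and the immanant vanishes, proving the lemma.

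I do not expect a serious obstacle here — the only mild subtlety is being careful that the definition of ``$\sigma$ lies in $\lambda/\mu$'' (namely $\mu_i < \sigma(i) \le \lambda_i$ for all $i$) really does force $\sigma(i) \in (\mu_{i_0}, \lambda_{i_0}]$-type containments after invoking monotonicity of $\lambda$ and $\mu$, and that the absence of the single cell $(i_0, n+1-i_0)$ in a genuine skew shape is equivalent to the disjunction $\lambda_{i_0} \le n-i_0$ or $\mu_{i_0} \ge n+1-i_0$ (there is no way for an ``interior'' cell of row $i_0$ to be missing). Both are immediate from the shape being a union of contiguous rows with weakly decreasing endpoints, so the argument is essentially a pigeonhole count once the case split is set up.
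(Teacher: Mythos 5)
Your proposal is correct and follows essentially the same route as the paper: the contrapositive, the case split according to whether the missing anti-diagonal cell forces $\lambda_{i}\le n-i$ or $\mu_{i}\ge n+1-i$, and then a pigeonhole count showing no permutation can lie in the shape. The paper phrases the count as finding some $j\le i$ with $w(j)\le n+1-i$ in the quadrant of missing cells, but this is the same argument.
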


\begin{proof}
We show the contrapositive assertion. If $(i, n+1-i) \notin \lambda/\mu$ for some $i$, then either $(a, b) \notin \lambda/\mu$ for all $a \le i, b \le n+1-i$ or $(a, b) \notin \lambda/\mu$ for all $a \ge i, b \ge n+1-i$. In the first case, for any permutation $w \in \fkS_n$, there must exist $j \le i$ such that $w(j) \le n+1-i.$ Thus, $(j, w(j)) \notin \lambda/\mu$, and so $\imm^\%_{\lambda/\mu} = 0$. The second case is analogous.
\end{proof}

We can define a partial order on immanants: we say $\imm_{\lambda_1/\mu_1}^\% \ge \imm_{\lambda_2/\mu_2}^\%$ if $\lambda_1/\mu_1$ contains $\lambda_2/\mu_2$.

\begin{lemma}\label{lem:engulfing}
    We have $w \in \fkS_n$ lies in $\lambda/\mu$ iff $\imm_w^\% \le \imm_{\lambda/\mu}^\%$.
\end{lemma}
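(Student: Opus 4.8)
The plan is to prove both directions of the biconditional directly from the definitions of $\imm_w^\%$ (Definition \ref{defn:per imm w}) and the containment order on skew shapes.

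For the forward direction, suppose $w \in \fkS_n$ lies in $\lambda/\mu$. I want to show $\imm_w^\% \le \imm_{\lambda/\mu}^\%$, i.e., that the shape $\lambda_w/\mu_w$ associated to $w$ (with $\mu_w(i) = m_w(i) - 1 = \min w([1,i]) - 1$ and $\lambda_w(i) = M_w(i) = \max w([i,n])$) is contained in $\lambda/\mu$. Since containment of skew shapes is checked row by row, it suffices to show $\mu_i \le \mu_w(i)$ and $\lambda_w(i) \le \lambda_i$ for each $i$. For the upper bound: since $w$ lies in $\lambda/\mu$, for every $j \ge i$ we have $w(j) \le \lambda_j \le \lambda_i$ (using that $\lambda$ is non-increasing), so $\lambda_w(i) = \max_{j \ge i} w(j) \le \lambda_i$. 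For the lower bound: for every $j \le i$ we have $w(j) > \mu_j \ge \mu_i$ (using that $\mu$ is non-increasing), so $\min_{j \le i} w(j) \ge \mu_i + 1$, giving $\mu_w(i) \ge \mu_i$. Hence $\lambda_w/\mu_w \subseteq \lambda/\mu$, which is exactly $\imm_w^\% \le \imm_{\lambda/\mu}^\%$.

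For the reverse direction, suppose $\imm_w^\% \le \imm_{\lambda/\mu}^\%$, i.e., $\lambda_w/\mu_w \subseteq \lambda/\mu$. I must show $w$ lies in $\lambda/\mu$, i.e., $\mu_i < w(i) \le \lambda_i$ for all $i$. This is immediate once I observe that $w$ itself lies in its own associated shape $\lambda_w/\mu_w$: indeed $w(i) \le \max w([i,n]) = \lambda_w(i)$ trivially (as $i \in [i,n]$), and $w(i) \ge \min w([1,i]) = \mu_w(i) + 1 > \mu_w(i)$ trivially (as $i \in [1,i]$). So $(i, w(i)) \in \lambda_w/\mu_w \subseteq \lambda/\mu$ for every $i$, which says precisely that $w$ lies in $\lambda/\mu$.

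There is essentially no hard step here; the only thing to be careful about is the direction of the monotonicity inequalities on $\lambda$ and $\mu$ (both non-increasing) when passing from the per-row bound at index $j$ to the bound at index $i$, and making sure the definitions of $m_w, M_w$ and the conventions for $\mu_i = m_w(i) - 1$ versus the strict inequality $\mu_i < \sigma(i)$ line up. One remark worth including: combined with Lemma \ref{lem:bigtableau}, this shows that the partial order on nonzero \%-immanants has a well-defined meaning in terms of which permutations they ``contain,'' and that $\imm_{\lambda/\mu}^\% = \sum_{w \text{ lies in } \lambda/\mu} \sgn(w) x_w$ is supported exactly on the down-set $\{\imm_w^\% : \imm_w^\% \le \imm_{\lambda/\mu}^\%\}$.
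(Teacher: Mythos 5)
Your proof is correct and follows essentially the same route as the paper: the forward direction uses the same monotonicity argument ($w(j) > \mu_j \ge \mu_i$ for $j \le i$ and $w(j) \le \lambda_j \le \lambda_i$ for $j \ge i$), and your reverse direction is just the direct form of the paper's contrapositive, both resting on the observation that $(i, w(i)) \in \lambda_w/\mu_w$.
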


\begin{proof}
    Let $\imm_w^\% = \imm_{\lambda_w/\mu_w}^\%$. If $w \notin \lambda/\mu$, then there exists $1 \le t \le n$ such that $(t, w(t)) \notin \lambda/\mu$. But $(t, w(t)) \in \lambda_w/\mu_w$, so $\lambda_w/\mu_w \not\subset \lambda/\mu$ and $\imm_w^\% \not\le \imm_{\lambda/\mu}^\%$.

    Now suppose $w \in \lambda/\mu$; then $\mu_i < w(i) \le \lambda_i$ for all $1 \le i \le n$. We claim that $\mu_i < m_w (i)$ for all $1 \le i \le n$. Indeed, if $m_w (i) = w(j)$ for some $1 \le j \le i$, then $m_w (i) = w(j) > \mu_j \ge \mu_i$. Similarly, we have $M_w (i) \le \lambda_i$ for all $1 \le i \le n$. Thus, $\lambda_w/\mu_w \subset \lambda/\mu$ and $\imm_w^\% \le \imm_{\lambda/\mu}^\%$.
\end{proof}

\begin{prop}\label{prop:same percent}
The permutations $w, w' \in \fkS_n$ are $1324$-related if and only if $\imm_w^{\%} = \imm_{w'}^{\%}.$
\end{prop}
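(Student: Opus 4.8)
The plan is to prove Proposition \ref{prop:same percent} by showing both implications through an analysis of how the functions $m_w$ and $M_w$ (equivalently, the shapes $\lambda_w/\mu_w$) change under a single $1324$-adjacency move, and then invoking Lemma \ref{lem:engulfing} together with the fact that a $\%$-immanant is determined by its diagram.

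First I would handle the easy direction: if $w$ and $w'$ are $1324$-adjacent, I claim $\imm_w^\% = \imm_{w'}^\%$. Suppose $w, w'$ agree outside positions $a < b$, with witnesses $c < a < b < d$ and $w(c) < w(a) < w(b) < w(d)$ while $w'(a) = w(b)$, $w'(b) = w(a)$. Since $w$ and $w'$ take the same set of values at each prefix $[1,i]$ and each suffix $[i,n]$ (the swap only permutes the two values $w(a), w(b)$, and for $i \le a$ or $i \ge b+1$ the prefix/suffix contains both or neither), we get $m_w(i) = m_{w'}(i)$ and $M_w(i) = M_{w'}(i)$ for all $i$ — the only indices to check are $a \le i \le b-1$, and there $m_w(i) \le w(c) < w(a) < w(b)$ so the minimum of the prefix is unaffected by swapping $w(a)$ with $w(b)$; symmetrically $M_w(i) \ge w(d) > w(b) > w(a)$. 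Hence $\lambda_w/\mu_w = \lambda_{w'}/\mu_{w'}$, so $\imm_w^\% = \imm_{w'}^\%$. Iterating along a chain of $1324$-adjacent permutations gives $\imm_w^\% = \imm_{w'}^\%$ whenever $w, w'$ are $1324$-related.

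For the converse, suppose $\imm_w^\% = \imm_{w'}^\%$, equivalently (since a $\%$-immanant's nonzero monomial support determines its diagram, or more directly since $\imm_w^\% \le \imm_w^\%$ forces $\lambda_w/\mu_w = \lambda_{w'}/\mu_{w'}$ via Lemma \ref{lem:engulfing} applied in both directions) the shapes coincide: $m_w = m_{w'}$ and $M_w = M_{w'}$. Now $w' \in \lambda_{w'}/\mu_{w'} = \lambda_w/\mu_w$, so by Lemma \ref{lem:engulfing}, $w'$ lies in $\lambda_w/\mu_w$, and symmetrically $w$ lies in $\lambda_{w'}/\mu_{w'}$. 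I would then argue by induction on the number of positions where $w$ and $w'$ differ (equivalently on some statistic like $\sum_i |w(i) - w'(i)|$, or on $\ell(w^{-1}w')$, or simply on the number of inversions of $w$ relative to $w'$): if $w \ne w'$, I want to produce a permutation $w''$ that is $1324$-adjacent to $w$, still lies in $\lambda_w/\mu_w$, and is "closer" to $w'$. Concretely, pick positions $a < b$ with $w(a) > w(b)$ but $w'(a) < w'(b)$ — such an inversion-of-$w$-relative-to-$w'$ exists whenever $w \ne w'$ — chosen minimally (say $b-a$ minimal, then among those $w(a) - w(b)$ minimal), and let $w''$ swap the values at $a, b$. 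The minimality should force $w(a), w(b)$ to be "consecutive enough" that $w''$ is again in $\lambda_w/\mu_w$; and I need to find witnesses $c < a$ and $d > b$ making $(w, w'')$ a genuine $1324$-adjacency, i.e. $c$ with $w(c) < w(b)$ (so that $m_w$ is unchanged at the relevant prefixes) and $d$ with $w(d) > w(a)$. The existence of such $c$ and $d$ is exactly what the condition "$w'$ also lies in $\lambda_w/\mu_w$" should buy us: since $m_w(a) \le$ the shape constraint is shared, and $w'(a) < w'(b)$ forces enough room below and above.

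The main obstacle I anticipate is this last step — carefully choosing the swap $(a,b)$ and verifying both that the swapped permutation $w''$ still lies in the common skew shape $\lambda_w/\mu_w$ (so that the shape, hence the $\%$-immanant, is preserved along the path) and that the required pattern-witnesses $c, d$ exist so that the move is literally a $1324$-adjacency in the sense of Definition \ref{defn:1234-1324}. This requires extracting, from the hypothesis that both $w$ and $w'$ sit inside the same left hull, a structural statement of the form: any two permutations with the same $m_\cdot$ and $M_\cdot$ sequences are connected by $1324$-adjacencies staying within that hull. I would prove this by the minimal-inversion argument above, with the bulk of the work being the bookkeeping that $m_{w''} = m_w$, $M_{w''} = M_w$, and that the witnesses $c$ (with $w(c) < w(b) < w(a) < w(d)$) exist — here I expect to use that $\mu_a = m_w(a) - 1 < w(b)$ (since $w''$, with $w''(a) = w(b)$, lies in the shape) and $\lambda_b = M_w(b) \ge w(a)$ (since $w''(b) = w(a)$ lies in the shape) to locate suitable indices in $[1,a-1]$ and $[b+1, n]$ realizing those values or smaller/larger ones.
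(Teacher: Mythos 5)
Your forward direction is correct and is essentially the paper's argument: a single $1324$-adjacency leaves $m_w$ and $M_w$ unchanged, hence the hulls and the \%-immanants agree, and one iterates along the chain. The reduction of the converse to ``equal hulls'' is also fine once stated carefully (from polynomial equality the coefficient of $x_{w'}$ in $\imm_w^\%$ is nonzero, so $w'$ lies in $\lambda_w/\mu_w$, and Lemma \ref{lem:engulfing} applied both ways gives $\lambda_w/\mu_w = \lambda_{w'}/\mu_{w'}$); your parenthetical as written conflates the diagram partial order with polynomial equality, but this is easily repaired.

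The genuine gap is exactly the step you flag as ``the main obstacle,'' and it is not a routine bookkeeping matter. For your minimal relative inversion $(a,b)$ you must show that the swap is literally a $1324$-adjacency in the sense of Definition \ref{defn:1234-1324}, i.e.\ that witnesses $c<a$ with $w(c)<w(b)$ and $d>b$ with $w(d)>w(a)$ exist. Note that the existence of $c$ is \emph{equivalent} to $m_w(a)\le w(b)$, i.e.\ to the swapped permutation satisfying the $\mu$-constraint in row $a$; so your plan to derive the witnesses ``since $w''$ lies in the shape'' is circular --- that membership is precisely what needs proving, and mere membership of both $w$ and $w'$ in the common hull does not supply it (hull equality only gives $m_w(a)\le w'(a)$, which bears no relation to $w(b)$). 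Your induction statistic is also not pinned down: the number of differing positions can increase under such a swap (e.g.\ $w=14325$, $w'=12345$, swapping positions $2,3$), so one must work with relative inversions or a similar quantity and prove strict decrease, which again interacts with the choice of pair. The paper resolves all of this by a more structured choice: induct on the largest index $j$ with $w(j)\neq w'(j)$ and swap the value $w(j)$ with $w'(j)=w(k)$, $k=w^{-1}(w'(j))<j$; the witnesses then come not from hull membership alone but from the equalities $m_w(k)=m_{w'}(k)$, $M_w(j)=M_{w'}(j)$ \emph{combined with} the fact that $w$ and $w'$ agree at all positions beyond $j$ (which is what rules out $m_w(k)=w(k)$ and $M_w(j)=w(j)$). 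Without some such additional structure, your chosen swap is not justified, so as it stands the converse direction is incomplete.
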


\begin{proof}
First, suppose that the permutations are $1324$-related, so there exists a sequence of permutations where each adjacent pair of them are $1324$-adjacent. It suffices to show $\imm_w^{\%} = \imm_{w'}^{\%}$ when $m = 1,$ because the general case follows from the $m=1$ case and transitivity. 
Turning to the $m=1$ case, and swapping $w$ and $w'$ if necessary, we conclude there must exist $1 \leq a < b < c < d \leq n$ so that $w(a) < w(b) < w(c) < w(d)$ and $w'(a) < w'(c) < w'(b) < w'(d).$ 
\par To show that $\imm_w^{\%} = \imm_{w'}^{\%},$ we need to prove that $m_w(i) = m_{w'}(i)$ and $M_w(i) = M_{w'}(i)$ for all $i.$ First, notice that $w(j) = w'(j)$ for all $j$ except for $j = b$ and $j = c,$ by definition \ref{defn:1234-1324}. Therefore, if $i < b$ or $i \geq c,$ then $m_w(i) = m_{w'}(i)$ and $M_w(i) = M_{w'}(i)$ because these are the minimum and maximum, respectively, of the same set.
\par Otherwise, if $b \leq i < c,$ since $w(a) = w'(a) < w(b) < w'(b),$ it follows that $m_w(i) \leq w(a) < w(b), w'(b).$ But if $m_w(i) = w(j_i),$ then by definition \ref{defn:1234-1324} this equals $w'(j_i),$ as $j_i \neq b.$ Therefore, we have $m_w(i) \geq m_{w'}(i).$ Swapping the roles of $w, w'$ and repeating the argument yields $m_w(i) \leq m_{w'}(i),$ and thus these two values are equal.
\par Similarly, $M_w(i), M_{w'}(i) \geq w(d) > w(c), w'(c),$ so if $M_w(i) = w(j_i),$ as $j_i$ cannot equal $b$ or $c,$ we have $$M_w(i) = w(j_i) = w'(j_i) \geq M_{w'}(i).$$ Again, we can swap the roles of $w, w'$ to get our equality for all $i.$ This implies that $\imm_w^{\%} = \imm_{w'}^{\%},$ which we wanted.
\par For the other direction, suppose that $\imm_w^{\%} = \imm_{w'}^{\%}.$ We will show the existence of a sequence starting with $w$ and ending with $w'$ and where each pair of adjacent permutations in the sequence are $1324$-adjacent. We induct on the largest value $i$ such that $w(i) \neq w'(i),$ if one exists (and set $i = 0$ if this never holds). Our base case is $i = 0,$ where $w = w'$ and this is obvious.
\par Suppose we have shown that this holds for $i = 0, 1, 2, \ldots, j - 1,$ and $w, w'$ are permutations such that $\imm^{\%}_{w} = \imm^{\%}_{w'},$ and $j$ is the largest value such that $w(j) \neq w'(j).$ In particular, we have that $w(i) = w'(i)$ for $i > j.$ Without loss of generality, suppose that $w(j) > w'(j).$ Let $w'' = (w'(j), w(j)) \cdot w,$ and let $k$ be such that $w(k) = w'(j);$ note that $k < j.$ We will show that $w, w''$ are $1324$-adjacent.
\par To do this, first observe that $w(x) = w''(x)$ for $x \neq j, k.$ The main difficulty in this proof is finding values $a, d$ such that $a < k < j < d$ and $w(a) < w(j), w(k) < w(d).$ 
\par To do this, by assumption, since $\imm_{w}^{\%} = \imm_{w'}^{\%},$ we require $$M_{w'}(j) = M_{w}(j)$$ and $$m_{w'}(k) = m_{w}(k).$$ We first claim that $w^{-1}(m_w(k)) < k$ and $j < w^{-1}(M_w(j)).$
\par First, note that since $M_w (j) = \max w([j, n])$, we have $w^{-1}(M_w(j)) \geq j$. Suppose that this is an equality; then $M_{w'} (j) = M_w (j) = w(j).$ Since $M_{w'}(j) = \max w'([j, n])$, we have $w(j) \in w'([j, n])$. But observe that $w(j) \neq w(i) = w'(i)$ for $i > j,$ and $w(j) \neq w'(j),$ so $w(j)$ cannot lie in $w'([j, n])$, a contradiction. Thus, $w^{-1}(M_w(j)) > j.$
\par Similarly, $w^{-1}(m_w(k)) \leq k.$ If this is an equality, we have $\min w'([1, k]) = m_{w'} (k) = m_w (k) = w(k) = w'(j).$ But this is impossible, since $j \not \in [1, k]$. Thus, $w^{-1}(m_w(k)) < k.$
\par We now argue that $m_w(k) < w(k) < w(j) < M_w(j).$ By assumption, we have $w(j) > w(k).$ Furthermore, $w(k) \in w([1, k]),$ so $k \neq w^{-1}(m_w(k))$ implies that $w(k) > m_w(k).$ Similarly, $j \neq w^{-1}(M_w(j))$ and $w(j) \in w([j, n]),$ so $w(j) < M_w(j).$
\par Hence, we have that $w^{-1}(m_w(k)) < k < j < w^{-1}(M_w(j)),$ and $m_w(k) < w(k) = w'(j) < w(j) < M_w(j).$ In particular, it follows that $w$ and $w'' = (w'(j), w(k))w$ are $1324$-adjacent, so by the other direction of this proposition, it follows that $\imm_{w''}^{\%} = \imm_w^{\%} = \imm_{w'}^{\%}.$ Furthermore, by construction, we have that $w''(i) = w(i) = w'(i)$ for $i > j,$ and $w''(j) = w(k) = w'(j),$ meaning that the largest value where $w''$ and $w'$ disagree is less than $j.$ By the inductive hypothesis, there exists a sequence of permutations $w'', w_1, w_2, \ldots, w_k, w'$ where each adjacent pair is $1324$-adjacent. But then $w, w'', w_1, w_2, \ldots, w_k, w'$ is a sequence of permutations where each pair is $1324$-adjacent, and therefore $w, w'$ are $1324$-related. This proves the proposition.
\end{proof}
\begin{remark}
This proposition is the first hint that $\%$-immanants aren't able to distinguish between different $1324$-related permutations (in the sense of statement 2 of Theorem \ref{thm:classifying space of percent}). This idea is also an intuitive explanation for why, when asking which Temperley-Lieb immanants are linear combinations of $\%$-immanants later in the paper, avoiding the pattern $1324$ would be necessary. We will make these ideas more precise in later sections of the paper, once we understand some properties of the coefficients of Temperley-Lieb immanants.
\end{remark}

We now are ready to prove the theorem.

\begin{proof}[Proof of Theorem \ref{thm:classifying space of percent}]
Let $V$ be the vector space of immanants $\imm_f$ for which $f(w) = -f(w')$ for all $w, w'$ that are $1324$-adjacent. We want to show $P_n^{\%} = V$.

First, we prove $P_n^{\%} \subset V$. Since $P_n^{\%}$ is spanned by $\%$-immanants, it suffices to check that $\imm_{\lambda/\mu}^\% \in V$ for all skew tableaux $\lambda/\mu$. Fix $\lambda/\mu$, and for any $w \in \fkS_n$, let $c_w$ denote the coefficient of $x_w$ in $\imm^{\%}_{\lambda/\mu}$. In order to show $\imm_{\lambda/\mu}^\% \in V$, it suffices to check $c_w = -c_{w'}$ for any $w, w' \in \fkS_n$ that are $1324$-adjacent. Note that $c_w = \sgn(w)$ if $w$ lies in $\lambda/\mu$ and $0$ otherwise, and likewise for $w'$. Since $\sgn(w) = -\sgn(w')$, we obtain that $c_w = -c_{w'}$ if we can show the following:

\textbf{Claim.} $w$ lies in $\lambda/\mu$ iff $w'$ lies in $\lambda/\mu$.

By Lemma \ref{lem:engulfing} and Proposition \ref{prop:same percent}, both statements are equivalent to $\imm_w^\% \le \imm_{\lambda/\mu}^\%$. (A direct proof is also possible.)

\par This proves the Claim and thus we have shown that $P_n^{\%} \subset V$. Now, we will prove that $\dim V = \dim P_n^{\%}$.
\par Let $W$ be a set of representatives for the $1324$-related equivalence classes. For each $w \in W$, let $I_w$ be the set of permutations $w'$ that are $1324$-related to $w,$ and define the immanant $\chi_{I_w} = \sum_{\sigma \in I_w} \sgn(\sigma) x_\sigma$. First, note that the $\chi_{I_w},$ ranging over $w \in W$, span $V$. Indeed, if some immanant $\sum\limits_{w \in \fkS_n} f(w) x_w$ lies in $V,$ then $\sum\limits_{w \in \fkS_n} f(w) x_w =  \sum\limits_{w \in W} f(w) \sgn(w) \chi_{I_w},$ since any other permutation $w' \in \fkS_n$ is $1324$-related to some element $w \in W,$ meaning that $f(w') = \sgn(w)\sgn(w') f(w).$ Thus, $\dim V \le |W|$.

\par Next, we show that $\{ \imm_w^\% \}_{w \in W}$ are linearly independent in $P_n^\%$. Indeed, suppose that there is a subset $W' \subset W$ such that $\sum_{w \in W'} a_w \imm_w^\% = 0$ for nonzero $a_w$. By Proposition \ref{prop:same percent}, we have $\imm_w^\% \neq \imm_v^\%$ for distinct $v, w \in W$. Thus, there is some maximal element $v \in W'$, in the sense that $\imm_w^\% \not\le \imm_v^\%$ for $w \in W'$, $w \neq v$ (using the partial order defined before Lemma \ref{lem:engulfing}). Now consider the coefficient of $x_v$ in $\sum_{w \in W'} a_w \imm_w^\%$. The coefficient of $x_v$ in $\imm_v^\%$ is $\sgn(v)$, but the coefficient of $x_v$ in $\imm_w^\%$ is $0$ by Lemma \ref{lem:engulfing} and maximality of $v$. Thus, we must have $a_v \sgn(v) = 0$, a contradiction since by assumption $a_v \neq 0$. Hence, $\{ \imm_w^\% \}_{w \in W}$ are linearly independent in $P_n^\%$, so $\dim P_n \ge |W|$.

\par Thus, $|W| \le \dim P_n^\% \le \dim V \le |W|$. As a result, $\dim P_n^\% = \dim V$. Since $P_n^\% \subset V$, we have $P_n^\% = V$ and this completes the proof of the theorem. \qedhere

\end{proof}

\section{Temperley-Lieb Immanants as One \%-Immanant}\label{sec:specific-TL}
In this section, we classify the $321$-avoiding permutations $w$ whose Temperley-Lieb immanant is a \%-immanant up to sign. 
\begin{thm}\label{thm:onepercentimm}
Let $w$ be a $321$-avoiding permutation. Then $\imm_w$ is a \%-immanant up to sign if and only if $w$ avoids both $1324$ and $2143$. In that case, $\imm_w = \sgn(w) \imm^\%_w$.
\end{thm}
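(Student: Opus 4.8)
The plan is to prove the two directions separately, using the results of Section 3 together with the basic facts about $f_w(u)$ from Lemma \ref{lem:basic fwu} and the symmetry Lemma \ref{lem:symmetry}.

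First I would handle the easy implication: if $\imm_w = \sgn(w)\imm^\%_w$ (or any $\%$-immanant up to sign), then by Theorem \ref{thm:classifying space of percent} the function $f_w$ satisfies $f_w(u) = -f_w(u')$ whenever $u,u'$ are $1324$-adjacent; in particular $f_w$ is never constantly nonzero on a $1324$-related pair with equal $\%$-immanant, but more directly I would argue the contrapositive. If $w$ contains $1324$, then restricting to the four indices realizing the pattern and using Lemma \ref{lem:restriction} one shows $w$ and a $1324$-adjacent permutation $w'$ obtained by swapping the two middle values both satisfy $w, w' \ge w$, hence both $f_w(w) = 1$ and we would need $f_w(w') = -1$; but $w'$ is obtained from $w$ by resolving a $1324$ into a $1234$, which decreases length, so $w' \not\ge w$ contradicting Theorem \ref{thm:classifying space of percent} — wait, more carefully: the pattern argument should instead produce a $u \ge w$ and a $1324$-adjacent $u'$ with $u' \ge w$ but $f_w(u) = f_w(u')$ forced by the non-crossing matching structure of $\ncm(w)$. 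The cleanest route is: $w$ contains $2143$ or $1324$ $\Rightarrow$ $\imm_w$ is not a single $\%$-immanant. For $2143$: I would exhibit two monomials $x_u$ with nonzero coefficient of opposite sign in $\imm_w$ that cannot both appear with the sign pattern forced by a single $\%$-immanant (a $\%$-immanant has all its nonzero coefficients equal to $\sgn(\sigma)$, so $\imm_w$ being $\pm\imm^\%_{\lambda/\mu}$ forces $f_w(u)\in\{0,\pm 1\}$ with a rigid sign; a $2143$ pattern lets one build $u\ge w$ with $f_w(u)$ violating this, via the complementary-minor expansion Proposition \ref{prop:cm equals imm sum}).

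For the converse — $w$ avoiding $321$, $1324$, $2143$ implies $\imm_w = \sgn(w)\imm^\%_w$ — I would use the uniqueness remark after Proposition \ref{prop:cm equals imm sum}: it suffices to check that $\sgn(w)\imm^\%_w$ satisfies the same complementary-minor identities \eqref{cm equals imm sum} that characterize $\imm_w$, or equivalently to compute $f_w(u)$ directly and match it against $\sgn(w)\sgn(u)\one_{u \in \lambda_w/\mu_w}$. Concretely, I would show: (i) if $u$ lies in $\lambda_w/\mu_w$ then $u \ge w$ (this is essentially Lemma \ref{lem:engulfing} combined with $\imm^\%_w = \imm^\%_{\lambda_w/\mu_w}$), and (ii) for such $w$, $f_w(u) = \sgn(w)\sgn(u)$ exactly when $u$ lies in $\lambda_w/\mu_w$ and $0$ otherwise. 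Part (ii) is where I expect to do real work; I would argue it by downward induction on $\ell(u)$ using Proposition \ref{prop:cm equals imm sum} with a well-chosen coloring $(I,J)$ compatible with $w$ (the one from Lemma \ref{lem:matching_compatibility}), so that the sum on the right has only controlled terms, and by the $2143$-avoidance showing the non-crossing matching $\ncm(w)$ has a very simple ``nested intervals'' form, so that compatibility of $u$ with all colorings compatible with $w$ forces exactly $u \in \lambda_w/\mu_w$.

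The main obstacle I anticipate is establishing (ii) — pinning down the coefficients $f_w(u)$ for $321$-, $2143$-, $1324$-avoiding $w$. The key structural input should be that such a $w$ has a block decomposition into ``staircase'' pieces (the $\ncm(w)$ matching consists only of nested/parallel arcs, no interleaving), which one gets by translating $2143$-avoidance on $w$ into a statement about $\ncm(w)$ via Proposition \ref{prop:ncm}. Once the matching is this simple, the Rhoades–Skandera sign rule and Proposition \ref{prop:cm equals imm sum} should let one read off that $\imm_w$ is supported exactly on permutations fitting in the left hull $\lambda_w/\mu_w$, each with sign $\sgn(w)\sgn(u)$, which is precisely $\sgn(w)\imm^\%_w$. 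I would also double-check the degenerate cases ($w$ with fixed points, $w = \mathrm{id}$) separately since Lemma \ref{lem:matching_compatibility} treats fixed points specially.
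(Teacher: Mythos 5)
Your outline for the $1324$ half of the converse is essentially the paper's argument (Theorem \ref{1324Thm}): if $w$ contains $1324$ at $i<j<k<l$, then $w' = w\cdot(j,k)$ is $1324$-adjacent to $w$ and satisfies $w' < w$ by Corollary \ref{cor:bruhat_inv}, so $f_w(w') = 0 \neq -1 = -f_w(w)$ by Lemma \ref{lem:basic fwu}, contradicting Theorem \ref{thm:classifying space of percent}; despite the garbled phrasing, that step is fine. The other two steps, however, are plans rather than proofs, and both hide the real work. For the $2143$ part you say ``a $2143$ pattern lets one build $u \ge w$ with $f_w(u)$ violating this,'' but you never produce such a $u$ or compute its coefficient, and there is no cheap way to do so at this stage: the paper avoids coefficient computations here entirely by evaluating both $\imm_w$ and the putative $\imm^\%_{\lambda/\mu}$ at an explicit $0$--$1$ matrix with three equal rows (built from Lemmas \ref{w(1) and w(n) inequality for 2143} and \ref{lem:corner zero}), getting $0$ on one side via Proposition 3.14 of \cite{RS} and $\pm 1$ on the other; the coefficient route you gesture at is only available after the full Section 5 machinery (Proposition \ref{2143Patterns}, Propositions \ref{sumcm5}--\ref{sumcm6}, and Corollary \ref{cor:anti_diag}, which gives $|f_w(w_0)|\ge 2$).

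The forward direction is the larger gap. Your proposed mechanism --- fix the single coloring of Lemma \ref{lem:matching_compatibility} and run a downward induction on $\ell(u)$ through \eqref{cm equals imm sum} --- does not close: for a fixed coloring $(I,J)$ and fixed $u$, \eqref{cm equals imm sum} is one equation whose unknowns are $f_v(u)$ for \emph{all} $v$ compatible with $(I,J)$, at the same $u$, so induction on $\ell(u)$ never eliminates the other compatible $v$'s, and a single coloring cannot isolate $f_w(u)$. The paper's solution is structurally different: first reduce by the symmetries $S,T$ (Lemma \ref{lem: simplify cases}, Proposition \ref{prop:CSBreduction}) to the case $w(1)=1$ or $w(1)=w(n)+1$, where the hull is the complement of two corner rectangles (Proposition \ref{321-avoiding Rectangles}) and $w$ has an explicit block form (Lemma \ref{lem: w form}); then write $\imm^\%_w$ as an explicit \emph{sum} of complementary minors $\cm_{I,[1,w(n)]}$ over $[w^{-1}(n)+1,n]\subset I\subset[w^{-1}(1),n]$ (Proposition \ref{Rectangle Complementary Minors}); and finally expand each minor by Proposition \ref{prop:cm equals imm sum} and cancel every $\imm_v$ with $v\neq w$ via a parity-based sign-reversing involution on the family of colorings, the surviving term being pinned down by the uniqueness of a matching with no internal pairings (Lemma \ref{lem:simple-no-internal-pair}, Proposition \ref{Rectangle Unique Matching}). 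Your sketch contains neither the reduction, nor the family of colorings, nor the cancellation mechanism, so statement (ii) --- the heart of the theorem --- remains unproved. (A minor misattribution: your claim (i) that $u$ in the hull implies $u\ge w$ does not follow from Lemma \ref{lem:engulfing}, which compares diagrams, not Bruhat order; it is also not needed once the cancellation argument is in place.)
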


\subsection{2143-, 1324-Avoiding Implies $\imm_w = \sgn(w)\imm_w^{\%}$}
This subsection is devoted to proving the if direction of Theorem \ref{thm:onepercentimm}. This is a special case of \cite[Corollary 3.6]{CSB} which drops the assumption that $w$ is $321$-avoiding, whose proof is based on results on Schubert varieties. Here, we present a purely combinatorial proof, using the relationship between Temperley-Lieb immanants, non-crossing matchings, and complementary minors described in \cite{RS} and summarized in Section \ref{sec:prelims}. The ideas in this section will also be used extensively in Section \ref{sec:general_TL}. 

\begin{thm}\label{CSBThm}Special case of \cite[Corollary 3.6]{CSB}
If $w$ is a permutation that avoids the patterns 321, 1324, and 2143, then $\imm_w$ is a \%-immanant up to sign. Specifically, $\imm_w = \sgn(w) \imm_w^{\%}.$
\end{thm}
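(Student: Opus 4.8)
The plan is to prove the statement $\imm_w = \sgn(w)\,\imm_w^{\%}$ by using Proposition \ref{prop:cm equals imm sum}, which expresses a complementary minor as a sum of Temperley-Lieb immanants, together with Lemma \ref{cmminors}, which relates complementary minors and the $\cm_{I,J}$ polynomials. Concretely, I would pick the coloring $(I,J)$ furnished by Lemma \ref{lem:matching_compatibility}: color $i$ black and $w(i)'$ white when $w(i)>i$, color $i$ white and $w(i)'$ black when $w(i)<i$, and make a fixed choice when $w(i)=i$. Since $w$ is $321$-avoiding, $w$ is compatible with this coloring, so $\imm_w$ appears on the right-hand side of \eqref{cm equals imm sum} for this $(I,J)$. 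The goal then becomes to show that $w$ is the \emph{only} $321$-avoiding permutation compatible with this particular $(I,J)$, which would force $\Delta_{I,J}\Delta_{\bar I,\bar J} = \imm_w$, and hence (via Lemma \ref{cmminors} and the explicit monomial expansion of $\cm_{I,J}$) that $\imm_w = \pm\cm_{I,J} = \sum_{u(I)=J}\pm\sgn(u)x_u$; comparing this with the definition of $\imm_w^{\%}$ finishes the job.

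The main work is therefore two-fold. First, I would show that the set of permutations $u$ with $u(I)=J$ for the chosen $(I,J)$ is exactly the set of permutations lying in $\lambda/\mu$ where $\lambda,\mu$ are the sequences of Definition \ref{defn:per imm w}; this is a direct translation between ``$(i,u(i))$ gets two colors'' (the remark after Definition \ref{def:cm}) and the inequalities $\mu_i < u(i)\le\lambda_i$, and it also pins down the sign bookkeeping so that $\pm\cm_{I,J} = \sgn(w)\imm_w^{\%}$ up to checking one monomial, e.g. $x_w$ itself. Second, and this is where the hypotheses of avoiding $1324$ and $2143$ must enter, I would show uniqueness of the compatible $321$-avoiding permutation. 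Here I expect to argue contrapositively: if there were a second $321$-avoiding permutation $w' \ne w$ compatible with $(I,J)$, then comparing $\ncm(w)$ and $\ncm(w')$ against the coloring $(I,J)$ should expose a forbidden sub-configuration in $w$. Using Lemma \ref{lem:matching_compatibility} (every white vertex is matched to a black vertex of no-larger height) one sees that the coloring essentially records, for each height, whether the wire goes up, down, or stays; a second compatible matching would mean two wires could be ``rerouted,'' and the obstruction to rerouting in a $321$-avoiding diagram is precisely the presence of a $2143$ pattern (two disjoint ascents that are ``parallel'') or a $1324$ pattern.

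The step I expect to be the genuine obstacle is this uniqueness claim — showing that avoiding $2143$ and $1324$ (on top of $321$) forces the coloring $(I,J)$ to be compatible with a unique $321$-avoiding permutation. I would approach it by induction on $n$, peeling off the first row: if $w(1)=1$ the problem reduces to $\fkS_{n-1}$ after deleting vertex $1$ and $1'$ from the matching; if $w(1)=k>1$ then in the coloring $1$ is black, and one must show the matched white partner of $1$ is forced, then delete and recurse, checking that pattern-avoidance is inherited by the restricted permutation (Lemma \ref{lem:restriction}-style reasoning, plus the flattening remarks). Alternatively — and this may be cleaner — I would use the block-structure / restriction language of Section \ref{sec:prelims} to reduce a general $2143$- and $1324$-avoiding, $321$-avoiding permutation to a direct sum of ``irreducible'' pieces, classify the irreducible pieces (they should be very constrained, essentially increasing runs or the single transposition-type blocks), and verify uniqueness of compatible matching on each piece directly, since complementary minors and $\%$-immanants both behave well under taking direct sums of permutations. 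I would also sanity-check the whole argument on the running example $w = 2143$, where $\imm_{2143} = \sgn(2143)\,\imm^{\%}_{2143}$ should come out with the expected skew shape.
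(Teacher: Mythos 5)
Your overall template (express something as complementary minors, expand via Proposition \ref{prop:cm equals imm sum}, and compare coefficients) is the right one, but the central claim your plan rests on is false: for a general $321$-, $1324$-, $2143$-avoiding $w$, there is \emph{no single} coloring $(I,J)$ with $\Delta_{I,J}\Delta_{\bar I,\bar J}=\imm_w$, nor with $\pm\cm_{I,J}=\sgn(w)\imm_w^{\%}$. The support of one $\cm_{I,J}$ is $\{u: u(I)=J\}$, an equality-of-sets condition, while the support of $\imm_w^{\%}$ is the set of permutations satisfying the row-wise interval constraints $\mu_i<\sigma(i)\le\lambda_i$; the latter is in general a disjoint union over a whole family of colorings, not one. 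Concretely, take $w=3142$ (which avoids $321$, $1324$, $2143$): the diagram of $\imm^{\%}_{3142}$ is the complement of a $1\times 2$ and a $1\times 2$ rectangle, and it decomposes as a sum of \emph{two} complementary minors ($I=\{2,4\}$ and $I=\{3,4\}$ with $J=\{1,2\}$), as in the picture following Proposition \ref{Rectangle Complementary Minors}. Moreover your uniqueness claim for the coloring of Lemma \ref{lem:matching_compatibility} also fails for this $w$: that coloring is $I=\{1,3\}$, $J=\{3,4\}$, and both $\ncm(3142)$ and $\ncm(2413)$ (the matching with arcs $\{1,2\},\{3,4\},\{2',3'\},\{1',4'\}$) are compatible with it, so $\Delta_{I,J}\Delta_{\bar I,\bar J}=\imm_{3142}+\imm_{2413}+\cdots$, not $\imm_{3142}$ alone. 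So the ``genuine obstacle'' you flag (uniqueness of the compatible matching for a single coloring) is not merely hard, it is not true, and no induction on $n$ will rescue it.

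What is actually needed, and what the paper does, is to work with the \emph{family} of colorings at once: after a symmetry reduction (Lemma \ref{lem: simplify cases} plus the pattern lemmas) to the case $w(1)=1$ or $w(1)=w(n)+1$, one writes $\imm_w^{\%}=\sum_{I}\cm_{I,[1,w(n)]}$ over all $I$ with $[w^{-1}(n)+1,n]\subset I\subset[w^{-1}(1),n]$, $|I|=w(n)$ (Proposition \ref{Rectangle Complementary Minors}); expanding each term by Proposition \ref{prop:cm equals imm sum}, the uniqueness statement that survives is not about a single coloring but about the matching having \emph{no internal pairing} within a prescribed interval (Lemma \ref{lem:simple-no-internal-pair}), which characterizes $\ncm(w)$; for every other compatible $v$, an internal pair $(p,q)$ yields a sign-reversing involution $I\mapsto (I\setminus\{p\})\cup\{q\}$ on the family, and Lemma \ref{cmminors} (parity of $q-p$) makes the contributions of $\imm_v$ cancel. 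The sign is then fixed by comparing the coefficient of $x_w$ using $f_w(w)=1$. Your proposal is missing precisely this sum-over-colorings cancellation mechanism; as written, its two main steps (single-minor identification of $\imm_w^{\%}$, and single-coloring uniqueness of the compatible matching) both break down already at $n=4$.
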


The main method of the proof is as follows. First, we show that $\imm_w^\%$ can be nicely expressed as a sum of complementary minors, each of which is (up to sign) a sum of Temperley-Lieb immanants by Proposition \ref{prop:cm equals imm sum}. This yields a long expression for $\imm_w^{\%}$ in terms of Temperley-Lieb immanants. We then show that almost all of the terms cancel, leaving us with a single Temperley-Lieb immanant $\imm_w$, up to sign. The exact sign can then be extracted by comparing the coefficients of $x_w.$
\par In order to do this, we first proceed by classifying the permutations that avoid the three patterns $321, 2143, 1324$. We begin with the following proposition:
\begin{prop}\label{321-avoiding Rectangles}
If $w$ is a $321$-avoiding permutation, then the complement of the diagram of $\imm^{\%}_w$ consists of two (possibly empty) rectangles in the corners: the rectangle in the upper-left corner is $(w^{-1}(1) - 1)$ by $(w(1) - 1)$, and the one in the lower-right corner is $(n - w^{-1}(n))$ by $(n - w(n))$. In particular, $m_w(i)$ and $M_w(i)$ takes on at most two distinct values across all $1 \le i \le n$. (Recall that $m_w$ and $M_w$ were defined in Definition \ref{defn:per imm w}.)
\end{prop}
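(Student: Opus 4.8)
The plan is to work directly from Definition \ref{defn:per imm w}, which describes the diagram of $\imm^\%_w$ via $\mu_i = m_w(i) - 1$ and $\lambda_i = M_w(i)$, so that the complement of the diagram in row $i$ consists of the columns $[1, m_w(i) - 1]$ on the left and $[M_w(i) + 1, n]$ on the right. Thus it suffices to understand the two sequences $m_w(i) = \min w([1,i])$ and $M_w(i) = \max w([i,n])$. I will first argue that $m_w$ is non-increasing and $M_w$ is non-decreasing (immediate from the definitions, since $[1,i] \subseteq [1, i+1]$ and $[i+1, n] \subseteq [i, n]$), so the left part of the complement forms a Young-diagram-like staircase pinned to the upper-left corner and the right part forms one pinned to the lower-right corner. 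The content of the proposition is that for a $321$-avoiding $w$, each of these staircases is actually a rectangle, i.e. $m_w$ takes only the values $w(1)$ and $1$, and $M_w$ takes only the values $w(n)$ and $n$.

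The key step is the following claim: if $w$ is $321$-avoiding, then $m_w(i) \in \{w(1), 1\}$ for every $i$. To see this, note $m_w(1) = w(1)$, and once $m_w$ drops below $w(1)$ it is because some later value $w(j) < w(1)$ with $j > 1$ appears; I claim that in fact the running minimum must then already be $1$. Suppose not: suppose $m_w(i) = v$ with $1 < v < w(1)$, achieved at position $j = w^{-1}(v) \le i$, and $1 < j$. Then $w(1) > v = w(j)$, so $(1, j)$ with $w(1) > w(j)$ is a descent-type pair; and since $v > 1$, the value $1$ appears at some position $\ell = w^{-1}(1)$. If $\ell > j$, then $w(1) > w(j) > w(\ell)$ gives a $321$ pattern at positions $1 < j < \ell$; if $\ell < j$ then $w(\ell) = 1 < v$ contradicts $v$ being the minimum of $w([1,i]) \supseteq w([1,j]) \ni w(\ell)$. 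Either way we reach a contradiction, so $m_w(i) \in \{w(1), 1\}$. The statement for $M_w$ follows by the symmetric argument (or by applying Lemma \ref{lem:symmetry}-style symmetry, e.g. replacing $w$ by $w_0 w w_0$, which is again $321$-avoiding and swaps the roles of $m$ and $M$).

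Finally I translate this back into the geometry. The left part of the complement consists of those cells $(i, c)$ with $c \le m_w(i) - 1$; since $m_w(i) = w(1)$ exactly for $i < w^{-1}(1)$ and $m_w(i) = 1$ afterwards, the nonempty rows of the left complement are precisely $i = 1, \dots, w^{-1}(1) - 1$, each containing columns $1, \dots, w(1) - 1$, giving a $(w^{-1}(1) - 1) \times (w(1) - 1)$ rectangle in the upper-left corner. Symmetrically, $M_w(i) = w(n)$ exactly for $i > w^{-1}(n)$ and $M_w(i) = n$ otherwise, so the right complement is the $(n - w^{-1}(n)) \times (n - w(n))$ rectangle in the lower-right corner. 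The claim that $m_w$ and $M_w$ each take at most two distinct values is exactly what was proved in the key step. I expect the main obstacle to be stating the $321$-avoidance case analysis cleanly — making sure the positions $1, j = w^{-1}(v), \ell = w^{-1}(1)$ are correctly ordered in each subcase — rather than anything deep; the rest is bookkeeping with the definitions.
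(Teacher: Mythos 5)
Your proposal is correct and takes essentially the same approach as the paper: the paper likewise observes that if the running minimum $m_w$ (or running maximum $M_w$) changed value too often, the positions where new extrema are achieved would form a $321$ pattern, so each takes at most two values and the complement consists of the two corner rectangles with the stated dimensions. One small wording slip: $M_w(i)=\max w([i,n])$ is non-\emph{increasing} in $i$ (not non-decreasing), though your subsequent geometric conclusions and the symmetric case analysis already use the correct direction.
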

\begin{proof}
Notice that if $m_w(i)$ changes values at three times, say at $i_1, i_2, i_3,$ then we have $w(i_1) > w(i_2) > w(i_3),$ which is a contradiction to the assumption that $w$ is $321$-avoiding. Similarly, we see that if $M_w$ changes three times, at $i_1, i_2, i_3,$ we have that $M_w(i_1) = w(i_1) > M_w(i_2) = w(i_2) > M_w(i_3) = w(i_3),$ again a contradiction. Thus, both $m_w (i)$ and $M_w (i)$ take on at most two distinct values across all $1 \le i \le n$. This means the diagram of $\imm_w^\%$ is the complement of two rectangles in the upper-left and lower-right corners. Since $m_w (1) = w(1)$ and $m_w (w^{-1} (1)) = 1$, the dimensions of the upper-left rectangle are $(w^{-1}(1) - 1)$ by $(w(1) - 1)$. Similarly, the rectangle in the lower-right corner has dimensions $(n - w^{-1}(n))$ by $(n - w(n))$. This completes the proof of the Proposition.
\end{proof}
\par Before proceeding with the proof of the proposition, we utilize symmetries of Temperley-Lieb immanants and \%-immanants in order to make some assumptions about our permutation $w.$ The following lemma formalizes the symmetries of these immanants.

\begin{lemma}\label{lem: simplify cases}
Let $S$ be the linear map that sends $x_{\sigma}$ to $x_{\sigma^{-1}},$ and let $T$ be the linear map that sends $x_{\sigma}$ to $x_{w_0 \sigma w_0},$ where $w_0$ is the longest word in $\fkS_n.$ Then, $S$ sends $\imm_w$ to $\imm_{w^{-1}}$ and $\imm^{\%}_w$ to $\imm^{\%}_{w^{-1}},$ and $T$ sends  $\imm_w$ to $\imm_{w_0ww_0}$ and $\imm^{\%}_w$ to $\imm^{\%}_{w_0ww_0}.$
\end{lemma}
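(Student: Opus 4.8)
The statement has four parts: $S$ acts correctly on Temperley-Lieb immanants, $S$ acts correctly on \%-immanants, and the same two claims for $T$. The claim about $S$ on $\imm_w$ and $T$ on $\imm_w$ are immediate consequences of Lemma \ref{lem:symmetry}: writing $\imm_w = \sum_{u} f_w(u) x_u$, applying $S$ gives $\sum_u f_w(u) x_{u^{-1}} = \sum_u f_w(u^{-1}) x_u$ (reindexing), and $f_w(u^{-1}) = f_{w^{-1}}(u)$ by Lemma \ref{lem:symmetry} (read with the roles of $w,u$ as given, since $f_w(u)=f_{w^{-1}}(u^{-1})$ is symmetric in the substitution). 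Similarly $T$ sends $\imm_w$ to $\sum_u f_w(u) x_{w_0 u w_0} = \sum_u f_w(w_0 u w_0) x_u = \sum_u f_{w_0 w w_0}(u) x_u = \imm_{w_0 w w_0}$, again using Lemma \ref{lem:symmetry}. So those two parts are essentially a bookkeeping exercise with the already-established symmetry of the coefficients $f_w$; I would dispatch them in a sentence or two.

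The substance is the behavior of $S$ and $T$ on \%-immanants, where I cannot appeal to Lemma \ref{lem:symmetry} and must argue directly from Definition \ref{defn:per imm w}. For $S$: writing $\imm^\%_w = \imm^\%_{\lambda/\mu}$ with $\mu_i = m_w(i)-1$, $\lambda_i = M_w(i)$, and noting $\imm^\%_{\lambda/\mu} = \sum_{\sigma \text{ lies in } \lambda/\mu} \sgn(\sigma) x_\sigma$, applying $S$ gives $\sum_{\sigma} \sgn(\sigma) x_{\sigma^{-1}} = \sum_{\tau} \sgn(\tau) x_\tau$ where the sum is over $\tau$ with $\tau^{-1}$ lying in $\lambda/\mu$. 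So I need: $\sigma$ lies in the diagram of $\imm^\%_w$ iff $\sigma^{-1}$ lies in the diagram of $\imm^\%_{w^{-1}}$. Geometrically, the diagram of $\imm^\%_w$ is the "left hull" of the point set $\{(i,w(i))\}$ — the set of all $(i,j)$ with $m_w(i) \le j \le M_w(i)$. Transposing the matrix (the $i \leftrightarrow j$ swap that takes $x_\sigma$ to $x_{\sigma^{-1}}$) carries the point set of $w$ to that of $w^{-1}$, and it should carry the left hull of one to the left hull of the other; the only thing to check is that $m_w, M_w$ transform the right way, i.e. $\{j : (i,j) \text{ in hull of } w\}$ transposes to $\{i : (i,j) \text{ in hull of } w^{-1}\}$. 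I would verify this by a short direct computation: $(i,j)$ is in the hull of $w$ iff some $a \le i$ has $w(a) \le j$ and some $b \ge i$ has $w(b) \ge j$; transposing, $(j,i)$ should be in the hull of $w^{-1}$ iff some $a' \le j$ has $w^{-1}(a') \le i$ and some $b' \ge j$ has $w^{-1}(b') \ge i$ — and these two conditions are equivalent (the first says $i \ge w^{-1}(\text{something} \le j)$ in a suitable sense). This is the one genuinely fiddly step.

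For $T$: the map $\sigma \mapsto w_0 \sigma w_0$ acts on one-line notation by reversing the sequence and complementing the values, i.e. rotating the $n \times n$ matrix by $180^\circ$. The diagram of $\imm^\%_w$, being the left hull of $w$'s point set, rotates by $180^\circ$ to the left hull of the point set of $w_0 w w_0$ (the hull of a point set rotates to the hull of the rotated point set, and "left-aligned skew Ferrers" shape is preserved under $180^\circ$ rotation since complements of two corner rectangles go to complements of two corner rectangles). Concretely I would check $m_{w_0 w w_0}(i) = n+1 - M_w(n+1-i)$ and $M_{w_0 w w_0}(i) = n+1 - m_w(n+1-i)$, which follows since $w_0 w w_0(i) = n+1 - w(n+1-i)$ and min/max swap under $x \mapsto n+1-x$ together with $[1,i] \leftrightarrow [n+1-i, n]$. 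Then $\sigma$ lies in $\lambda/\mu$ (diagram of $\imm^\%_w$) iff $w_0 \sigma w_0$ lies in the rotated diagram ($=$ diagram of $\imm^\%_{w_0 w w_0}$), and since $\sgn(w_0 \sigma w_0) = \sgn(\sigma)$ (conjugation), we get $T(\imm^\%_w) = \imm^\%_{w_0 w w_0}$. The main obstacle throughout is just getting the index arithmetic for $m_w, M_w$ under transpose and $180^\circ$ rotation exactly right; there is no conceptual difficulty, but it is the kind of thing where an off-by-one slips in, so I would write those two identities out carefully and verify $(i,\sigma(i)) \in \lambda/\mu \iff (i, w_0\sigma w_0(i)) \in (\text{rotated } \lambda/\mu)$ explicitly.
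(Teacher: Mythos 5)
Your proposal is correct and follows essentially the same route as the paper: the $\imm_w$ claims are reduced to Lemma \ref{lem:symmetry}, and the \%-immanant claims are proved by directly checking how $m_w$ and $M_w$ transform under transposition (for $S$) and under the relations $m_{w_0ww_0}(i) = n+1-M_w(n+1-i)$, $M_{w_0ww_0}(i) = n+1-m_w(n+1-i)$ (for $T$), exactly as in the paper's proof.
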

\begin{remark}
The proof of this lemma, found in subsection \ref{subsec: sec4proof}, can be easily extended to show that for any skew-tableau $\lambda/\mu$, we have $S(\imm_{\lambda/\mu})$ equals the \%-immanant of the reflection of $\lambda/\mu$ across the main diagonal of the $n \times n$ bounding box, while $T(\imm_{\lambda/\mu})$ equals the \%-immanant of the $180^{\circ}$ rotation of $\lambda/\mu$ about the center of the bounding box.
\end{remark}
Now, we start the proof of Theorem \ref{CSBThm}. Our first step is to use Lemma \ref{lem: simplify cases} to reduce Theorem \ref{CSBThm} to a special case.
\begin{prop}\label{prop:CSBreduction}
    If Theorem \ref{CSBThm} is true for all $321$-, $2143$-, and $1324$-avoiding permutations $w \in \fkS_n$ such that either $w(1) = 1$ or $w(1) = w(n) + 1$, then Theorem \ref{CSBThm} is true for all $321$-, $2143$-, and $1324$-avoiding permutations $w \in \fkS_n$.
\end{prop}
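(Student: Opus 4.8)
The plan is to use the symmetries $S$ and $T$ from Lemma \ref{lem: simplify cases} to reduce an arbitrary $321$-, $2143$-, $1324$-avoiding permutation $w$ to one of the special form ($w(1) = 1$ or $w(1) = w(n) + 1$) without changing whether the conclusion of Theorem \ref{CSBThm} holds. Since $S$ sends $\imm_w \mapsto \imm_{w^{-1}}$, $\imm^\%_w \mapsto \imm^\%_{w^{-1}}$, and $T$ sends $\imm_w \mapsto \imm_{w_0ww_0}$, $\imm^\%_w \mapsto \imm^\%_{w_0ww_0}$, each of these operations preserves the statement ``$\imm_w = \sgn(w)\imm^\%_w$'' up to keeping track of $\sgn$; and by Lemma \ref{lem: restriction inverses} and Corollary \ref{cor: w_0 conjugation pattern avoidance}, the class of $321$-, $2143$-, $1324$-avoiding permutations is closed under $w \mapsto w^{-1}$ and $w \mapsto w_0 w w_0$ (one checks the five relevant patterns are each fixed or swapped with another in this class: $321^{-1} = 321$, $2143^{-1} = 2143$, $1324^{-1} = 1324$, and the $w_0$-conjugates of these are $321, 2143, 1324$ again). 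So applying $S$, $T$, or both to $w$ keeps us inside the relevant class and transforms the desired conclusion into the corresponding conclusion for $w^{-1}$, $w_0 w w_0$, or $w_0 w^{-1} w_0$.

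Next I would use Proposition \ref{321-avoiding Rectangles}: for a $321$-avoiding $w$, the complement of the diagram of $\imm^\%_w$ is two rectangles, one of size $(w^{-1}(1)-1)\times(w(1)-1)$ in the upper-left, one of size $(n-w^{-1}(n))\times(n-w(n))$ in the lower-right. The goal of the reduction is to arrange, after applying symmetries, that the upper-left rectangle is degenerate in a controlled way — precisely that $w(1) = 1$ (upper-left rectangle has zero width) or that $w(1) = w(n) + 1$ (the two ``vertical strips'' occupied by $1$ and $n$ in the one-line notation are adjacent). The key numerical observation is that $S$ swaps the roles of $w(1)$ and $w^{-1}(1)$ (and of $w(n)$, $w^{-1}(n)$), while $T$ turns $(w(1), w^{-1}(1), w(n), w^{-1}(n))$ into $(n+1-w(n),\, n+1-w^{-1}(n),\, n+1-w(1),\, n+1-w^{-1}(1))$. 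Case analysis on the four corner parameters, together with the $321$-avoiding constraint that forces $w(1)$ and $w(n)$ to ``fit together'' (e.g. if $w(1) > 1$ and $w(n) < n$ then one shows $w(1) \le w(n)+1$, else $w(1), w(n)$ and any intermediate value would give a $321$ pattern), should let me always move into one of the two target configurations by a suitable composition of $S$ and $T$. I would organize this as: if $w(1) = 1$ we are done; if $w(n) = n$ apply $T$; otherwise both $w(1) > 1$ and $w(n) < n$, use $321$-avoidance to get $w(1) \le w(n) + 1$, and finish by handling the case $w(1) = w(n)+1$ directly and the case $w(1) < w(n)+1$ by first applying $S$ (which converts the gap condition on values into one on positions, or vice versa) or by a short additional argument.

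The main obstacle I anticipate is the bookkeeping in this last case analysis: making sure that some fixed finite sequence of applications of $S$ and $T$ (which together generate a group of order at most $8$ acting on the corner data) genuinely reaches one of the two allowed normal forms for every $321$-, $2143$-, $1324$-avoiding $w$, and that no configuration is left out. The potentially subtle point is when \emph{both} corner rectangles are nondegenerate and the ``gap'' conditions fail simultaneously for $w$ and all its images under the symmetry group — I would need to rule this out using $2143$- and $1324$-avoidance in addition to $321$-avoidance (this is presumably where those two extra patterns enter the reduction, as opposed to the main argument). Everything else — that the symmetries preserve the pattern-avoidance class and transport the statement correctly — is routine given Lemmas \ref{lem: restriction inverses}, \ref{lem: simplify cases} and Corollary \ref{cor: w_0 conjugation pattern avoidance}.
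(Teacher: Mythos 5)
Your overall strategy is the same as the paper's: reduce via the symmetries $S$ and $T$ of Lemma \ref{lem: simplify cases}, using Lemma \ref{lem: restriction inverses} and Corollary \ref{cor: w_0 conjugation pattern avoidance} to stay inside the $321$-, $2143$-, $1324$-avoiding class, and use $321$-avoidance to force $w(1) = w(n)+1$ when $w(1) > w(n)$ (your argument there, producing a $321$-pattern from the value $w(1)-1$, is exactly the paper's). However, the one case you leave unresolved is precisely the case that needs a real argument, so as written there is a gap. When $w(1) \neq 1$, $w(n) \neq n$ and $w(1) < w(n)$, neither normal form holds for $w$, and applying $T$ does not help (it sends this configuration to another of the same type). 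The paper closes this case with a short $2143$-avoidance argument: if $w^{-1}(1) < w^{-1}(n)$, then the positions $1 < w^{-1}(1) < w^{-1}(n) < n$ carry the values $1 < w(1)$ and $w(n) < n$ in the pattern $2143$ (namely $w(1),\,1,\,n,\,w(n)$ read in position order), contradicting $2143$-avoidance; hence $w^{-1}(1) > w^{-1}(n)$. One then applies $S$ and runs your $321$-gap argument on $w' = w^{-1}$ (which satisfies $w'(1) \neq 1$, $w'(n) \neq n$, $w'(1) > w'(n)$) to conclude $w'(1) = w'(n) + 1$, landing in the allowed normal form; Lemma \ref{lem: simplify cases} then transports the conclusion back to $w$.

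Two smaller remarks. First, your worry that $1324$-avoidance might be needed to rule out a bad orbit under the symmetry group is unfounded: the reduction uses only $321$- and $2143$-avoidance, and $1324$ enters only through the closure of the class under $w \mapsto w^{-1}$ and $w \mapsto w_0ww_0$ (which is automatic since these three patterns are each self-inverse and self-conjugate, as you note). Second, no composition of symmetries is ever required: the full case analysis is just (i) $w(1)=1$, done; (ii) $w(n)=n$, apply $T$ so that $w'(1)=n+1-w(n)=1$; (iii) $w(1)>w(n)$, where $321$-avoidance gives $w(1)=w(n)+1$; and (iv) $w(1)<w(n)$ with $w(1)\neq 1$, $w(n)\neq n$, handled by the $2143$ argument above followed by a single application of $S$. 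Filling in case (iv) as described completes your proof and makes it identical in substance to the paper's.
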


\begin{proof}
    \par Fix a $321$-, $2143$-, and $1324$-avoiding permutation $w \in \fkS_n$. Our goal is to show Theorem \ref{CSBThm} is true for $w$. We divide our analysis according to the following four cases. 
\begin{enumerate}
    \item $w(1) = 1$.
    \item $w(1) \neq 1, w(n) = n.$
    \item $w(1) \neq 1, w(n) \neq n, w(1)> w(n)$
    \item $w(1) \neq 1, w(n) \neq n, w(1) < w(n)$

\end{enumerate}
\par Case 1: Since $w(1) = 1$, Theorem \ref{CSBThm} is true for $w$.
\par Case 2: Consider $w'=w_0ww_0$.
By Corollary \ref{cor: w_0 conjugation pattern avoidance}, $w'$ is $321$-, $2143$-, and $1324$-avoiding if and only if $w$ is.
Furthermore, by Lemma \ref{lem: simplify cases}, $T$ sends $\imm_w$ to $\imm_{w'}$ and $\imm^{\%}_w$ to $\imm^{\%}_{w'}.$ So $\imm^{\%}_w=\imm_w$ if and only if $\imm^{\%}_{w'}=\imm_{w'}$. Thus, we have reduced showing Theorem \ref{CSBThm} holds for $w$ to showing it holds for $w'$. But now, $w'(1)=n+1-w(n)= 1$, so Theorem \ref{CSBThm} is true for $w'$.
\par Case 3: First suppose that $w(1) > w(n) + 1$. If $c = w^{-1} (w(1) - 1)$, then $1 < c < n$ forms a $321$-pattern because $w(1) > w(c) > w(n)$. This is a contradiction. So we must have $w(1) = w(n) + 1$, which means Theorem \ref{CSBThm} is true for $w$.
\par Case 4: Let $a = w^{-1} (1)$ and $b = w^{-1} (n)$; we claim that $a > b$. Suppose not; then the fact $a < b$ together with the assumption of Case 4 gives $1 < a < b < n$ and $w(a)=1<w(1)<w(n)<n=w(b)$. So $1<a<b<n$ forms a $2143$-pattern, contradicting with our assumption that $w$ avoids $2143$. Thus, we must have that $w^{-1} (1)>w^{-1} (n)$. For simplicity of notation, let $w'=w^{-1}$.
By Lemma \ref{lem: restriction inverses}, $w'$ is $321$-, $2143$-, and $1324$-avoiding if and only if $w$ is.
Furthermore, by Lemma \ref{lem: simplify cases}, $S$ sends $\imm_w$ to $\imm_{w'}$ and $\imm^{\%}_w$ to $\imm^{\%}_{w'}.$ So $\imm^{\%}_w=\imm_w$ if and only if $\imm^{\%}_{w'}=\imm_{w'}$. Thus, we have reduced showing Theorem \ref{CSBThm} holds for $w$ to showing it holds for $w'$. But since $w^{-1} (1)>w^{-1} (n)$, we can use the analysis of Case 3 to conclude that Theorem \ref{CSBThm} holds for $w'$. This completes the proof of Proposition \ref{prop:CSBreduction}
\end{proof}

\par By Proposition \ref{prop:CSBreduction}, we can assume that either $w(1) = 1$ or $w(1) = w(n) + 1$. We will assume this condition holds throughout the rest of this subsection. 
\par To begin with, we have the following proposition, which expresses $\imm_w^\%$ as a sum of certain products of complementary minors.
\begin{prop}\label{Rectangle Complementary Minors}
Suppose that $w\in \mathfrak{S}_n$ is a $321$-, $1324$-, $2143$-
avoiding permutation so that either $w(1) = 1$ or $w(1) = w(n) + 1$. Then we have \[\imm_w^{\%} = \sum\limits_{\substack{|I| = w(n) \\ [w^{-1}(n) + 1, n] \subset I \subset [w^{-1}(1), n]}} \cm_{I, [1, w(n)]}.\]
\end{prop}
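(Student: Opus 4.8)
The strategy is to directly compute both sides as polynomials in the $x_{i,j}$ and show that the coefficient of each monomial $x_u$ agrees. Since $\imm_w^\% = \sum_{\sigma \text{ lies in }\lambda/\mu} \sgn(\sigma) x_\sigma$ with $\lambda/\mu$ the diagram described in Definition \ref{defn:per imm w}, and since $\cm_{I,J} = \sum_{u(I)=J} \sgn(u) x_u$, everything reduces to a statement about which permutations $u$ contribute, and with what total sign. Concretely, fix $u \in \fkS_n$; the coefficient of $x_u$ in the right-hand side is $\sgn(u)$ times the number of sets $I$ with $|I| = w(n)$, $[w^{-1}(n)+1,n]\subset I\subset[w^{-1}(1),n]$, and $u(I) = [1,w(n)]$. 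So I must show:
\begin{enumerate}
    \item if $u$ lies in $\lambda/\mu$, then there is exactly one such $I$ (giving coefficient $\sgn(u)$ on both sides), and
    \item if $u$ does not lie in $\lambda/\mu$, then there are either zero such $I$, or else an even number of them splitting into sign-cancelling... no --- actually the signs $\sgn(u)$ are the same for a fixed $u$, so I instead need: if $u$ does not lie in $\lambda/\mu$, then there are \emph{zero} such $I$.
\end{enumerate}
Wait — that cannot be right in general, so the key structural input must be that the diagram $\lambda/\mu$ is exactly the complement of the two corner rectangles from Proposition \ref{321-avoiding Rectangles}, namely the upper-left $(w^{-1}(1)-1)\times(w(1)-1)$ rectangle and the lower-right $(n-w^{-1}(n))\times(n-w(n))$ rectangle, together with the simplifying assumption $w(1)=1$ or $w(1) = w(n)+1$.

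\textbf{Key steps.}
First I would record explicitly, using Proposition \ref{321-avoiding Rectangles} and the case assumption, what $\lambda/\mu$ looks like: when $w(1)=1$ the upper-left rectangle is empty (it has width $w(1)-1 = 0$), so $u$ lies in $\lambda/\mu$ iff for all $i > w^{-1}(n)$ we have $u(i) > w(n)$; when $w(1) = w(n)+1$ both rectangles are present, the upper-left having width $w(n)$ and height $w^{-1}(1)-1$, and $u$ lies in $\lambda/\mu$ iff $u(i) \le w(n)$ forces $i \le w^{-1}(1)-1$ is false... let me restate: $u$ lies in $\lambda/\mu$ iff $\mu_i < u(i) \le \lambda_i$ for all $i$, which by the rectangle description says precisely that $u$ avoids the upper-left block (no $i \le w^{-1}(1)-1$ has $u(i) \le w(1)-1$) and avoids the lower-right block (no $i \ge w^{-1}(n)+1$ has $u(i) \ge w(n)+1$). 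Second, given such a $u$, I would \emph{construct} the set $I := u^{-1}([1,w(n)])$ and verify it satisfies all three constraints: $|I| = w(n)$ is automatic; $u(I) = [1,w(n)]$ is automatic; the inclusion $[w^{-1}(n)+1,n]\subset I$ holds because... hmm, actually it is $I \subset [w^{-1}(1),n]$ and $[w^{-1}(n)+1,n]\subset I$ that need the lower-right and upper-left rectangle conditions respectively — I need to match these up carefully, and this is where the precise shape matters. Third, and conversely, I would show \emph{uniqueness}: any $I$ meeting the constraints must equal $u^{-1}([1,w(n)])$, which is immediate from $u(I) = [1,w(n)]$ since $u$ is a bijection. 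Fourth, I would show that if $u$ does \emph{not} lie in $\lambda/\mu$ then no valid $I$ exists: if $u$ hits the upper-left block at some $i \le w^{-1}(1)-1$ with $u(i) \le w(1)-1 \le w(n)$, then $i \in u^{-1}([1,w(n)])$ but $i < w^{-1}(1)$, violating $I \subset [w^{-1}(1),n]$; symmetrically if $u$ hits the lower-right block then some $i \ge w^{-1}(n)+1$ has $u(i) > w(n)$, so $i \notin u^{-1}([1,w(n)])$, violating $[w^{-1}(n)+1,n]\subset I$; and since the only valid candidate for $I$ is $u^{-1}([1,w(n)])$, we are done.

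\textbf{Main obstacle.}
The routine bookkeeping aside, the real subtlety — and the only place the full hypothesis on $w$ (not just $321$-avoidance, but also $2143$- and $1324$-avoidance plus the normalization $w(1)=1$ or $w(1)=w(n)+1$) gets used — is establishing that the diagram $\lambda/\mu$ is \emph{exactly} the complement of the two corner rectangles with the stated dimensions, and in particular that in the normalized case the horizontal cut of the upper-left rectangle ($w(1)-1$) and the row cut of the lower-right rectangle ($n-w(n)$) are positioned so that the index set $[w^{-1}(1),n]$ for columns (rows?) and the value set $[1,w(n)]$ line up precisely with the constraints on $I$. I expect to need Proposition \ref{321-avoiding Rectangles} to pin down $m_w(i)$ and $M_w(i)$ as two-valued step functions, to identify the jump locations as $w^{-1}(1)$ and $w^{-1}(n)$, and to check that under $w(1)=1$ or $w(1)=w(n)+1$ the ``middle'' value of $m_w$ is $1$ and of $M_w$ is $n$ on the overlapping range — so that the skew shape genuinely decomposes as I claimed. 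Once that geometric picture is nailed down, the coefficient comparison is the short argument sketched above.
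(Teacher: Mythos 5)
Your proposal is correct and is in substance the same argument as the paper's: the paper first proves $X=\sum_{|I|=w(n)}\cm_{I,[1,w(n)]}$ by exactly your coefficient comparison (the unique contributing set being $I=\sigma^{-1}([1,w(n)])$), then zeroes out the two corner rectangles of Proposition \ref{321-avoiding Rectangles} and checks which terms survive, which is precisely your termwise verification that the unique candidate $I=u^{-1}([1,w(n)])$ meets the inclusion constraints if and only if $u$ avoids both rectangles. The only differences are packaging (the paper kills bad terms via all-zero rows in the minor rather than via the inclusion constraints directly), and, as you suspected, only $321$-avoidance plus the normalization $w(1)=1$ or $w(1)=w(n)+1$ is actually used at this step.
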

As an example we have the following sum when $w = 3142,$ with the shaded regions representing which complementary minors we take.
\begin{center}
    \includegraphics[]{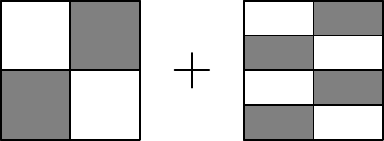}
\end{center}
\begin{proof}
\par We first observe the following identity which holds in general. Let $X$ be the determinant of the matrix whose $(i, j)$ entry is the variable $x_{ij}.$ Then, for any $1 \le k \le n,$ 
\begin{equation}\label{eqn:ebm}
    X = \sum_{|I| = k} \cm_{I,[1,k]}.
\end{equation}
(The case $k = 1$ is expansion by minors along the first column.) To prove this, we compare coefficients of $x_\sigma$ on both sides. The coefficient of $x_\sigma$ in $X$ is $\sgn(\sigma)$. For the RHS, there is a unique $I$ for which $\cm_{I,[1,k]}$ has nonzero coefficient for $x_\sigma$: namely when $I = \sigma^{-1} ([1,k])$. Then, by definition of $\cm_{I,J}$, the coefficient of $x_\sigma$ in $\cm_{\sigma^{-1} ([1,k]), [1,k]}$ is $\sgn(\sigma)$. Thus, the coefficients of $x_\sigma$ on both sides of equation \ref{eqn:ebm} are equal, proving the identity.

\par From Proposition \ref{321-avoiding Rectangles}, we have that $\imm_w^{\%}$ is obtained from $X$ by setting $x_{ij} = 0$ for $1 \le i \le w^{-1} (1)-1, 1 \le j \le w(1)-1$ and $w^{-1} (n) + 1 \le i \le n, w(n) \le j \le n$.

\par Now, consider equation \ref{eqn:ebm} with $k = w(n)$, and plug in $x_{ij} = 0$ for $1 \le i \le w^{-1} (1)-1, 1 \le j \le w(1)-1$ and $w^{-1} (n) + 1 \le i \le n, w(n) \le j \le n$. We will show that $\cm_{I,[1,w(n)]}$ vanishes unless $[w^{-1}(n) + 1, n] \subset I \subset [w^{-1}(1), n]$.

\par First, if $i \notin I$ for some $w^{-1} (n) + 1 \le i \le n$, then the $i$-th row in the matrix for $\cm_{I,[1,w(n)]}$ will be all zeros, so $\cm_{I,[1,w(n)]} = 0$. Thus, $\cm_{I,[1,w(n)]}$ vanishes unless $[w^{-1}(n) + 1, n] \subset I$.

\par Next, if $w^{-1} (1) = 1$, then $I \subset [w^{-1}(1), n]$ is vacuously true. If $w^{-1} (1) > 1$, then by assumption we have $w(1) = w(n) + 1.$ Thus, $x_{ij} = 0$ for $1 \le i \le w^{-1} (1) - 1$, $1 \le j \le w(n)$. Now, if $i \in I$ for some $1 \le i \le w^{-1} (1) - 1$, then the $i$-th row of the matrix for $\cm_{I,[1,w(n)]}$ will be all zeros, so $\cm_{I,[1,w(n)]} = 0$. Thus, in either case, $\cm_{I,[1,w(n)]}$ vanishes unless $I \subset [w^{-1}(1), n]$. Hence, we have showed $\cm_{I,[1,w(n)]}$ vanishes unless $[w^{-1}(n) + 1, n] \subset I \subset [w^{-1}(1), n]$.

Finally, if $[w^{-1}(n) + 1, n] \subset I \subset [w^{-1}(1), n]$, then $\cm_{I,[1,w(n)]}$ remains unchanged after setting some $x_{ij}$ to zero. This proves the Proposition.
\end{proof}
From here, we can now express our complementary minors as sums of Temperley-Lieb immanants by Proposition \ref{prop:cm equals imm sum}. Before doing this, we first observe that the pattern avoidance conditions let us conclude that $w$ takes the following form. 
\begin{lemma}\label{lem: w form}
Suppose that $w$ is a $321$-, $1324$-, $2143$- avoiding permutation, and furthermore either $w(1) = 1$ or $w(1) = w(n) + 1$. Then, $w$ is uniquely determined and has one line notation $$(w(n)+1..w^{-1}(1)+w(n)- 1:1..w^{-1}(n) + w(n) - n:w^{-1}(1) + w(n)..n:w^{-1}(n) + w(n) - n+1..w(n)).$$  
\end{lemma}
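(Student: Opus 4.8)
The plan is to show that the pattern-avoidance hypotheses force the one-line notation of $w$ to have a very rigid shape, namely four consecutive ascending runs arranged in a specific order, and then simply read off the formula. I would start from Proposition \ref{321-avoiding Rectangles}, which already tells us that $m_w$ and $M_w$ each take at most two values; combined with the normalization $w(1)=1$ or $w(1)=w(n)+1$, this pins down the positions and values where the ``steps'' occur. Concretely, set $a = w^{-1}(1)$ and $b = w^{-1}(n)$. The rectangle description says the values $\{1,\dots,w(1)-1\}$ only appear in rows $\ge a$, and the values $\{w(n)+1,\dots,n\}$ only appear in rows $\le b$; together with $321$-avoidance this should force $w$ to send the first block of rows to a block of large values, then drop down to the small values, etc.

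The key steps, in order: (1) Fix notation $a=w^{-1}(1)$, $b=w^{-1}(n)$, and use Proposition \ref{321-avoiding Rectangles} to note that $w([1,a-1])\subseteq [w(1),n]$ and in fact (since these rows avoid a $321$ among themselves and lie above the upper-left rectangle) $w|_{[1,a-1]}$ must be increasing — any descent there plus row $a$ (value $1$) would create a $321$. Similarly $w|_{[b+1,n]}$ is increasing. (2) Argue that rows $[1,a-1]$ take exactly the values $[w(1),w(1)+a-2]$: they are $a-1$ values all $\ge w(1)$, increasing, and the normalization $w(1)=w(n)+1$ (or $w(1)=1$, the degenerate case) plus $2143$-avoidance forbids them from ``skipping'' over any value in $[w(1),n]$ that would otherwise be stranded. (3) Handle the middle rows $[a,b]$: here $w$ must realize the values $[1,w(1)-1]$ (the small values, which can only live in rows $\ge a$) followed by, and this is where $1324$-avoidance enters, nothing can be inserted out of order, so the middle breaks into the decreasing-to-$1$ part and then the run up to $w(n)$. (4) Translate the resulting block decomposition into the interval notation $(w(n)+1..w^{-1}(1)+w(n)-1 : 1..w^{-1}(n)+w(n)-n : \dots)$ by bookkeeping the lengths: the first block has length $a-1$ and starts at value $w(n)+1$, the second has length $b+w(n)-n$... wait, one checks $|[1,a-1]| = a-1 = (w^{-1}(1)+w(n)-1)-(w(n)+1)+1$, and so on for the other three blocks, confirming the four stated intervals partition $[n]$ both as positions and as values. (5) Conclude uniqueness: every inequality above was forced, so $w$ is determined by $n$, $w(1)$, $w^{-1}(1)$, $w^{-1}(n)$ alone, and the displayed one-line notation is the only possibility.

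I expect the main obstacle to be step (3), controlling the middle rows $[a,b]$. Steps (1)--(2) are clean $321$-avoidance arguments, but in the middle block we have to simultaneously rule out: a small value appearing after a medium value (which with a row from each of the outer blocks gives $1324$ or $2143$), and any non-monotone behavior within the ``small'' run or the ``medium'' run. I would argue by contradiction: pick a putative witness to non-monotonicity or misordering in $[a,b]$, then exhibit the forbidden pattern by adjoining row $1$ (value $w(1)$, say, or row containing value $1$) and row $n$ (value $n$) — the four-element pattern $w(1)<\text{(value }1\text{)}<\dots$ lands as $1324$ or $2143$. One has to be slightly careful that these witness rows are distinct from the three already chosen, but since $a-1, b-a+1, n-b$ partition $n$ and the degenerate cases ($a=1$, $b=n$, etc.) can be checked separately, this goes through. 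Once the shape is established, the final translation to interval notation is pure bookkeeping and I would present it compactly rather than spelling out each length computation.
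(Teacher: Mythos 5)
Your skeleton is the same as the paper's (outer blocks forced by $321$-avoidance via Proposition \ref{321-avoiding Rectangles}, middle forced by $1324$-avoidance, then bookkeeping), but two concrete points in the plan are wrong as stated and would need repair. First, in step (2) you credit ``the normalization plus $2143$-avoidance'' with ruling out skipping in the first block, but that combination is not sufficient: the permutation $461253$ avoids $2143$ and satisfies $w(1)=w(n)+1$, yet its initial block $4,6$ skips the value $5$. What actually forbids skipping is $321$-avoidance: if a larger ``large'' value occupied a row in $[1,w^{-1}(1)-1]$ while a smaller large value appeared in a later row, those two rows together with row $n$ (value $w(n)$) would form a $321$. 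Equivalently, this is the paper's opening observation that $w^{-1}$ is increasing on $[w(n)+1,n]$; the corresponding statement for $[1,w(n)]$ is the one place where the case split $w(1)=w(n)+1$ versus $w(1)=1$ matters ($321$ with position $1$ in the first case, $1324$ with positions $1$ and $n$ in the second).

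Second, step (3) misdescribes the middle block, and the final block's values are never pinned down. Rows $[w^{-1}(1),w^{-1}(n)]$ do \emph{not} carry all of $[1,w(1)-1]$: they carry the small values $[1,\,w^{-1}(n)+w(n)-n]$ together with the remaining large values $[w^{-1}(1)+w(n),\,n]$ (the block ends with the value $n$ at row $w^{-1}(n)$, so there is no ``decreasing-to-$1$ part'' and no ``run up to $w(n)$'' inside it), while the small values $[w^{-1}(n)+w(n)-n+1,\,w(n)]$ sit in rows $[w^{-1}(n)+1,n]$. Your step (1) shows the tail rows are increasing, but not \emph{which} small values they receive; for that you need the mirror of the no-skipping argument above (i.e., that $w^{-1}$ is increasing on $[1,w(n)]$), exactly as in the paper. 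With those repairs — positions of small values increasing, positions of large values increasing, middle rows increasing by a $1324$ argument using rows $w^{-1}(1)$ and $w^{-1}(n)$ as the ``$1$'' and ``$4$'' — the complement-set computation gives the stated one-line notation, and your step (4) bookkeeping then goes through; as written, it would not, because the value sets you assign to the middle and final blocks are inconsistent with the target formula.
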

\begin{remark}
Specifically, $w$ has block structure $[3][1][4][2]$ with block lengths $w^{-1} (1) - 1, w^{-1} (n) + w(n) - n, n - w^{-1} (1) - w(n) + 1$, and $n - w^{-1} (n)$; some of the blocks may be empty.
\end{remark}

We also need the following lemma about colorings.

\begin{lemma}\label{lem:simple-no-internal-pair}
Suppose $a, b, c$ satisfy $0 \leq a, b, c \leq n$ and $a + b + c = 2n$. There is a unique coloring of the vertices $1, 2, \cdots, 2n$ and a unique compatible non-crossing matching such that vertices $[a+1, a+b]$ are colored black, vertices $[a+b+1, 2n]$ are colored white, and there does not exist an internal pairing within $[1, a]$ (i.e. there do not exist $1 \le i < j \le a$ that are paired).
\end{lemma}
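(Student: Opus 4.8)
The plan is to determine the coloring first, and then, for that (now forced) coloring, to establish both existence and uniqueness of the matching by induction on $n$. Write $L = [1,a]$, let $M = [a+1,a+b]$ be the prescribed black block and $R = [a+b+1,2n]$ the prescribed white block. First I would note that a compatible non-crossing matching pairs each black vertex with a white vertex, hence uses exactly $n$ vertices of each color; since $M$ supplies $b$ black and $R$ supplies $c$ white vertices, the coloring of $L$ must have $n-b$ black and $n-c$ white vertices, and indeed $(n-b)+(n-c)=a$ with $0 \le n-b \le a$ by $b,c \le n$. Next, because no two vertices of $L$ are paired, each vertex of $L$ is matched into $M \cup R$, and the non-crossing condition makes the arcs out of $L$ nest: if $i<j$ both lie in $L$, then $j$ lies in the interior of the arc from $i$, so their partners satisfy $v_i > v_j$. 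If some white vertex of $L$ preceded a black one — say $i<j$ with $i$ white and $j$ black — then $v_i$ would be black, hence $v_i \in M$ (it cannot lie in $L$), while $v_j$ would be white, hence $v_j \in R$; this gives $v_i \le a+b < v_j$, contradicting $v_i > v_j$. So the black vertices of $L$ precede the white ones, the coloring of $L$ is forced to $[1,n-b]$ black and $[n-b+1,a]$ white, and the full coloring $\chi = B^{\,n-b} W^{\,n-c} B^{\,b} W^{\,c}$ is uniquely determined.

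Now I would show that $\chi$ admits a unique compatible non-crossing matching with no internal pair in $[1,a]$, by induction on $n$ (the case $n=0$ being trivial). Suppose first $b \le n-1$; then $n-b \ge 1$, and $a+c = 2n-b \ge n+1$ together with $c \le n$ forces $a \ge 1$, so vertex $1$ is black. In any valid matching its partner $v$ is white and lies neither in $L$ (no internal $L$-pair) nor in $M$ (monochromatic black), hence in $R$; but then $[v+1,2n] \subseteq R$ must be matched internally while being monochromatic white, so it is empty and $v = 2n$. Deleting vertices $1$ and $2n$ and relabelling $[2,2n-1]$ as $[1,2n-2]$ produces exactly the lemma for the triple $(a-1,b,c-1)$ at level $n-1$ (one checks $0 \le a-1, b, c-1 \le n-1$, using $b \le n-1 \Rightarrow c \ge 1$, and that the inherited coloring is the forced one there). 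Since $\{1,2n\}$ is the outermost arc, this deletion is a bijection between valid configurations of the two problems, so the induction hypothesis settles this case.

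If instead $b = n$, then $a+c = n$, the requirement ``no internal pair in $[1,a]$'' is automatic (now $L$ is monochromatic white), and $\chi = W^{\,a} B^{\,n} W^{\,n-a}$; so it suffices to show this coloring has a unique compatible non-crossing matching. I would first record the routine fact that $B^{\,m} W^{\,m}$, and by the mirror-image argument $W^{\,m} B^{\,m}$, has a unique compatible non-crossing matching — the rainbow $i \leftrightarrow 2m+1-i$ — proved by the same ``the partner of the first vertex is forced to be the last'' step. For $W^{\,a} B^{\,n} W^{\,n-a}$ with $a \ge 1$ (the case $a=0$ being $B^{\,n} W^{\,n}$), the partner $v$ of vertex $1$ must be black, so $a+1 \le v \le a+n$, and $[2,v-1]$ — the whites $[2,a]$ followed by the blacks $[a+1,v-1]$ — must be matched internally, forcing $v-1-a = a-1$, i.e. $v = 2a$. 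The arc $\{1,2a\}$ then splits the matching into one on $[2,2a-1]$ (colored $W^{\,a-1} B^{\,a-1}$) and one on $[2a+1,2n]$ (colored $B^{\,n-a} W^{\,n-a}$), each unique by the recorded fact, so the whole matching is unique; and it exists, being the union of these two rainbows with $\{1,2a\}$. This completes the induction.

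The step I expect to be the main obstacle is the case $b = n$: unlike the generic case it does not reduce to a smaller instance of the lemma itself, so it needs the separate ``ballot'' argument above together with the auxiliary uniqueness for $B^{\,m} W^{\,m}$. One must also keep track of the degenerate possibilities — empty blocks, and staying within $0 \le a,b,c \le n$ after the reductions — but these are routine once the nesting and color-counting bookkeeping in the first step is set up correctly.
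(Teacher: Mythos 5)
Your proof is correct, and it runs on the same engine as the paper's: show that in any valid configuration vertex $1$ must be paired with the last vertex, delete that outermost arc, and induct. The differences are in the packaging. The paper inducts on $a$ and disposes of the asymmetric case by the relabelling $x \mapsto a+1-x \pmod{2n}$ together with a black/white flip, which reduces to $b \le c$ and makes the counting argument for ``$1$ is paired with $2n$'' go through uniformly; you instead split on $b \le n-1$ versus $b = n$, reducing the first case to the triple $(a-1,b,c-1)$ exactly as the paper does, and settling $b = n$ by a separate ballot-style argument (the arc $\{1,2a\}$ plus the two rainbow matchings on $W^{a-1}B^{a-1}$ and $B^{n-a}W^{n-a}$). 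You also pin down the forced coloring $B^{n-b}W^{n-c}B^{b}W^{c}$ of $[1,a]$ up front via the nesting argument, something the paper leaves implicit inside its induction; this costs a little extra bookkeeping but yields the coloring and the matching completely explicitly, which is convenient when the lemma is applied later. In short, the paper's symmetry trick buys a shorter, single-case induction, while your version avoids the relabelling and is self-contained at the price of the extra $b = n$ case; both are sound.
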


With these two lemmas, we will argue in the next proposition that our sum of complementary minors is (up to sign) a single Temperley-Lieb immanant $\imm_w$.

\begin{prop}\label{Rectangle Unique Matching}
Suppose that $w\in \mathfrak{S}_n$ is a $321$-, $1324$-, $2143$-
avoiding permutation so that either $w(1) = 1$ and $w(n) \neq n,$ or $w(1) = w(n) + 1$. Then there exists $c_w \in \R$ such that \[\sum\limits_{|I| = w(n), [w^{-1}(n) + 1, n] \subset I \subset [w^{-1}(1), n]} \cm_{I, [1, w(n)]} = c_w \imm_w.\]
\end{prop}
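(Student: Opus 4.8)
The plan is to apply Proposition~\ref{prop:cm equals imm sum} to each $\cm_{I,[1,w(n)]}$ appearing in the sum, expand everything as a signed sum of Temperley--Lieb immanants $\imm_v$, and show that all terms cancel except for a multiple of $\imm_w$. Concretely, by Lemma~\ref{cmminors} we have $\cm_{I,[1,w(n)]} = (-1)^{s(I)+s([1,w(n)])}\Delta_{I,[1,w(n)]}\Delta_{\overline I,\overline{[1,w(n)]}}$, and by Proposition~\ref{prop:cm equals imm sum} the latter equals $\sum_{v\text{ compatible with }(I,[1,w(n)])}\imm_v$. So the left-hand side becomes $\sum_v \left(\sum_{I} (-1)^{s(I)+s([1,w(n)])}\,[v\text{ compatible with }(I,[1,w(n)])]\right)\imm_v$, where the inner sum runs over the admissible index sets $I$ (those with $|I|=w(n)$ and $[w^{-1}(n)+1,n]\subset I\subset[w^{-1}(1),n]$). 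The coefficient $c_w$ will be the value of the inner sum at $v=w$, and the content of the proposition is that for every other $321$-avoiding $v$ the inner sum vanishes.

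First I would fix the colorings in play. The coloring $(I,[1,w(n)])$ colors $I$ and $\overline{[1,w(n)]}' = [w(n)+1,n]'$ black, and $\overline I$ and $[1,w(n)]'$ white. Using Lemma~\ref{lem: w form} to pin down the one-line notation of $w$ (block structure $[3][1][4][2]$), the primed side of the coloring is the \emph{same} for every admissible $I$: the primed vertices $[w(n)+1,n]'$ are always black and $[1,w(n)]'$ always white. Only the unprimed coloring varies, and it varies exactly on the index range $[w^{-1}(1), w^{-1}(n)]$, which is the span of the $[3]$ and $[1]$ blocks of $w$ --- outside this range the coloring is forced (the $[4]$-block indices $[w^{-1}(n)+1,n]$ are black, the $[3]$... wait, more precisely the indices below $w^{-1}(1)$ are white, the indices above $w^{-1}(n)$ are black). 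Then I would invoke Lemma~\ref{lem:simple-no-internal-pair}: the data of which unprimed vertices in the varying range are black is exactly a choice subject to a cardinality constraint, and that lemma identifies, for each such choice, a unique compatible non-crossing matching with no internal pairing inside the white initial segment. The upshot is that a $321$-avoiding $v$ is compatible with $(I,[1,w(n)])$ for \emph{at most one} admissible $I$ unless $\ncm(v)$ has enough ``slack'' (internal pairs among the white vertices $\overline I$, or among black vertices) to be compatible with several $I$'s at once; when it is compatible with several, I would show those $I$'s form a Boolean-lattice-like family over which the alternating sum $\sum_I (-1)^{s(I)}$ telescopes to zero. The case $v=w$ is the one where $\ncm(w)$ forces a single $I$ (namely $I = w^{-1}([1,w(n)])$), giving $c_w = (-1)^{s(I_w)+s([1,w(n)])} = \pm 1$.

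I expect the main obstacle to be the combinatorial bookkeeping in the cancellation step: precisely characterizing, for a given $321$-avoiding $v$, the set $\mathcal I(v)$ of admissible $I$ with which $v$ is compatible, and proving $\sum_{I\in\mathcal I(v)}(-1)^{s(I)} = 0$ whenever $v\neq w$. The clean way to do this is to show $\mathcal I(v)$, if nonempty, is an interval in a suitable Boolean lattice --- i.e. there is a ``free'' set $F(v)$ of indices (those $i$ in the varying range for which both colorings of $i$ extend to a valid compatible non-crossing matching, which by the parity/non-crossing constraints of Definition~\ref{def:ncm} happens exactly when $i$ participates in an internal pair in $\ncm(v)$ that can be recolored) such that $\mathcal I(v) = \{I_0 \sqcup S : S\subseteq F(v),\ |S| \text{ fixed mod a constraint}\}$; since $s$ is additive and each element of $F(v)$ contributes independently, $\sum (-1)^{s(I)}$ factors as a product containing a factor $\sum_{\text{parity classes}}(\pm 1)=0$ as soon as $F(v)\neq\varnothing$, and $F(v)=\varnothing$ forces $v=w$ by Lemma~\ref{lem:simple-no-internal-pair} together with the fact that $w$'s matching has no recolorable internal pairs. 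I would set $c_w := (-1)^{s(w^{-1}([1,w(n)]))+\binom{w(n)+1}{2}}$, note it is $\pm1$, and finish. (The hypothesis excluding $w(1)=1, w(n)=n$ --- i.e. $w$ not the identity on both ends simultaneously with $w(n)=n$ --- is presumably needed so that $w(n)<n$ and the index ranges above are nondegenerate; I would double-check that edge case separately or fold it into Case 1 of the earlier reduction.)
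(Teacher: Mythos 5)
Your strategy is the paper's own: expand each $\cm_{I,[1,w(n)]}$ through Lemma~\ref{cmminors} and Proposition~\ref{prop:cm equals imm sum}, then kill every $\imm_v$ with $v\neq w$ by a parity cancellation over the admissible colorings compatible with $\ncm(v)$. Your cancellation mechanism is sound and is essentially the paper's sign-reversing involution (swap the colors of one internally paired $p,q$ with $q-p$ odd) repackaged as a product over all recolorable internal pairs. One description is imprecise, though not fatally: $\mathcal{I}(v)$ is not of the form $\{I_0\sqcup S: S\subseteq F(v)\}$ with just a size constraint, and the slack never comes from ``internal pairs among the white vertices or among black vertices'' (such monochromatic pairs preclude compatibility outright). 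The correct statement is that each pairing of $\ncm(v)$ between two unprimed vertices of the varying range $[w^{-1}(1),w^{-1}(n)]$ contributes an independent binary choice of which endpoint lies in $I$, so the coefficient of $\imm_v$ factors with one factor $(-1)^p+(-1)^q=0$ per such pair; this is the same cancellation the paper gets from a single pair.

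The genuine gap is the step you wave through as ``the fact that $w$'s matching has no recolorable internal pairs.'' What your argument actually shows is that the sum equals $c\,\imm_{\tw}$, where $\tw$ is the unique permutation furnished by Lemma~\ref{lem:simple-no-internal-pair} whose matching carries an admissible coloring with no internal pairing in the varying range, and $c=\pm 1\neq 0$ since that coloring is then the only admissible one compatible with $\ncm(\tw)$. The proposition therefore stands or falls with the identification $\tw=w$, i.e.\ with the verification that $\ncm(w)$ is compatible with some admissible coloring \emph{and} has no unprimed internal pairings inside $[w^{-1}(1),w^{-1}(n)]$; this does not follow from Lemma~\ref{lem:simple-no-internal-pair} alone, and it is where the paper spends roughly half of its proof. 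Concretely, the paper uses the one-line notation of Lemma~\ref{lem: w form} to write down an explicit admissible coloring whose varying range splits into a black sub-block (the indices with $w(i)<i$) followed by a white sub-block, and then invokes Lemma~\ref{lem:matching_compatibility}: pairings inside either sub-block are excluded by color, and pairings between the two sub-blocks are excluded because every index of the earlier sub-block is matched in $\ncm(w)$ to a vertex of weakly smaller index, while the other sub-block lies strictly to its right. You cite the right lemmas but never carry out this verification, so as written the proof is incomplete at exactly the point that ties the surviving term to $w$ rather than to some other permutation.
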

\begin{proof}
By Lemma \ref{lem:simple-no-internal-pair} and Proposition \ref{prop:ncm}, there exists a unique $\tw$ and a unique coloring $\tilde{\cC}$ compatible with $\ncm(\tw)$ such that $[1, \tw^{-1} (1) - 1]$ and $[1, \tw(n)]'$ are colored white, $[\tw^{-1} (n)+1, n]$ and $[\tw(n)+1, n]'$ are colored black, and there are no internal pairings between vertices in $[\tw^{-1} (1), \tw^{-1} (n)]$.

We first show $\tw = w$. To do this, it suffices to show that $w$ and the following coloring $\cC$
satisfy the conditions for $(\tw, \tilde{\cC})$:
\begin{itemize}
    \item $[1, w^{-1} (1) - 1]$, $[1, w(n)]'$, and $[w^{-1} (1) + w^{-1} (n) + w(n) - n, w^{-1} (n)]$ are colored white;

    \item $[w^{-1} (n)+1, n]$, $[w(n)+1, n]'$, and $[w^{-1} (1), w^{-1} (1) + w^{-1} (n) + w(n) - n - 1]$ are colored black.
\end{itemize}
First, by Lemma \ref{lem: w form}, the coloring $\cC$ satisfies the conditions of Lemma \ref{lem:matching_compatibility}, which means that $\ncm(w)$ and $\cC$ are compatible. Next, we show that there are no internal pairings between vertices in $[w^{-1} (1), w^{-1} (n)]$. There are no internal pairings within $[w^{-1} (1) + w^{-1} (n) + w(n) - n, w^{-1} (n)]$ or $[w^{-1} (1), w^{-1} (1) + w^{-1} (n) + w(n) - n - 1]$ because of color, and there are no pairings between $[w^{-1} (1) + w^{-1} (n) + w(n) - n, w^{-1} (n)]$ and $[w^{-1} (1), w^{-1} (1) + w^{-1} (n) + w(n) - n - 1]$ because by Lemma \ref{lem:matching_compatibility}, any $i \in [w^{-1} (1), w^{-1} (1) + w^{-1} (n) + w(n) - n - 1]$ must be paired with some $j$ or $j'$ with $j \le i$. This shows $(w, \cC)$ satisfy the conditions for $(\tw, \tilde{\cC})$, so $\tw = w$.

Now, we use Proposition \ref{prop:cm equals imm sum} and Lemma \ref{cmminors} to expand the sum of complementary minors in Proposition \ref{Rectangle Complementary Minors} as $\sum_{v} c_v \imm_v$ for some constants $c_v$. We claim that if $v \neq w$, then $c_v = 0$. Suppose not; then since $c_v \neq 0$, there exists a coloring compatible with $\ncm(v)$ such that $[1, v^{-1} (1) - 1]$ and $[1, v(n)]'$ are colored white, $[v^{-1} (n)+1, n]$ and $[v(n)+1, n]'$ are colored black. Since $v \neq \tw$, $\ncm(v)$ must have an internal pairing between two vertices $p, q \in [v^{-1} (1), v^{-1} (n)]$. Since $p, q$ are paired in $\ncm(v)$, we see by Remark 2 after Definition \ref{def:ncm} that $q-p$ must be odd.

For an set $I$ satisfying $[w^{-1} (n)+1, n] \subset I \subset [w^{-1} (1), n]$, define the involution $\iota$ by sending $I$ to $(I - \{p\}) \cup \{q\}$ if $p \in I,$ and $(I - \{q\}) \cup \{p\}$ otherwise. Then, by Lemma \ref{cmminors} and the fact that $q-p$ is odd, the coefficients of $\imm_v$ in $\cm_{I, [1,w(n)]}$ and $\cm_{\iota(I), [1,w(n)]}$ are negatives of each other. Thus, after cancellation, we see that $c_v = 0$ whenever $v \neq w$.
\end{proof}

Putting all of the pieces together yields Theorem \ref{CSBThm}.

\begin{proof}[Proof of Theorem \ref{CSBThm}]
Using Proposition \ref{prop:CSBreduction}, we may assume that either $w(1) = 1$ or $w(1) = w(n) + 1$. Then by Propositions \ref{Rectangle Complementary Minors} and \ref{Rectangle Unique Matching}, we obtain $\imm^\%_w = c_w \imm_w$. Comparing coefficients of $x_w$ on both sides, we have $\sgn(w) = c_w f_w (w)$. But $f_w (w) = 1$ by Lemma \ref{lem:basic fwu}, so $c_w = \sgn(w)$. This proves the theorem.
\end{proof}

\subsection{$\imm_w = \sgn(w)\imm_w^{\%}$ Implies 2143-, 1324-Avoiding}
In this subsection, we prove the only if direction of Theorem \ref{thm:onepercentimm}.
\begin{thm}\label{2143ConverseThm}
If $w$ is a permutation that avoids $321$ such that $\imm_w$ is a \%-immanant up to sign, then $w$ must avoid both $1324$ and $2143$. 
\end{thm}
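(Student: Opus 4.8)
The plan is to prove the contrapositive: if $w$ is $321$-avoiding but contains either $1324$ or $2143$, then $\imm_w$ is not a \%-immanant up to sign. I would handle the two patterns somewhat differently, since the $1324$ case follows cleanly from machinery already developed, while the $2143$ case requires a genuine computation with the Temperley-Lieb coefficients. For the $1324$ case: suppose $w$ contains $1324$, witnessed by indices $c < a < b < d$ with $w(c) < w(a) < w(b) < w(d)$. Using Lemma \ref{lem:restriction} (or rather its contrapositive reasoning via restriction), one builds the permutation $w'$ obtained from $w$ by swapping the values at positions $a$ and $b$; then $w$ and $w'$ are $1324$-adjacent in the sense of Definition \ref{defn:1234-1324}. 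If $\imm_w$ were a linear combination of \%-immanants, then by Theorem \ref{thm:classifying space of percent} its coefficient function $f_w$ would satisfy $f_w(w) = -f_w(w')$. But $f_w(w) = 1$ by Lemma \ref{lem:basic fwu}, so we would need $f_w(w') = -1$; in particular $w' \geq w$ in Bruhat order. However, swapping $w(a)$ and $w(b)$ with $w(a) < w(b)$ at positions $a < b$ creates an inversion, so $\ell(w') > \ell(w)$, which is consistent — so I instead need to rule this out by a more careful argument. The cleanest route: show directly that there is a \%-immanant obstruction, i.e. that if $\imm_w = \sgn(w)\imm^\%_{\lambda/\mu}$ then comparing the $x_w$ coefficient forces $\lambda/\mu$ to contain $w$, hence by Lemma \ref{lem:engulfing} contains $\imm^\%_w$, hence (Proposition \ref{prop:same percent}) $w$ and $w'$ lie in the same diagram, so $x_{w'}$ would appear with coefficient $\sgn(w') = -\sgn(w)$ in $\imm^\%_{\lambda/\mu}$; but then $f_w(w') = -1 \neq 0$, forcing $w' \geq w$, and one must derive a contradiction from the specific $1324$ structure — concretely, the existence of the pattern $1324$ at $c,a,b,d$ prevents $w'$ from being $\geq w$ because one can exhibit indices $i,j$ violating condition \ref{bruhat3}.

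For the $2143$ case, the plan is to use the structural reduction already set up in Section \ref{sec:specific-TL}: by Lemma \ref{lem: simplify cases} and the symmetries $S$, $T$, reduce to a normalized form of $w$. Then invoke the formula of Proposition \ref{Rectangle Complementary Minors} / \ref{Rectangle Unique Matching} — or rather, since $w$ now contains $2143$, the analysis will produce a sum of complementary minors that does \emph{not} collapse to a single $\imm_w$: there will be a second surviving permutation $v \neq w$ with $c_v \neq 0$. Concretely, if $w$ contains $2143$ at positions $p_1 < p_2 < p_3 < p_4$, I would locate the associated internal pairing in $\ncm(v)$ that the cancellation argument in Proposition \ref{Rectangle Unique Matching} relied on being absent, and show this pairing \emph{can} occur, so cancellation fails. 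This shows $\imm^\%_w$ (the natural candidate) equals $c_w\imm_w + (\text{other terms})$, hence $\imm_w \neq \pm\imm^\%_w$. To rule out \emph{all} \%-immanants (not just $\imm^\%_w$): if $\imm_w = \sgn(w)\imm^\%_{\lambda/\mu}$ for some $\lambda/\mu$, then since $x_w$ has coefficient $\sgn(w)$ on the left, $w$ lies in $\lambda/\mu$, so $\lambda_w/\mu_w \subseteq \lambda/\mu$ by Lemma \ref{lem:engulfing}; take $w'$ the $1324$- or $2143$-type neighbor of $w$ lying in $\lambda_w/\mu_w$ (for $2143$, swapping the ``2'' and ``1'' values, which keeps the permutation inside the left hull), and then $x_{w'}$ must appear in $\imm^\%_{\lambda/\mu}$ with coefficient $\sgn(w') = -\sgn(w)$, forcing $f_w(w') = -1$; but one checks $f_w(w') = 0$ for this particular $w'$ — e.g. because $w' \not\geq w$ (the $2143 \to 1243$ swap decreases length) — giving the contradiction.

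The main obstacle I expect is the $2143$ computation: verifying that $f_w(w') \neq 1$ (indeed $= 0$ or has the wrong sign) for the relevant neighbor $w'$, or equivalently that the complementary-minor expansion genuinely fails to telescope. This is where one needs the explicit combinatorics of $\ncm(w)$ and the parity observation (Remark 2 after Definition \ref{def:ncm}) that powered Proposition \ref{Rectangle Unique Matching}, now used in reverse: in the $2143$-containing case the two ``corner rectangles'' of Proposition \ref{321-avoiding Rectangles} no longer capture the whole complement of the diagram of $\imm^\%_w$, so equation \eqref{eqn:ebm} yields extra complementary minors indexed by sets $I$ that pair up vertices the cancellation involution cannot match, and I would need to exhibit at least one such surviving term explicitly. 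A secondary obstacle is making the ``$w'$ neighbor lies in $\lambda_w/\mu_w$'' claim precise in both pattern cases simultaneously, which is really the observation that $\lambda_w/\mu_w$ — the left hull of $w$ — is invariant under these pattern-swapping moves, a fact essentially contained in Proposition \ref{prop:same percent}.
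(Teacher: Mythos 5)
Your $2143$ step has a genuine gap at its key claim. You propose: if $\imm_w = \sgn(w)\imm^\%_{\lambda/\mu}$, take $w'$ obtained by sorting the ``$21$'' of a $2143$ occurrence, argue $w'$ stays inside the left hull $\lambda_w/\mu_w \subseteq \lambda/\mu$, and get the contradiction $f_w(w') = -1$ versus $f_w(w') = 0$. But the hull-invariance claim is false: for $w = 2143$ itself the neighbor is $w' = 1243$, and $(1,1) \notin \lambda_w/\mu_w$ (the $1\times 1$ corners are excluded from the hull, cf.\ Proposition \ref{321-avoiding Rectangles}), so $w'$ does not lie in $\lambda_w/\mu_w$; the same failure occurs for the ``$43$'' swap at the other corner. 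Since $\lambda/\mu$ is only known to \emph{contain} the hull, you get no control on whether $w'$ lies in $\lambda/\mu$, and if it does not, both sides assign $x_{w'}$ coefficient $0$ and no contradiction results. (Note that Proposition \ref{prop:same percent} concerns $1324$-moves only; there is no analogous statement that $2143$-moves preserve $\imm^\%_w$, and indeed they do not.) Your other branch --- showing the complementary-minor expansion of $\imm^\%_w$ fails to telescope --- only addresses the single candidate $\imm^\%_w$ and, as you note, still requires proving that the extra terms survive cancellation, which is essentially the coefficient computation of Section \ref{sec:general_TL}. The paper avoids all of this: using Lemma \ref{w(1) and w(n) inequality for 2143} and Lemma \ref{lem:corner zero} it pins down cells that must (and must not) lie in $\lambda/\mu$, then evaluates both sides on an explicit $0$--$1$ matrix $X$ having three equal rows, so that $\imm_w(X) = 0$ by \cite[Proposition 3.14]{RS} while a direct block determinant computation gives $\imm^\%_{\lambda/\mu}(X) = \pm 1$. (Alternatively, the remark after Corollary \ref{cor:anti_diag} notes $|f_w(w_0)| \ge 2$, which also rules out a single \%-immanant, but that needs the full machinery of Section \ref{sec:general_TL}.)

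Your $1324$ step is also mis-set-up, though the correct argument is nearby. You record the witness as $w(c) < w(a) < w(b) < w(d)$ (an increasing pattern) and then swap in the inversion-\emph{creating} direction, getting $\ell(w') > \ell(w)$, at which point Bruhat considerations cannot exclude $f_w(w') = -1$, and your proposed patch (``exhibit indices violating condition \ref{bruhat3} to show $w' \not\ge w$'') is impossible for that $w'$. The fix is the paper's Theorem \ref{1324Thm}: take a genuine $1324$ occurrence $i<j<k<l$ with $w(i) < w(k) < w(j) < w(l)$ and set $w' = w\cdot(j,k)$, which removes an inversion, so $w' < w$ by Corollary \ref{cor:bruhat_inv}, hence $f_w(w') = 0$ by Lemma \ref{lem:basic fwu}, contradicting the requirement $f_w(w') = -f_w(w) = -1$ from Theorem \ref{thm:classifying space of percent}. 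With that orientation corrected your $1324$ argument coincides with the paper's; the $2143$ half, however, needs a genuinely different mechanism such as the evaluation argument above.
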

Our strategy is to first show $w$ avoids $1324$. Then, we show that if $w$ avoids $1324$, then $w$ must also avoid $2143$. 
\begin{thm}\label{1324Thm}
If $w$ is a $321$-avoiding permutation where $\imm_w$ is a linear combination of \%-immanants, then $w$ must be $1324$-avoiding.
\end{thm}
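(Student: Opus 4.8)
The plan is to prove the contrapositive: if $w$ is $321$-avoiding and \emph{contains} the pattern $1324$, then $\imm_w$ is not a linear combination of \%-immanants. By Theorem \ref{thm:classifying space of percent}, it suffices to exhibit two permutations $u, u' \in \fkS_n$ that are $1324$-adjacent but for which $f_w(u) \ne -f_w(u')$. The natural candidates come directly from the $1324$-pattern inside $w$: suppose $a < b < c < d$ witness $w(a) < w(b) < w(c) < w(d)$ (a $1324$-occurrence, after relabeling indices so $b,c$ are the ``middle'' positions). The idea is to produce $u$ and $u'$ with $u' = (w(b), w(c)) \cdot u$, differing only in the two positions realizing this pattern, and to compute $f_w(u)$ and $f_w(u')$ and show they are \emph{equal} (in fact both equal to $1$, or both equal to some common value), violating condition (2).

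First I would exploit the symmetry reductions: by Lemma \ref{lem:symmetry} and Corollary \ref{cor: w_0 conjugation pattern avoidance}, $f_w$, the $1324$-containment of $w$, and the $1324$-adjacency relation are all compatible with the operations $w \mapsto w^{-1}$ and $w \mapsto w_0 w w_0$, so I may normalize the $1324$-occurrence to a convenient position. The cleanest choice is to take $u = w$ itself and $u' = (w(b), w(c)) \cdot w$, where $b,c$ are the indices of the ``$3$'' and ``$2$'' in a $1324$-occurrence (so $w$ has $\ldots w(b) \ldots w(c) \ldots$ with $w(b) < w(c)$, and $u'$ swaps these two values). Then $f_w(w) = 1$ by Lemma \ref{lem:basic fwu}(2), so it remains to show $f_w(u') = 1$ (or at least $f_w(u') \ne -1$). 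Since $u' > w$ in Bruhat order exactly when the pair $(b, c)$ with $w(b) < w(c)$... wait—here one must be careful about which swap increases length; the point is to choose the adjacency so that $u' \geq w$, so that Lemma \ref{lem:basic fwu}(1) does not already kill $f_w(u')$, and then argue $f_w(u') = 1$ by a direct Temperley-Lieb computation or by the complementary-minor formula of Proposition \ref{prop:cm equals imm sum}. Concretely, one expands $\theta(u')$ in terms of $\beta$ of $321$-avoiding permutations and extracts the coefficient of $\beta(w)$, using that $u'$ and $w$ differ by a single transposition and that the extra generator in a reduced word for $u'$ can be absorbed via the relation $t_i t_j t_i = t_i$.

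The main obstacle I anticipate is the bookkeeping in the Temperley-Lieb algebra: showing $f_w(u') = 1$ (rather than some other value) requires understanding precisely how a reduced word for $u'$ relates to one for $w$ and tracking which monomials in the expansion of $\theta(u')$ collapse onto $\beta(w)$. A cleaner route, which I would try first, is to use Proposition \ref{prop:cm equals imm sum}: pick a coloring $(I,J)$ compatible with $\ncm(w)$ (via Lemma \ref{lem:matching_compatibility}) and chosen so that $u'$ also contributes to $\Delta_{I,J}\Delta_{\bar I,\bar J}$, i.e. $u'(I) = J$; then from $\sgn(u') \cdot \one_{u'(I)=J} = \sum_{v \text{ compatible}} f_v(u')$ and an inductive/minimality argument on Bruhat order (the terms $f_v(u')$ with $v < w$ vanish, and with a well-chosen $(I,J)$ only $v = w$ survives among $v \not< w$), deduce $f_w(u') = \sgn(u')\sgn(w) = -f_w(w) \cdot(\pm1)$... and crucially compare signs to get a \emph{contradiction} with condition (2) of Theorem \ref{thm:classifying space of percent}. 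The delicate part throughout is ensuring the chosen pair $u, u'$ is genuinely $1324$-adjacent (the outer indices $c<a<b<d$ of Definition \ref{defn:1234-1324} must exist), which is exactly guaranteed by $w$ containing $1324$; I would verify this compatibility first before doing any algebra, since it is what forces the whole argument to hinge on the pattern $1324$ specifically.
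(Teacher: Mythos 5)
Your overall frame is the same as the paper's: prove the contrapositive, take a $1324$-occurrence $i<j<k<l$ in $w$, form the pair $u=w$ and $u'=w\cdot(j,k)$ (your $(w(b),w(c))\cdot w$ is this same permutation), note they are $1324$-adjacent, and contradict condition (2) of Theorem \ref{thm:classifying space of percent}. But you go wrong at the decisive step. You insist on arranging things ``so that $u' \geq w$, so that Lemma \ref{lem:basic fwu}(1) does not already kill $f_w(u')$,'' and then you plan a substantial Temperley--Lieb or complementary-minor computation to show $f_w(u')=1$. This has two problems. First, it is impossible: in the $1324$-occurrence the middle positions $j<k$ carry a descent ($w(j)>w(k)$), so the unique $1324$-adjacent partner of $w$ supported at those positions is $w'=w\cdot(j,k)$, and $(j,k)$ is an inversion of $w$, so $w'<w$ by Corollary \ref{cor:bruhat_inv}; you cannot have $u'\ge w$ while keeping $u=w$ and the adjacency at the pattern positions. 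Second, the vanishing you are trying to avoid is exactly the easy win: since $w'\not\ge w$, Lemma \ref{lem:basic fwu} gives $f_w(w')=0$ and $f_w(w)=1$, so $f_w(w')\neq -f_w(w)$, and Theorem \ref{thm:classifying space of percent} immediately yields the contradiction. No Temperley--Lieb bookkeeping or choice of coloring $(I,J)$ is needed at all.

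The remainder of your sketch (expanding $\theta(u')$, or choosing $(I,J)$ so that only $v=w$ ``survives'' and comparing signs) is the genuinely hard part of your route and is left unproven; moreover the sign chase at the end is aimed at the wrong target --- to contradict Theorem \ref{thm:classifying space of percent} you need $f_w(u')\neq -f_w(u)$, whereas your sketch appears to be deriving an equality of the form $f_w(u')=\pm f_w(w)$ with the sign unresolved, which by itself gives no contradiction. So the proposal as written has a real gap; replacing the ``$u'\ge w$'' step by the observation $u'=w\cdot(j,k)<w$, hence $f_w(u')=0\neq -1=-f_w(w)$, repairs it and reduces it to the paper's proof.
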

\begin{proof}
Assume for contradiction that $w$ contains the pattern $1324$. Then we have indices $i<j<k<l$ such that $w(i)<w(k)<w(j)<w(l)$. Define the permutation $w':=w \cdot (j,k)$. By Lemma \ref{cor:bruhat_inv}, $w' \le w$. By Lemma \ref{lem:basic fwu}, we have $f_{w}(w')=0$ and $f_w(w)=1$.  

However, by Theorem \ref{thm:classifying space of percent}, we need $f_w(w') = -f_w(w),$ which is a contradiction, as desired.
\end{proof}


Now, to finish proving Theorem \ref{2143ConverseThm}, we need to show that $w$ avoids $2143$. We begin by analyzing what happens if $w$ doesn't avoid $2143$.

\begin{lemma}\label{w(1) and w(n) inequality for 2143}
Suppose that $w$ contains $2143$.
\begin{enumerate}[(a)]
    \item If $w$ avoids $321$, then $w(1) < w(n)$ and $w^{-1}(1) < w^{-1}(n)$.
    
    \item If $w$ avoids $321$, then $w^{-1}(1) + w(1) \le n + 1$ and $(n+1 - w^{-1}(n)) + (n+1 - w(n)) \le n+1$.
    
    \item If $w$ avoids $1324$, then $w(1) \neq 1$ and $w(n) \neq n.$
\end{enumerate}
\end{lemma}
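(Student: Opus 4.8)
The plan is to handle the three parts separately, using in each case the fact that $w$ contains a $2143$-pattern together with the relevant avoidance hypothesis to rule out ``bad'' configurations of the values $w(1), w(n)$ and positions $w^{-1}(1), w^{-1}(n)$.

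For part (a): fix indices $a<b<c<d$ with $w(b) < w(a) < w(d) < w(c)$ realizing the $2143$-pattern. Suppose for contradiction that $w(1) > w(n)$. If $1 \notin \{a,b,c,d\}$ and $n \notin \{a,b,c,d\}$, then $1 < a$ and $d < n$, and I would look at the triple of positions $1, (\text{something in the pattern}), n$ — more carefully, since $w(a) > w(b)$ and $w(1) > w(n)$, I want to extract a decreasing subsequence of length $3$. The cleanest route: compare $w(1)$ with $w(a)$ and $w(b)$, and $w(n)$ with $w(c)$ and $w(d)$; in each of the few cases one finds three positions carrying a strictly decreasing sequence, contradicting $321$-avoidance. (For instance, if $w(1) > w(a)$ then $1 < a < b$ gives $w(1) > w(a) > w(b)$, a $321$.) Handling the boundary cases where $1$ or $n$ coincides with one of $a,b,c,d$ is similar but shorter. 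The statement about $w^{-1}(1) < w^{-1}(n)$ follows by applying the already-proved inequality to $w^{-1}$, using Lemma \ref{lem: restriction inverses} (which tells us $w^{-1}$ also contains $2143 = 2143^{-1}$) and the fact that $w^{-1}$ is $321$-avoiding (Lemma \ref{lem: restriction inverses} again, since $321^{-1} = 321$).

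For part (b): by part (a) we know $a := w^{-1}(1) < w^{-1}(n) =: b$ and $w(1) < w(n)$. I want $w^{-1}(1) + w(1) \le n+1$. Suppose not, so $w^{-1}(1) + w(1) \ge n+2$. Consider the positions $1, 2, \ldots, w^{-1}(1) - 1$ (all before the position of the value $1$): each carries a value that is at least $2$. I would count: among positions $[1, w^{-1}(1)]$ there are $w^{-1}(1)$ values, all distinct and all in $[1,n]$, but those at positions $[1, w^{-1}(1)-1]$ avoid the value $1$. If additionally two of them formed a descent, together with position $w^{-1}(1)$ (value $1$) we would get a $321$-pattern; hence $w(1) < w(2) < \cdots < w(w^{-1}(1)-1)$, i.e. the prefix before the $1$ is increasing. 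An increasing run of length $w^{-1}(1)-1$ taking values in $[2, n]$ avoiding whatever is forced forces $w(w^{-1}(1)-1) \le n$, but to get the sharp bound I combine this increasing-prefix fact with $w(1) \le n+1 - (w^{-1}(1)-1) = n+2-w^{-1}(1)$ — wait, that already gives $w(1) + w^{-1}(1) \le n+2$, off by one; so I need to also use that $w(n) > w(1)$ occupies one of the values above $w(1)$ at a position after the prefix, tightening to $w(1) \le n+1-w^{-1}(1)$. Then $w^{-1}(1) + w(1) \le n+1$, as desired. The second inequality of (b) follows by applying the first to $w_0 w w_0$, which contains $2143$ (by Corollary \ref{cor: w_0 conjugation pattern avoidance}, since $w_0' \cdot 2143 \cdot w_0' = 2143$) and avoids $321$, and for which $(w_0 w w_0)^{-1}(1) = n+1 - w^{-1}(n)$ and $(w_0 w w_0)(1) = n+1 - w(n)$.

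For part (c): suppose $w$ avoids $1324$ and contains $2143$, and assume for contradiction $w(1) = 1$. Take $a<b<c<d$ realizing the $2143$-pattern; since $w(a) > w(b) \ge 1$, we must have $a > 1$, so $1 < a < b < c < d$. Now $w(1) = 1 < w(b) < w(a) < w(c)$, so positions $1 < a < b < c$ carry values $w(1) < w(b) < w(a)$... I need to be careful about the relative order; I want to produce a $1324$-pattern. We have $w(1) = 1$ is below everything. Among $a<b$ we have $w(b) < w(a)$; and $c > b$ with $w(c) > w(a) > w(b)$, and $w(c) > w(1)$. So positions $1, b, a, c$ — but these aren't in increasing order of position. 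Instead take positions $1 < a < b < c$: values are $w(1)=1$, $w(a)$, $w(b)$, $w(c)$ with $1 < w(b) < w(a) < w(c)$, i.e. the pattern of values read in position order is $1\,3\,2\,4$-ish only if $w(a) < w(c)$ and $w(b) < w(a)$ and $1 < w(b)$ — that is exactly $w(1) < w(b)$, $w(b) < w(a)$, $w(a) < w(c)$, giving the pattern $1324$ at positions $1 < a < b < c$. This contradicts $1324$-avoidance. Hence $w(1) \neq 1$. The claim $w(n) \neq n$ follows by applying this to $w_0 w w_0$ (which avoids $1324$ since $w_0' \cdot 1324 \cdot w_0' = 1324$ and contains $2143$), noting $(w_0 w w_0)(1) = n+1 - w(n)$, so $w(n) = n$ would give $(w_0 w w_0)(1) = 1$, a contradiction.

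The main obstacle I anticipate is getting the constants exactly right in part (b): the naive counting argument gives the bound off by one, and pinning down the sharp inequality $w^{-1}(1) + w(1) \le n+1$ requires carefully accounting for where the value $w(n)$ (which exceeds $w(1)$ by part (a)) sits and confirming it does not lie in the forced increasing prefix. Parts (a) and (c) are essentially bookkeeping over a small number of cases and should be routine.
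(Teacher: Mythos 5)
Your proposal is correct and follows essentially the same route as the paper: part (a) extracts $321$-patterns from comparing $w(1)$ with $w(a)$ and $w(n)$ with $w(d)$ and then passes to $w^{-1}$, part (c) is the identical $1324$-pattern at positions $1<a<b<c$, and the symmetric halves of (b) and (c) use conjugation by $w_0$ exactly as the paper does. Your part (b), phrased via the forced increasing prefix before the value $1$ together with the extra value $w(n)>w(1)$, is just a repackaging of the paper's count of the $w^{-1}(1)-1$ positions carrying values exceeding $w(1)$, and your final tightening is valid.
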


Furthermore, if we are given that $\imm_w$ is a \%-immanant up to sign, we would like to know some properties of the \%-immanant, beyond the one given in Lemma \ref{lem:bigtableau}.

\begin{lemma}\label{lem:corner zero}
Suppose $w$ avoids $321, 1324$ and contains $2143$, and $\imm_w$ is a \%-immanant $\imm^\%_{\lambda/\mu}$ up to sign. Then:
\begin{enumerate}[(a)]
    \item $(1, 1), (n, n) \notin \lambda/\mu$;
    
    \item There exists $i$ such that $(1, i), (n+1-i, 1), (n, i), (n+1-i, n) \in \lambda/\mu$.
\end{enumerate}
\end{lemma}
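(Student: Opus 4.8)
The plan is to first translate the hypothesis into information about where $\lambda/\mu$ sits, and then to force the conclusions by exhibiting explicit permutations that must lie in $\lambda/\mu$. Concretely, I would begin by normalizing: comparing the coefficient of $x_w$ on the two sides of $\imm_w=\pm\imm^{\%}_{\lambda/\mu}$ and using $f_w(w)=1$ (Lemma \ref{lem:basic fwu}), one gets $\imm_w=\sgn(w)\,\imm^{\%}_{\lambda/\mu}$, that $w$ itself lies in $\lambda/\mu$, and that for every $u\in\fkS_n$ one has $f_w(u)\ne 0$ precisely when $u$ lies in $\lambda/\mu$ (with $f_w(u)=\sgn(w)\sgn(u)$ in that case). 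Two facts will then be used repeatedly: (i) since $\imm^{\%}_{\lambda/\mu}=\pm\imm_w\ne 0$, Lemma \ref{lem:bigtableau} gives $(i,n+1-i)\in\lambda/\mu$ for all $i$, whence $\lambda_1=n$ and $\lambda_i\ge n+1-i$, $\mu_i\le n-i$; (ii) if $u$ lies in $\lambda/\mu$ then $f_w(u)\ne 0$, so $u\ge w$ by Lemma \ref{lem:basic fwu}, and hence $u(1)\ge w(1)$ and $u(n)\le w(n)$ (immediate from the third characterization of Bruhat order in Definition \ref{defn: Bruhat Order}).

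For part (a), suppose for contradiction that $(1,1)\in\lambda/\mu$. Then $\mu_1=0$, and since $\mu$ is non-increasing, $\mu_i=0$ for all $i$, so $\lambda/\mu$ is the straight shape $\lambda$. Set $a=w^{-1}(1)$ and $u=w\cdot(1,a)$, so that $u(1)=1$, $u(a)=w(1)$, and $u$ agrees with $w$ off $\{1,a\}$. Then $u$ lies in $\lambda$: for $i\notin\{1,a\}$ this is inherited from $w$; for $i=1$ we need only $1\le\lambda_1=n$; and for $i=a$ we need $w(1)\le\lambda_a$, which holds because $\lambda_a\ge n+1-a=n+1-w^{-1}(1)\ge w(1)$, the last step being the inequality $w^{-1}(1)+w(1)\le n+1$ of Lemma \ref{w(1) and w(n) inequality for 2143}(b). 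Hence $f_w(u)\ne 0$, so $u\ge w$ and $u(1)\ge w(1)$; but $u(1)=1$ while $w(1)\ge 2$ (as $w(1)\ne 1$ by Lemma \ref{w(1) and w(n) inequality for 2143}(c)), a contradiction. To get $(n,n)\notin\lambda/\mu$ I would apply the statement just proved to $w_0ww_0$: it again avoids $321$ and $1324$ and contains $2143$ (Corollary \ref{cor: w_0 conjugation pattern avoidance}, as each of these three patterns is fixed by $v\mapsto w_0'vw_0'$), and $\imm_{w_0ww_0}$ is, up to sign, the $\%$-immanant of the $180^{\circ}$-rotation of $\lambda/\mu$ (Lemma \ref{lem: simplify cases} and the remark following it); so $(1,1)$ is absent from the rotated shape, i.e.\ $(n,n)\notin\lambda/\mu$. (Alternatively one repeats the construction with $u=w\cdot(w^{-1}(n),n)$ and the second inequality of Lemma \ref{w(1) and w(n) inequality for 2143}(b).)

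For part (b), I would note that $w$ lying in $\lambda/\mu$ together with the monotonicity of $\lambda,\mu$ already fixes enough boundary cells. Since $\mu_{w^{-1}(1)}<w(w^{-1}(1))=1$ we get $\mu_{w^{-1}(1)}=0$, and since $n=w(w^{-1}(n))\le\lambda_{w^{-1}(n)}\le\lambda_1=n$ we get $\lambda_{w^{-1}(n)}=n$. Hence every row $r$ with $w^{-1}(1)\le r\le w^{-1}(n)$ has $\mu_r=0$ and $\lambda_r=n$, so $(r,1),(r,n)\in\lambda/\mu$; and every column $c$ with $w(1)\le c\le w(n)$ has $\mu_1<w(1)\le c$ and $c\le w(n)\le\lambda_n$, so $(1,c),(n,c)\in\lambda/\mu$. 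It then suffices to find an integer $i$ in $[w(1),w(n)]\cap[\,n+1-w^{-1}(n),\,n+1-w^{-1}(1)\,]$, for then column $i$ and row $n+1-i$ are both ``full'' in the above sense, producing all four cells $(1,i),(n,i),(n+1-i,1),(n+1-i,n)$. Both intervals are non-empty because $w(1)<w(n)$ and $w^{-1}(1)<w^{-1}(n)$ (Lemma \ref{w(1) and w(n) inequality for 2143}(a)), and they overlap because $w(1)\le n+1-w^{-1}(1)$ and $n+1-w^{-1}(n)\le w(n)$ — precisely the two inequalities of Lemma \ref{w(1) and w(n) inequality for 2143}(b); since all endpoints are integers, the overlap contains such an $i$.

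The hard part will be part (a): the whole argument rests on choosing the right witness permutation (the transposition of $w$ that carries the value $1$ into the first slot) and checking that it still fits inside the straight shape $\lambda$ — and that check is exactly where the inequality $w^{-1}(1)+w(1)\le n+1$, and, through Lemma \ref{lem:bigtableau}, the $321$-avoidance of $w$, come in. Part (b) is then essentially bookkeeping on the boundary of $\lambda/\mu$; the only substantive point is that the column-interval $[w(1),w(n)]$ and the reflected row-interval must meet, which once more reduces to Lemma \ref{w(1) and w(n) inequality for 2143}(b).
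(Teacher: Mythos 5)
Your proposal is correct and takes essentially the same route as the paper: for (a) you use the same witness permutation $w\cdot(1,w^{-1}(1))$, killed via Lemma \ref{lem:basic fwu} and Bruhat order, differing only in how you check it fits the shape (via Lemma \ref{lem:bigtableau} and Lemma \ref{w(1) and w(n) inequality for 2143}(b), where the paper instead notes $\lambda_{w^{-1}(1)}=n$), and in handling $(n,n)$ by conjugation with $w_0$ rather than an ``analogous argument.'' For (b), your interval-intersection argument is exactly the paper's choice $i=\max(w(1),\,n+1-w^{-1}(n))$ justified by the same inequalities, so there are no gaps.
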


These lemmas, combined with Theorem \ref{1324Thm}, allow us to prove Theorem \ref{2143ConverseThm}.
\begin{proof}[Proof of Theorem \ref{2143ConverseThm}]
Suppose that $\imm_w$ is a \%-immanant $\imm^\%_{\lambda/\mu}$, up to a sign. By Theorem \ref{1324Thm}, $w$ is $1324$-avoiding. Assume for the sake of contradiction that $w$ is not 2143-avoiding. Then, from Lemma \ref{lem:corner zero}, we can find $i'$ such that $(1, i'), (n+1-i', 1), (n, i'), (n+1-i', n) \in \lambda/\mu$ and $(1, 1), (n, n) \notin \lambda/\mu$. 
\par Our approach now is to construct an $n \times n$ matrix $X,$ and then show that $\imm_w$ and $\imm^\%_{\lambda/\mu}$ give different results when evaluated on $X$. Let $X = (x_{i,j})$ where $x_{i, n+1 - i} = 1$ for all $i,$ and also set $x_{1, n + 1 - i'} = x_{1, 1} = x_{i', 1} = x_{i', n} = x_{n, n} = x_{n, n + 1 - i'} = 1.$ Set the rest of the entries to be zero.
\par We now evaluate $\imm^\%_{\lambda/\mu} (X)$. Notice that, by our assumption, this is equal to the determinant of the matrix $X'$ by setting $x_{1, 1}$ and $x_{n, n}$ to zero in $X,$ by Lemmas \ref{lem:bigtableau} and \ref{lem:corner zero}. In other words, we are evaluating the determinant of the following matrix: $$\begin{pmatrix} 0 & 0 & \hdots & 1 & \hdots & 0 & 1 \\ 0 & 0 & \hdots & 0 & \hdots & 1 & 0 \\ \vdots & \vdots & \ddots & \ddots & \ddots & \vdots & \vdots \\ 1 & 0 & \hdots & 1 & \hdots & 0 & 1 \\ \vdots & \vdots & \ddots & \ddots & \ddots & \vdots & \vdots \\ 0 & 1 & \hdots & 0 & \hdots & 0 & 0 \\ 1 & 0 & \hdots & 1 & \hdots & 0 & 0 \end{pmatrix}.$$
\par Notice that we can re-arrange these rows (which only affects the sign) so that the $i'$-th row and the $n$th row become the 2nd and 3rd row, respectively. Then, re-arrange the columns so that the $(n+1-i')$-th column and the $n$th column become the 2nd and 3rd columns. The resulting determinant we are evaluating is then $$\begin{pmatrix} 0 & 1 & 1 & 0 & \hdots & 0 & 0 & 0\\ 1 & 1 & 1 & 0 & \hdots & 0 & 0 & 0 \\ 1 & 1 & 0 & 0 & \hdots & 0 & 0 & 0 \\  0 & 0 & 0 & \ddots & 0 & 0 & 0 & 1  \\  0 & 0 & 0 & \hdots & 0 & 0 & 1 & 0 \\ \vdots & \vdots & \vdots & \ddots & \ddots & \vdots & \vdots & \vdots \\ 0 & 0 & 0 & 0 & 0 & 1 & \hdots & 0 \\ 0 & 0 & 0 & 0 & 1 & 0 & \hdots & 0 \\ 0 & 0 & 0 & 1 & 0 & 0 & \hdots & 0 \end{pmatrix}.$$
\par Notice that this is a block-diagonal matrix, with a matrix with all $1$s along the anti-diagonal in one block and the matrix $\begin{pmatrix}0 & 1 & 1 \\ 1 & 1 & 1 \\ 1& 1 &  0 \end{pmatrix}$ as the other. This means that the determinant of this matrix in total is equal to simply the product of the determinants of these two blocks, which is just $1.$ There are some sign considerations, but this is not important. We conclude $\imm^\%_{\lambda/\mu} (X) = \pm 1$.
\par On the other hand, since $X$ has three equal rows (rows $1, i$ and $n$), Proposition 3.14 from \cite{RS} tells us that $\imm_w X = 0.$ In particular, for our $w,$ we know that $\imm_w X = 0.$ 
But this contradicts $\imm^\%_{\lambda/\mu} (X) = \pm 1$. This proves the theorem.
\end{proof}
\subsection{Proofs of Lemmas in Section 4}\label{subsec: sec4proof}

In this subsection we present the proofs of the various lemmas throughout this section. We also reproduce each of the lemmas in this subsection.



\begin{replemma}{lem: simplify cases}
Let $S$ be the linear map that sends $x_{\sigma}$ to $x_{\sigma^{-1}},$ and let $T$ be the linear map that sends $x_{\sigma}$ to $x_{w_0 \sigma w_0},$ where $w_0$ is the longest word in $\fkS_n.$ Then, $S$ sends $\imm_w$ to $\imm_{w^{-1}}$ and $\imm^{\%}_w$ to $\imm^{\%}_{w^{-1}},$ and $T$ sends  $\imm_w$ to $\imm_{w_0ww_0}$ and $\imm^{\%}_w$ to $\imm^{\%}_{w_0ww_0}.$
\end{replemma}
\begin{proof}
The claim about $\imm_w$ is Lemma \ref{lem:symmetry}, so we will show the Lemma for $\imm^{\%}_w$.

Note that the coefficient of $x_{\sigma}$ in $\imm^{\%}_{w^{-1}}$ is $\sgn(\sigma)$ if and only if $m_{w^{-1}}(i) \leq \sigma(i) \leq M_{w^{-1}}(i)$ for each $i,$ and is $0$ otherwise. But now, consider $\sigma^{-1}(i).$ Notice that $m_{w^{-1}}(i) \leq \sigma(i)$ implies that $w^{-1}(k) = l \leq \sigma(i)$ for some $k \leq i.$ Furthermore, we know that $w(l) \leq k \leq i,$ meaning that $m_w(\sigma(i)) \leq i$ for each $i,$ or that $m_w(i) \leq \sigma^{-1}(i).$ Similarly, $w^{-1}(p) = q \geq \sigma(i)$ for some $p \geq i,$ so then $w(q) = p \geq i,$ and thus $M_w(\sigma(i)) \geq i$ for each $i.$ We combine these results to get that $m_w(i) \leq \sigma^{-1}(i) \leq M_w(i).$ In particular, $x_{\sigma}$ has coefficient $\sgn(\sigma)$ in $\imm_w^{\%}$ if and only if $x_{\sigma^{-1}}$ has coefficient $\sgn(\sigma^{-1})$ in $\imm_{w^{-1}}^{\%}.$ Thus, $S$ sends $\imm^{\%}_{w^{-1}}$ to $\imm^{\%}_w.$ It's not hard to see that $S$ is an involution, so the reverse also holds. Similarly for $T,$ note that $m_w(i) \leq \sigma(i) \leq M_w(i)$ for each $i$ if and only if $\max \{n + 1 - w(1),  n + 1 - w(2), \ldots,  n + 1 - w(n + 1 - i)\} = M_{w_0ww_0}(i) \geq n + 1 - \sigma(n + 1 - i) \geq \min \{n + 1 - w(n + 1 - i), n + 1 - w(n + 2 - i), \ldots, n + 1 - w(n)\} =  m_{w_0ww_0}(i).$ In other words, $x_{\sigma}$'s coefficient in $\imm_w^{\%}$ is the same as $x_{w_0\sigma w_0}$'s coefficient in $\imm_{w_0ww_0}^{\%}$ for each $u,$ meaning that $T$ sends the former to the latter.
\end{proof}
\begin{replemma}{lem: w form}
Suppose that $w$ is a $321$-, $1324$-, $2143$- avoiding permutation, and furthermore either $w(1) = 1$ or $w(1) = w(n) + 1$. Then, $w$ is uniquely determined and has one line notation $$(w(n)+1..w^{-1}(1)+w(n)- 1:1..w^{-1}(n) + w(n) - n:w^{-1}(1) + w(n)..n:w^{-1}(n) + w(n) - n+1..w(n)).$$
\end{replemma}
\begin{proof}
We first show $w^{-1}$ is increasing from $1$ to $w(n).$ Otherwise, if $w^{-1}(i) > w^{-1}(j)$ but $1 \leq i < j \leq w(n),$ then either $w(1) \neq 1,$ and so $1 < w^{-1}(j) < w^{-1}(i)$ forms a $321$-pattern (note that $w(1) = w(n) + 1 > j > i$), or $w(1) = 1,$ where $i, j \neq 1$ yields that 
$1 < w^{-1}(j) < w^{-1}(i) < n$ forms a $1324$ pattern. Similarly, $w^{-1}$ is increasing from $w(n) + 1$ to $n.$
\par We first show $w(1..w^{-1} (1) - 1) = (w(n)+1..w(n) + w^{-1} (1) - 1)$. If $w(1) = 1$ then this statement is trivial. If $w(1) > 1$ then $w(1) = w(n) + 1$, and so by $321$-avoidance of $w$ (or Proposition \ref{321-avoiding Rectangles}), we have $w(i) \ge w(n) + 1$ for $1 \le i \le w^{-1} (1) - 1$. Since $w^{-1}$ is increasing from $w(n)+1$ to $n$, we conclude that $w(1..w^{-1} (1) - 1) = (w(n)+1..w(n) + w^{-1} (1) - 2)$.
\par Next, we show that $w(w^{-1}(n) + 1..n) = (w(n) + w^{-1} (n) - n+1..w(n))$. By $321$-avoidance of $w$, we know that $w(i) \le w(n)$ for $w^{-1}(n) + 1 \le i \le n$. Since $w^{-1}$ is increasing from $1$ to $w(n)$, we conclude that $w(w^{-1}(n) + 1..n) = (w(n) + w^{-1} (n) - n+1..w(n))$.
\par As for the remaining values for $w$, notice that since $w$ avoids $1324$, $w$ needs to be increasing from $w^{-1}(1)$ to $w^{-1}(n)$. We also know
\begin{equation*}
    w([w^{-1} (1), w^{-1} (n)]) = [n] \setminus w([1, w^{-1} (1) - 1:w^{-1}(n) + 1, n]) = [1, w(n)+w^{-1}(n)-n : w(n)+w^{-1} (1), n].
\end{equation*}
Thus, we get $w(w^{-1} (1)..w^{-1} (1)+w(n)+w^{-1}(n)-n-1) = (1..w(n)+w^{-1}(n)-n)$ and $w(w^{-1} (1)+w(n)+w^{-1}(n)-n..w^{-1} (n)) = (w(n)+w^{-1} (1)..n)$, which corresponds to the one line notation 
$$(w(n)+1..w^{-1}(1)+w(n)- 1:1..w^{-1}(n) + w(n) - n:w^{-1}(1) + w(n)..n:w^{-1}(n) + w(n) - n+1..w(n)),$$
proving the lemma.
\end{proof}

\begin{replemma}{lem:simple-no-internal-pair}
Suppose that $a, b, c$ satisfy $0 \le a,b,c \le n$ and $a + b + c = 2n.$ There is a unique coloring of the vertices in $[2n]$ and a unique compatible non-crossing matching such that vertices $[a+1, a+b]$ are colored black, vertices $[a+b+1, 2n]$ are colored white, and there do not exist internal pairings within $[1, a]$ (i.e. there do not exist $1 \le i < j \le a$ that are paired).
\end{replemma}

\begin{proof}
We induct on $a$. For the base case $a = 0$, we have $b = c$ and the unique matching is given by pairing $x$ and $2n-x$ for all $1 \le x \le n$. For the inductive step, suppose the result is true for $a-1$. We may assume $b \le c$; otherwise, relabel the vertices $x \to a+1 - x \pmod {2n}$ and flip black/white. Then $a \ge 1$ implies $2b \le b+c \le 2n-1$, so $b \le n-1$.

We claim that $1$ is paired with $2n$. Suppose not; then $1$ is paired with $x$ for some $a+1 \le x \le 2n-1$. Among the vertices in $[x+1, 2n]$, there must be equal numbers of black vertices and white vertices. We now show this can't be the case. If $x \ge a+b$, then all the vertices are black. If $a+1 \le x < a+b$, then there are $c$ white vertices and at most $b-1$ black vertices. Thus, in fact $1$ must be paired with $2n$, and $1$ must be colored black. If we remove $1$ and $2n$, then the remaining configuration exhibits a coloring and compatible non-crossing matching. By the inductive hypothesis (note that $0 \le a-1, b, c-1 \le n-1$), this coloring and compatible non-crossing matching are unique.
\end{proof}
\begin{replemma}{w(1) and w(n) inequality for 2143}
Suppose that $w$ contains $2143$.
\begin{enumerate}[(a)]
    \item If $w$ avoids $321$, then $w(1) < w(n)$ and $w^{-1}(1) < w^{-1}(n)$.
    
    \item If $w$ avoids $321$, then $w^{-1}(1) + w(1) \le n + 1$ and $(n+1 - w^{-1}(n)) + (n+1 - w(n)) \le n+1$.
    
    \item If $w$ avoids $1324$, then $w(1) \neq 1$ and $w(n) \neq n.$
\end{enumerate}
\end{replemma}
\begin{proof}
(a) We first show that $w(1) < w(n)$. Since it contains a $2143$ pattern, we can find $a < b < c < d$ such that $w(b) < w(a) < w(d) < w(c)$. Then $w(1) \le w(a)$ (otherwise $1, a, b$ would be a $321$-pattern) and $w(n) \ge w(d)$ (otherwise $c, d, n$ would be a $321$-pattern), so $w(1) \le w(a) < w(d) \le w(n)$.

Now $w^{-1} (1) < w^{-1} (n)$ follows from applying the first part of (a) to $w^{-1}$, which also avoids $321$ and contains $2143$ by Lemma \ref{lem: restriction inverses}.

(b) Note that $w(i) > w(1)$ for $2 \le i \le w^{-1} (1) - 1$ and $i = n$ by $321$-avoidance and part (a). But there are exactly $n-w(1)$ many $i$ with $w(i) > w(1)$, which shows that $w^{-1} (1) - 1 \le n - w(1)$ and hence $w(1) + w^{-1} (1) \le n+1.$

Then $(n+1 - w^{-1}(n)) + (n+1 - w(n)) \le n+1$ follows from applying the first part of (b) to $w_0 w w_0$, which also avoids $321$ and contains $2143$ by Corollary \ref{cor: w_0 conjugation pattern avoidance}.

(c) We first show $w(1) \neq 1$. If $w(1) = 1$, then there exist $1 < a < b < c < d$ such that $w(b) < w(a) < w(d) < w(c)$. Then $1 < a < b < c$ satisfy $w(1) < w(b) < w(a) < w(c)$, which contradicts $w$ being $1324$ avoiding. Thus, $w(1) \neq 1$, and $w(n) \neq n$ follows by applying the first part of (c) to $w_0 w w_0$, which also avoids $1324$ and contains $2143$ by Corollary \ref{cor: w_0 conjugation pattern avoidance}.
\end{proof}

\begin{replemma}{lem:corner zero}
Suppose $w$ avoids $321, 1324$ and contains $2143$, and $\imm_w$ is a \%-immanant $\imm^\%_{\lambda/\mu}$ up to sign. Then:
\begin{enumerate}[(a)]
    \item $(1, 1), (n, n) \notin \lambda/\mu,$
    
    \item There exists $i$ such that $(1, i), (n+1-i, 1), (n, i), (n+1-i, n) \in \lambda/\mu$.
\end{enumerate}
\end{replemma}

\begin{proof}
(a) By Lemma \ref{w(1) and w(n) inequality for 2143}(c), we have $w(1) \neq 1$. If $(1, 1) \in \lambda/\mu$, then consider $w' = w \cdot (1, w^{-1} (1))$. Then since $(1, w^{-1} (1))$ is an inversion of $w'$, we have $w' \not\ge w$ by Corollary \ref{cor:bruhat_inv}. Thus, by Lemma \ref{lem:basic fwu}, we have $f_w (w') = 0$ and $f_w (w) = 1$. However, note that if the coefficient of $x_w$ is nonzero, then in particular we require that $n \leq \lambda_{w^{-1}(n)}$ and $\mu$ to be empty, since $\mu_1 = 0$ by $\lambda/\mu$ containing $(1, 1).$ By Lemma \ref{w(1) and w(n) inequality for 2143}(a), we have that $w^{-1}(1) < w^{-1}(n),$ and so $\lambda_{w^{-1}(1)} = n.$ But then the coefficients of $x_w$ and $x_{w'}$ in $\imm^\%_{\lambda/\mu}$ are negatives of each other, since $w'(i) \leq \lambda_i$ for $i \in [n]$ holds if and only if $w(i) \leq \lambda_i$ for $i\in [n].$ Thus, $(1, 1) \notin \lambda/\mu$, and an analogous argument shows $(n, n) \notin \lambda/\mu$.

(b) Choose $i = \max(w(1), n+1-w^{-1}(n))$. We first observe that since $f_w (w) = 1$, we must have $(j, w(j)) \in \lambda/\mu$ for all $1 \le j \le n$. Thus, since $i \ge w(1)$, we must have $(1, i) \in \lambda/\mu$. Similarly, since $n+1-i \le w^{-1} (n)$, we must have $(n+1-i, n) \in \lambda/\mu$.

\par Now, for $(n, i),$ observe that either $i = w(1) < w(n),$ or $i = n + 1 - w^{-1}(n) \leq w(n)$ by Lemma \ref{w(1) and w(n) inequality for 2143}(a,b). In either case we see that $(n, i) \in \lambda/\mu.$ Finally, we observe that in $(n + 1 - i, 1),$ either $n + 1 - i = w^{-1}(n) > w^{-1}(1),$ or $n + 1 - i = n + 1 - w(1) \geq w^{-1}(1),$ so again in either case $(n+1 - i, 1) \in \lambda/\mu.$ This proves the claim.
\end{proof}

\section{Temperley-Lieb Immanants as Linear Combinations of \%-Immanants}\label{sec:general_TL}
This section is devoted to proving the following theorem. 
\begin{thm}\label{TwoPercentForward}
Let $w$ be a $321$-avoiding permutation. The following statements are equivalent:
\begin{enumerate}
    \item The Temperley-Lieb immanant $\imm_w$ is a linear combination of \%-immanants;
    
    \item The signed Temperley-Lieb immanant $\sgn(w) \imm_w$ is a sum of at most two \%-immanants;
    
    \item The permutation $w$ avoids the patterns $1324, 24153, 31524, 231564$, and $312645,$ in addition to avoiding $321.$
\end{enumerate}
\end{thm}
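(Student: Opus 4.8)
The plan is to establish the theorem as a cycle of implications. The implication $(2) \Rightarrow (1)$ is immediate: a sum of at most two \%-immanants is a fortiori a linear combination of \%-immanants. For the remaining two implications, Theorem \ref{thm:onepercentimm} disposes of the case that $w$ avoids $2143$: there $\sgn(w)\imm_w = \imm_w^{\%}$ is a single \%-immanant (so $(2)$ holds), and such a $w$ automatically avoids $1324$ and, since each of $24153, 31524, 231564, 312645$ contains $2143$, all four longer patterns as well (so $(3)$ holds). Hence we may assume throughout that $w$ is $321$-avoiding and contains $2143$; moreover, by Theorem \ref{1324Thm} and Theorem \ref{thm:classifying space of percent}, if $(1)$ holds then $w$ avoids $1324$. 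So the heart of the matter is the regime where $w$ is $321$- and $1324$-avoiding but contains $2143$.

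The engine of the proof is an explicit combinatorial formula for the coefficients $f_w(u)$ in this regime. To obtain it, I would imitate Section \ref{sec:specific-TL}: first use the symmetry maps $S, T$ of Lemma \ref{lem: simplify cases}, together with $321$- and $1324$-avoidance, to reduce $w$ to a normal form --- an analogue of Lemma \ref{lem: w form}, expected to be a one-line notation built from a bounded number of increasing runs with a single controlled ``$2143$-type'' overlap; then express the relevant complementary minors $\cm_{I,J}$ in terms of \%-immanants via the expansion \eqref{eqn:ebm} and Proposition \ref{prop:cm equals imm sum}; and finally invert the resulting linear system using the non-crossing matching and coloring combinatorics (Lemma \ref{lem:matching_compatibility}, Lemma \ref{lem:simple-no-internal-pair}, the parity-of-difference remark after Definition \ref{def:ncm}, and Lemma \ref{cmminors}). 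I expect the outcome to say that $f_w(u)$ vanishes unless $u \ge w$ and $u$ is ``compatible'' with $\ncm(w)$ in a precise sense, in which case its value (a sign, possibly times a power of two) is governed by the presence or absence of certain internal pairings in an associated coloring. Establishing this formula, and in particular organizing the normal-form reduction and the sign bookkeeping cleanly, is where I expect the bulk of the work to lie.

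With the formula in hand, the implication $(1) \Rightarrow (3)$ becomes pattern-chasing. Assuming $w$ contains one of $24153, 31524, 231564, 312645$ (each of which forces $2143$, so the formula applies), I would exhibit an explicit pair of $1324$-adjacent permutations $u, u'$ for which the formula yields $f_w(u) \ne -f_w(u')$ --- for instance both nonzero of the same sign, or one zero and one nonzero --- contradicting Theorem \ref{thm:classifying space of percent}. The restriction lemmas (Lemma \ref{lem: restriction inverses}, Corollary \ref{cor: w_0 conjugation pattern avoidance}) and the symmetries of $\imm_w$ should collapse the four patterns to one or two genuinely distinct cases. For $(3) \Rightarrow (2)$, assuming $w$ avoids $321, 1324$ and the four longer patterns but contains $2143$, I would again pass to the normal form and read off from the coefficient formula two skew shapes $\lambda_1/\mu_1$ and $\lambda_2/\mu_2$ with $\imm_{\lambda_1/\mu_1}^{\%} + \imm_{\lambda_2/\mu_2}^{\%} = \sgn(w)\imm_w$; verifying this identity amounts to checking, monomial by monomial, that the overlay of the two skew shapes reproduces the support and multiplicities of $\sgn(w)\imm_w$, which the formula is designed to make transparent. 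The main obstacle, as noted, is the coefficient formula together with the normal-form reduction; once those are in place, both directions reduce to finite --- if intricate --- combinatorial verifications.
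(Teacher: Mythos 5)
Your outline is the paper's own route: prove $3 \Rightarrow 2 \Rightarrow 1 \Rightarrow 3$, dispose of the $2143$-avoiding case by Theorem \ref{thm:onepercentimm}, use Theorem \ref{1324Thm} and Theorem \ref{thm:classifying space of percent} to force $1324$-avoidance, and in the remaining regime derive an explicit coefficient formula from complementary minors and non-crossing matchings, then use $1324$-adjacent pairs with mismatched coefficients for necessity and an explicit two-shape decomposition for sufficiency. So the strategy is sound and matches the paper.

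The issue is that everything you defer is where the actual content lies, and your anticipation of what comes out of it is wrong in a way the theorem depends on. The paper first classifies $321$-, $1324$-avoiding, $2143$-containing $w$ into \emph{two} block structures, $[2][1][3][5][4]$ and $[3][5][1][6][2][4]$ (Proposition \ref{2143Patterns}), with the patterns $24153, 31524$ detecting exactly the second case (Lemma \ref{lem:24153, 31524 pattern}); your ``single controlled $2143$-type overlap'' normal form misses this dichotomy, which is needed because the two cases get different minor expansions (Propositions \ref{sumcm5}, \ref{sumcm6}) and different coefficient formulas. More importantly, the coefficients are not ``a sign, possibly times a power of two'': Theorems \ref{AntiDiagCoeff} and \ref{coefficient of u in w case 2} give $f_w(u) = \sgn(w)\sgn(u)\binom{A+B}{A}$, a genuine binomial coefficient (e.g.\ $f_w(w_0) = \pm\binom{\min(a,c)+\min(b,d)}{\min(b,d)}$, Corollary \ref{cor:anti_diag}), and the whole point of the patterns $231564$ and $312645$ is that avoiding them forces the block parameters to satisfy ($a=1$ or $c=1$) and ($b=1$ or $d=1$), which caps this binomial at $\binom{2}{1}=2$ and is precisely what makes ``sum of at most two \%-immanants'' attainable; conversely, when a long pattern is present the paper exhibits $1324$-adjacent $v, vs_a$ with $f_w(v) = \pm 1$ but $f_w(vs_a) = \mp(\min(b,d)+1)$ (or $0$ versus $\pm 1$ in the second case), violating the criterion of Theorem \ref{thm:classifying space of percent}. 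With coefficients of the shape you guessed, the count ``at most two'' in statement 2 could not be extracted, so the quantitative heart of the equivalence is exactly the part your plan leaves open.
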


To prove the theorem, we first classify permutations $w$ that avoid $321$ and $1324$ but contain $2143.$ Then, as in Section \ref{sec:specific-TL}, we will use Proposition \ref{prop:cm equals imm sum} to express $\imm_w$ as an explicit linear combination of certain complementary minors. Finally, we compute the coefficients $f_w (u)$ of the immanant $\imm_w$, which amounts to counting the complementary minors in the sum that have nonzero $x_u$ term.
\subsection{Classifying 321-, 1324-avoiding, 2143-containing permutations}
We will show such permutations satisfy one of two prescribed block structures.
\begin{prop}\label{2143Patterns}
Let $w\in \mathfrak{S}_n$ be a permutation that avoids $321$, $1324$, and contains $2143$. 

Let $a' = w^{-1}(1) - 1, b' = w(1) - 1, c' = n - w(n), d' = n - w^{-1}(n)$. We reserve letters $a, b, c, d$ for the size of finer blocks. We will see that in one case, we will have blocks whose sizes are $a', b', c', d'$ respectively. In the other case, we will break up $a', b', c', d'$ into six values which will form the lengths of our blocks.

\begin{enumerate}
    \item If $a' + b' + c' + d' \leq n,$ let $a = a', b = b', c = c', d = d',$ and let $e=n-a-b-c-d \ge 0$. Then the one-line notation of $w$ is
    \begin{equation*}
        (b+1..b+a : 1..b : b+a+1..b+a+e : n-c+1..n : n-c-d+1..n-c).
    \end{equation*}
In this case, we see $w$ has block structure $[2][1][3][5][4]$ with block lengths $a, b, e, c, d$ and has fixed points at every $i \in [b+a+1, b+a+e]$. Only the $[3]$ block is allowed to be empty.

\item Otherwise, there exist integers $1 \le a, b, c, d \le n$, $0 \le e, f \le n$ with $\max(e, f) \ge 1$, such that $a + e = a', b + f = b', c + e = c'$, $d + f = d'$, $a+b+c+d+e+f=n$, and the one-line notation of $w$ is
\begin{equation*}
    (b+f+1..b+f+a : n-c-e+1..n-c : 1..b : n-c+1..n : b+1..b+f : n-d-c-e+1..n-c-e).
\end{equation*}
In this case, $w$ has block structure $[3][5][1][6][2][4]$ with block lengths $a, e, b, c, f, d$, and the middle two blocks do not contain any numbers in $[b'+1, n-c']$. Only one of the $[5]$ or $[2]$ blocks are allowed to be empty.
\end{enumerate}
\end{prop}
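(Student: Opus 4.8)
The plan is to extract the block structure from the pattern-avoidance hypotheses in the same spirit as Lemma \ref{lem: w form}, but now without the simplifying assumption $w(1)=1$ or $w(1)=w(n)+1$. First I would record the basic consequences of $321$-avoidance: by Proposition \ref{321-avoiding Rectangles}, $m_w$ and $M_w$ each take at most two values, so the diagram of $\imm_w^\%$ is the $n\times n$ box minus an upper-left rectangle of dimensions $(w^{-1}(1)-1)\times(w(1)-1)=a'\times b'$ and a lower-right rectangle of dimensions $(n-w^{-1}(n))\times(n-w(n))=d'\times c'$. In particular $w(i)\ge w(1)=b'+1$ for $i\le a'$, and $w(i)\le w(n)=n-c'$ for $i>n-d'$. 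Next I would show, exactly as in the proof of Lemma \ref{lem: w form}, that $w^{-1}$ is increasing on $[1,b']$ and on $[c'+1,n]$: if not, a decreasing pair in one of these ranges combines with the "rectangle" rows to give either a $321$ pattern or (using $w(1)\ne 1$, $w(n)\ne n$, which here we may not have, so one must be slightly careful) a $1324$ pattern; I would split into the subcases $w(1)=1$ versus $w(1)>1$ as in Lemma \ref{lem: w form}. This forces $w(1..a')=(b'+1..b'+a')$ and $w(n-d'+1..n)=(n-c'-d'+1..n-c')$, i.e.\ the first $a'$ and last $d'$ positions already form ascending blocks $[2]$ and $[4]$ (in the value sets $[b'+1,b'+a']$ and $[n-c'-d'+1,n-c']$). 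Similarly, by $1324$-avoidance, $w$ restricted to the "middle" positions $[a'+1,n-d']$ must avoid $1324$; since $w$ avoids $321$ on the whole and we have already pinned down the extreme blocks, one shows $w$ is increasing on $[w^{-1}(1),w^{-1}(n)]$, which is the key structural fact.

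With these pieces, the remaining positions $[a'+1,n-d']$ must be filled with the value set $[1,b']\cup[n-c'+1,n]\cup[b'+a'+1,n-c'-d']$ (the latter being the values not used by blocks $[2],[4]$ and not in the two rectangles), and $w$ is increasing on the sub-interval containing $w^{-1}(1)$ through $w^{-1}(n)$. The case split $a'+b'+c'+d'\le n$ versus $>n$ corresponds precisely to whether the "interior" value set $[b'+a'+1,n-c'-d']$ is nonempty (equivalently, whether positions $a'+1\le i\le n-d'$ can host values strictly between $b'$ and $n-c'+1$). In the first case ($e:=n-a'-b'-c'-d'\ge 0$), the small values $[1,b']$ come first (positions $a'+1..a'+b'$), then the fixed points $[b'+a'+1..b'+a'+e]$, then the large values $[n-c'+1..n]$; that is exactly block structure $[2][1][3][5][4]$ with lengths $a'=a,b'=b,e,c'=c,d'=d$, and I would verify the displayed one-line notation and that $2143$-containment forces $a,b,c,d\ge 1$ (an explicit $2143$ pattern needs one entry from each of blocks $[2],[1],[5],[4]$). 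In the second case, $b'+a'+1>n-c'-d'$, so the interior value set is empty and instead the two ascending runs $[1,b']$ and $[n-c'+1,n]$ overlap in "position budget": this forces a finer split $a'=a+e$, $b'=b+f$, $c'=c+e$, $d'=d+f$ with $a+b+c+d+e+f=n$, where $e$ (resp.\ $f$) measures how far the large-value run $[n-c'+1,n]$ reaches left of position $n-d'$ (resp.\ how far the small-value run reaches right of $a'+1$); the increasing-on-the-middle property then pins down the order uniquely as block structure $[3][5][1][6][2][4]$ with lengths $a,e,b,c,f,d$. I would again check the displayed one-line notation, that $\max(e,f)\ge 1$ (this is exactly $a'+b'+c'+d'>n$), that the middle blocks $[5],[2]$ avoid $[b'+1,n-c']$ by construction, and that only one of $[5],[2]$ can be empty (if both were empty we would be in case 1).

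The main obstacle I anticipate is the bookkeeping in the second case: disentangling the four "outer" parameters $a',b',c',d'$ into the six parameters $a,b,c,d,e,f$ and proving this decomposition is \emph{forced} (uniqueness) rather than merely possible. Concretely, one must argue that once $w$ is increasing on $[w^{-1}(1),w^{-1}(n)]$ and the value multiset on the middle positions is fixed, there is exactly one way to interleave the run $[1,b']$ and the run $[n-c'+1,n]$ consistent with the already-determined endpoints $w(a'+1),\dots$ and $w^{-1}(1),w^{-1}(n)$ — and that the resulting permutation indeed avoids $1324$ (the increasing middle block kills any $1324$ whose "32" lies in the middle, and the extreme blocks are handled by $321$-avoidance) and contains $2143$ (pick positions in blocks $[3],[1],[6],[4]$, say, to realize $2143$). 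I would also double-check the boundary: a permutation with $a'+b'+c'+d'=n$ has $e=0$ in case 1 and should \emph{not} also satisfy case 2's hypotheses, so the two cases are genuinely exclusive. None of this is deep, but it is the part where a sign or off-by-one slip is most likely, so I would write out the one-line notation verification carefully rather than leaving it to the reader.
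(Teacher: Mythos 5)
There is a genuine gap, and it sits exactly where the proposition is hardest. Your central intermediate claim — that the hypotheses force $w(1..a') = (b'+1..b'+a')$ and $w(n-d'+1..n) = (n-c'-d'+1..n-c')$ — is false in case 2. Concretely, $w = 24153$ avoids $321$ and $1324$ and contains $2143$, with $a' = w^{-1}(1)-1 = 2$ and $b' = w(1)-1 = 1$, yet $w(1..2) = (2,4) \neq (2,3)$. Structurally, the claim cannot hold whenever $a'+b'+c'+d' > n$: it would assign the value interval $[b'+1, b'+a']$ to the first $a'$ positions and $[n-c'-d'+1, n-c']$ to the last $d'$ positions, and these two intervals overlap precisely when $a'+b' \ge n-c'-d'+1$, contradicting disjointness. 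What the $321$-rectangle fact plus the increasingness of $w^{-1}$ on $[b'+1,n-c']$ and on $[n-c'+1,n]$ actually gives is weaker: $w(1..a')$ is an initial run $(b'+1..b'+a)$ followed by a run $(n-c'+1..n-c'+(a'-a))$ for some $a \le a'$ (and symmetrically for the last $d'$ positions). In case 2 the first $a'$ positions genuinely split into the blocks $[3]$ and $[5]$, so your subsequent step "the remaining positions $[a'+1,n-d']$ are filled with $[1,b']\cup[n-c'+1,n]\cup[b'+a'+1,n-c'-d']$" also fails there, and your case 2 discussion ("the runs overlap in position budget, forcing a finer split") is built on a premise that contradicts itself rather than on a working derivation of the six-parameter decomposition.

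The paper's proof supplies the two ingredients you are missing. First, it introduces the split points explicitly: $a$ is the largest index in $[1,a']$ with $w(a) \le n-c'$ and $d$ the largest with $w(n-d) \ge b'+1$, which determines $w$ on $[1,a']$ and on $[n-d'+1,n]$ as two ascending runs each. Second, it uses a further consequence of $1324$-avoidance (its Observation 4): if some position in $[1,a']\cup[n-d'+1,n]$ carries a value outside $[b'+1,n-c']$, then no position in $[a'+1,n-d']$ can carry a value inside $[b'+1,n-c']$. In case 2 (where $a<a'$ or $d<d'$) this forces the interior block $[b'+a+1,\,n-d-c']$ to be empty, giving $a+b' = n-d-c'$ and hence the decomposition $a'=a+e$, $b'=b+f$, $c'=c+e$, $d'=d+f$ with $a+b+c+d+e+f=n$. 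Your case 1 outline is essentially correct and matches the paper's, but without these two steps the case 2 statement — the heart of the proposition — is not established.
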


In fact, if $w$ is in the second case, then it must contain one of two prescribed patterns.
\begin{lemma}\label{lem:24153, 31524 pattern}
Suppose that $w$ is a permutation avoiding $1324, 321$ that contains the pattern $2143,$ and define $a', b', c', d'$ as in Proposition \ref{2143Patterns}. Suppose that $a' + b' + c' + d' > n.$ Then, $w$ either contains the pattern $24153$ or $31524.$
\end{lemma}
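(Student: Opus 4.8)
The plan is to feed the hypothesis directly into the structural classification of Proposition~\ref{2143Patterns}. Since $a' + b' + c' + d' > n$, the permutation $w$ falls into the second case of that proposition, so there are integers $a, b, c, d \ge 1$ and $e, f \ge 0$ with $\max(e, f) \ge 1$ such that $w$ has block structure $[3][5][1][6][2][4]$ with block lengths $a, e, b, c, f, d$ (in positional order), and
\[
w = (b+f+1..b+f+a : n-c-e+1..n-c : 1..b : n-c+1..n : b+1..b+f : n-d-c-e+1..n-c-e),
\]
where $n = a+b+c+d+e+f$. The key point is that $a, b, c, d \ge 1$ forces the $[3]$, $[1]$, $[6]$, and $[4]$ blocks (the first, third, fourth, and sixth positional blocks) to be nonempty, while $\max(e,f) \ge 1$ says that at least one of the $[5]$ and $[2]$ blocks (the second and fifth positional blocks) is nonempty. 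I would then select one position from each of five of these six blocks and read off the induced length-$5$ pattern.

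First I would treat the case $e \ge 1$. Here the $[3], [5], [1], [6], [4]$ blocks are all nonempty; taking the leftmost position of each (namely positions $1$, $a+1$, $a+e+1$, $a+e+b+1$, and $a+e+b+c+f+1$, which are strictly increasing because $a, b, c, e \ge 1$) produces the values $b+f+1$, $n-c-e+1$, $1$, $n-c+1$, and $n-d-c-e+1$. Substituting $n = a+b+c+d+e+f$ and using $a, d, e \ge 1$ gives $1 < b+f+1 < n-d-c-e+1 < n-c-e+1 < n-c+1$, so these five values, read in positional order, have relative order $2, 4, 1, 5, 3$; that is, $w$ contains $24153$.

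The case $f \ge 1$ is entirely analogous: now the $[3], [1], [6], [2], [4]$ blocks are all nonempty, and taking the leftmost position of each (positions $1$, $a+e+1$, $a+e+b+1$, $a+e+b+c+1$, and $a+e+b+c+f+1$, increasing because $a, b, c, f \ge 1$) yields the values $b+f+1$, $1$, $n-c+1$, $b+1$, and $n-d-c-e+1$, which by $a, d, f \ge 1$ satisfy $1 < b+1 < b+f+1 < n-d-c-e+1 < n-c+1$; so their positional order realizes the pattern $3, 1, 5, 2, 4 = 31524$. Since $\max(e, f) \ge 1$ places us in at least one of these two cases, the lemma follows.

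I do not expect a genuine obstacle here: the content of the lemma is essentially a repackaging of Proposition~\ref{2143Patterns}, and the two short chains of inequalities collapse immediately once everything is written in terms of $a, b, c, d, e, f$. The only care needed is in translating the block structure into the explicit positions and values above. In fact one can run the whole argument purely at the level of value-ranks: the positional sequence of block value-ranks is $3, 5, 1, 6, 2, 4$, and deleting the $[2]$ block leaves the order pattern $24153$ while deleting the $[5]$ block leaves $31524$, so it suffices that the deleted block be one that Proposition~\ref{2143Patterns} permits to be empty.
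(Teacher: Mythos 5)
Your proposal is correct and follows essentially the same route as the paper: the hypothesis $a'+b'+c'+d' > n$ places $w$ in Case 2 of Proposition \ref{2143Patterns}, and then $e \ge 1$ yields a $24153$ pattern while $f \ge 1$ yields a $31524$ pattern. The paper states these two implications without detail, and your explicit choice of positions and the verification of the value inequalities is a correct filling-in of exactly that argument.
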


\begin{proof}[Proof of Proposition \ref{2143Patterns}]

\par We first extract information from the pattern avoidance conditions on $w$. 
\begin{itemize}
    \item \textbf{Observation 1.} Since $w$ avoids $321, 1324$ and contains $2143$, by Lemma \ref{w(1) and w(n) inequality for 2143}(a,c) we have that $a', b', c', d' \neq 0$, $n - d' > a' + 1$ and $n - c' > b' + 1$.
    
    \item \textbf{Observation 2.} Since $w$ avoids $321$, we have $w([1, a']) \subset [b'+1, n]$ and $w([n-d', n]) \subset [1, n-c']$. For instance, to prove the first claim, notice that if $1 \leq y \leq a'$ satisfies $w(y) \leq b',$ then $1, y, a' + 1$ forms a $321$-pattern. (See also Proposition \ref{321-avoiding Rectangles}.)

    \item \textbf{Observation 3.} Since $w$ avoids $1324$, we have that the following six sequences are increasing:
    \begin{equation*}
        w(1..a'), w(a'+1..n-d'), w(n-d'+1..n), w^{-1}(1..b'), w^{-1} (b'+1..n-c'), w^{-1} (n-c'+1..n).
    \end{equation*}
    For example, if the first sequence $(w(1), w(2), \cdots, w(a'))$ is non-increasing, say we have $1 \le x < y \le a'$ such that $w(x) > w(y)$, then Observation 2 tells us that $x > 1$, and then $1, x, y, n-d'$ forms a $1324$-pattern.

    \item \textbf{Observation 4.} Since $w$ avoids $1324$, there cannot exist $y \in [1, a' : n-d'+1, n]$ and $x \in [a'+1, n-d']$ such that $w(y) \notin [b'+1, n-c']$ and $w(x) \in [b'+1, n-c']$. For if say $y \in [1, a']$, then $w(y) \ge b' + 1$ by Observation 2, and so $w(y) \ge n-c' + 1$. Then $1, y, x, n-d'$ form a $1324$-pattern. A similar contradiction holds if $y \in [n-d'+1, n]$.
\end{itemize}

\par Now, we begin the proof of the proposition in earnest. Let $a \le a'$ be the largest value such that $w(a) \le n-c'$. Then $[1, a] \subset w^{-1} ([b'+1, n-c'])$ and $w^{-1} (b'+1..n-c')$ is increasing by Observation 3, so $(1..a) = w^{-1} (b'+1..b'+a)$ and $w(1..a) = (b'+1..b'+a)$. Now by maximality we have $w(x) > n-c'$ for $a < x \le a'$, so $[a+1, a'] \subset w^{-1} ([n-c'+1, n])$. Since $w^{-1}(n-c'+1..n)$ is increasing, we have $(a+1..a') = w^{-1} (n-c'+1..n-c'+a'-a)$. Thus, $w(a+1..a') = (n-c'+1..n-c'+a'-a)$. In conclusion, so far we have determined the values $w(1)$ through $w(a')$.
\par We can also similarly define $d \le d'$ to be the largest value such that $w(n-d) \ge b'+1$. Then an analogous argument shows that $w(n-d+1..n) = (n-d-c'+1..n-c')$ and $w(n-d'+1..n-d) = (b'-(d'-d)+1..b')$.
\par Since $w([1, a]) = [b'+1,b'+a]$ and $w([n-d+1,n]) = [n-d-c'+1,n-c']$ are disjoint and $b'+1 < n-c'$ by Observation 1, we must have $b'+a \le n-d-c'$.
\par Finally, it suffices to determine the sequence $w(a'+1..n-d')$. By Observation 3, the sequence is increasing and consists of the elements in the set $S = [n] \setminus w([1,a';n-d'+1,n]).$ It will simplify matters to define $b := b' - (d'-d)$ and $c := c' - (a'-a)$. From our computation of $w(1..a':n-d'+1..n)$, we see that $S = [1, b : b'+a+1, n-d-c' : n - c + 1 : n]$. Thus, we can compute $w(a'+1..a' + b) = (1.. b),$
$w(a' + b+1..n-d'-c) = (b'+a+1..n-d-c')$, and
$w(n-d'-c+1..n-d') = (n-c+1..n)$. Since $w(a'+1) = 1$, we must have $b > 0$; similarly, $c > 0$.
We divide into two cases.
\par \textbf{Case 1.} Suppose $a = a'$ and $d = d'$; then $b = b'$, $c = c'$. If we define $e = n - a - b - c - d$, then we get our desired result. In particular, $a' + b' + c' + d' = n-e \le n$.
\par \textbf{Case 2.} Suppose $a < a'$ or $d < d'$. Then at least one of $w(a')$ or $w(d')$ does not lie in $[b'+1, n-c']$, so by Observation 4, we have $w([a'+1, n-d']) \cap [b'+1, n-c'] = \emptyset$. But we know that $w([a'+b+1, n-d-c']) = [b'+a+1,n-d-c']$, which will be a contradiction unless both sides are empty. Thus, $a+b' \ge n-d-c'$, and so $a+b' = n-d-c'$. If $e := a' - a$, $f := d' - d$, then we get $b := b' - f$, and $c := c' - e$, and we recover our desired result. Finally, we note that $a' + b' + c' + d' > a + b' + c' + d = n$.

Thus, we have showed Case 1 happens if and only if $a' + b' + c' + d' \le n$. This completes the proof of the proposition.

\end{proof}

\subsection{Temperley-Lieb Immanants as a Sum of Complementary Minors}\label{CompliMinors}
\par Let $w \in \fkS_n$ be a permutation that avoids $321$ and $1324$ but contains $2143$. Our main goal in this subsection is to express $\imm_w$ as an explicit sum of certain complementary minors. To give this explicit expression of complementary minors, we split up our argument into two cases, along the lines of the cases provided in Proposition \ref{2143Patterns}. For each of these cases, we closely follow the method in Proposition \ref{Rectangle Unique Matching}. We first prove that a unique non-crossing matching exists for a given condition of colorings (Lemmas \ref{non-crossing matching of case1} and \ref{non-crossing matching of case2}), and then we show that the unique non-crossing matching corresponds to $w$ (Lemmas \ref{ncm equals w case1} and \ref{ncm equals w case2}). Thus, we are able to construct an explicit set $\cS$ of colorings such that for each $u \neq w$, there exists a ``sign-reversing'' involution $\iota_u$ on $\cS$. Finally, we will construct a special linear combination of complementary minors corresponding to colorings in $\cS$ that equals our desired $\imm_w$. To prove the equality, we use $\iota_u$ to pair up opposite terms to generate cancellation (Propositions \ref{sumcm5} and \ref{sumcm6}).

First, we present the following general lemma concerning non-crossing matchings:
\begin{lemma}\label{general non-crossing matching}
Let $a, b, c, d, e \ge 0$ be integers with $n = a + b + c + d + e$. Consider vertices labelled $1, 2, \ldots, 2n.$ Then there exists a unique coloring of these vertices using the colors white and black and a non-crossing matching compatible with this coloring such that the following conditions hold. 
\begin{enumerate}
    \item $[1, b+c+e]$ are colored black,
    
    \item in the interval $[b+c+e+1, a+2b+c+e]$, there are exactly $a$ black and $b$ white vertices, and there are no pairings between two vertices in this interval (which we will refer to as an ``internal pairing''),
    
    \item $[a+2b+c+e+1, a+b+e+n]$ are colored white,
    
    \item in the interval $[a+b+e+n+1, 2n]$, there are exactly $d$ black and $c$ white vertices, and there are no internal pairings in this interval.
\end{enumerate}
\end{lemma}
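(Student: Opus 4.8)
The plan is to reduce this lemma to the already-proven Lemma \ref{lem:simple-no-internal-pair} by a peeling argument, or alternatively to prove it directly by induction on the total number of "free" vertices $a+b+c+d$. The strategy I would actually carry out: first dispose of the forced pairings, then invoke uniqueness from the simpler lemma. Observe that the coloring is only partially prescribed — we are told the colors of all vertices in $[1,b+c+e]$ (black), in $[a+2b+c+e+1,a+b+e+n]$ (white), and the multiset of colors in the two ``transition'' intervals $[b+c+e+1,a+2b+c+e]$ and $[a+b+e+n+1,2n]$, but not the exact assignment within those intervals. So part of the claim is that the coloring is forced, and part is that the matching is forced. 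These two assertions will be intertwined, exactly as in Lemma \ref{lem:simple-no-internal-pair}.

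First I would argue that vertex $1$ must be paired with vertex $2n$: if $1$ is paired with some $x < 2n$, then the interval $[x+1,2n]$ must contain equal numbers of black and white vertices (since the matching restricted to that interval is a non-crossing perfect matching after removing $1$'s partner's ``outside''), but a color count using conditions (1)--(4) shows this is impossible for any $x$ in $[2,2n-1]$ — the black vertices from the initial segment $[1,b+c+e]$ and the white vertices from $[a+2b+c+e+1,a+b+e+n]$ create a color imbalance that cannot be corrected by the bounded surpluses allowed in the two transition intervals. (This is the same counting idea as in the proof of Lemma \ref{lem:simple-no-internal-pair}, just with a two-sided bookkeeping; one has to check a handful of cases depending on which of the four regions $x$ lands in.) Hence $1$ pairs with $2n$, forcing vertex $1$ black and vertex $2n$ white, consistent with (1) and (3). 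Remove this pair and relabel; the remaining $2n-2$ vertices satisfy the hypotheses with $a,b,c,d$ unchanged and $e$ replaced by... — more precisely the initial black block shrinks by one and the terminal white block shrinks by one, so we should set up the induction on $b+c+e$ decreasing (or, cleanest, strip all the way down until the forced ``nested'' pairs are exhausted and then the configuration is literally an instance of Lemma \ref{lem:simple-no-internal-pair} with its parameter $a$ equal to our $a$, middle-black-count $b$, and the rest white).

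The cleanest writeup, which I would adopt, is: peel off the forced pair $\{1,2n\}$ repeatedly as long as there is a prescribed-black vertex at the current left end adjacent (after earlier removals) to a prescribed-white vertex at the current right end — the counting argument above shows each such peel is forced. After removing all $b+c+e$ initial black vertices and all $b+c+e$ of the matching terminal white vertices, we are left with vertices forming the two transition intervals plus the terminal block $[a+b+e+n+1,2n]$ minus what was peeled; careful bookkeeping shows this residual configuration is exactly the setup of Lemma \ref{lem:simple-no-internal-pair} with parameters $(a, b, c+d)$ — indeed the residual left interval has $a$ black then no internal pairings, then the remaining vertices split into the required black/white counts — and the ``no internal pairing in $[1,a]$'' hypothesis of that lemma matches condition (2) here, while condition (4)'s no-internal-pairing in the last interval follows because those vertices, after the nested pairs are stripped, become the ``middle'' of Lemma \ref{lem:simple-no-internal-pair}. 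Uniqueness of the coloring and matching then transfers back up through the forced peels.

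The main obstacle is the color-counting case analysis showing $1$ must pair with $2n$: one must verify, for each of the four (or five, counting boundary) possible locations of the hypothetical partner $x$, that the black/white balance on $[x+1,2n]$ fails, using that the transition intervals contribute a net color surplus of bounded size ($a-b$ and $d-c$ respectively, with the right signs). This is routine but must be done carefully because, unlike in Lemma \ref{lem:simple-no-internal-pair}, here \emph{both} ends have free transition regions, so one cannot simply assume $b \le c$ after a relabeling symmetry; the relabeling $x \mapsto a+b+e+2n+1-x \pmod{2n}$ swapping the two ends and flipping colors does give a symmetry, which I would use to halve the casework, but the bounded-surplus estimates still need to be checked on the side one keeps.
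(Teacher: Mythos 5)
Your overall strategy (peel a forced pair $\{1,2n\}$, then reduce to Lemma \ref{lem:simple-no-internal-pair}) is the same family of argument the paper uses, but the step you lean on --- that vertex $1$ must be paired with vertex $2n$, justified by a color count that is claimed ``impossible for any $x \in [2,2n-1]$'' --- is false as stated, and the failure is not just unfinished casework. Take $a=b=1$, $c=d=e=0$, $n=2$: the conditions force vertex $1$ black and vertex $4$ white, with one black and one white among $\{2,3\}$ and no pairing between $2$ and $3$; the unique compatible configuration colors $2$ white and $3$ black and pairs $1$ with $2$ and $3$ with $4$, so $1$ is \emph{not} paired with $2n$, and the interval $[3,4]$ contains equal numbers of each color, so no counting argument can rule this out. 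Likewise $a=b=c=0$, $d=e=1$ gives the unique matching pairing $1$ with $2$ and $3$ with $4$. The claim ``$1$ pairs with $2n$'' is only true after normalizing by the reflection-plus-color-swap symmetry so that $c \ge d$, and only when $c \ge 1$: in the paper's proof this symmetry is not an optional device to halve the casework but the hypothesis that makes the count work (its cases use $c \ge 1$ and $c > d-1$), and the case $c=d=0$ is split off as a base case that reduces directly to Lemma \ref{lem:simple-no-internal-pair} (with parameters $a+b$, $b+e$, $a+e$, after a shift), with no peeling claim made there at all.

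Your bookkeeping of the peeling is also off. Each successful peel removes one black vertex from $[1,b+c+e]$ and one \emph{white} vertex from the last interval $[a+b+e+n+1,2n]$, i.e.\ it decreases $c$ by one (or $d$, after a flip), leaving $a,b,e$ untouched; so at most $c+d$ peels are available, not $b+c+e$, and the terminal configuration is the $c=d=0$ instance just described, not an instance of Lemma \ref{lem:simple-no-internal-pair} with parameters $(a,b,c+d)$ (those do not even sum to the number of remaining vertices). The paper organizes all of this as an induction on $c+d$: re-apply the symmetry at each step if necessary, show $2n$ pairs with $1$, delete that pair, and invoke the inductive hypothesis with $c$ replaced by $c-1$, transferring both existence and uniqueness through the deletion. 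With that restructuring your outline becomes the paper's proof; as written, its first claim is refuted by the two small examples above.
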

As an example, this is the case where $a = 2$ and $b = c = d = e = 1,$ with the red lines dividing the four different ranges of indices with the different conditions. 
\begin{center}
    \includegraphics[scale=0.7]{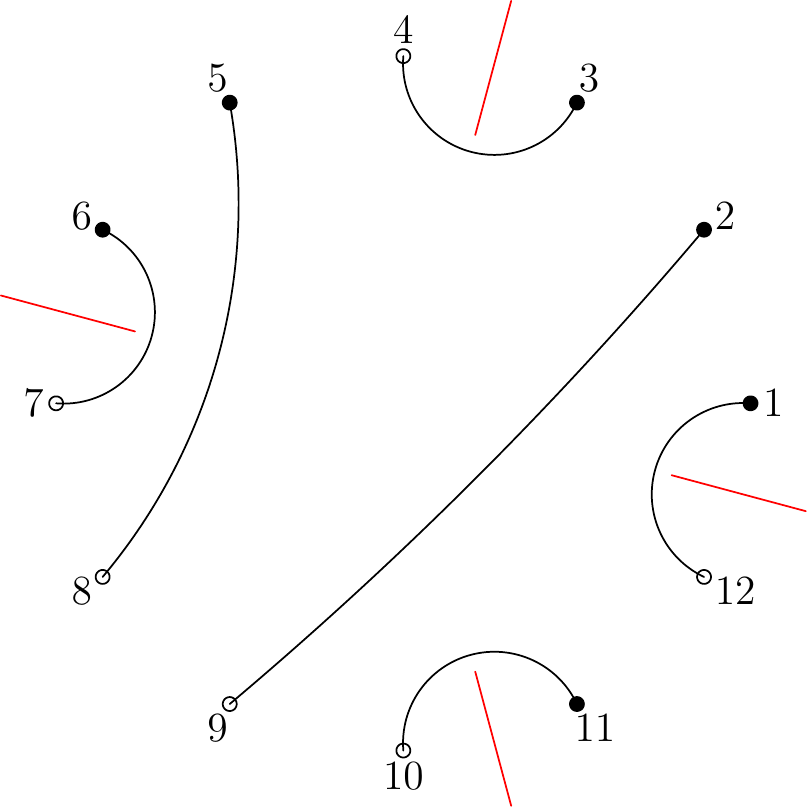}
\end{center}
\begin{remark}
We represent this lemma by drawing these vertices on a circle to illustrate the symmetry. When we then apply this lemma, we will pick two ways of choosing a diameter and letting the vertices on each side of the diameter form a column of $n$ vertices, consistent with the pictures we have previously shown to represent the non-crossing matchings.
\end{remark}

\par We specialize Lemma \ref{general non-crossing matching} to our situation. The next two lemmas may seem mysterious at first sight; a curious reader can turn to the discussion before Lemma \ref{ncm equals w case1} for their significance.

\begin{lemma}\label{non-crossing matching of case1}
Let $a, b, c, d, e$ be nonnegative integers, so $a, b, c, d \geq 1$ and $a+b+c+d+e=n$. Then, there is a unique non-crossing matching and a coloring compatible with it, such that the coloring satisfies
\begin{enumerate}
    \item $i$ is black for $i\in [a+1,n-d]$
    \item $i'$ is white for $i\in [b+1, n-c]$
    \item There are exactly $a$ black vertices and $b$ white vertices in $[1,a]\cup [1,b]'$
    \item There are exactly $d$ black vertices and $c$ white vertices in $[n-d+1,n]\cup [n-c+1,n]'$ 
    \item There are no pairings between two vertices in $[1,a]\cup [1,b]'$ (which we will refer to as an ``internal pairing")
    \item There are no internal pairings in $[n-d+1,n]\cup [n-c+1,n]'$
\end{enumerate}
\end{lemma}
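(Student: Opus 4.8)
The plan is to deduce this lemma directly from the general non-crossing matching result, Lemma \ref{general non-crossing matching}, by an appropriate identification of the $2n$ vertices arranged on a circle with the two columns $[n] \cup [n]'$. Recall that in the setup of Proposition \ref{prop:ncm}, vertex $i$ sits at $(-1, n-i)$ and vertex $i'$ at $(1, n-i)$; reading around the boundary of the diagram, the cyclic order of the vertices is $1, 2, \ldots, n, n', (n-1)', \ldots, 2', 1'$. So I would first set up the dictionary: the circular sequence of $2n$ vertices from Lemma \ref{general non-crossing matching}, call them $v_1, \ldots, v_{2n}$, is matched to $1, 2, \ldots, n, n', (n-1)', \ldots, 1'$ in this order. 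Under this identification, the interval $[1, a]$ (vertices $v_1, \ldots, v_a$) becomes the unprimed vertices $1, \ldots, a$; the interval $[a+1, n-d]$ becomes the unprimed vertices $a+1, \ldots, n-d$; the interval $[n-d+1, n]$ becomes the unprimed vertices $n-d+1, \ldots, n$; the interval $[n+1, n+c]$ becomes the primed vertices $n', (n-1)', \ldots, (n-c+1)'$, i.e. $[n-c+1, n]'$; the interval $[n+c+1, 2n-b]$ becomes $[b+1, n-c]'$; and the interval $[2n-b+1, 2n]$ becomes $[1, b]'$.

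Next I would match the parameters. The five block lengths in Lemma \ref{general non-crossing matching} are (in cyclic order around the circle) a black run of length $b+c+e$, then a mixed block with $a$ black and $b$ white of total length $a+b$, then a white run of length $a+b+e$... wait — I need to recompute carefully. In Lemma \ref{general non-crossing matching} the four distinguished regions are $[1, b+c+e]$ (black), $[b+c+e+1, a+2b+c+e]$ (mixed, $a$ black $b$ white, no internal pair), $[a+2b+c+e+1, a+b+e+n]$ (white), $[a+b+e+n+1, 2n]$ (mixed, $d$ black $c$ white, no internal pair). I want to choose the parameters $(a,b,c,d,e)$ of that lemma — call them $(A,B,C,D,E)$ to avoid clashing with the present lemma's $(a,b,c,d,e)$ — so that after the circular relabeling above, the four regions line up exactly with conditions (1)–(6). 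Comparing lengths and positions, one finds the correct choice is $A = a$, $B = b$, then the black run $[1, B+C+E]$ should be the vertices $[a+1, n-d] \cup [n-c+1, n]'$ shifted to start at position $a+1$ in the circle; tracking cyclically, I expect $C$ and $E$ to be determined by $C+E = (n-d-a)$ split appropriately and the lower mixed block to carry $D = d$, $C = c$. I would verify these identities (they are forced, and amount to checking $A+B+C+D+E = n$ reduces to $a+b+c+d+e = n$) as a short bookkeeping computation.

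Finally, having matched data, I would invoke Lemma \ref{general non-crossing matching} to get existence and uniqueness of the coloring and compatible non-crossing matching satisfying the four circular conditions, then translate back through the dictionary: condition (1) of Lemma \ref{general non-crossing matching} becomes conditions (1) and part of (4) here, condition (2) becomes conditions (3) and (5), condition (3) becomes condition (2) and part of (4), and condition (4) becomes conditions (4) and (6) — I would state the precise correspondence. Since the relabeling is a bijection of vertices that preserves the cyclic order (hence preserves the property of being a non-crossing matching and being compatible with a coloring), uniqueness transports directly. The main obstacle I anticipate is purely the indexing: getting the cyclic shift exactly right so that the black run, the two mixed blocks, and the white run in Lemma \ref{general non-crossing matching} align with the six enumerated conditions here, and confirming that the ``no internal pairing'' clauses (5) and (6) correspond to the no-internal-pairing clauses (2) and (4) of the general lemma under the relabeling. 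Once the dictionary is pinned down, everything else is immediate.
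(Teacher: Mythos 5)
You take the same route as the paper: reduce Lemma \ref{non-crossing matching of case1} to Lemma \ref{general non-crossing matching} by a relabeling of $[n]\cup[n]'$ that preserves the cyclic order, so the skeleton of the argument is right and, once the dictionary is correct, uniqueness and existence do transport exactly as you say. However, the dictionary you sketch is the only nontrivial content of the proof, and as written it is off. First, the black run of Lemma \ref{general non-crossing matching} corresponds to the unprimed vertices $[a+1,n-d]$ alone --- this set already has size $n-d-a=b+c+e$ --- and not to $[a+1,n-d]\cup[n-c+1,n]'$; the vertices $[n-c+1,n]'$ belong to the mixed region of condition \textit{4}. Second, your parameter guess $A=a$, $B=b$ is inconsistent with your chosen orientation: reading the circle as $1,\ldots,n,n',\ldots,1'$ and rotating so that the black run starts the list, the region immediately following $[a+1,n-d]$ is $[n-d+1,n]\cup[n-c+1,n]'$, which carries $d$ black and $c$ white vertices, then comes the white run $[b+1,n-c]'$ (of size $a+d+e$), and finally $[1,b]'\cup[1,a]$ with $a$ black and $b$ white. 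So with your orientation the general lemma must be invoked with parameters $(d,c,b,a,e)$ in place of $(a,b,c,d,e)$. Alternatively, reverse the orientation, as the paper does, by relabeling $i\mapsto n-d+1-i$ for $i\in[1,n-d]$, $i'\mapsto i+n-d$ for $i\in[1,n]$, and $i\mapsto 3n-d+1-i$ for $i\in[n-d+1,n]$; then the four regions become $[1,b+c+e]$, $[b+c+e+1,a+2b+c+e]$, $[a+2b+c+e+1,a+b+e+n]$, $[a+b+e+n+1,2n]$ and Lemma \ref{general non-crossing matching} applies with the parameters $(a,b,c,d,e)$ unchanged. With either fix pinned down explicitly, the rest of your argument is complete.
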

For instance, here is a complementary coloring, with the corresponding unique matching, with $a = c = 2, e = b = d = 1.$ The boxed vertices are those that are fixed by conditions \textit{1} and \textit{2}.
\begin{center}
    \includegraphics[]{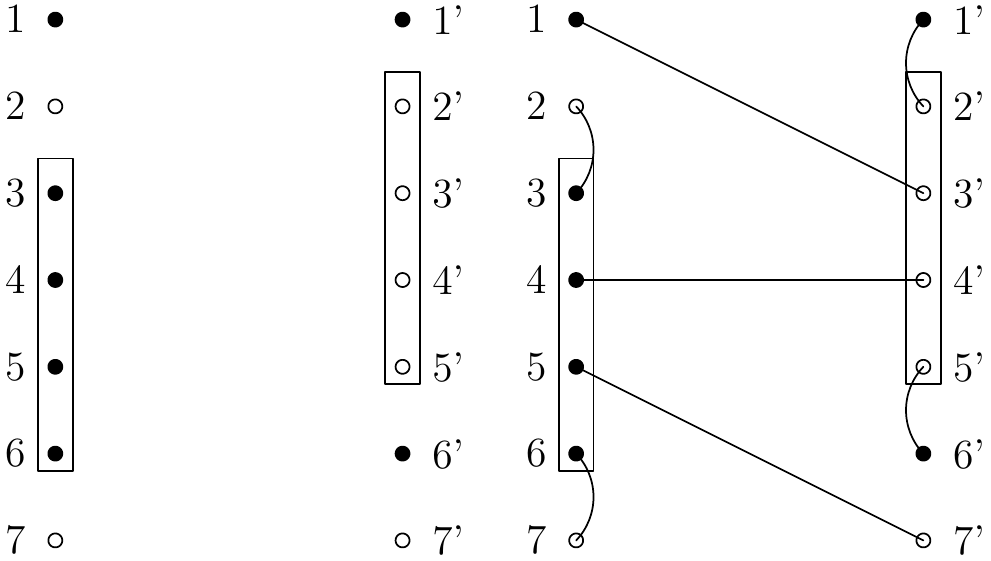}
\end{center}

\begin{lemma}\label{non-crossing matching of case2}
Let $a, b, c, d, e, f$ be nonnegative integers where $a, b, c, d, \max(e, f) \geq 1,$ and $a+b+c+d+e+f=n$. Then, there is a unique non-crossing matching and a coloring compatible with it, such that the coloring satisfies
\begin{enumerate}
    \item $i$ is black for $i\in [1,a+e]$
    \item $i$ is white for $i\in [a+e+b + c+1, n]$
    \item $i'$ is black for $i\in [1,b+f]$
    \item $i'$ is white for $i\in [b+f+a+d+1,n]$
    \item There are exactly $c$ black vertices and $b$ white vertices in $[a+e+1,a+e+b+c]$
    \item There are exactly $d$ black vertices and $a$ white vertices in $[b+f+1,b+f+a+d]'$ 
    \item There are no pairings between two vertices in $[a+e+1,a+e+b+c]$ (which we will refer to as an ``internal pairing")
    \item There are no internal pairings in $[b+f+1,b+f+a+d]'$
\end{enumerate}
\end{lemma}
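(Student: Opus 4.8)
The plan is to deduce this lemma directly from the general Lemma \ref{general non-crossing matching} by choosing an appropriate way to arrange the $2n$ vertices on a circle and cutting along a diameter, exactly as anticipated in the remark following Lemma \ref{general non-crossing matching}. First I would observe that in the setup of Lemma \ref{general non-crossing matching} (with parameters, say, $A, B, C, D, E$ and $n = A+B+C+D+E$), the circle of $2n$ vertices is divided into four consecutive arcs: an all-black arc of length $B+C+E$, a ``mixed'' arc of length $A+2B+C+E$ with $A$ black, $B$ white, and no internal pairing, an all-white arc of length $A+B+E+n$, and a final mixed arc of length $2n - (A+B+E+n) = B+C+D+E+A - (A+B+E) = C+D$ with $D$ black, $C$ white, no internal pairing. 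So Lemma \ref{general non-crossing matching} produces precisely a configuration with two all-black/all-white arcs and two ``balanced, internally unmatched'' arcs, separated in that cyclic pattern. The content of Lemma \ref{non-crossing matching of case2} is that its list of eight conditions also describes exactly such a cyclic pattern, once we read the $2n$ vertices $[1,n] \cup [n]'$ around the circle.

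The key step is to lay out the vertices around the circle and match the arcs. Reading the left column $1, 2, \dots, n$ downward and then the right column $n', (n-1)', \dots, 1'$ upward (the standard cyclic order for the diagram), conditions (1)--(4) and (7)--(8) of Lemma \ref{non-crossing matching of case2} say: vertices $[1, a+e]$ black; vertices $[a+e+1, a+e+b+c]$ mixed ($c$ black, $b$ white), no internal pairing; vertices $[a+e+b+c+1, n] = [n-d+1 \text{-ish}, n]$ white; then, continuing around the bottom into the primed side, $[b+f+a+d+1, n]'$ white, then $[b+f+1, b+f+a+d]'$ mixed ($d$ black, $a$ white), no internal pairing, then $[1, b+f]'$ black. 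Going around the circle this is: black arc, mixed arc, white arc, mixed arc — the same cyclic shape as in Lemma \ref{general non-crossing matching}. I would then solve for the parameters: matching the two all-one-color arcs (lengths $a+e$ plus $b+f$ of black, versus $B+C+E$; lengths of white similarly) and the two mixed arcs (one of total length $b+c$ with black-count $c$, the other of total length $a+d$ with black-count $d$) against $A+2B+C+E$, $A$, $C+D$, $D$. This is a small linear bookkeeping exercise; one finds consistent nonnegative values of $A, B, C, D, E$ (e.g. $A = a$, and the black/white arcs of Lemma \ref{general non-crossing matching} get split across the ``seam'' between the primed and unprimed halves of the circle), using $a+b+c+d+\max(e,f)\ge 1$ and $a,b,c,d\ge 1$ to guarantee the mixed arcs are genuinely present and the hypotheses of Lemma \ref{general non-crossing matching} (all parameters $\ge 0$) hold. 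Existence and uniqueness of the coloring-plus-matching then transfer immediately from Lemma \ref{general non-crossing matching}, since the correspondence between the two descriptions is a bijection of configurations.

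The main obstacle I anticipate is purely organizational rather than conceptual: carefully tracking how the two monochromatic arcs of Lemma \ref{general non-crossing matching} are cut by the diameter into the four monochromatic pieces appearing in conditions (1)--(4) of Lemma \ref{non-crossing matching of case2}, and confirming that the two ``mixed'' arcs land entirely within one half-circle each (so that conditions (5)--(8) really are the conditions (2),(4) of Lemma \ref{general non-crossing matching} and not something straddling the seam). One must check that the black arc $[1,b+f]'$ together with the black arc $[1,a+e]$ form a single contiguous arc on the circle once we pass through the point between $n$ and $n'$ and between $1'$ and $1$ — i.e. that no mixed arc sits between them — and similarly for white; the hypothesis $\max(e,f) \ge 1$ (which forbids the degenerate total collapse) plus the given ordering of the blocks is exactly what makes this work, paralleling the ``only one of the $[5]$ or $[2]$ blocks may be empty'' clause in Proposition \ref{2143Patterns}. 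Once the arc bookkeeping is pinned down, invoking Lemma \ref{general non-crossing matching} finishes the proof.
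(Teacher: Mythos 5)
Your approach is correct and is essentially the paper's own proof: the paper likewise reduces Lemma \ref{non-crossing matching of case2} to Lemma \ref{general non-crossing matching} by relabelling the $2n$ vertices around the circle (starting at vertex $a+e$ and passing through $1, 1', \ldots, n', n, \ldots$), which merges the two black blocks and the two white blocks across the seams, with parameters $\tilde a = d$, $\tilde b = a$, $\tilde c = b$, $\tilde d = c$, $\tilde e = e+f$. The only caveat is that your sample identification ``$A=a$'' would require an additional color swap; under the direct relabelling the first mixed arc has $d$ black and $a$ white vertices, but this is exactly the bookkeeping you defer and it does not affect the validity of the reduction.
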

This case has the following diagram, where $a = c = 2, b = d = e = f = 1,$ with the boxed vertices again being those fixed (this time for conditions \textit{1} to \textit{4}).
\begin{center}
    \includegraphics[]{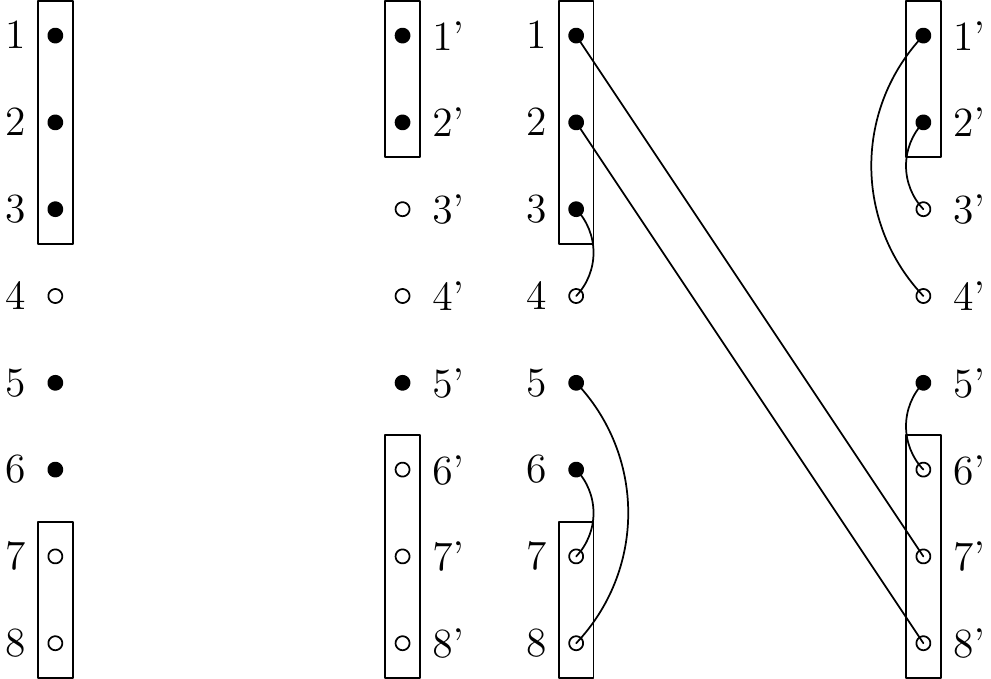}
\end{center}

\par As promised, we reveal that the non-crossing matching in Lemmas \ref{non-crossing matching of case1} and \ref{non-crossing matching of case2} actually belong to one of the $w$'s described in Proposition \ref{2143Patterns}.

\begin{lemma}\label{ncm equals w case1}
Let $w \in \fkS_n$ have block structure $[2][1][3][5][4]$ with block lengths $a,b,e,c,d$, as stated in the first case of Proposition \ref{2143Patterns}. The non-crossing matching of $w$ is exactly the non-crossing matching in Lemma \ref{non-crossing matching of case1}.
\end{lemma}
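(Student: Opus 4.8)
The plan is to invoke the uniqueness assertion of Lemma~\ref{non-crossing matching of case1}: it is enough to produce a single coloring $\cC$ of $[n]\cup[n]'$ that is compatible with $\ncm(w)$ and satisfies the six conditions listed there; then $\ncm(w)$ must coincide with the unique matching described in that lemma.

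First I would compute $\ncm(w)$ explicitly. Since $w$ has block structure $[2][1][3][5][4]$, it is the direct sum $w = u_1\oplus \mathrm{id}_e\oplus u_2$, where $u_1\in\fkS_{a+b}$ is the $[2][1]$ pattern with block lengths $a,b$, the middle factor fixes the $e$ points of the $[3]$-block, and $u_2\in\fkS_{c+d}$ is the $[2][1]$ pattern with block lengths $c,d$. The wiring diagram of a direct sum is block-diagonal, so in the construction of Proposition~\ref{prop:ncm} no path ever leaves its block; hence $\ncm(w)$ is the disjoint union of $\ncm(u_1)$ drawn on the outermost $2(a+b)$ vertices of the circular diagram, $\ncm(\mathrm{id}_e)$ drawn on the next $2e$ vertices, and $\ncm(u_2)$ drawn on the innermost $2(c+d)$ vertices. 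Here $\ncm(\mathrm{id}_e)$ is $e$ nested arcs pairing each fixed point $i$ with $i'$; and by Lemma~\ref{lem:matching_compatibility} (or a direct computation) the coloring attached to a $[2][1]$ pattern with block lengths $p\le q$ reads, in the natural cyclic order of its $2(p+q)$ vertices, as $p$ black, then $p+q$ white, then $q$ black, whose unique compatible matching is two families of nested arcs. Re-embedding these three pieces back into the $2n$-vertex diagram gives a completely explicit list of the pairs of $\ncm(w)$.

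Next I would define $\cC$ by coloring the unprimed vertices $[a+1,n-d]$ black and the primed vertices $[b+1,n-c]'$ white (this is forced by conditions (1) and (2)) and then propagating colors along the pairs of $\ncm(w)$: each pair with one endpoint already colored forces the other to the opposite color, and pairs with neither endpoint colored get one black and one white endpoint. From the explicit description of $\ncm(w)$ one reads off that every arc has exactly one endpoint in the corner regions $[1,a]\cup[1,b]'$ and $[n-d+1,n]\cup[n-c+1,n]'$ and the other endpoint in $[a+1,n-d]\cup[b+1,n-c]'$; in particular no pair of $\ncm(w)$ lies entirely inside any of these regions, so the propagation is well-defined and yields a compatible coloring, and conditions (5) and (6) hold. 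Conditions (1) and (2) hold by construction. For conditions (3) and (4) one observes that the arcs meeting $[1,a]\cup[1,b]'$ are exactly the arcs coming from the $u_1$-piece, and among them precisely $a$ run to vertices of $[b+1,n-c]'$ (colored white) while $b$ run to vertices of $[a+1,n-d]$ (colored black), so $[1,a]\cup[1,b]'$ contains exactly $a$ black and $b$ white vertices; the analogous count for the $u_2$-piece gives $d$ black and $c$ white vertices in $[n-d+1,n]\cup[n-c+1,n]'$. Thus all six conditions hold, and the uniqueness in Lemma~\ref{non-crossing matching of case1} gives the claimed identification.

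The main obstacle is the bookkeeping in the second step: one must carefully track, for each of the two $[2][1]$-pieces, which of its nested arcs remain within a single column and which cross between columns, and then verify after the re-embedding into the circular $2n$-vertex diagram that every arc of $\ncm(w)$ meets each corner region in exactly one point, with the correct multiplicities ($a$ versus $b$ for the upper corner, $c$ versus $d$ for the lower). Once this accounting is carried out, checking the six conditions and appealing to uniqueness is routine.
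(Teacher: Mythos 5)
Your overall strategy is the same as the paper's: exhibit one coloring compatible with $\ncm(w)$ that satisfies conditions \textit{1}--\textit{6}, then appeal to the uniqueness in Lemma \ref{non-crossing matching of case1}. Where you differ is in how you obtain the structure of $\ncm(w)$. You compute $\ncm(w)$ explicitly, via the direct-sum decomposition $w = u_1 \oplus \mathrm{id}_e \oplus u_2$ and an explicit description of $\ncm$ of a $[2][1]$-pattern; the paper never computes $\ncm(w)$, but instead extracts only the needed structural facts (no internal pairings in $[1,a]\cup[1,b]'$ or in $[a+1,a+b]\cup[b+1,a+b]'$, a bijection between these two sets, fixed points paired straight across) directly from Lemma \ref{lem:matching_compatibility} and Proposition \ref{prop:ncm}(a) applied to $w$ itself. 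Your route is fine, but it leans on two facts that you assert rather than prove, and which are doing real work: (i) that $\ncm$ of a direct sum is the juxtaposition of the $\ncm$'s of the summands (true, and provable by noting that in Proposition \ref{prop:ncm} wires from different blocks never intersect, so no path $q_v$ leaves its block, but this is not stated anywhere in the paper); and (ii) that the cyclic color pattern $B^aW^{a+b}B^b$ attached to a $[2][1]$-piece admits a \emph{unique} compatible non-crossing matching (also true --- the last black of the first block is forced to pair with the adjacent white, and one inducts --- but it needs this innermost-arc argument, not just the word ``unique''; your side condition $p\le q$ is irrelevant). Both gaps are fillable, so this is a legitimate alternative proof once they are supplied; the paper's route avoids them at the cost of working with weaker, purely qualitative information about $\ncm(w)$.

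Two smaller points. Your claim that ``every arc has exactly one endpoint in the corner regions'' is false when $e\ge 1$: the fixed-point arcs $i$--$i'$, $i\in[a+b+1,n-c-d]$, have no endpoint in a corner, and both of their endpoints are pre-colored by conditions \textit{1}--\textit{2}; the argument survives because those two endpoints receive opposite colors (black unprimed, white primed), but you should check this consistency explicitly rather than subsume it under the one-endpoint claim. Also, the counts for conditions \textit{3}--\textit{4} are cleanest if phrased as the paper does: once each of the $a+b$ arcs of the $u_1$-piece has exactly one endpoint in $[1,a]\cup[1,b]'$ and one in $[a+1,a+b]\cup[b+1,a+b]'$, the latter set has $b$ unprimed (black) and $a$ primed (white) vertices, which forces $a$ black and $b$ white vertices in the corner, and symmetrically for the $u_2$-piece.
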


\begin{lemma}\label{ncm equals w case2}
Let $w \in \fkS_n$ have block structure $[3][5][1][6][2][4]$ with block lengths $a,e,b,c,f,d$, as stated in the second case of Proposition \ref{2143Patterns}. The non-crossing matching of $w$ is exactly the non-crossing matching in Lemma \ref{non-crossing matching of case2}.
\end{lemma}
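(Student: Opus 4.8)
The plan is to mimic the argument of Proposition \ref{Rectangle Unique Matching}: I would produce a coloring $\cC$ of $[n]\cup[n]'$ that is compatible with $\ncm(w)$ and that, together with $\ncm(w)$, satisfies all eight conditions of Lemma \ref{non-crossing matching of case2}. By the uniqueness asserted there, $\ncm(w)$ must then be exactly the matching described in that lemma, which is the claim.

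The first step is bookkeeping. From the second case of Proposition \ref{2143Patterns} I would write out the six blocks of $w$ explicitly: positions $[1,a]$, $[a+1,a+e]$, $[a+e+1,a+e+b]$, $[a+e+b+1,a+e+b+c]$, $[a+e+b+c+1,a+e+b+c+f]$, and $[a+e+b+c+f+1,n]$ carry, respectively, the value-intervals of the blocks $[3],[5],[1],[6],[2],[4]$. A direct comparison shows $w(i)=i$ is impossible in each of the six blocks --- it would force one of $a,b,c,d$ to vanish --- so $w$ is fixed-point-free and the coloring $\cC$ of Lemma \ref{lem:matching_compatibility} (color $i$ black and $w(i)'$ white when $w(i)>i$, and the reverse when $w(i)<i$) is completely determined. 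Computing $\cC$ block by block, one gets: among unprimed vertices, $[1,a+e]$ is black, $[a+e+1,a+e+b]$ is white, $[a+e+b+1,a+e+b+c]$ is black, and $[a+e+b+c+1,n]$ is white; among primed vertices, $[1,b+f]'$ is black, $[b+f+1,b+f+a]'$ is white, $[b+f+a+1,b+f+a+d]'$ is black, and $[b+f+a+d+1,n]'$ is white. Conditions (1)--(6) of Lemma \ref{non-crossing matching of case2} (including the black/white counts $c,b$ in (5) and $d,a$ in (6)) are then immediate from this description, and $\ncm(w)$ is compatible with $\cC$ by the final sentence of Lemma \ref{lem:matching_compatibility}.

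The real content is conditions (7) and (8). For (7), the interval $[a+e+1,a+e+b+c]$ is the union of the white sub-block $[a+e+1,a+e+b]$ and the black sub-block $[a+e+b+1,a+e+b+c]$; since a compatible non-crossing matching pairs every black vertex with a white one, an internal pairing in this interval would have to join some white $p\le a+e+b$ with some black $q\ge a+e+b+1>p$, contradicting the conclusion of Lemma \ref{lem:matching_compatibility} that each white vertex is matched to a black vertex of index at most its own. Condition (8) is the same argument with the primed interval $[b+f+1,b+f+a+d]'$, using the ``primed half'' of that conclusion. Once all eight conditions are verified, uniqueness in Lemma \ref{non-crossing matching of case2} completes the proof.

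The main obstacle I anticipate is organizational rather than conceptual: converting the block lengths $a,e,b,c,f,d$ into the correct position- and value-intervals without slips, reconciling the $[2n]$-labelling underlying Lemma \ref{general non-crossing matching} with the $[n]\cup[n]'$-labelling in which Lemma \ref{non-crossing matching of case2} is phrased, and keeping track of the direction of the inequality $j\le i$ in Lemma \ref{lem:matching_compatibility} when it is applied to primed vertices in (8). Each of these is routine but requires care.
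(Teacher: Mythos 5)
Your proposal is correct and follows essentially the same route as the paper's proof: you construct exactly the same coloring (via Lemma \ref{lem:matching_compatibility}, using the explicit block form from Proposition \ref{2143Patterns}), verify conditions \textit{1}--\textit{6} directly, rule out internal pairings for \textit{7}--\textit{8} using the ``black index $\le$ white index'' conclusion of Lemma \ref{lem:matching_compatibility}, and invoke uniqueness from Lemma \ref{non-crossing matching of case2}. The only cosmetic difference is that you justify the coloring by noting $w$ is fixed-point-free, whereas the paper simply writes the coloring down and checks compatibility.
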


Using Lemmas \ref{non-crossing matching of case1} through \ref{ncm equals w case2}, we are now able to describe the Temperley-Lieb immanant of $w$ in terms of complementary minors. Again, we present the two cases separately.

\begin{prop}\label{sumcm5}
Let $w \in \fkS_n$ have block structure $[2][1][3][5][4]$ with block lengths $a,b,e,c,d$, as stated in the first case of Proposition \ref{2143Patterns}. Define $\cS$ to be the set of all possible colorings $(I, J)$ satisfying conditions \textit{1} through \textit{4} in Lemma \ref{non-crossing matching of case1}.
Then, we have
\begin{equation}\label{antidiagcase1}
    \imm_w = \sgn(w) (-1)^n \sum\limits_{(I,J) \in \cS} (-1)^{|I|} \cm_{I,J},
\end{equation} 
\end{prop}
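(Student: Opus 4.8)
The plan is to mimic the proof of Proposition~\ref{Rectangle Unique Matching}. First I would expand the right-hand side of~\eqref{antidiagcase1} into a linear combination $\sum_{v\in\fkS_n} c_v\,\imm_v$ of Temperley--Lieb immanants; then show $c_v=0$ for every $v\neq w$ via a sign-reversing involution on colorings; and finally identify $c_w=1$ by comparing the coefficient of the monomial $x_w$ on the two sides. For the expansion, note that conditions \textit{1}--\textit{4} of Lemma~\ref{non-crossing matching of case1} force any $(I,J)\in\cS$ to be a coloring with exactly $n$ black and $n$ white vertices, so in particular $|I|=|J|$ and $\cm_{I,J}$ is defined; by Lemma~\ref{cmminors} and Proposition~\ref{prop:cm equals imm sum}, $\cm_{I,J}=(-1)^{s(I)+s(J)}\sum_{v\text{ compatible with }(I,J)}\imm_v$, so the right-hand side of~\eqref{antidiagcase1} equals $\sum_{v\in\fkS_n}c_v\,\imm_v$ with
\[
  c_v=\sgn(w)(-1)^n\!\!\!\sum_{\substack{(I,J)\in\cS\\ v\text{ compatible with }(I,J)}}\!\!\!(-1)^{|I|+s(I)+s(J)}.
\]

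Next I would prove $c_v=0$ for $v\neq w$. If no $(I,J)\in\cS$ is compatible with $v$, this is immediate. Otherwise, by Lemmas~\ref{non-crossing matching of case1} and~\ref{ncm equals w case1}, the pair consisting of $\ncm(w)$ and the coloring $\cC$ produced there is the \emph{unique} (non-crossing matching, compatible coloring) pair satisfying conditions \textit{1}--\textit{6}. Since $\ncm(v)\neq\ncm(w)$ but $\ncm(v)$ does admit a coloring satisfying \textit{1}--\textit{4}, the matching $\ncm(v)$ must fail \textit{5} or \textit{6}; i.e.\ it has an internal pairing $\{p,q\}$ with $p,q$ both in $[1,a]\cup[1,b]'$ or both in $[n-d+1,n]\cup[n-c+1,n]'$. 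Fix one such pair $\{p,q\}$ by a rule depending only on $v$, and let $\iota$ act on $\{(I,J)\in\cS : v\text{ compatible with }(I,J)\}$ by switching the colors of $p$ and $q$ and nothing else. Since $p,q$ are matched in $\ncm(v)$ and $(I,J)$ is compatible with it, they carry opposite colors, so $\iota$ is a fixed-point-free involution, and after the swap $p,q$ are still oppositely colored, so $\iota(I,J)$ remains compatible with $v$. It also lands in $\cS$: conditions \textit{1},\textit{2} pin down colors only in the middle ranges $[a+1,n-d]$ and $[b+1,n-c]'$, which miss $\{p,q\}$, while exchanging one black for one white inside $[1,a]\cup[1,b]'$ (resp.\ inside $[n-d+1,n]\cup[n-c+1,n]'$) preserves the counts required by \textit{3} (resp.\ \textit{4}). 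Finally $\iota$ reverses the sign of $(-1)^{|I|+s(I)+s(J)}$: if $p,q$ are both unprimed the swap changes $s(I)$ by $\pm(q-p)$ and fixes $|I|,s(J)$; if both primed, with $p=p_0', q=q_0'$, it changes $s(J)$ by $\pm(q_0-p_0)$ and fixes $|I|,s(I)$; if mixed, with $q=q_0'$, it changes $|I|$ by $\mp1$, $s(I)$ by $\mp p$ and $s(J)$ by $\mp q_0$. By Remark~2 after Definition~\ref{def:ncm}, vertices paired in a non-crossing matching have odd label-difference, which in unprimed/primed coordinates means $q-p$ and $q_0-p_0$ are odd in the first two cases and $p+q_0$ is even in the mixed case; in each case the displayed change is of odd total parity and so flips the sign of the weight. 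Pairing colorings off through $\iota$ therefore forces $c_v=0$.

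Hence the right-hand side of~\eqref{antidiagcase1} equals $c_w\,\imm_w$, and since $\imm_w$ has coefficient $f_w(w)=1$ on $x_w$ by Lemma~\ref{lem:basic fwu}, $c_w$ is just the coefficient of $x_w$ on the right-hand side, namely $\sgn(w)(-1)^n\sum_{(I,J)\in\cS}(-1)^{|I|}\,[\text{coeff.\ of }x_w\text{ in }\cm_{I,J}]$. By Definition~\ref{def:cm} this coefficient is $\sgn(w)$ when $w(I)=J$ and $0$ otherwise, so it remains to check that $I=J=[n]$ is the only $(I,J)\in\cS$ with $w(I)=J$. Condition \textit{1} forces $[a+1,n-d]\subseteq I$, and from the one-line notation of $w$ in the first case of Proposition~\ref{2143Patterns} one computes $w([a+1,n-d])=[1,b]\cup[a+b+1,a+b+e]\cup[n-c+1,n]$; combining $w(I)=J$ with condition \textit{2}, which gives $[b+1,n-c]\subseteq J$, forces $J=[n]$, hence $|I|=|J|=n$ and $I=[n]$, while conversely $([n],[n])\in\cS$ and $w([n])=[n]$. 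Since $\cm_{[n],[n]}=\det(x_{i,j})$ has $x_w$-coefficient $\sgn(w)$ and $(-1)^{|I|}=(-1)^n$, the coefficient of $x_w$ on the right-hand side is $\sgn(w)(-1)^n(-1)^n\sgn(w)=1$, so $c_w=1$, completing the argument.

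The main obstacle is the middle step: it depends on the (separately established) uniqueness in Lemmas~\ref{non-crossing matching of case1} and~\ref{ncm equals w case1} to guarantee the internal pairing exists, and then requires care to verify that recoloring stays inside $\cS$ and genuinely reverses the sign. The sign-reversal itself is a short parity check, but it has to be run in each of the three configurations for the location of $\{p,q\}$; apart from that, the proof runs parallel to Proposition~\ref{Rectangle Unique Matching}.
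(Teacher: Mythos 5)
Your proposal is correct and follows essentially the same route as the paper: expand each $\cm_{I,J}$ via Lemma \ref{cmminors} and Proposition \ref{prop:cm equals imm sum}, use the uniqueness in Lemmas \ref{non-crossing matching of case1} and \ref{ncm equals w case1} to produce an internal pairing for any compatible $v \neq w$, kill those terms with the same color-swapping sign-reversing involution (with the same parity case analysis), and fix the constant by comparing $x_w$-coefficients. Your identification of $(I,J)=([n],[n])$ as the unique contributing coloring is just the paper's condition $I_1=I_2=I_3=I_4=\emptyset$ stated in complementary form, so the two write-ups differ only in bookkeeping.
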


\begin{remark}
Explicitly, we have
\begin{equation}\label{antidiagcase1'}
    \imm_w = \sgn(w) \sum\limits_{I_1, I_2, I_3, I_4} (-1)^{|I_1| + |I_3|} \cm_{I_1 \cup I_3, I_2 \cup I_4}.
\end{equation}
where the sum runs over all $I_1 \subset [1, a], I_2 \subset [1, b], I_3 \subset [n-d +1, n], I_4 \subset [n-c+1, n]$ satisfying $|I_1| = |I_2|, |I_3| = |I_4|$.

For instance, with $w = 2143,$ we are given that $a = b = c = d = 1,$ and so the possible $(I_1 \cup I_3, I_2 \cup I_4)$ are:
\begin{equation*}
    (\{1,4\}, \{1',4'\}), (\{1\}, \{1'\}), (\{4\}, \{4'\}), (\emptyset, \emptyset).
\end{equation*}
(It is a coincidence in this example that we always have $I_1 = I_2$ and $I_3 = I_4$.) They correspond to the following complementary minors:
\begin{center}
    \includegraphics[]{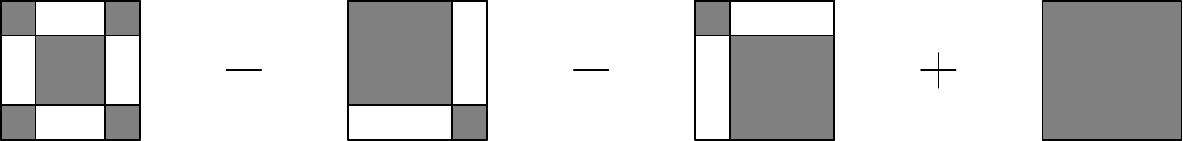}
\end{center}
\end{remark}
Notice that $I_1, I_2$ could both be taken to be empty.
\begin{proof}
Let $\beta(I, J) := s(I) + s(J) + |I|$, where $s(I) = \sum_{i \in I} i$. First, we claim for some $\alpha$,
\begin{equation}\label{antidiagcase1eqn}
    \imm_w = \alpha \sum\limits_{(I,J) \in \cS} (-1)^{\beta(I,J)} \Delta_{\overline{I}, \overline{J}} \Delta_{ I, J },
\end{equation}
Here, $\cS$ is the set of all possible colorings $(I, J)$ satisfying conditions \textit{1} through \textit{4} in Lemma \ref{non-crossing matching of case1}. (Recall that in a coloring $(I, J)$, we have that $I, \overline{J}'$ are colored black and $\overline{I}, J'$ are colored white.) 
\par To show \eqref{antidiagcase1eqn}, we expand each product of complementary minors $(-1)^{\beta(I, J)} \Delta_{I,J} \Delta_{\bI,\bJ}$ into a sum of $\imm_u$ terms, where $\imm_u$ appears with coefficient $(-1)^{\beta(I, J)}$ iff $(I, J) \in \cS$ by Proposition \ref{prop:cm equals imm sum}. Thus, the RHS of \eqref{antidiagcase1eqn} can be expressed as $\sum_u c_u \imm_u$, where the sum ranges over $321$-avoiding permutations in $\fkS_n$, and the coefficient $c_u = \sum_{(I, J) \in \cS_u} (-1)^{\beta(I,J)}$, where $\cS_u$ is the set of colorings in $\cS$ compatible with $u$. We want to show $c_u = 0$ unless $u = w$.

\par Suppose there exists $u\not = w$ such that $c_u \neq 0$. Then $\cS_u$ is non-empty, so by Lemmas \ref{non-crossing matching of case1} and \ref{ncm equals w case1}, the non-crossing matching of $u$ does not satisfy both conditions \textit{5} and \textit{6} in Lemma \ref{non-crossing matching of case1}. Fix a pair of vertices $v_1, v_2$ that form an internal pairing (so both vertices are in $[1, a] \cup [1, b]',$ or both are in $[n - d + 1, n] \cup [n - c + 1, n]'$). Now, we define the involution $\iota$ on colorings that swaps the colors of $v_1, v_2$. We will now show that $\iota$ maps $\cS_u$ to $\cS_u$ and $\beta(\iota(I, J)) \equiv \beta(I, J) + 1 \pmod 2$. Indeed, conditions \textit{1} and \textit{2} are preserved under $\iota$ since $v_1, v_2 \notin [a+1, n-d] \cup [b+1, n-c]'$, and conditions \textit{3} and \textit{4} are preserved because $v_1, v_2$ form an internal pairing in either $[1, a] \cup [1, b]'$ or $[n-d+1, n] \cup [n-c+1, n]'$. Furthermore, $v_1$ and $v_2$ are paired in $\ncm(u)$, so swapping the colors of $v_1$ and $v_2$ will preserve compatibility of the coloring with $u$. This shows $\iota$ maps $\cS_u$ to $\cS_u$. For the other claim, we have two main cases to consider.
\par First, if $v_1, v_2$ are both unprimed, then from our non-crossing matching they must be opposite colors; say $v_1 \in I$ and $v_2 \in \overline{I}$. Also, since $v_1, v_2$ are paired, their labels have different parities (when viewed as integers from $1$ to $n$). Now, swapping the colors of $v_1, v_2$ doesn't change the size of $I$ (the number of unprimed black vertices) but replaces $v_1$ with $v_2,$ so this swap changes $\beta(I, J)$ by $v_2 - v_1$. But $v_2 - v_1$ is odd by Remark 2 after Definition \ref{def:ncm}, so $\beta(\iota(I, J)) \equiv \beta(I, J) + 1 \pmod 2$. The same argument holds if both are primed, but now considering $J$.
\par Otherwise, suppose that $v_1$ is primed and $v_2$ is unprimed. Then, $v_1, v_2$ must have the same parity. In this case, swapping the colors of $v_1, v_2$ will add $v_1$ to $I$ and $v_2$ to $J$, or remove each of them from their respective sets; so from the parities of $v_1, v_2,$ the sum of the elements will be the same parity. However, the size $|I|$ also changes by $1$, and so $\beta(I, J)$ changes by an odd number.
\par In either case, we have $\beta(\iota(I, J)) \equiv \beta(I, J) + 1 \pmod 2$. Thus, since $\iota : \cS_u \to \cS_u$ is an involution,
\begin{equation}\label{eqn:cancellation}
    c_u = \sum_{(I, J) \in \cS_u} (-1)^{\beta(I, J)} = \sum_{(I, J) \in \cS_u} (-1)^{\beta(\iota(I, J))} = \sum_{\iota(I, J) \in \cS_u} (-1)^{\beta(I, J)+1} = -c_u.
\end{equation}
This shows $c_u = 0$, contradicting our original assumption that $c_u \neq 0$. Thus, in fact $c_u = 0$ for $u \neq w$, giving equation \eqref{antidiagcase1eqn} up to a global factor.

Next, conditions \textit{1} through \textit{4} in Lemma \ref{non-crossing matching of case1} tell us that a coloring $(I, J) \in \cS$ can be expressed in the form $\overline{I} = I_1 \cup I_3$ and $\overline{J} = I_2 \cup I_4$, where $I_1 \subset [1, a]$, $I_3 \subset [n-d+1, n]$, $I_2 \subset [1, b]$, $I_4 \subset [n-c+1, n]$, and $|I_1| = |I_2|, |I_3| = |I_4|$. By Lemma \ref{cmminors}, we can rewrite \eqref{antidiagcase1eqn} as
\begin{equation}\label{antidiagcase1eqn2}
    \imm_w = \alpha \sum\limits_{(I,J) \in \cS} (-1)^{|I|} \cm_{I,J} = \alpha \sum\limits_{I_1, I_2, I_3, I_4} (-1)^{n-|I_1| - |I_3|} \cm_{I_1 \cup I_3, I_2 \cup I_4}.
\end{equation}
To determine $\alpha$, we compare coefficients of $x_w$ in $\eqref{antidiagcase1eqn2}$. By the explicit formula of Definition \ref{def:cm}, we know that $\cm_{I_1 \cup I_3, I_2 \cup I_4}$ has a nonzero $x_w$ coefficient iff $w(I_1 \cup I_3) = I_2 \cup I_4$. This forces $I_1 = I_2 = I_3 = I_4 = \emptyset,$ since from Proposition \ref{2143Patterns} we know that $w(I_1) \subset w([1, a]) = [b+1, a+b]$ and $w(I_3) \subset w([n-d+1, n]) = [n-d-c+1, n-c],$ both of which are disjoint from $[1, b] \cup [n-c+1, n] \supset I_2 \cup I_4$. Then since $f_w (w) = 1$, we get $1 = \alpha \sgn(w) (-1)^n$, so $\alpha = \sgn(w) (-1)^n$. With this $\alpha$, \eqref{antidiagcase1eqn2} is equivalent to \eqref{antidiagcase1} and \eqref{antidiagcase1'}, as desired.
\end{proof}


\begin{prop}\label{sumcm6}
Let $w \in \fkS_n$ have block structure $[3][5][1][6][2][4]$ with block lengths $a,e,b,c,f,d$, as stated in the second case of Proposition \ref{2143Patterns}. Let $\cS$ be the set of possible colorings satisfying conditions \textit{1} through \textit{6} in Lemma \ref{non-crossing matching of case2}. Then, we have
\begin{equation}\label{antidiagcase2}
    \imm_w = \sgn(w) \sum\limits_{(I,J) \in \cS} \cm_{I,J}.
\end{equation}
\end{prop}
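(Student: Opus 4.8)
The plan is to mimic the proof of Proposition~\ref{sumcm5} almost line for line, exploiting the fact that in the present case the two ``free'' intervals $[a+e+1,a+e+b+c]$ and $[b+f+1,b+f+a+d]'$ are each purely unprimed or purely primed, so that the mixed primed/unprimed pairing case that forced the extra $(-1)^{|I|}$ in Proposition~\ref{sumcm5} never occurs. First I would show that $\imm_w = \alpha \sum_{(I,J)\in\cS}\cm_{I,J}$ for some scalar $\alpha$. Expanding each $\cm_{I,J}$ using Lemma~\ref{cmminors} and Proposition~\ref{prop:cm equals imm sum} rewrites the right-hand side as $\sum_u c_u\imm_u$, where $u$ runs over $321$-avoiding permutations, $\cS_u\subseteq\cS$ denotes the colorings compatible with $\ncm(u)$, and $c_u = \sum_{(I,J)\in\cS_u}(-1)^{s(I)+s(J)}$. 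It then suffices to prove $c_u=0$ whenever $u\neq w$.

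For this I would run a sign-reversing involution. Suppose $u\neq w$ with $\cS_u\neq\emptyset$. By Lemmas~\ref{non-crossing matching of case2} and~\ref{ncm equals w case2}, $\ncm(w)$ is the unique matching admitting a coloring that satisfies all of conditions~\textit{1}--\textit{8} of Lemma~\ref{non-crossing matching of case2}; since a coloring of $\cS_u$ satisfies \textit{1}--\textit{6} but is compatible with $\ncm(u)\neq\ncm(w)$, the matching $\ncm(u)$ must violate \textit{7} or \textit{8}, so it contains an internal pairing $\{v_1,v_2\}$ lying within $[a+e+1,a+e+b+c]$ or within $[b+f+1,b+f+a+d]'$. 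Let $\iota$ swap the colors of $v_1$ and $v_2$. Since $v_1,v_2$ avoid the frozen vertices of \textit{1}--\textit{4} and form an internal pairing in one of the two free intervals, swapping their colors preserves conditions~\textit{1}--\textit{6}, hence $\iota(\cS)=\cS$; and since $v_1,v_2$ are paired in $\ncm(u)$, compatibility with $\ncm(u)$ is preserved, so $\iota$ is a fixed-point-free involution of $\cS_u$. Finally, two paired vertices that are both unprimed (resp.\ both primed) have labels differing by an odd integer (Remark~2 after Definition~\ref{def:ncm}), so $\iota$ changes $s(I)$ (resp.\ $s(J)$) by an odd amount; hence $s(\iota(I,J))\equiv s(I)+s(J)+1\pmod 2$, the two members of each $\iota$-orbit cancel, and $c_u=0$.

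It remains to compute $\alpha$ by comparing coefficients of $x_w$. The coefficient of $x_w$ in $\imm_w$ is $f_w(w)=1$ by Lemma~\ref{lem:basic fwu}, and the coefficient of $x_w$ in $\cm_{I,J}$ is $\sgn(w)$ if $w(I)=J$ and $0$ otherwise by Definition~\ref{def:cm}. Reading off the one-line notation of $w$ from the second case of Proposition~\ref{2143Patterns}, I would check that $w$ sends $[1,a+e]$ onto $[b+f+1,b+f+a]\cup[n-c-e+1,n-c]$ and sends $[a+e+1,a+e+b+c]$ onto $[1,b]\cup[n-c+1,n]$; combined with the fact that any target $J$ of a coloring in $\cS$ is contained in $[b+f+1,n]$ and contains $[b+f+a+d+1,n]=[n-c-e+1,n]$, this forces $K:=I\cap[a+e+1,a+e+b+c]$ to be exactly the positions of the block $[6]$, and then $w(I)=J$ holds for exactly one $(I,J)\in\cS$, namely $I=[1,a+e]\cup[a+e+b+1,a+e+b+c]$ and $J=[b+f+1,b+f+a]\cup[b+f+a+d+1,n]$ (which one verifies is a legitimate element of $\cS$). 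Hence the $x_w$-coefficient of $\sum_{(I,J)\in\cS}\cm_{I,J}$ equals $\sgn(w)$, so $\alpha\sgn(w)=1$, i.e.\ $\alpha=\sgn(w)$, which is \eqref{antidiagcase2}.

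The main obstacle I anticipate is the bookkeeping in this last paragraph: correctly pushing the block structure $[3][5][1][6][2][4]$ through the maps $I\mapsto w(I)$ and $J$, and verifying that precisely one coloring of $\cS$ yields the monomial $x_w$ and that this coloring indeed satisfies the counting conditions~\textit{5}--\textit{6}. The involution step, by contrast, is easier than its analogue in Proposition~\ref{sumcm5}, precisely because the internal pairings here are never of mixed type and so the parity argument never needs the correction term $|I|$.
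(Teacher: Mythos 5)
Your proposal is correct and follows essentially the same route as the paper's proof: expand the sum of complementary minors via Proposition \ref{prop:cm equals imm sum} and Lemma \ref{cmminors}, kill every $\imm_u$ with $u\neq w$ by a sign-reversing involution that swaps the colors of the two endpoints of an internal pairing forced by the failure of conditions \textit{7}--\textit{8} (using the odd-difference parity of same-side paired vertices, exactly the paper's $\beta(I,J)=s(I)+s(J)$ argument), and then fix the constant by identifying the unique coloring contributing $x_w$, namely $I_1=[a+e+b+1,a+e+b+c]$ and $I_2=[b+f+1,b+f+a]$, which is precisely the paper's computation.
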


\begin{remark}
Explicitly, we have
\begin{equation}\label{antidiagcase2'}
    \imm_w = \sgn(w) \sum\limits_{I_1,I_2} \cm_{[1,a+e] \cup I_1 , I_2 \cup [b+f+a+d+1,n] },
\end{equation}
where the sum runs over all $I_1\subset [a+e+1,a+e+b+c]$ with $|I_1|=c$ and $I_2\subset [b+f+1,b+f+a+d]$ with $|I_2|=a$. 

For instance, with $w = 24153,$ we are given that $a = b = c = d = f = 1$ and $e = 0,$ and so the possible $([1, a+e] \cup I_1, I_2 \cup [b + f + a + d + 1, n])$ are:
\begin{equation*}
    (\{1,2\}, \{3,5\}), (\{1, 2\}, \{4, 5\}), (\{1, 3\}, \{3, 5\}), (\{1, 3\}, \{4, 5\}).
\end{equation*}
They correspond to the following complementary minors:
\begin{center}
    \includegraphics[]{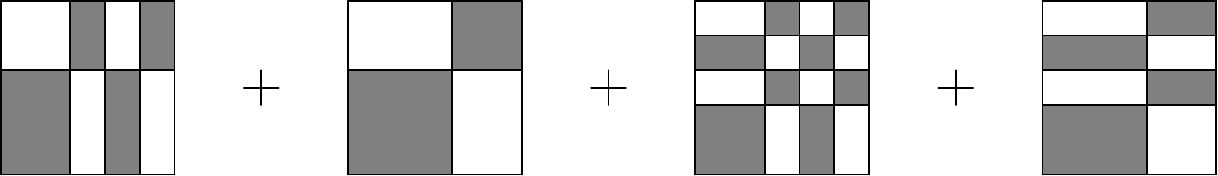}
\end{center}
\end{remark}
\begin{proof}
Let $\beta(I, J) := s(I) + s(J)$. First, we claim for some $\alpha$,
\begin{equation}\label{antidiagcase2eqn}
    \imm_w = \alpha \sum\limits_{(I,J) \in \cS} (-1)^{\beta(I,J)} \Delta_{I, J} \Delta_{\overline{I}, \overline{J}}.
\end{equation}
Here, $\cS$ is the set of possible colorings satisfying conditions \textit{1} through \textit{6} in Lemma \ref{non-crossing matching of case2}.
The proof of \eqref{antidiagcase2eqn} is extremely similar to the proof of \eqref{antidiagcase1eqn}, so we sketch the details. First, we expand each product of complementary minors $(-1)^{\beta(I, J)} \Delta_{I,J} \Delta_{\bI,\bJ}$ into a sum of $\imm_u$ terms, where $\imm_u$ appears with coefficient $(-1)^{\beta(I, J)}$ iff $(I, J) \in \cS$ by Proposition \ref{prop:cm equals imm sum}. Thus, the RHS of \eqref{antidiagcase2eqn} can be expressed as $\sum_u c_u \imm_u$, where the sum ranges over $321$-avoiding permutations in $\fkS_n$, and the coefficient $c_u = \sum_{(I, J) \in \cS_u} (-1)^{\beta(I,J)}$, where $\cS_u$ is the set of colorings in $\cS$ compatible with $u$. We want to show $c_u = 0$ unless $u = w$. Suppose that $u \neq w$ and $c_u \neq 0$; then $\cS_u \neq \emptyset$. By Lemmas \ref{non-crossing matching of case2} and \ref{ncm equals w case2}, the non-crossing matching of $u$ does not satisfy both conditions \textit{7} and \textit{8} in Lemma \ref{non-crossing matching of case2}, so there is some internal pairing of vertices $v_1, v_2;$ note that both are either primed or unprimed.
\par Then, we know that $v_1, v_2$ must have different parities, so swapping the colors of $v_1, v_2$ will change the parity of $\beta(I,J)$. Also, since $v_1, v_2$ form an internal pairing, conditions \textit{1} through \textit{6} of \ref{non-crossing matching of case2} and compatibility with $u$ still hold after swapping the colors of $v_1, v_2$. In summary, we have constructed a sign-reversing involution $\iota : \cS_u \to \cS_u$. Using \eqref{eqn:cancellation}, we see that $c_u = 0$, a contradiction. Thus in fact $c_u = 0$ whenever $u \neq w$, so we obtain equation \eqref{antidiagcase2eqn} up to a global sign.

\par Next, conditions \textit{1} through \textit{6} in Lemma \ref{non-crossing matching of case2} tell us that a coloring $(I, J) \in \cS$ can be expressed in the form $I = [1,a+e] \cup I_1$ and $J = I_2 \cup [b+f+a+d+1,n])$ where $|I_1| = c$, $I_1 \subset [a+e+1, a+e+b+c]$, $|I_2| = a$, $I_2 \subset [b+f+1, b+f+a+d]$. By Lemma \ref{cmminors}, we can rewrite \eqref{antidiagcase2eqn} as
\begin{equation}\label{antidiagcase2eqn2}
    \imm_w = \alpha \sum\limits_{(I,J) \in \cS} \cm_{I,J} = \alpha \sum\limits_{I_1,I_2} \cm_{[1,a+e] \cup I_1 , I_2 \cup [b+f+a+d+1,n] }
\end{equation}
To determine $\alpha$, we compare coefficients of $x_w$ in $\eqref{antidiagcase2eqn2}.$ Note that $f_w (w) = 1,$ and by the explicit formula of Definition \ref{def:cm}, any complementary minor $\cm_{[1,a+e] \cup I_1 , I_2 \cup [b+f+a+d+1,n] }$ with a nonzero $x_w$ term requires $I_1$ to contain $w^{-1}([b + f + a + d + e + 1, n]) = [a + e + b + 1, a + e + b + c]$ and $I_2$ to contain $w([1, a]) = [b + f + 1, b + f + a]$ (from Proposition \ref{2143Patterns}). But since $|I_1| = c$ and $|I_2| = a$, we must have $I_1 = [a + e + b + 1, a + e + b + c]$ and $I_2 = [b + f + 1, b + f + a]$. Thus, there is a unique choice for $I_1, I_2$ to get a nonzero $x_w$ term, which means $1 = \alpha \sgn(w)$, so $\alpha = \sgn(w)$. With this $\alpha$, \eqref{antidiagcase2eqn2} is equivalent to \eqref{antidiagcase2} and \eqref{antidiagcase2'}, as desired.


\end{proof}

Using the above propositions, we can explicitly express $\imm_w$ as a sum of products of complementary minors. From there, it becomes easy to calculate $f_w(u)$ for each $u\in \mathfrak{S}_n.$


\begin{thm}\label{AntiDiagCoeff} Let $w \in \fkS_n$ have block structure $[2][1][3][5][4]$ with block lengths $a,b,e,c,d$, as stated in the first case of Proposition \ref{2143Patterns}. Let $u \in \mathfrak{S}_n$. Then the coefficient of $x_u$ in $\imm_w$ is given by
\[
f_w(u) = 
\begin{cases}
0 & \text{if there exists } i \in [1,a] \text{ s.t. } u(i) \in [1,b],\\
& \text{or there exists } i \in [n+1-d, n]\text{ s.t. }  u(i) \in [n+1 - c, n] \\
&\\
\sgn(w)\sgn(u)\binom{A+B}{A} & \text{otherwise, where A = } |[1, a] \cap u^{-1}([n+1-c, n])|, \\
& B = |[1, b] \cap u^{-1}([n+1-d, n])|.
\end{cases}
\]

\end{thm}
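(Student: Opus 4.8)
The plan is to start from Proposition \ref{sumcm5}, which expresses $\imm_w$ as the signed sum $\sgn(w)\sum_{I_1,I_2,I_3,I_4}(-1)^{|I_1|+|I_3|}\cm_{I_1\cup I_3, I_2\cup I_4}$, where $I_1\subset[1,a]$, $I_2\subset[1,b]$, $I_3\subset[n-d+1,n]$, $I_4\subset[n-c+1,n]$ with $|I_1|=|I_2|$ and $|I_3|=|I_4|$. By Definition \ref{def:cm}, $\cm_{I,J}$ contributes $\sgn(u)$ to the coefficient of $x_u$ exactly when $u(I\cup I_3) = I_2\cup I_4$ (and $0$ otherwise). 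So I would first fix $u$ and count, with sign $(-1)^{|I_1|+|I_3|}$, the number of quadruples $(I_1,I_2,I_3,I_4)$ in the index set with $u(I_1\cup I_3)=I_2\cup I_4$; then $f_w(u) = \sgn(w)\sgn(u)\cdot(\text{that signed count})$.

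Next I would analyze which quadruples are admissible. Since $I_1\cup I_3$ is the preimage under $u$ of $I_2\cup I_4$, and $I_2\subset[1,b]$, $I_4\subset[n-c+1,n]$ are disjoint, the condition $u(I_1\cup I_3)=I_2\cup I_4$ forces each element of $I_1$ to have $u$-image in $[1,b]\cup[n-c+1,n]$ and likewise for $I_3$. Crucially, $|I_1|=|I_2|$ means the number of elements of $I_1$ landing in $[1,b]$ must equal $|I_2|$ — but there's freedom in how the images of $I_1$ split between $[1,b]$ and $[n-c+1,n]$ only through the constraint linking $I_1$ to $I_2$ and $I_3$ to $I_4$. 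Let me organize by which of the four ``types'' of (index, image)-incidences occur: elements of $[1,a]$ mapping into $[1,b]$; elements of $[1,a]$ mapping into $[n-c+1,n]$; elements of $[n-d+1,n]$ mapping into $[1,b]$; elements of $[n-d+1,n]$ mapping into $[n-c+1,n]$. If $u$ sends \emph{any} element of $[1,a]$ into $[1,b]$, that element cannot be in $I_1$ without its image being in $I_2\subset[1,b]$; tracking this carefully (together with the symmetric statement for $[n-d+1,n]\to[n-c+1,n]$), I expect to show the signed count vanishes in the first case of the theorem, because a fixed ``forced'' incidence creates a sign-reversing pairing (toggling membership of that index in $I_1$ flips $(-1)^{|I_1|}$ while the rest adjusts consistently). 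In the ``otherwise'' case, no element of $[1,a]$ maps into $[1,b]$ and no element of $[n-d+1,n]$ maps into $[n-c+1,n]$, so $I_1$ can only consist of the $A$ elements of $[1,a]$ with image in $[n-c+1,n]$, and each such choice of a size-$k$ subset of these $A$ elements forces $I_4$ to be the corresponding image set and contributes sign $(-1)^{k}$ through $|I_1|$; similarly $I_3$ is a subset of the $B$ elements of $[n-d+1,n]$ with image in $[1,b]$, contributing $(-1)^{|I_3|}$, and $I_2,I_4$ are then determined. But wait — this would give $\sum_k(-1)^k\binom{A}{k}\sum_\ell(-1)^\ell\binom{B}{\ell}$, which is $0$ unless $A=B=0$. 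So I must be mis-accounting: the correct bookkeeping is that $I_2$ ranges \emph{freely} over size-$|I_1|$ subsets of $[1,b]$ as long as it matches the image multiset, so actually the binomial structure comes from choosing which elements of $[1,b]\cap u^{-1}([n-d+1,n])$ etc.\ — I would recompute to see that the answer collapses to a single binomial coefficient $\binom{A+B}{A}$ via a Vandermonde-type identity after the signs cancel appropriately.

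Concretely, I would set up the count as follows: an admissible configuration is determined by choosing $I_1\subset[1,a]$ and $I_3\subset[n-d+1,n]$, and then $I_2 = u(I_1\cup I_3)\cap[1,b]$, $I_4 = u(I_1\cup I_3)\cap[n-c+1,n]$, subject to $|I_1| = |I_2|$ and $|I_3| = |I_4|$. In the ``otherwise'' case, writing $S_1 = [1,a]\cap u^{-1}([n-c+1,n])$ (size $A$), $S_1' = [1,a]\cap u^{-1}([n-d+1,n])$, $S_3 = [n-d+1,n]\cap u^{-1}([1,b])$ (size $B$), $S_3' = [n-d+1,n]\cap u^{-1}([n-d+1,n])$, I would track exactly how $|I_1|=|I_2|$ constrains the split of $I_1$ between $S_1$ and $S_1'$ (note $[1,a]$'s image avoids $[1,b]$, so $I_2=\emptyset$ forces $I_1\subset$ the part of $[1,a]$ not mapping to $[n-c+1,n]$... no: $I_2 = u(I_1\cup I_3)\cap[1,b]$ and if $I_1\subset[1,a]$ avoids mapping to $[1,b]$ then $I_2$ comes entirely from $I_3$). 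The cleanest route is probably: $|I_1| = |I_2| = |u(I_3)\cap[1,b]|$ forces $|I_1|$ to equal the number of elements of $I_3$ landing in $[1,b]$, i.e.\ $|I_3\cap S_3|$; symmetrically $|I_3| = |I_1\cap S_1|$. So writing $|I_1\cap S_1| = p$, $|I_3\cap S_3| = q$, we need $|I_1| = q$ and $|I_3| = p$, and $I_1\setminus S_1$ maps into... this gives a finite system whose solution count I can evaluate. The main obstacle, and the part I expect to require real care, is getting this combinatorial bookkeeping exactly right — identifying precisely which incidences are forced, which are free, and showing the alternating sum of the free choices collapses to $\binom{A+B}{A}$ via an identity like $\sum_{k}(-1)^{?}\binom{A}{k}\binom{B}{q-k} = \binom{A+B}{?}$. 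Once the identity is pinned down, I would finish by noting that Lemma \ref{lem:symmetry} or the symmetry arguments of Section \ref{sec:general_TL} are not needed here since Proposition \ref{sumcm5} is already stated for exactly this block structure; and I would double-check the vanishing case by exhibiting the explicit sign-reversing involution (toggle a forced index in $I_1$ or $I_3$) that makes $c_u$ telescope to $0$, exactly as in the proof of Proposition \ref{sumcm5}.
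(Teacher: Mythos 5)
Your plan starts from the same place as the paper (Proposition \ref{sumcm5}, in the explicit form \eqref{antidiagcase1'}), and your treatment of the vanishing case is exactly the paper's: if $u(i_0)\in[1,b]$ for some $i_0\in[1,a]$, toggling $i_0$ in $I_1$ together with $u(i_0)$ in $I_2$ is a sign-reversing involution on admissible quadruples, so the signed count is $0$ (the paper phrases this as swapping the colors of $i_0$ and $u(i_0)'$ in $\cS_u'$). Where you differ is the nonzero case, and that is precisely the part you leave unfinished. It does close, and here is the missing bookkeeping: in the ``otherwise'' case $u(I_1)\cap[1,b]=\emptyset$ and $u(I_3)\cap[n-c+1,n]=\emptyset$, so with $I_2=u(I_1\cup I_3)\cap[1,b]$ and $I_4=u(I_1\cup I_3)\cap[n-c+1,n]$ you get $|I_2|=|I_3\cap S_3|\le|I_3|$ and $|I_4|=|I_1\cap S_1|\le|I_1|$; combined with $|I_1|=|I_2|$ and $|I_3|=|I_4|$ this forces $I_1\subset S_1$, $I_3\subset S_3$, $|I_1|=|I_3|=k$, and then $I_2=u(I_3)$, $I_4=u(I_1)$ are determined (in particular the requirement $u(I_1\cup I_3)\subset[1,b]\cup[n-c+1,n]$, which you never imposed, comes for free from the size conditions). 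Hence every surviving term carries sign $(-1)^{|I_1|+|I_3|}=(-1)^{2k}=+1$ --- the sum is not alternating at all, which is what doomed your first accounting, where $|I_1|$ and $|I_3|$ varied independently --- and the count is $\sum_{k\ge 0}\binom{A}{k}\binom{B}{k}=\binom{A+B}{A}$ by Vandermonde. So your route trades the paper's argument (which counts colorings in $\cS_u'$ directly, getting $\binom{A+B}{A}$ as the choice of which of the $A+B$ ``unresolved'' vertices of $[1,a]\cup[1,b]'$ are black, plus a separate parity check that all terms have sign $+1$) for a Vandermonde identity; both are short, but yours must explicitly extract the hidden constraint $|I_1|=|I_3|$, so state and prove it rather than leaving it as ``recompute and hope the signs cancel.''

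One further point you should settle: your bookkeeping produces $B=|S_3|=\bigl|[n-d+1,n]\cap u^{-1}([1,b])\bigr|$, which is what the paper's own proof and the later applications (e.g. in the proof of Theorem \ref{TwoPercentForward_restate}) actually use, but it is not literally the $B=\bigl|[1,b]\cap u^{-1}([n+1-d,n])\bigr|$ in the statement of Theorem \ref{AntiDiagCoeff}; the two differ in general. For example, with $w=2143$ and $u=4213$ the statement's $B$ equals $1$ while yours equals $0$, and a direct check against \eqref{antidiagcase1'} gives $f_w(u)=1=\binom{1+0}{1}$, matching your version. So the statement's formula appears to contain a typo, and your proof (once completed as above) establishes the correct form; just say explicitly which $B$ you are proving the formula for.
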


\begin{proof}
Let $\cS$ be the set of colorings that satisfy conditions \textit{1} through \textit{4} in Lemma \ref{non-crossing matching of case1}, and let $\cS'_u$ be the set of colorings in $\cS$ such that $i$ and $u(i)'$ have different colors for all $i\in [n]$. (This is different from the $\cS_u$ defined in the proof of Proposition \ref{sumcm5}.)

Consider a $\cm_{I,J}$ term in the sum given in Proposition \ref{sumcm5} (so the coloring $(I, J) \in \cS$). By the remark after Definition \ref{def:cm}, notice that $x_u$ has nonzero coefficient in $\cm_{I,J}$ if and only if the coloring $(I, J) \in \cS_u'$ (in which case the coefficient is $\sgn(u)$). Thus, we get
\begin{equation}\label{eqn:extract_coeff1}
    f_w (u) = (-1)^n\sgn(w) \sgn(u) \sum_{(I, J) \in \cS_u'} (-1)^{|I|}.
\end{equation}

First, if there exists $i \in [1,a]$ such that $u(i) \in [1,b]$, then swapping the colors of $i$ and $u(i)'$ is an involution $\cS_u' \to \cS_u'$ that flips the sign of $(-1)^{|I|}$. By a cancellation argument similar to \eqref{eqn:cancellation}, we get $f_w (u) = 0$. (Alternate algebraic approach: we have $|u([1, a]) \cap [1, b]| = 1 > 0 = |w([1, a]) \cap [1, b]|$ by assumption and Proposition \ref{2143Patterns} case 1, so $u \not\ge w$ by equivalent definition \ref{bruhat3} of Bruhat order. Thus, $f_w (u) = 0$ by Lemma \ref{lem:basic fwu}.) A similar argument holds if there exists $i \in [n+1-d,n]$ such that $u(i) \in [n+1-c, n]$.

Thus, assume neither condition holds. We will show two claims: (1) $|\cS'_u| = \binom{A+B}{A}$ and (2) $(-1)^{n-|I|} = 1$ for each $(I, J) \in \cS'_u$.

Consider the $n$ pairs of vertices $(i, u(i)');$ each pair must consist of one white and one black vertex to have a nonzero $x_u$ term. Then $a-A$ of the vertices in $[1,a]$ are paired with a vertex in $[b+1, n-c]'$ and thus are black. Similarly, $b-B$ of the vertices in $[1,b]'$ are paired with a vertex in $[a+1, n-d]$ and are therefore white. As a result, among the $A+B$ remaining unresolved vertices in $[1,a] \cup [1,b]'$, $A$ must be black and $B$ must be white. There are $\binom{A+B}{A}$ ways of choosing colors for the unresolved vertices. 

Note that each coloring of the unresolved vertices in $[1, a] \cup [1, b]'$ uniquely specifies the entire coloring: since $u^{-1}([1, b]) = [a + 1, n]$ by assumption, we have that the coloring of the unresolved vertices in $[1, b]'$ determines the color of $b - B$ vertices in $[n - d + 1, n].$ But the remaining elements in $[n - d + 1],$ again by assumption, must be sent by $u$ to some element in $[b+1, n-c],$ and so their color is uniquely determined; the same argument holds for unresolved vertices in $[1, a]$ and those in $[n-c+1, n]'.$ Each of these colorings, by construction, colors $i, u(i)'$ different and satisfies conditions \textit{1} through \textit{3} in Lemma \ref{non-crossing matching of case1}. Condition \textit{4} follows from the first three: with $B$ vertices in $[1, b]'$ paired with those in $[n - d + 1, n],$ the remaining $d - B$ are necessarily colored black, and similarly $c - A$ vertices in $[n - c + 1, n]'$ are necessarily colored white. Of the remaining $A + B$ vertices, $A$ are white and $B$ are black (by the reverse condition imposed in $[1, a] \cup [1, b]'$), yielding $c$ white and $d$ black vertices. In summary, each of our choices for the unresolved vertices leads to a valid coloring in $\cS'_u$, and so $|\cS'_u| = \binom{A+B}{A}$, proving claim 1.

Finally, suppose that $C$ of the unresolved vertices in $[1, a]$ are white. Then, $A - C$ of the unresolved vertices in $[1, a]$ are black, since there are $A$ unresolved vertices in $[1, a].$ Thus, there are $A - (A - C)$ many black vertices in $[1,b]',$ since there are $A$ unresolved vertices in $[1, a] \cup [1,b]'$ that are colored black. But since $u^{-1}([1,b])$ lies in $[a+1, n],$ $i'$ being unresolved in $[1, b]'$ means that $u^{-1}(i)$ is unresolved in $[n - d + 1, n],$ and furthermore this is a $1-1$ correspondence. We thus observe that $C$ of the unresolved vertices in $[n-d+1, n]$ are white. Thus, as the only white vertices in $[1, a] \cup [n-d+1, n]$ are unresolved by our above argument, we have that $(-1)^{n-|I|} = (-1)^{2C} = 1,$ since the vertices in $I$ are colored black. This proves Claim 2.

Plugging in Claims 1 and 2 into \eqref{eqn:extract_coeff1}, the coefficient $f_w(u)$ of $x_u$ in $\imm_w$ is equal to $\binom{A + B}{A} \sgn(w)\sgn(u),$ as desired.
\end{proof}

\begin{thm}\label{coefficient of u in w case 2}
Let $w \in \fkS_n$ have block structure $[3][5][1][6][2][4]$ with block lengths $a,e,b,c,f,d$, as stated in the second case of Proposition \ref{2143Patterns}. Let $u\in \mathfrak{S}_n$. Then
\[f_w(u) = \begin{cases}
0 & \text{if there exists } i \in [1,a+e] \text{ s.t. } u(i) \in [1,b+f],\\
& \text{or there exists }i \in [a+e+b+c+1, n]\text{ s.t. }  u(i) \in [b+f+a+d+1, n] \\
&\\
\sgn(w)\sgn(u)\binom{A+B}{A} & \text{otherwise, where} \\
& A=c- |[a+e+1,a+e+b+c] \cap u^{-1}([b+f+a+d+1,n])|,\\
& B= b- |[a+e+1,a+e+b+c] \cap u^{-1}([1,b+f])|.
\end{cases}
\]
\end{thm}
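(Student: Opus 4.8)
The plan is to follow the proof of Theorem~\ref{AntiDiagCoeff} essentially verbatim, substituting the complementary-minor expansion of Proposition~\ref{sumcm6} for that of Proposition~\ref{sumcm5}; since the case-2 expansion carries no $(-1)^{|I|}$ sign, the argument is in fact lighter. Let $\cS$ be the set of colorings satisfying conditions \textit{1}--\textit{6} of Lemma~\ref{non-crossing matching of case2}, and let $\cS'_u \subseteq \cS$ consist of those colorings in which $i$ and $u(i)'$ receive different colors for every $i \in [n]$. By the remark after Definition~\ref{def:cm}, a summand $\cm_{I,J}$ in Proposition~\ref{sumcm6} contributes $\sgn(u)x_u$ exactly when $(I,J) \in \cS'_u$ and contributes nothing to $x_u$ otherwise. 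Hence $f_w(u) = \sgn(w)\sgn(u)\,|\cS'_u|$, and the whole theorem reduces to counting $\cS'_u$.

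First I would handle the two vanishing cases. If $u(i) \in [1, b+f]$ for some $i \in [1, a+e]$, then conditions \textit{1} and \textit{3} force both $i$ and $u(i)'$ to be black in every coloring of $\cS$, so $\cS'_u = \emptyset$; symmetrically, if $u(i) \in [b+f+a+d+1, n]$ for some $i \in [a+e+b+c+1, n]$, conditions \textit{2} and \textit{4} force $i$ and $u(i)'$ both white. (Alternatively, Proposition~\ref{2143Patterns} shows $w([1,a+e]) \cap [1,b+f] = \emptyset$, so in the first case $u \not\ge w$ and Lemma~\ref{lem:basic fwu} gives $f_w(u)=0$; similarly for the second.) Either way $f_w(u) = 0$.

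Now suppose neither condition holds. Conditions \textit{1}--\textit{4} fix the colors of all vertices outside the ``free'' sets $F := [a+e+1, a+e+b+c]$ and $F' := [b+f+1, b+f+a+d]'$, with $|F| = b+c$ and $|F'| = a+d$. For $i \in F$, compatibility with $u$ forces $i$ to be black when $u(i) \in [b+f+a+d+1,n]$ and white when $u(i) \in [1, b+f]$, while the remaining $i \in F$, namely those with $u(i)' \in F'$, are ``unresolved.'' By definition of $A$ and $B$, there are $c - A$ forced-black and $b - B$ forced-white vertices in $F$, hence $A+B$ unresolved ones; condition \textit{5}, which requires exactly $c$ black vertices in $F$, then forces exactly $A$ of the $A+B$ unresolved vertices to be black, and there are $\binom{A+B}{A}$ such choices. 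I would finish by checking that (i) each such choice determines the entire coloring --- every $j' \in F'$ is paired either with a fixed unprimed vertex (so its color is forced) or with an unresolved vertex of $F$ (so its color is the chosen opposite) --- and (ii) the resulting coloring automatically satisfies condition \textit{6}, since the number of black vertices of $F'$ equals $n - (a+e) - (b+f) - c = d$. This yields $|\cS'_u| = \binom{A+B}{A}$, completing the proof.

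The main obstacle is the bookkeeping of the third paragraph: correctly partitioning $F$ into forced-black, forced-white, and unresolved vertices, matching these counts to the statement's $A$ and $B$, and verifying that conditions \textit{5} and \textit{6} pin the coloring down to exactly $\binom{A+B}{A}$ possibilities with neither overcounting nor any lost coloring. One should also note the convention that $\binom{A+B}{A}$ is $0$ when $A$ or $B$ is negative: such degenerate sub-cases (e.g.\ more than $c$ forced-black vertices in $F$) force $\cS'_u = \emptyset$ as well, consistently with the stated formula.
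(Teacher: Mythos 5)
Your proposal is correct and follows essentially the same route as the paper's proof: expand $\imm_w$ via Proposition \ref{sumcm6}, reduce to $f_w(u)=\sgn(w)\sgn(u)|\cS'_u|$, dispose of the vanishing cases by the forced same-color pairs, and count the remaining colorings as $\binom{A+B}{A}$ choices on the unresolved vertices of $[a+e+1,a+e+b+c]$. Your extra checks (that each choice determines the full coloring and that condition \textit{6} follows automatically) are exactly the details the paper's argument implicitly relies on, so no gap remains.
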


\begin{remark}
The binomial coefficient $\binom{A+B}{A}$ is taken to be zero if $A < 0$ or $B < 0$.
\end{remark}

\begin{proof}
\par Let $\cS$ be the set of colorings that satisfy conditions \textit{1} through \textit{6} in Lemma \ref{non-crossing matching of case2}, and let $\cS'_u$ be the set of colorings in $\cS$ such that $i$ and $u(i)'$ have different colors for all $i\in [n]$.

\par Consider a $\cm_{I,J}$ term in the sum given in Proposition \ref{sumcm6} (so the coloring $(I, J) \in \cS$). By the remark after Definition \ref{def:cm}, notice that $x_u$ has nonzero coefficient in $\cm_{I,J}$ if and only if the coloring $(I, J) \in \cS_u'$ (in which case the coefficient is $\sgn(u)$.) Thus, we get

\begin{equation}\label{eqn:extract_coeff2}
    f_w (u) = \sgn(w) \sgn(u) |\cS_u'|.
\end{equation}

\par First, note that $\cS_u' = \emptyset$ if some $i \in [1,a+e]$ satisfies $u(i) \in [1,b+f]$. This is because both $i$ and $u(i)$ will be black vertices. Similarly, if some $i \in [a+e+b+c+1, n]$ satisfies $u(i) \in [b+f+a+d+1, n]$, then both $i$ and $u(i)$ will be white vertices. In this case, we find that $f_w (u) = 0.$ Thus, assume neither condition holds. We will now count the elements in $\cS_u'$.

Note that $[a+e+1,a+e+b+c] \cap u^{-1}([b+f+a+d+1,n]) $ must all be black and $[a+e+1,a+e+b+c] \cap u^{-1}([1,b+f])$ must all be white, and the vertices in $[a+e+1, a + e + b+c]$ contains $c$ black and $b$ white vertices. Therefore, we have the freedom of choosing exactly $A$ black and $B$ white vertices from the $A+B$ vertices in $[a + e + 1, a + e + b + c],$ each of which uniquely determines the coloring for $[b + f + 1, b + f + a + d + 1]'$ and thus the entire coloring. As a result, $|\cS_u'| = \binom{A+B}{A}$, and hence by \eqref{eqn:extract_coeff2}, $f_w (u) = \sgn(w)\sgn(u)\binom{A + B}{A}$. 


\end{proof}

Of special interest is the antidiagonal coefficient, which we promote to its own corollary.

\begin{cor}\label{cor:anti_diag}

Let $w$ avoid the patterns $1324$ and $321$ but not $2143$. Then, $w$ falls into one of the two cases of Proposition \ref{2143Patterns}. Define $a, b, c, d \ge 1$ accordingly (we ignore $e, f$). If $w_0$ is the longest word in $\mathfrak{S}_n,$ then $|f_w(w_0)| = \binom{\min(a, c) + \min(b, d)}{\min(b, d)}.$
\end{cor}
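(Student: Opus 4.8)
The plan is to apply Theorems \ref{AntiDiagCoeff} and \ref{coefficient of u in w case 2} directly with $u = w_0$, where $w_0(i) = n+1-i$, and then carry out the elementary index bookkeeping needed to identify the quantities $A$ and $B$ appearing in those statements. Since $w_0$ is an involution, $w_0^{-1} = w_0$, so for any interval $[p,q] \subset [n]$ we have $w_0^{-1}([p,q]) = [n+1-q,\, n+1-p]$; this is essentially the only fact about $w_0$ we will use. By Proposition \ref{2143Patterns} we know $w$ falls into one of its two cases, with $a,b,c,d \ge 1$ (this is part of Observation 1 in that proof), so the corollary is well-posed.

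First I would dispose of the two ``vanishing'' alternatives in each theorem. In the first case of Proposition \ref{2143Patterns} we have $a+b+c+d+e = n$ with $c,d \ge 1$, hence $a+b \le n-2$; therefore $w_0([1,a]) = [n+1-a,n]$ is disjoint from $[1,b]$, and likewise $w_0([n+1-d,n]) = [1,d]$ is disjoint from $[n+1-c,n]$ because $c+d \le n$. So the first branch of Theorem \ref{AntiDiagCoeff} is not triggered. In the second case of Proposition \ref{2143Patterns}, the analogous inequality $a+b+e+f = n-c-d \le n-2$ shows $w_0([1,a+e]) = [n+1-a-e,n]$ is disjoint from $[1,b+f]$, and $w_0([a+e+b+c+1,n]) = [1, d+f]$ is disjoint from $[b+f+a+d+1,n]$ trivially, so the first branch of Theorem \ref{coefficient of u in w case 2} is not triggered either.

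Next I would compute $A$ and $B$. In the first case, $w_0^{-1}([n+1-c,n]) = [1,c]$ gives $A = |[1,a]\cap[1,c]| = \min(a,c)$, and $w_0^{-1}([n+1-d,n]) = [1,d]$ gives $B = |[1,b]\cap[1,d]| = \min(b,d)$; Theorem \ref{AntiDiagCoeff} then yields $|f_w(w_0)| = \binom{\min(a,c)+\min(b,d)}{\min(a,c)} = \binom{\min(a,c)+\min(b,d)}{\min(b,d)}$. In the second case the bookkeeping is slightly longer: $w_0^{-1}([b+f+a+d+1,n]) = [1,\, c+e]$ and $w_0^{-1}([1,b+f]) = [a+c+d+e+1,\, n]$, and intersecting each of these with the block $[a+e+1,\, a+e+b+c]$ produces a contiguous interval of size $\max(c-a,0)$, respectively $\max(b-d,0)$. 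Hence $A = c - \max(c-a,0) = \min(a,c)$ and $B = b - \max(b-d,0) = \min(b,d)$, and Theorem \ref{coefficient of u in w case 2} again gives the claimed value.

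The only real content is keeping the interval endpoints straight --- in particular checking that each relevant intersection is either empty or a single block whose length collapses to a $\min$ of block sizes. I do not anticipate a genuine obstacle here; the main risk is an off-by-one slip, which I would guard against by re-deriving $w_0^{-1}([p,q])$ from scratch at each use rather than from memory, and by sanity-checking the two cases against the worked example $w = 2143$ (case 1, giving $\binom{2}{1}$... here $a=b=c=d=1$ so $\binom{1+1}{1}=2$) and $w = 24153$ (case 2).
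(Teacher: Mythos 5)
Your proposal is correct and follows essentially the same route as the paper: plug $u = w_0$ into Theorems \ref{AntiDiagCoeff} and \ref{coefficient of u in w case 2}, note the vanishing branch cannot occur, and reduce the interval intersections to $A = \min(a,c)$, $B = \min(b,d)$ in each case. Your explicit check that the zero branch is not triggered is slightly more careful than the paper's brief remark, but the argument is otherwise the same.
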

\begin{proof}
\par We divide up our work into the two cases, given by Theorems \ref{coefficient of u in w case 2} and  \ref{AntiDiagCoeff}. 
\par For the first case (covered by Theorem \ref{AntiDiagCoeff}), it's not hard to see that $w_0$ lies in the second case in Theorem \ref{AntiDiagCoeff}, meaning that we need to compute the number of elements in $w_0^{-1}([n + 1 - c, n]) \cap [1, a]$ and $w_0^{-1}([1, b]) \cap [n + 1 - d, n].$ However, notice that $w_0$ sends $[1, b]$ to $[n + 1 - b, n],$ meaning that the size of the latter set is $\min(b, d).$ Similarly, we see that the size of the former set is $\min(a, c),$ giving us that by Theorem \ref{AntiDiagCoeff} that $f_w(w_0) = \binom{\min(a, c) + \min(b, d)}{\min(b, d)}.$
\par For the second case (covered by Theorem \ref{coefficient of u in w case 2}), observe that for $w_0,$ $A$ is equal to $c - |[b + f + a  + d + 1, n] \cap [n - a - e - b - c + 1, n - a - e]|.$ But $n - a - e - b - c = d + f,$ meaning that our set has magnitude $|[b + f + a + d + 1, n - a - e]|.$ But observe that $n - a - e = b + f + c + d,$ meaning that this is equal to $\max(c - a, 0).$ Similarly, for $B,$ this is equal to the size of the set $[d + f, b + f] = \max(b - d, 0)$ subtracted from $b.$
\par Therefore, $A = c - \max(c - a, 0) = \min(a, c), B = b - \max(b - d, 0) = \min(b, d),$ whereby Theorem \ref{coefficient of u in w case 2} gives us the desired result.
\par Combining these together gives us the value of the antidiagonal coefficient, as desired.
\end{proof}

\begin{remark}
In particular, since $a, b, c, d \ge 1$, we have $|f_w (w_0)| \ge 2$, so $\imm_w$ cannot be expressed as a \%-immanant. This provides an alternate proof for the 2143-avoiding condition in Theorem \ref{2143ConverseThm}.
\end{remark} 

\subsection{Proof of Theorem \ref{TwoPercentForward}}
Using the results of the previous subsection, we are now able to prove Theorem \ref{TwoPercentForward}, which we restate here.
\begin{thm}\label{TwoPercentForward_restate}
Let $w$ be a $321$-avoiding permutation. The following statements are equivalent:
\begin{enumerate}
    \item The Temperley-Lieb immanant $\imm_w$ is a linear combination of \%-immanants;
    
    \item The signed Temperley-Lieb immanant $\sgn(w) \imm_w$ is a sum of at most two \%-immanants;
    
    \item The permutation $w$ avoids the patterns $1324, 24153, 31524, 231564$, and $312645,$ in addition to avoiding $321.$
\end{enumerate}
\end{thm}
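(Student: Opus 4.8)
The plan is to prove the cycle of implications $(3) \Rightarrow (2) \Rightarrow (1) \Rightarrow (3)$, leveraging the explicit coefficient formulas established in Theorems \ref{AntiDiagCoeff} and \ref{coefficient of u in w case 2}, the classification of $\%$-immanant space from Theorem \ref{thm:classifying space of percent}, and Theorem \ref{thm:onepercentimm} for the $2143$-avoiding base case. The implication $(2) \Rightarrow (1)$ is immediate. The implication $(1) \Rightarrow (3)$ is the ``necessity'' direction: we must show that if $w$ contains any of $1324, 24153, 31524, 231564, 312645$, then $\imm_w$ is not a linear combination of $\%$-immanants. For the $1324$ pattern this is exactly Theorem \ref{1324Thm}. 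For the other four patterns, assuming $w$ avoids $321$ and $1324$ but contains one of them, $w$ falls into the cases of Proposition \ref{2143Patterns}; more specifically, by Lemma \ref{lem:24153, 31524 pattern}, if $w$ is in the second case it contains $24153$ or $31524$, so these two patterns are precisely the obstruction to $w$ being in the ``small'' first case of Proposition \ref{2143Patterns}. The remaining two patterns $231564, 312645$ should, by a similar but more delicate analysis, be exactly the obstruction within the \emph{first} case of Proposition \ref{2143Patterns} to having $\min(a,c) = 1$ or $\min(b,d) = 1$ — i.e., to the antidiagonal coefficient binomial $\binom{\min(a,c) + \min(b,d)}{\min(b,d)}$ being at most $2$. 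So the proof of $(1) \Rightarrow (3)$ reduces to: (i) if the binomial coefficient from Corollary \ref{cor:anti_diag} exceeds $2$ then $\imm_w \notin P_n^\%$, and (ii) a combinatorial lemma translating ``$\min(a,c) \ge 2$ and $\min(b,d) \ge 2$'' into ``$w$ contains $231564$ or $312645$'' (and conversely).

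For part (i), I would use Theorem \ref{thm:classifying space of percent}: $\imm_f \in P_n^\%$ iff $f(w) = -f(w')$ whenever $w,w'$ are $1324$-adjacent. If $\imm_w = \sum c_i \imm^\%_{\lambda_i/\mu_i}$ then $f_w$ must satisfy this alternating condition. But using the coefficient formulas (Theorems \ref{AntiDiagCoeff}, \ref{coefficient of u in w case 2}), one can exhibit two $1324$-adjacent permutations $u, u'$ with $f_w(u) = f_w(u')$ (both nonzero) when the relevant binomial is large — indeed the binomial counts colorings, and adjacent values of $u$ near $w_0$ give coefficients like $\binom{m-1}{k}$ and $\binom{m-1}{k-1}$ whose sum is $\binom{m}{k}$ but which need not be negatives of each other unless one of them vanishes, which happens exactly when $\min(a,c) \le 1$ or $\min(b,d) \le 1$. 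So the strategy is to locate, within the support of $\imm_w$, a $1324$-adjacent pair witnessing a violation whenever $\binom{\min(a,c)+\min(b,d)}{\min(b,d)} \ge 3$. This is the place where I expect the most care is needed: choosing the right pair $u, u'$ and verifying via the explicit formulas that their coefficients are equal and nonzero.

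For $(3) \Rightarrow (2)$: given $w$ avoiding $321, 1324$ and the four other patterns, either $w$ avoids $2143$ — in which case Theorem \ref{thm:onepercentimm} gives $\sgn(w)\imm_w = \imm^\%_w$, a single $\%$-immanant — or $w$ contains $2143$. In the latter case, avoiding $24153$ and $31524$ forces $w$ into the \emph{first} case of Proposition \ref{2143Patterns} (by the contrapositive of Lemma \ref{lem:24153, 31524 pattern}), and avoiding $231564, 312645$ forces $\min(a,c) \le 1$ or $\min(b,d) \le 1$, so by Corollary \ref{cor:anti_diag} the antidiagonal coefficient has absolute value $\le \binom{1 + k}{k} = k+1$ where $k = \max(\min(a,c),\min(b,d))$... wait, more carefully: one of $\min(a,c), \min(b,d)$ equals $1$, say $\min(b,d) = 1$, giving $|f_w(w_0)| = \binom{\min(a,c)+1}{1} = \min(a,c)+1$, which can be large. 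So I would instead want the \emph{other} condition to also be controlled; the honest claim must be that avoiding $231564$ and $312645$ forces $\min(a,c) = 1$ \emph{and} $\min(b,d) = 1$, i.e. the antidiagonal coefficient is exactly $\binom{2}{1} = 2$. Then, using Theorem \ref{AntiDiagCoeff}, $f_w(u) \in \{0, \pm 1, \pm 2\}$ and the value $\pm 2$ occurs only at permutations $u$ with $A = B = 1$. The plan is then to explicitly write $\sgn(w)\imm_w = \imm^\%_{\lambda_1/\mu_1} + \imm^\%_{\lambda_2/\mu_2}$ by identifying two skew shapes whose indicator functions add up to $f_w$: roughly, the set of $u$ with $f_w(u) \neq 0$ (which from Theorem \ref{AntiDiagCoeff} is the set of $u$ avoiding the two ``forbidden position'' conditions) should decompose as a union of two order ideals $\{u : u \le \imm^\%_{\lambda_i/\mu_i}\}$ in the sense of Lemma \ref{lem:engulfing}, overlapping exactly on the $u$ with $f_w(u) = \pm 2$. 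Verifying this decomposition — writing down $\lambda_1/\mu_1$ and $\lambda_2/\mu_2$ from the block data $a,b,c,d,e$ and checking the coefficient identity term-by-term using $c_w(u) = \sgn(u)$ for $u$ in the shape — is routine but lengthy.

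The main obstacle I anticipate is the combinatorial translation lemma: proving that, for $w$ avoiding $321, 1324$ and containing $2143$ in the first case of Proposition \ref{2143Patterns}, the condition ``$\min(a,c) \ge 2$ and $\min(b,d) \ge 2$'' is equivalent to ``$w$ contains $231564$ or $312645$''. One direction (pattern $\Rightarrow$ large blocks) should follow by extracting the block sizes from an occurrence of the pattern; the other direction (large blocks $\Rightarrow$ pattern) requires constructing an explicit occurrence of one of the two length-$6$ patterns using two elements from each of four of the five blocks in the $[2][1][3][5][4]$ structure, with the choice between $231564$ and $312645$ depending on the relative positions — this bookkeeping, together with the analogous check that the second case of Proposition \ref{2143Patterns} always yields $24153$ or $31524$ (already done as Lemma \ref{lem:24153, 31524 pattern}), is the technical heart. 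Once that lemma is in hand, the two-$\%$-immanant decomposition for the surviving permutations and the coefficient-based non-membership argument for the rest assemble into the full equivalence.
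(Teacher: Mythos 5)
Your overall architecture is the same as the paper's (the cycle $3\Rightarrow 2\Rightarrow 1\Rightarrow 3$, the block classification of Proposition \ref{2143Patterns}, the coefficient formulas of Theorems \ref{AntiDiagCoeff} and \ref{coefficient of u in w case 2}, Theorem \ref{thm:classifying space of percent}, and reduction by symmetry), and your $3\Rightarrow 2$ sketch---force the first case with $\min(a,c)=\min(b,d)=1$ and write $\sgn(w)\imm_w$ as the sum of two explicit shapes overlapping on the coefficient-$\pm 2$ locus---is essentially the paper's construction. But there is a genuine gap in your $1\Rightarrow 3$: you reduce the necessity of avoiding the four long patterns to the single criterion ``the antidiagonal binomial $\binom{\min(a,c)+\min(b,d)}{\min(b,d)}$ exceeds $2$.'' That criterion is silent about permutations in the second case of Proposition \ref{2143Patterns} whose parameters satisfy $\min(a,c)=\min(b,d)=1$: for instance $w=24153$ itself has $a=b=c=d=1$, so $|f_w(w_0)|=\binom{2}{1}=2$, exactly the same antidiagonal coefficient as the ``good'' permutation $2143$. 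Hence Corollary \ref{cor:anti_diag} cannot detect the patterns $24153$ and $31524$ at all, and your plan never shows that such $w$ fail condition \textit{1}---which is precisely why these two patterns appear on the forbidden list. The paper closes this with a separate argument for the block form $[3][5][1][6][2][4]$: using Theorem \ref{coefficient of u in w case 2} it exhibits a $1324$-adjacent pair $v$, $vs_{a+e}$ with $f_w(v)=0$ but $f_w(vs_{a+e})=\pm 1$, violating the alternation condition of Theorem \ref{thm:classifying space of percent}. You need some analogue of this step; without it the implication $1\Rightarrow 3$ is not proved.

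Two smaller repairs. First, your proposed witness ``two $1324$-adjacent $u,u'$ with $f_w(u)=f_w(u')$ both nonzero'' cannot exist: $1324$-adjacent permutations differ by a transposition, so $\sgn(u')=-\sgn(u)$, and the formulas $f_w(u)=\sgn(w)\sgn(u)\binom{A+B}{A}$ would force a nonnegative binomial to equal the negative of another. What one must exhibit is a pair with $f_w(u)\neq -f_w(u')$, i.e.\ with \emph{unequal} binomial values (the paper's case-1 pair has values $1$ and $\min(b,d)+1$, and its case-2 pair has values $0$ and $1$). Second, the translation lemma should be stated pattern by pattern: within the first case of Proposition \ref{2143Patterns}, containing $231564$ is equivalent to $\min(a,c)\ge 2$ and containing $312645$ to $\min(b,d)\ge 2$; your initial formulation (``$\min(a,c)\ge 2$ and $\min(b,d)\ge 2$'' iff ``contains $231564$ or $312645$'') is wrong in both directions, though your later self-correction in the $3\Rightarrow 2$ discussion is the right statement. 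With these repairs the remainder of your outline assembles along the same lines as the paper.
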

    
    
\begin{proof}
We will prove $\textit{3} \Rightarrow \textit{2} \Rightarrow \textit{1} \Rightarrow \textit{3}$. The implication $\textit{2} \Rightarrow \textit{1}$ is straightforward.

$\textit{3} \Rightarrow \textit{2}$. Assume that $w$ avoids $321, 1324, 24153, 31524.$ If furthermore, $w$ avoids $2143$, then $\imm_w = \sgn(w)\imm_w^\%$ by Theorem \ref{CSBThm}. Thus, we may further assume that $w$ contains $2143$. By Lemma \ref{lem:24153, 31524 pattern}, we have that $w$ has block structure $[2][1][3][5][4]$, as stated in the first case of Proposition \ref{2143Patterns}. Since $w$ furthermore avoids $231564$ and $312645$, we must have $a = 1$ or $c = 1$, and $b = 1$ or $d = 1$. This is because otherwise, the block structure given in the first case of Proposition \ref{2143Patterns} will result in a pattern of $231564$ or $312645$. We will first consider the case when $a = 1$. This case has two subcases. 

\textbf{Case 1.} Assume $a = b = 1$. We use Theorem \ref{AntiDiagCoeff} to prove that $\sgn(w)\imm_w$ is the sum of the following two \% immanants. 
\begin{itemize}
    \item $\imm^\%_1 = \imm_w^{\%}.$
    \item The \%-immanant $\imm^\%_2$ corresponding to the \% shape where we remove the $d \times c$ rectangle in the lower-right corner, and remove the $(n-d) \times 1$ and $1 \times (n-c)$ rectangles in the upper-left corner (with these two rectangles overlapping at the $1 \times 1$ rectangle in the upper-left corner).
\end{itemize}
\par In particular, in the perfect matching of $w$, block $[1,a]$ is matched with block $[(b+1)', (b+a)']$, and block $[1',b']$ is matched with block $[a+1, a+b],$ letting $a=b=1$ gives us that $w(1)=2$ and $w(2)=1$.


\textbf{Case 2.} If $a = d = 1,$ we take the following two \%-immanants:
\begin{itemize}
    \item $\imm^\%_1 = \imm_w^{\%}.$
    \item The \%-immanant $\imm^\%_2$ corresponding to the shape where we remove the $1 \times (n-b)$ rectangle in the lower-right corner, and remove the $(n-c) \times 1$ rectangle in the upper-left corner.
\end{itemize} 

To show both of these, we compare the coefficients of $x_u$ for each $u \in \fkS_n.$ First, suppose that $u([1, a])$ is not disjoint from $[1, b],$ or $u([n - d + 1, n])$ is not disjoint from $[n-c + 1, n],$ so we know that $f_w(u) = 0$ by Theorem \ref{AntiDiagCoeff}.
\par Notice that these conditions in particular imply that the coefficient of $x_u$ in $\imm_w^{\%}$ is zero for both cases. This also holds for $\imm_2^{\%}$ as well, since the rectangles that we remove in the latter contain the rectangles we remove in the former. So the coefficient of $x_u$ in $\imm_1^{\%} + \imm_2^{\%}$ is zero as well.

\par Now, consider $u \in \mathfrak{S}_n$ where $f_w(u) \neq 0$. Then, $f_w(u) =$ $A+B \choose A$ as in Theorem \ref{AntiDiagCoeff}. Recall that for $w$ to avoid the above patterns, $w$ fall into one of the four cases: $a = 1, b = 1$, or $a = 1, d = 1$, or $c = 1, b = 1$, or $c = 1, d = 1$. Given these constraints, for $A+B \choose A$ to be nonzero, we can have either $B \choose 0$ $= 1$, or $2 \choose 1$ $= 2$. 
\par Suppose that this coefficient is $1,$ so either $A = 0$ or $B = 0.$ Then, when $a = b = 1,$ note that $A = 0$ iff $u(1) \not \in [n-c+1, n],$ or that it doesn't fit in the second pattern, and $B = 0$ iff $u^{-1}(1) \not \in [n-d+1, n],$ or that it doesn't fit in the second pattern. Meanwhile, for $a = d = 1,$ we have $A = 0$ iff $u(1) \not \in [n - c + 1, n]$ (so doesn't fit in the second pattern), and $B = 0$ iff $u^{-1}([1, b])$ is not equal to $n,$ or that it doesn't fit in the second pattern.
\par Otherwise, the coefficient is ${2 \choose 1} = 2$ if and only if $A \neq 0$ and $B \neq 0.$ But then we have that $u(1) \in [n - c + 1, n]$ and $u([n-d + 1, n]) \cap [1, b]$ contains an element. In the case for $a = b = 1,$ we have that $u(1) \not \in [1, n - c]$ and $u([1, n-d]) \neq 1$ (since $u([n-d+1, n])$ contains $1$), and the condition that $f_w(u) \neq 0$ means that $u([n-d+1, n])$ is disjoint from $[n-c+1, n],$ so $u$ fits in both \%-immanant patterns. For the case $a = d = 1,$ again $u(1)$ does not lie in $[1, n-c]$ and $u(n)$ is not in $[b + 1, n],$ so $u$ fits in both $\%$-immanants. This proves our result for $a = 1$.

Otherwise, if $c = 1$, then notice that $w'=w_0 w^{-1} w_0$ will also have block structure $[2][1][3][5][4]$ and the corresponding $a$-value for $w'$ is $1$. This is because taking the inverse is the same as reflecting the matching diagram across the perpendicular bisector of $1$ and $1'$, while taking conjugation by $w_0$ is the same as reflecting the matching diagram across the perpendicular bisector of $1$ and $n$. By the discussion of our previous case, $\sgn(w') \imm_{w'}$ is a sum of two \%-immanants $\imm^\%_1 + \imm^\%_2$. And by Lemma \ref{lem:symmetry}, $\sgn(w) \imm_w = \imm^{*\%}_1 + \imm^{*\%}_2$, where $\imm^{*\%}_i$ is the $\%$-immanant with a zero in $(i, j)$ if and only if $\imm^\%_i$ has a zero in $(n+1-j, n+1-i)$.


$\textit{1} \Rightarrow \textit{3}$. We will show the contrapositive of this statement. Suppose $w$ doesn't avoid one of $1324, 24153, 31524, 231564, 312645$; we would like to show $\imm_w$ is not a linear combination of $\%$-immanants. If $w$ contains $1324$, then we are done by Theorem \ref{1324Thm}, so we can assume $w$ avoids $1324$.

By the symmetry lemma \ref{lem: simplify cases}, we know that $\imm_w$ is a linear combination of $\%$-immanants iff $\imm_{w^{-1}}$ is. Thus, by replacing $w$ with $w^{-1}$ and using Lemma \ref{lem: restriction inverses} if necessary, we may assume without loss of generality that $w$ contains $24153$ or $231564$ and avoids $1324$. In particular, $w$ contains $2143$.

The basic idea is to find two $1324$-adjacent permutations $v, v'$ such that $f_w (v) \neq -f_w (v')$, and use Theorem \ref{thm:classifying space of percent} to conclude that $\imm_w$ is not a linear combination of $\%$-immanants. We divide into cases per Proposition \ref{2143Patterns}. 
\par \textbf{Case 1: $w$ has block form $[2][1][3][5][4]$.} Define $a, b, c, d, e$ as in the first case of Proposition \ref{2143Patterns}. Then by inspection of the block form, we see that $w$ must avoid $24153$, so $w$ contains $231564.$ Further inspection of the block form also yields that $\min(a, c) \geq 2.$
\par Consider the permutation $v$ given by $v(i) = i + (n - a - c)$ for $i \leq a + c,$ and $v(i) = n + 1 - i$ for all other values of $i.$ We first note that $v([1, a]) = [n - a - c+1, n-c],$ which is disjoint from $[1, b]$ and $[n + 1 - c, n],$ and that $v([n + 1 - d, n]) = [1, d],$ which is disjoint from $[n + 1 - c, n]$ as $\max(b, d) \leq n - a - c - e \leq n - c.$ Therefore, by Theorem \ref{AntiDiagCoeff}, we have that $f_w(v) = \sgn(w)\sgn(v),$ since $[1,a] \cap u^{-1}([n+1 - c,n])$ is empty and $\binom{N}{0} = 1$ for any $N$.
\par Now, consider the permutation $vs_a.$ Repeating the computations, observe that $vs_a([1, a])$ is $[n - a - c, n - c - 1] \cup \{n - c + 1\},$ which is disjoint from $[1, b]$ and intersects $[n+1-c, n]$ in one element, and $vs_a([n + 1 - d, n]) = [1, d],$ which is disjoint from $[n + 1 - c, n].$ Again, by Theorem \ref{AntiDiagCoeff}, we find that $f_w(vs_a) = \sgn(w)\sgn(vs_a) \binom{\min(b, d)+1}{1} = \sgn(w)\sgn(vs_a) (\min(b, d)+1),$ since $vs_a([n + 1 - d, n])$ contains $\min(b, d)$ elements that map to elements in $[1, b].$
\par Thus, $f_w (v) = \pm 1$ while $f_w (vs_a) = \mp (\min(b,d) + 1)$. But $v, vs_{a+e}$ are $1324$-adjacent, by considering the inputs $a-1, a, a+1, a+2$ (it is here that the assumption $\min(a, c) \ge 2$ is used). Thus, by Theorem \ref{thm:classifying space of percent}, $\imm_w$ cannot be written as a sum of \%-immanants.
\par \textbf{Case 2: $w$ has block form $[3][5][1][6][2][4]$.} Define $a, b, c, d, e, f$ as in the second case of Proposition \ref{2143Patterns}, with $\max(e, f) > 0$. By replacing $w$ with $w^{-1}$ if needed, we can without loss of generality assume $e \geq 1,$ which means that our permutation contains the pattern $24153.$
\par In this case, consider the permutation given by $v(i) = i + n - a - 2e - c$ for $e \le i \le a + e + c + 1$, and $w(i) = n + 1 - i$ otherwise. Notice that $v([a+e+1, a+e+c+1]) = [n - e - c + 1, n - e + 1] \subset [b + f + a + d + 1, n]$ since $n = a + b + c + d + e + f.$ Therefore, by Theorem \ref{coefficient of u in w case 2}, we have that $f_w(v) = 0,$ since $A = c - (c + 1) < 0.$
\par Now, consider the permutation $vs_{a + e}.$ Observe first that $vs_{a+e}([1, a+e]) = [n - e + 1, n] \cup [n - a - e - c, n - e - c - 1] \cup \{ n - e - c + 1\};$ but $n - a - e - c  = b + f + d \geq b + f + 1.$ Similarly, note that $vs_{a+e}([a + e + b + c + 1, n]) = [1, f + d];$ again note that $f + d < b + f + a + d + 1.$
\par Finally, observe that $vs_{a+e}([a + e + 1, a + e + b + c])$ equals the union of $v([a+e+2, a+e+b+c])$ and $\{v(a+e)\}.$ This in turn equals $[n - e - c + 2, n - e + 1] \cup [n - a - e - b - c + 1 , n - a - e - c - 1] \cup \{n - e - c\} = [b + f + a + d + 2, n - e + 1] \cup [d + f + 1 , b + d + f - 1] \cup \{b + f + a + d\}.$ This contains $c$ elements in $[b + f + a + d + 1, n]$ and $\max(b - d, 0)$ elements of $[1, b + f]$, so by Theorem \ref{coefficient of u in w case 2} we have that $f_w(vs_{a+e}) = \sgn(vs_{a+e})\sgn(w).$
\par Thus, $f_w(vs_{a+e})$ is equal to $\pm 1$ while $f_w (v) = 0$. But $v, vs_{a+e}$ are $1324$-adjacent, by considering the inputs $a + e - 1, a + e, a + e + 1, a + e + 2.$ Thus, by Theorem \ref{thm:classifying space of percent}, $\imm_w$ is not a linear combination of \%-immanants.
\end{proof}


\subsection{Proofs of Lemmas in Section 5}
Here we present the proofs for the lemmas introduced in section 5.

\begin{replemma}{lem:24153, 31524 pattern}
Suppose that $w$ is a permutation avoiding $1324, 321$ that contains the pattern $2143,$ and define $a', b', c', d'$ as in Proposition \ref{2143Patterns}. Then, suppose that $a' + b' + c' + d' > n.$ Then, $w$ either contains the pattern $24153$ or $31524.$
\end{replemma}
\begin{proof}
We must be in Case 2 of Proposition \ref{2143Patterns}. If $e \ge 1$, then $w$ contains a $24153$ pattern. If $f \ge 1$, then $w$ contains a $31524$ pattern.
\end{proof}

\begin{replemma}{general non-crossing matching}
Let $a, b, c, d, e \ge 0$ be integers with $n = a + b + c + d + e$. Consider vertices labelled $1, 2, \ldots, 2n.$ Then there exists a unique coloring of these vertices with colors white and black and a non-crossing matching compatible with it such that the following conditions hold. 
\begin{enumerate}
    \item $[1, b+c+e]$ are colored black,
    
    \item in the interval $[b+c+e+1, a+2b+c+e]$, there are exactly $a$ black and $b$ white vertices, and there are no pairings between two vertices in this interval (which we will refer to as an ``internal pairing''),
    
    \item $[a+2b+c+e+1, a+b+e+n]$ are colored white,
    
    \item in the interval $[a+b+e+n + 1, 2n]$, there are exactly $d$ black and $c$ white vertices, and there are no internal pairings in this interval.
\end{enumerate}
\end{replemma}

\begin{proof}
The proof will proceed similarly to the proof of Lemma \ref{lem:simple-no-internal-pair}. We will induct on $c + d$. Our base case is $c = d = 0.$ In this case, we wish to show there is a unique coloring and non-crossing matching compatible with the coloring such that $[1, b + e]$ are colored black, $[b + e + 1, a + 2b + e]$ have no internal pairings and have $a$ black, $b$ white vertices, and $[a + 2b + e, 2n]$ are colored white. But this is just Lemma \ref{lem:simple-no-internal-pair} with the values $a+b, b+e, a+e,$ as $(a + b) + (b + e) + (a + e) = 2n,$ and then shifted by $a + b.$

For the inductive step, assume that we have shown that the lemma is true for $c + d = k-1$. Suppose we are given nonnegative integers $a, b, c, d, e$ with $c + d = k$. We can assume without loss of generality that $c \ge d.$ Otherwise, relabel the vertices such that $x$ is relabelled as $2n + 1 - c - d - x$ for $x \in [1, a + b + e + n]$ and $4n + 1 - c - d - x$ for $x \in [a + b + e + n + 1, 2n],$ and swap the colors black and white. This has the effect of swapping the variables $a$ with $b$ and $c$ with $d$. 

\par From our assumptions that $c \ge d$ and $c + d \ge 1$, we must have $c \ge 1$. 

\par Now, suppose a coloring and a non-crossing matching compatible with the coloring satisfying the properties in the statement of the lemma exist. We claim that $2n$ must be paired with $1$. If not, then $2n$ is paired with $x$ for some $2 \le x \le a + b + e + n = 2n-c-d$. Then among the vertices $[1, x-1]$, there must be equal number of white and black vertices. Otherwise, one of these vertices must be paired with a vertex with index at least $x,$ contradicting the definition of a non-crossing matching. 

\par We now show that this can't happen. If $1 < x \le b+c+e + 1$, then $[1, x-1]$ consists of only black vertices. Next, if $b +c + e + 1 < x \le a+2b+c+e + 1$, then $[1, x-1]$ has at least $b+c+e$ black vertices and at most $b$ white vertices by condition \textit{2}. This is a contradiction since $c \ge 1$, which means $b + c + e > b$. Finally, if $a+2b+c+e + 1 < x \le 2n-c-d$, then $[1, x-1]$ has at least $a+b+c+e$ black vertices and at most $b + (a+d+e-1)$ white vertices by conditions \textit{2} and \textit{3}, which is a contradiction since $c > d-1$. Thus, we must have $2n$ is paired with $1$. In particular, we also know that $2n$ is colored white, since $1$ is colored black.

Now, delete vertices $1$ and $2n,$ and relabel the vertices $[2, 2n-1]$ by reducing their index by $1.$ The remaining configuration must satisfy the following properties.
\begin{enumerate}
    \item $[1, b + c + e - 1]$ are colored black,
    \item in the interval $[b + c + e, a + 2b + c + e - 1],$ there are $a$ black and $b$ white vertices, and there are no internal pairings,
    \item $[a + 2b + c + e, a + b + e + n - 1]$ are colored white,
    \item in the interval $[a + b + e + n, 2(n-2)],$ there are exactly $d$ black and $c - 1$ white vertices, and there are no internal pairing in this interval.
\end{enumerate}
By the inductive hypothesis, this choice of non-crossing matching with compatible coloring on $2n - 2$ vertices is unique. However, this means that the choice of non-crossing matching and compatible coloring on the $2n$ vertices is also unique, since $1$ must be colored black, $2n$ must be colored white, and $1, 2n$ must be paired. This shows that if a coloring and compatible non-crossing matching satisfying the properties in the lemma statement exist, then they must be unique.
\par To see that one such choice of coloring and non-crossing matching exists, take the unique coloring and non-crossing matching on the vertices $[1, 2n - 2]$ (which exists by the inductive hypothesis) satisfying the four properties listed in the previous paragraph. Then, shift the labels of the vertices by $1,$ and add the vertices $1, 2n.$ From here, color $1$ black, $2n$ white, and pair $1$ and $2n.$ This is a coloring and compatible non-crossing matching that satisfies the conditions in the lemma. This finishes the inductive step and hence proves the claim. \qedhere 

\end{proof}

\begin{replemma}{non-crossing matching of case1}
Let $a, b, c, d, e$ be nonnegative integers, so $a, b, c, d \geq 1$ and $a+b+c+d+e=n$. Then, there is a unique non-crossing matching and a coloring compatible with it, such that the coloring satisfies
\begin{enumerate}
    \item $i$ is black for $i\in [a+1,n-d]$
    \item $i'$ is white for $i\in [b+1, n-c]$
    \item There are exactly $a$ black vertices and $b$ white vertices in $[1,a]\cup [1,b]'$
    \item There are exactly $d$ black vertices and $c$ white vertices in $[n-d+1,n]\cup [n-c+1,n]'$ 
    \item There are no pairings between two vertices in $[1,a]\cup [1,b]'$ (which we will refer to as an ``internal pairing")
    \item There are no internal pairings in $[n-d+1,n]\cup [n-c+1,n]'$
\end{enumerate}
\end{replemma}

\begin{proof}
Relabel the vertices such that $i$ is relabelled as $n - d + 1 - i$ for $i \in [1, n-d],$ relabelled as $3n - d  + 1 - i$ for $i \in [n - d + 1, n],$ and $i'$ is relabelled as $i + n - d$ for $i \in [1, n].$ Then, our conditions can be restated as requiring $[1, b + c + e]$ to be black, $[b + n - d + 1, 2n - c - d] = [a + 2b + c + e + 1, a + b + e + n]$ to be white, with $a$ black and $b$ white vertices in $[b + c + e + 1, a + 2b + c + e],$ and $d$ black and $c$ white vertices in $[2n - c - d + 1,2n] = [a + b + e + n + 1, 2n],$ with no internal pairings in the last two intervals. By Lemma \ref{general non-crossing matching}, there exists a unique coloring and non-crossing matching satisfying these conditions, as desired.
\end{proof}

\begin{replemma}{non-crossing matching of case2}
Let $a, b, c, d, e, f$ be nonnegative integers where $a, b, c, d, \max(e, f) \geq 1,$ and $a+b+c+d+e+f=n$. Then, there is a unique non-crossing matching and a coloring compatible with it, such that the coloring satisfies
\begin{enumerate}
    \item $i$ is black for $i\in [1,a+e]$
    \item $i$ is white for $i\in [a+e+b + c+1, n]$
    \item $i'$ is black for $i\in [1,b+f]$
    \item $i'$ is white for $i\in [b+f+a+d+1,n]$
    \item There are exactly $c$ black vertices and $b$ white vertices in $[a+e+1,a+e+b+c]$
    \item There are exactly $d$ black vertices and $a$ white vertices in $[b+f+1,b+f+a+d]'$ 
    \item There are no pairings between two vertices in $[a+e+1,a+e+b+c]$ (which we will refer to as an ``internal pairing")
    \item There are no internal pairings in $[b+f+1,b+f+a+d]'$
\end{enumerate}
\end{replemma}
\begin{proof}
Let $\Tilde{a}=d,\Tilde{b}=a,\Tilde{c}=b,\Tilde{d}=c$, and $\Tilde{e}=e+f$. Adopt the setting of Lemma \ref{general non-crossing matching} where we take the five constants to be $\Tilde{a},\Tilde{b},\Tilde{c},\Tilde{d}$, and $\Tilde{e}$.

Relabel the vertices such that $i$ is labelled as $a + e + 1 - i$ for $i \in [1, a + e],$ labelled $2n + a + e + 1 - i$ for $i \in [a + e + 1, n],$ and $i'$ is labelled as $a + e + i$ for $i \in [1, n].$ Then, notice that we are looking for a coloring and non-crossing matching compatible with it such that $[1, a + e + b + f] = [1, \Tilde{b} + \Tilde{c} + \Tilde{e}]$ are black, $[2a + b + d + e + f + 1, n + a + e + d + f] = [\Tilde{a} + 2\Tilde{b} + \Tilde{c} + \Tilde{e} + 1, n + \Tilde{a} + \Tilde{b} + \Tilde{e}]$ are white, and we have $a$ white and $d$ black vertices in $[a +e + b + f + 1, 2a + b + d + e + f] = [\Tilde{b} + \Tilde{c} + \Tilde{e} + 1, \Tilde{a} + 2\Tilde{b} + \Tilde{c} + \Tilde{e}],$ and $c$ black and $b$ white vertices in $[n + a + e + d + f + 1, 2n] = [n + \Tilde{a} + \Tilde{b} + \Tilde{e} + 1,2n],$ with no internal pairings amongst the last two intervals. By Lemma \ref{general non-crossing matching}, there is a unique coloring and non-crossing matching compatible with it, as desired.
\end{proof}

\begin{replemma}{ncm equals w case1}
Let $w \in \fkS_n$ have block structure $[2][1][3][5][4]$ with block lengths $a,b,e,c,d$, as stated in the first case of Proposition \ref{2143Patterns}. The non-crossing matching of $w$ is exactly the non-crossing matching in Lemma \ref{non-crossing matching of case1}.
\end{replemma}
\begin{proof}
\par The first part of the proof is to determine $\ncm(w)$. This can be done laboriously using Proposition \ref{prop:ncm}, but for our purposes, it suffices to deduce some structural properties of $\ncm(w)$ that can be directly gleaned from Lemma \ref{lem:matching_compatibility} and Proposition \ref{prop:ncm}(a).

\par Construct the coloring of $\ncm(w)$ in Lemma \ref{lem:matching_compatibility}, with fixed points colored arbitrarily. Then, using the one-line notation for $w$ in Proposition \ref{2143Patterns} case 1, we deduce that the vertices in $B_1 = [1,a] \cup [1,b]'$ are all colored black, while the vertices in $W_1 = [a+1, a+b] \cup [b+1, a+b]'$ are all colored white. Thus, since $\ncm(w)$ is consistent with this coloring, there are no pairings between two vertices in $B_1$. There are also no pairings between two vertices in $W_1$. Finally, by Lemma \ref{lem:matching_compatibility}, a white vertex $i$ or $i'$ in $W_1$ must be paired with some black vertex $j$ or $j'$ with $j \le i \le a+b$. The only black vertices in $[1, a+b] \cup [1, a+b]'$ belong to $B_1$, so each vertex in $W_1$ is paired with some vertex in $B_1$. Since $|W_1| = |B_1| = a+b$, this map is bijective.
\par A similar argument shows that each vertex in $W_2 = [n-d+1,n] \cup [n-c+1,n]'$ is paired with some vertex in $B_2 = [n-c-d, n-d] \cup [n-c-d, n-c]'$ (in particular, there are no internal pairings between vertices in $W_2$). Finally, note that for $i \in F := [a + b + 1, n - c - d]$, we have $w(i) = i$ by Proposition \ref{2143Patterns}, so $i$ is paired with $i'$ in $\ncm(w)$ by Proposition \ref{prop:ncm}(a). 

\par Using the properties of $\ncm(w)$ deduced from above, we will create a new coloring compatible with $\ncm(w)$ (\textbf{not} based on Lemma \ref{lem:matching_compatibility}) that satisfies the conditions of Lemma \ref{non-crossing matching of case1}. First, conditions \textit{5} and \textit{6} of Lemma \ref{non-crossing matching of case1} are satisfied from our above work. Next, we follow conditions \textit{1} and \textit{2} of Lemma \ref{non-crossing matching of case1} and color $i$ black for $i \in [a+1, n-d]$ and $i'$ white for $i \in [b+1, n-c]$. This determines the colors of vertices in $W_1, F, F'$, and $B_2$. Since $\ncm$ pairs vertices in $W_1$ with $B_1$, $W_2$ with $B_2$, and $F$ with $F'$, our coloring is consistent so far. Furthermore, we can easily extend our coloring to all of $[1, n] \cup [1, n]'$: for each black vertex $v_1 \in B_1$ paired to some white vertex $v_2 \in W_1$, we assign $v_1$ the opposite color of $v_2$, and likewise for vertices in $W_2$.
\par Now, we verify the remaining conditions of Lemma \ref{non-crossing matching of case1}. Condition \textit{3} is satisfied because there are exactly $a$ white vertices and $b$ black vertices in $W_1$, which will thus pair with $a$ black vertices and $b$ white vertices in $B_1 = [1,a] \cup [1,b]'$. Similarly, condition \textit{4} of Lemma \ref{non-crossing matching of case1} is also satisfied.

\par Thus, it follows that $\ncm(w)$ is the unique matching described in Lemma \ref{non-crossing matching of case1}, as desired.
\end{proof}

\begin{replemma}{ncm equals w case2}
Let $w \in \fkS_n$ have block structure $[3][5][1][6][2][4]$ with block lengths $a,e,b,c,f,d$, as stated in the second case of Proposition \ref{2143Patterns}. The non-crossing matching of $w$ is exactly the non-crossing matching in Lemma \ref{non-crossing matching of case2}.
\end{replemma}
\begin{proof}
Consider the following coloring.
\begin{itemize}
    \item $[1, a+e], [a+e+b+1,a+e+b+c], [1, b+f]', [b+f+a+1, b+f+a+d]'$ are black,
    
    \item $[a+e+1, a+e+b], [a+e+b+c+1, n], [b+f+1, b+f+a]', [b+f+a+d+1, n]'$ are white.
\end{itemize}
Using the one-line notation for $w$ in Proposition \ref{2143Patterns} case 2, we check that this coloring satisfies the conditions in Lemma \ref{lem:matching_compatibility}, and so this coloring is compatible with $\ncm(w)$.
\par Furthermore, note that this coloring satisfies conditions \textit{1-6} in Lemma \ref{non-crossing matching of case2}. Next, we check conditions \textit{7-8}. From Lemma \ref{lem:matching_compatibility}, we know that any black vertex has a smaller or equal label than the white vertex it is paired with in $\ncm(w)$. Thus, $\ncm(w)$ cannot have pairings between any two vertices in $[a + e + 1, a + e + b + c],$ since $[a + e + 1, a + e + b]$ are colored white and $[a + e + b + 1, a + e + b + c]$ are colored black. The same holds for the vertices among $[b + f + 1, b + f + a + d]'.$ Thus, conditions \textit{1-8} in Lemma \ref{non-crossing matching of case2} are satisfied, so $\ncm(w)$ is the unique non-crossing matching given in Lemma \ref{non-crossing matching of case2}. \qedhere
\end{proof}

\section*{Acknowledgements}
This project was partially supported by RTG grant NSF/DMS-1148634, DMS-1949896, and the Office of Undergraduate Research at Washington University in St. Louis. It was supervised as part of the University of Minnesota School of Mathematics Summer 2021 REU program. 
The authors would like to thank Professor Pavlo Pylyavskyy for introducing the problem and offering helpful directions for research and Sylvester Zhang for their mentorship and helpful comments on the paper. In addition, the authors would like to thank Swapnil Garg and Brian Sun for their algorithmic and coding support for this project.


\bibliographystyle{amsplain}
\bibliography{paper1.bib}

\end{document}